\numberwithin{equation}{section}
\newcommand{\Om}{\Omega}
\newcommand{\la}{\langle}
\newcommand{\ra}{\rangle}
\newcommand{\Op}{\mathrm{Op}\,}
\newcommand{\Lm}{\Lambda}
\renewcommand{\t}{\tau}
\newtheorem{theorem}{Theorem}[section]
\newtheorem{proposition}[theorem]{Proposition}
\newtheorem{lemma}[theorem]{Lemma}
\newtheorem{remark}[theorem]{Remark}
\newtheorem{remarks}[theorem]{Remark}
\newtheorem{definition}[theorem]{Definition}
\newcommand{\be}{\begin{equation}}
\newcommand{\ee}{\end{equation}}
\newcommand{\om}{\omega}
\newcommand{\e}{\varepsilon}
\newcommand{\R}{\mathbb R}
\newcommand{\C}{\mathbb C}
\newcommand{\Z}{\mathbb Z}
\newcommand{\N}{\mathbb N}
\newcommand{\T}{\mathbb T}
\renewcommand{\a }{\alpha }
\renewcommand{\b }{\beta }
\newcommand{\ii }{{\rm i} }
\renewcommand{\d }{\delta }
\newcommand{\g }{\gamma}
\newcommand{\vphi}{\varphi }
\newcommand{\mN}{\mathcal{N}}
\newcommand{\mG}{\mathcal G}
\newcommand{\mK}{\mathcal K}
\newcommand{\mU}{\mathcal U}
\newcommand{\ph}{\varphi}
\newcommand{\mH}{\mathcal{H}}
\newcommand{\mZ}{\mathcal{Z}}
\newcommand{\mL}{\mathcal{L}}
\newcommand{\mA}{\mathcal{A}}
\newcommand{\mR}{\mathcal{R}}
\newcommand{\mF}{\mathcal{F}}
\newcommand{\mE}{\mathcal{E}}
\newcommand{\mS}{\mathcal{S}}
\newcommand{\mQ}{\mathcal{Q}}
\newcommand{\mP}{\mathcal{P}}
\newcommand{\pa}{\partial}
\newcommand{\ompaph}{\om \cdot \partial_\ph}
\newcommand{\odd}{\text{odd}}
\newcommand{\even}{\text{even}}
\def\ba{\begin{aligned}}
\def\ea{\end{aligned}}
\def\beginm{\begin{multline}}
\def\endm{\end{multline}}
\newcommand{\mB}{\mathcal{B}}
\newcommand{\mC}{\mathcal{C}}
\DeclareMathOperator{\diag}{diag}
\DeclareMathOperator{\curl}{curl}
\newcommand{\grad}{\nabla}
\let\div\undefined
\DeclareMathOperator{\div}{div}
\newcommand{\lm}{\lambda}
\DeclareMathOperator{\Mat}{Mat}
\newcommand{\Id}{\mathrm{Id}}
\newcommand{\op}{\mathrm{op}}
\newcommand{\HS}{\mathrm{HS}}
\begin{document}

\title{{\bf Quasi-periodic incompressible Euler flows in 3D}}

\date{}

\author{Pietro Baldi, Riccardo Montalto}

\maketitle

\noindent
{\bf Abstract.}
We prove the existence of time-quasi-periodic solutions 
of the incompressible Euler equation 
on the three-dimensional torus $\T^3$, 
with a small time-quasi-periodic 
external force. 
The solutions are perturbations of constant (Diophantine) vector fields, 
and they are constructed by means of normal forms and KAM techniques 
for reversible quasilinear PDEs.

\smallskip 

\noindent
{\em Keywords:} Fluid dynamics, Euler equation, vorticity formulation, KAM for PDEs, quasi-periodic solutions.

\noindent
{\em MSC 2010:} 37K55, 35Q31. 



\tableofcontents

\section{Introduction}\label{introduction}

We consider the Euler equation for an incompressible fluid on the three-dimensional torus $\T^3$, $\T := \R / 2 \pi \Z$,
\begin{equation}\label{Eulero1}
\begin{cases}
\partial_t U + U \cdot \nabla U + \nabla p = \e f(\omega t, x) \\
\div U = 0
\end{cases} 
\end{equation}
where $\e \in (0, 1)$ is a small parameter, $\omega \in \R^\nu$ is a Diophantine $\nu$-dimensional vector, 
the external force $f$ belongs to ${\cal C}^q(\T^\nu \times \T^3, \R^3)$ 
for some integer $q > 0$ large enough, 
$U = (U_1, U_2, U_3) : \R \times \T^3 \to \R^3$ is the velocity field, 
and $p : \R \times \T^3 \to \R$ is the pressure. 
We look for time-quasi-periodic solutions of \eqref{Eulero1}, 
oscillating with time frequency $\omega$. 
This leads to solve the 
equation 
\begin{equation}\label{Eulero2}
\begin{cases}
\omega \cdot \partial_\vphi U + U \cdot \nabla U + \nabla p = \e f(\vphi, x) \\
\div U = 0
\end{cases}
\end{equation}
where the unknown velocity field $U : \T^\nu \times \T^3 \to \R^3$ 
and the unknown pressure $p : \T^\nu \times \T^3 \to \R$ 
are functions of $(\ph,x) \in \T^\nu \times \T^3$. 
We look for solutions which are small perturbation of a constant vector field $\zeta \in \R^3$, namely we look for solutions of the form 
$$
U (\vphi, x ) = \zeta +  u (\vphi, x) \quad \text{with} \quad \div u = 0\,.
$$
Plugging this ansatz into the equation, one is led to solve 
\begin{equation}\label{Eulero3}
\begin{cases}
\omega \cdot \partial_\vphi u + \zeta \cdot \nabla u + u \cdot \nabla u + \nabla p 
= \e f(\vphi, x) \\
\div u  = 0.
\end{cases}
\end{equation}
We assume that the forcing term $f(\vphi, x)$ is an odd function of the pair $(\ph,x)$, 
namely 
\begin{equation}\label{ipotesi forzante}
f(\vphi, x) = - f(- \vphi, - x) \quad \ 
\forall (\vphi, x) \in \T^\nu \times \T^3\,. 
\end{equation} 
We look for solutions $(u, p)$ of \eqref{Eulero3} that are even functions of the pair $(\vphi, x)$, namely 
\begin{equation}\label{soluzioni formulazione originale euler}
u(\vphi, x) = u(- \vphi, - x), \quad p(\vphi, x) = p(- \vphi, - x) 
\quad \ \forall (\vphi, x) \in \T^\nu \times \T^3\,. 
\end{equation}  
For any real $s \geq 0$, we consider the Sobolev spaces of real scalar 
and vector-valued functions of $(\ph,x)$
\begin{align} 
H^s = H^s(\T^{\nu + 3}, \R^n) 
& := \Big\{ u(\vphi, x) 
= \sum_{(\ell, j) \in \Z^{\nu + 3}} \widehat u(\ell, j) e^{\ii (\ell \cdot \vphi + j \cdot x)} 
: \| u\|_s := \Big(  \sum_{(\ell, j) \in \Z^{\nu + 3}}  
\langle \ell, j \rangle^{2 s} |\widehat u(\ell, j)|^2\Big)^{\frac12} < \infty \Big\} 
\notag \\
H^s_0 &:= \Big\{ u \in H^s : \int_{\T^{3}} u(\vphi, x)\, d x= 0 \Big\}\,, 
\qquad 
\langle \ell, j \rangle := \max \{ 1, |\ell|, |j| \}. 
\label{def sobolev}
\end{align}
We fix any bounded open set $\Omega \subseteq \R^\nu \times \R^3$ 
to which the parameters $(\omega, \zeta)$ belong.
The main result of the paper is the following theorem. 

\begin{theorem}\label{main theorem 2}
There exist $q = q(\nu) > 0$, $s = s(\nu) > 0$, 
such that for every forcing term 
$f \in {\cal C}^q(\T^\nu \times \T^3, \R^3)$ satisfying \eqref{ipotesi forzante} 
there exist $\e_0 = \e_0(f, \nu) \in (0, 1)$, 
$C = C(f,\nu) > 0$
such that for every $\e \in (0, \e_0)$ the following holds. 
There exists a Borel set $\Omega_\e \subset \Omega$ 
of asymptotically full Lebesgue measure, 
i.e. $\lim_{\e \to 0} |\Omega \setminus \Omega_\e| = 0$, 
such that for every $(\omega, \zeta) \in \Omega_\e$ 
there exist $u = u(\cdot \, ; \omega, \zeta) \in H^s(\T^{\nu + 3}, \R^3)$
and $p = p(\cdot \, ; \omega, \zeta) \in H^s(\T^{\nu + 3}, \R)$, 
even functions of the pair $(\vphi, x)$, 
that solve equation \eqref{Eulero3}, 
with $\| u \|_s , \| p \|_s \leq C \e^b$ for some $b \in (0,1)$.
\end{theorem}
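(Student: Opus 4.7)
The strategy is a Nash--Moser iteration combined with a reducibility scheme for the linearized operator. I would begin by eliminating the pressure and the divergence constraint through the vorticity $v := \curl u$, which satisfies
\be\label{vort-plan}
\omega \cdot \partial_\vphi v + \zeta \cdot \nabla v + u \cdot \nabla v - v \cdot \nabla u = \e \, \curl f,
\ee
with $u$ reconstructed from $v$ by a Biot--Savart-type operator on $\T^3$ (the unique zero-mean divergence-free primitive). The symmetry hypothesis \eqref{ipotesi forzante} and the parity ansatz \eqref{soluzioni formulazione originale euler} translate into a reversibility structure for \eqref{vort-plan} with respect to the involution $(\vphi,x) \mapsto (-\vphi,-x)$, with $v$ odd: this is crucial because it rules out the drifts in the KAM reducibility that would otherwise obstruct the small-divisor estimates.

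\textbf{Linearized inversion.} Given an approximate solution $u_n$ with residual of size $\e^{a_n}$, I would look for a correction $h_n$ solving approximately the linearized problem $\mathcal{L}(u_n) h_n = -\mathcal{F}(u_n)$, where
\be\label{plan-lin}
\mathcal{L}(u) h := \omega \cdot \partial_\vphi h + \zeta \cdot \nabla h + u \cdot \nabla h - h \cdot \nabla u + (\text{lower-order Biot--Savart terms}).
\ee
The operator $\mathcal{L}(u_n)$ is first-order quasilinear with $(\vphi,x)$-dependent coefficients, so it cannot be inverted perturbatively. I would reduce it to constant coefficients by successive conjugations: (i) a $\vphi$-dependent diffeomorphism of $\T^3$ straightening the transport part $(\zeta + u_n)\cdot\nabla$ into $\zeta_\infty\cdot\nabla$ for some corrected Diophantine vector $\zeta_\infty$ close to $\zeta$, which is possible thanks to the divergence-free character of $u_n$ and to a homological equation of the form $\omega\cdot\partial_\vphi \psi + \zeta \cdot \nabla \psi = -u_n$, solvable under Diophantine conditions; (ii) further bounded transformations normalizing the remaining zeroth-order $3\times 3$ matrix coefficient; (iii) a quadratic KAM reducibility diagonalizing the smoothing remainder using first and second Melnikov conditions of the form
\be\label{plan-mel}
|\omega\cdot\ell + \zeta_\infty \cdot (j - j')| \geq \tfrac{\gamma}{\langle\ell\rangle^\tau}, \qquad (\ell,j-j') \neq (0,0).
\ee

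\textbf{Measure estimates and conclusion.} The set $\Omega_\e$ of parameters $(\omega,\zeta)$ for which \eqref{plan-mel} and the corresponding first Melnikov conditions hold uniformly along the iteration is built as a countable intersection of open dense subsets of $\Omega$; a Fubini/complementary-measure argument, using that $\zeta_\infty$ depends Lipschitz-continuously on $(\omega,\zeta)$ and that $\gamma = \gamma(\e) \to 0$ as $\e \to 0$, yields $|\Omega\setminus\Omega_\e| \to 0$. The tame estimates on the approximate inverse, combined with the super-quadratic convergence of the Nash--Moser scheme, produce in the limit a solution $u \in H^s(\T^{\nu+3},\R^3)$ of \eqref{Eulero3} with $\|u\|_s \leq C \e^b$ for some $b\in(0,1)$; the pressure is then recovered as the zero-mean solution in $x$ of $\Delta_x p = -\div_x(u\cdot\nabla u - \e f)$, depending smoothly on $\vphi$, and satisfies the same estimate. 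The parity \eqref{soluzioni formulazione originale euler} is preserved at each iterative step by working throughout in the subspace of even functions singled out by reversibility.

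\textbf{Main obstacle.} The hardest step is the reduction of $\mathcal{L}(u_n)$ to constant coefficients. The quasilinear nature of $u\cdot\nabla u$ makes the variable part $u_n\cdot\nabla$ of the linearized operator of the \emph{same} order as the principal constant part $\zeta\cdot\nabla$, so no perturbative smallness can be extracted before the straightening: one must solve a genuinely quasi-periodic straightening problem for a divergence-free vector field on $\T^3$, an infinite-dimensional analogue of the flowbox theorem, itself controlled by a small-divisor argument on $(\omega,\zeta)$. Keeping tame Sobolev bounds on this diffeomorphism and on all subsequent conjugations, uniformly along the Nash--Moser iterates and with Lipschitz dependence on the parameters (as required for the measure estimates), is the technical core of the proof.
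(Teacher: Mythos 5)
Your overall route is the paper's (vorticity formulation, reversibility, straightening of the transport part, normalization of the zeroth-order term, KAM reducibility, Nash--Moser, pressure recovered from an irrotational zero-average field), but two steps, as you state them, would not go through. First, your ``second Melnikov conditions'' $|\omega\cdot\ell+\zeta_\infty\cdot(j-j')|\geq \gamma\langle\ell\rangle^{-\tau}$ are only the unperturbed Diophantine conditions: the unperturbed operator has the eigenvalue $\ii\,\zeta\cdot j$ with multiplicity three, so the reduction is only $3\times 3$ block-wise, and the final blocks $(\mQ_\infty)_j^j\in\Mat_{3\times 3}(\C)$ are in general \emph{not} self-adjoint (the Hamiltonian structure of Euler is not the standard one). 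The conditions one actually needs are quantitative invertibility, with loss in $\ell$, $j$, $j'$, of the operators $M\mapsto \ii(\omega\cdot\ell+\zeta\cdot(j-j'))M+\mQ_j^j M-M\mQ_{j'}^{j'}$ on $\Mat_{3\times 3}(\C)$ (cf.\ \eqref{insiemi di cantor rid}), and their measure estimate cannot be done by a first-order eigenvalue argument: one must run a R\"ussmann-type argument on the determinant of the associated $9\times 9$ matrix, controlling many parameter-derivatives (the paper needs $k_0=11$). Second, step (ii) of your reduction hides the real obstruction: for \emph{matrix-valued} symbols the commutator does not gain a derivative, so the zeroth-order term cannot be removed by a homological equation with constant-coefficient left-hand side; one has to solve a variable-coefficients homological equation of the form \eqref{equazione omologica grado 0 introduzione}, which requires first conjugating the operator $\omega\cdot\partial_\vphi+\zeta\cdot\nabla+R_0^{(1)}$ acting on symbols to constant coefficients, and it is precisely reversibility (plus the parity of the symbol in $\xi$) that makes the relevant averages vanish so that this is possible with no drift.

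There is also a gap specific to the statement you are proving, i.e.\ the passage back to the Euler system. Along the Nash--Moser scheme the approximate vorticities are not divergence-free (and the constraint is not preserved by the straightening diffeomorphism or the other conjugations), so one cannot simply posit $v=\curl u$ with $u$ the Biot--Savart primitive throughout; the paper instead solves the projected equation \eqref{equazione vorticita media nulla} in a space of odd, zero-average but not necessarily divergence-free functions, and only a posteriori proves $\div v=0$, by showing that $\div v$ satisfies the transport equation $(\omega\cdot\partial_\vphi+\zeta\cdot\nabla+\e u\cdot\nabla)\div v=0$ and conjugating that transport operator to constant coefficients, so that $\div v$ is constant, hence zero. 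Without this step (or a proof that your scheme preserves the constraint), the reconstruction $u=\curl(-\Delta)^{-1}v$, the removal of the projector, and the Helmholtz argument producing $p$ are not justified. A minor point: introducing a corrected frequency $\zeta_\infty$ is admissible but unnecessary here — since the transport coefficient has zero space average and zero divergence, no correction occurs, which is what lets one impose the non-resonance conditions directly on the external parameters $(\omega,\zeta)$ and simplifies the measure estimates.
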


Theorem \ref{main theorem 2} is deduced from Theorem \ref{main theorem 1} below, 
dealing with the vorticity formulation of the problem, which we now introduce. 

As is well-known, if we take the divergence of the first equation in \eqref{Eulero3}, 
we can determine the pressure $p$ 
in terms of the unknown $u$ and the forcing term $f$, namely we have 
$$
\Delta p + \div ( u \cdot \nabla u ) = \e {\rm div} f(\omega t, x),
$$
whence 
\begin{equation}\label{definizione pressione}
p = \Delta^{- 1} \big[ \e {\rm div} f(\omega t, x) - \div ( u \cdot \nabla u ) \big]\,.
\end{equation}

If we consider the average 
in the space variable $x \in \T^3$ of equation \eqref{Eulero3},
we get the equation
\[
\omega \cdot \partial_\vphi u_0(\vphi) = f_0(\vphi), \quad \text{where} \ \ 
u_0(\vphi) := \frac{1}{(2 \pi)^3} \int_{\T^3} u(\vphi, x)\, d x, \quad 
f_0(\vphi) := \frac{1}{(2 \pi)^3} \int_{\T^3} f(\vphi, x)\, d x\,. 
\]
This equation can be solved by assuming 
that the frequency vector $\om$ is Diophantine, i.e. 
$$
|\omega \cdot \ell| \geq \frac{\gamma}{|\ell |^\tau} \quad \ 
\forall \ell \in \Z^\nu \setminus \{ 0 \},
$$ 
because, by \eqref{ipotesi forzante}, $f$ has zero average in $(\vphi, x)$. 
Hence, without loss of generality, we can assume that $f$ has zero average in space, namely 
\begin{equation}\label{ipotesi media nulla f}
\int_{\T^3} f(\vphi, x)\, dx = 0\,. 
\end{equation}

We define the vorticity
\begin{equation}\label{def vorticita}
v := \nabla \times u 
:= \curl u 
:= \begin{pmatrix} 
\partial_{x_2} u_3 - \partial_{x_3} u_2 \\
\partial_{x_3} u_1 - \partial_{x_1} u_3 \\ 
\partial_{x_1} u_2 - \partial_{x_2} u_1 \end{pmatrix}.
\end{equation}
By taking the curl 
of equation \eqref{Eulero3} 
(recall that $\nabla \times \nabla \Phi = 0$ for any smooth scalar function $\Phi$), 
as is well known, 
one obtains the equation for the vorticity $v(\vphi, x)$
\[
\omega \cdot \partial_\vphi v + \zeta \cdot \nabla v 
+ u \cdot \nabla v - v \cdot \nabla u = \e F(\vphi, x), 
\quad \ F := \nabla \times f.
\]
We construct an odd solution $v(\vphi, x)$ of the latter equation and we shall prove that there are even functions $\big(u(\vphi, x),  p(\vphi,x) \big)$ which solves the equation \eqref{Eulero3}. Since $\div u = 0$, one has 
\[
\curl v = \curl (\curl u) = - \Delta u;
\]
therefore, if the space average of $u$ is zero, then 
\[
u = (-\Delta)^{-1} \curl v,
\]
and one has the vorticity equations
\begin{equation}\label{equazione vorticita}
\begin{cases}
\omega \cdot \partial_\vphi v + \zeta \cdot \nabla v 
+ u \cdot \nabla v - v \cdot \nabla u = \e F(\vphi, x), \quad F := \nabla \times f, \\
u = \nabla \times \big[(- \Delta)^{- 1} v \big],
\end{cases}
\end{equation}
where $(-\Delta)^{-1}$ is the Fourier multiplier of symbol 
$|\xi|^{-2}$ for $\xi \in \Z^3$, $\xi \neq 0$, 
and zero for $\xi = 0$, namely 
\begin{equation} \label{def Delta inv}
u(x) = \sum_{\xi \in \Z^3} \hat u(\xi) \, e^{\ii \xi \cdot x}
\quad \Rightarrow \quad 
(- \Delta)^{-1} u(x) 
= \sum_{\xi \in \Z^3 \setminus \{ 0 \}} \frac{1}{|\xi|^2} \hat u(\xi) \,  \, e^{\ii \xi \cdot x}.
\end{equation}
The divergence of $v$ and the space average of $v$ are both zero, 
because $v$ is a curl. 
We will prove that  
if $v$ solves \eqref{equazione vorticita} then 
$u := \nabla \times \big(\Delta^{- 1} v \big)$ solves \eqref{Eulero2}.

We define the spaces of even/odd functions of the pair $(\ph,x)$
\begin{equation}\label{funzioni pari dispari}
\begin{aligned}
X & := \big\{  h \in L^2(\T^{\nu + 3}, \R^3) : h(\vphi, x) = h(- \vphi, - x) \big\}, 
\\ 
Y & := \big\{ h \in L^2(\T^{\nu + 3}, \R^3) : h(\vphi, x) = - h(- \vphi, - x) \big\}
\end{aligned}
\end{equation}
and 
\begin{equation}\label{definizione proiettore media spazio tempo}
\Pi_0 h := \frac{1}{(2 \pi)^{ 3}} \int_{\T^{ 3}} h(\vphi, x)\, d x, 
\qquad 
\Pi_0^\bot := {\rm Id} - \Pi_0\,. 
\end{equation}
We will prove that it is enough to look for smooth solutions $v(\vphi, x) \in Y$ with zero average that solve the projected equation 
\begin{equation}\label{equazione vorticita media nulla}
\begin{cases}
\Pi_0^\bot \big( \omega \cdot \partial_\vphi v + \zeta \cdot \nabla v 
+ u \cdot \nabla v - v \cdot \nabla u \big) = \e F(\vphi, x), 
\quad F := \nabla \times f, \\
u = \nabla \times \big[(- \Delta)^{- 1} v \big].
\end{cases}
\end{equation}



It is convenient to replace $(-\Delta)^{-1}$ with an extension of it 
that is invertible also on functions with nonzero space average; 
with $\Pi_0$ defined by \eqref{definizione proiettore media spazio tempo}, 
we then define
\begin{equation} \label{def pi0 Lm Lm inv}
\Lm := \Pi_0 - \Delta, 
\quad \ \  
\Lm^{-1} := \Pi_0 + (-\Delta)^{-1},
\end{equation}
so that $\Lm, \Lm^{-1}$ are Fourier multipliers of symbols, respectively, 
$|\xi|^2$, $|\xi|^{-2}$ for $\xi \in \Z^3 \setminus \{ 0 \}$, 
and $1$ for $\xi = 0$. 
Thus $\Lm \Lm^{-1} u = u$ for all periodic functions $u$ 
(of course, introducing smooth cutoff functions, 
these symbols can be extended to all $\R^3$ 
by preserving the property $\Lm \Lm^{-1} = {\rm Id}$).

%


We introduce the rescaling $v = \tilde\e \tilde v$, 
$\tilde \e := \e^{1/2}$, 
and write \eqref{equazione vorticita media nulla} 
(with $(-\Delta)^{-1}$ replaced by $\Lm^{-1}$)
in terms of $\tilde\e, \tilde v$, namely 
\begin{equation}\label{2803.1}
\Pi_0^\bot ( \omega \cdot \partial_\vphi \tilde v 
+ \zeta \cdot \nabla \tilde v 
+ \tilde \e \tilde u \cdot \nabla \tilde v 
- \tilde \e \tilde v \cdot \nabla \tilde u ) 
= \tilde \e F(\vphi, x), 
\quad \tilde u = \curl (\Lm^{-1} \tilde v).
\end{equation}
We will show the existence of solutions of \eqref{2803.1} 
with a Nash-Moser approach, 
by finding zeros of the nonlinear operator 
(after dropping all the tilde)
$$
{\cal F} : H^{s + 1}_0(\T^{\nu + 3}, \R^3) \cap Y \to H^s_0(\T^{\nu + 3}, \R^3) \cap X
$$
defined by 
\begin{equation}\label{equazione cal F vorticita}
\begin{aligned}
{\cal F}(v) & := 
\omega \cdot \partial_\vphi v + \zeta \cdot \nabla v + \e \Pi_0^\bot 
\big[ {\cal U}(v) \cdot \nabla v - v \cdot \nabla {\cal U}(v) - F(\vphi, x) \big], 
\qquad  
{\cal U}(v) 
:= \curl (\Lm^{- 1} v).
\end{aligned}
\end{equation}
We 
consider parameters $(\omega, \zeta)$ in a bounded open set 
$\Omega \subseteq \R^\nu \times \R^3$; 
we will use such parameters along the proof 
in order to impose appropriate non resonance conditions. 
Now we state precisely the main results of the paper. 

\begin{theorem} \label{main theorem 1}
There exist $q = q(\nu) > 0$, $s = s(\nu) > 0$, such that for every forcing term $f \in {\cal C}^q(\T^\nu \times \T^3, \R^3)$ satisfying \eqref{ipotesi forzante} there exists $\e_0 = \e_0(f, \nu) \in (0, 1)$ such that for every $\e \in (0, \e_0)$ the following holds. There exists a Borel set $\Omega_\e \subset \Omega$ of asymptotically full Lebesgue measure, i.e. $\lim_{\e \to 0} |\Omega \setminus \Omega_\e| = 0$ such that for any $(\omega, \zeta) \in \Omega_\e$ there exists $v(\cdot; \omega, \zeta) \in H^s_0(\T^{\nu + 3}, \R^3)$, $v= {\rm odd}(\vphi, x)$ such that ${\cal F}(v) = 0$. Moreover $\sup_{(\omega, \zeta) \in \Omega_\e} \| v(\cdot; \omega, \zeta) \|_s \leq C \e^a$ for some constants $C = C(\nu) > 0$ and $a = a(\nu) \in (0, 1)$.  
\end{theorem}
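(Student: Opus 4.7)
The plan is to solve $\mathcal{F}(v) = 0$ by a Nash--Moser iteration scheme, with unknown an odd, zero--space--average vorticity field in $H^s_0 \cap Y$. The unperturbed linear operator $\omega \cdot \partial_\vphi + \zeta \cdot \nabla$ acts diagonally in the Fourier basis of $\T^\nu \times \T^3$ as multiplication by $\mathrm{i}(\omega \cdot \ell + \zeta \cdot j)$, and is invertible on the complement of its kernel under a joint Diophantine condition of the form $|\omega \cdot \ell + \zeta \cdot j| \geq \gamma \langle \ell, j\rangle^{-\tau}$ for $(\ell,j) \neq 0$, which holds on a parameter set whose complement in $\Omega$ has measure $o(1)$ as $\gamma \to 0$. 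Inverting the unperturbed operator applied to the forcing produces a first nontrivial approximation $v_0 = O(\e^{1/2})$, consistent with the rescaling already built into \eqref{2803.1}.

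The heart of the proof is the construction, at each step of the Nash--Moser scheme, of a tame approximate right inverse of the linearized operator $d\mathcal{F}(v_n)$ at the current approximation $v_n$, with only a fixed finite loss of derivatives. Computing $d\mathcal{F}(v_n)$, its principal part is the matrix-valued, variable-coefficient transport operator
\[
h \mapsto \omega \cdot \partial_\vphi h + \bigl(\zeta + \e\, \mathcal{U}(v_n)\bigr) \cdot \nabla h,
\]
perturbed by matrix contributions of order $\leq 0$ arising from $\mathcal{U}(h)\cdot \nabla v_n$, $h \cdot \nabla\mathcal{U}(v_n)$ and $v_n \cdot \nabla \mathcal{U}(h)$; recall that $\mathcal{U}(v) = \curl(\Lambda^{-1} v)$ has order $-1$, so $\zeta+\e\mathcal{U}(v_n)$ really is the only first-order symbol. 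I would reduce this system to constant coefficients in three reversibility-preserving stages:
(i) a $\vphi$-dependent diffeomorphism $x \mapsto x + \beta(\vphi, x)$ of $\T^3$ straightens the transport vector field $\zeta + \e\, \mathcal{U}(v_n)$ to a new constant $\zeta_+ = \zeta + O(\e)$, via a cohomological equation for $\beta$;
(ii) conjugations by time-one flows of matrix-valued pseudodifferential operators of order $\leq 0$ remove the residual variable-coefficient symbols order by order, leaving a smoothing remainder;
(iii) a KAM reducibility scheme diagonalizes this remainder blockwise, under second-Melnikov--type non-resonance conditions $|\omega \cdot \ell + \zeta_+ \cdot (j - j') + \mu_j - \mu_{j'}| \geq \gamma \langle \ell, j-j'\rangle^{-\tau}$ on the asymptotic eigenvalues $\mu_j$.

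The resulting constant-coefficient, diagonal operator is then trivially inverted on the complement of the resonant set, and the tame approximate inverse is fed into a Nash--Moser scheme with smoothing cut-offs $N_n \to \infty$; the quadratic convergence absorbs the derivative loss and produces a limit solution $v \in H^s_0 \cap Y$ with $\|v\|_s \lesssim \e^a$ for some $a \in (0,1)$, provided $\e_0 = \e_0(f,\nu)$ is small enough. The final task is a measure estimate showing that the union, over all scales of the iteration, of the ``bad'' parameter sets on which any of the first- or second-Melnikov conditions used along the reduction and the KAM diagonalization fails has Lebesgue measure $o(1)$ as $\e \to 0$, which gives the asymptotically full Borel set $\Omega_\e$.

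I expect the main obstacle to be the combination of stages (ii) and (iii): pushing the symbolic reduction and the subsequent KAM diagonalization of the matrix-valued, three-dimensional system all the way to a sufficiently smoothing remainder while (a) preserving reversibility and the structural constraints $\div v = 0$ and $\int_{\T^3} v \, dx = 0$, (b) maintaining Lipschitz (or weighted-Lipschitz) dependence on $(\omega, \zeta) \in \Omega$ throughout, and (c) producing asymptotics of the normal-form eigenvalues $\mu_j$ that are sharp enough to run the second-Melnikov measure estimates over the three-dimensional momentum lattice $\Z^3$, where near-resonances involving clusters of nearby indices are substantially more delicate than in one-dimensional PDE KAM.
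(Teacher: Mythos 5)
Your overall architecture (Nash--Moser on the odd, zero-average vorticity, straightening of the transport part, pseudo-differential normal form, KAM reducibility, measure estimates) is the same as the paper's, but your stage (ii) --- removing the residual variable-coefficient symbols ``order by order'' by conjugation with operators of order $\leq 0$ --- breaks down exactly at order zero, and this is the central difficulty of the problem. For matrix-valued symbols the commutator $[\Op(A),\Op(B)]$ is \emph{not} of order $m+m'-1$: its principal symbol is the matrix commutator $[A,B]$, which in general does not vanish. Hence after conjugating by ${\rm Id}+\Op(M)$ the new zeroth-order contribution $R_0^{(1)}M$ is still of order zero and of the same size, and the scalar-type step gains nothing. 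The paper's way out is to solve the \emph{variable-coefficient} homological equation \eqref{equazione omologica grado 0 introduzione}, i.e.\ to reduce the operator $\omega\cdot\partial_\vphi+\zeta\cdot\nabla+R_0^{(1)}(\vphi,x,\xi)$ acting on symbols by a further KAM-type iteration; and along that iteration reversibility is not merely a structure to ``preserve'' but the mechanism that makes the scheme close: the parity conditions ($R_0^{(1)}$ odd in $(\vphi,x,\xi)$ and even in $\xi$) force the $(\vphi,x)$-average of every remainder to vanish, so no constant $3\times 3$ correction of order zero survives. Without this, a constant order-zero matrix would remain in the normal form, obstructing your subsequent reduction of the negative orders and altering the non-resonance conditions. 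A smaller point: since $a=\e^{-1}\,\e\,\curl\Lambda^{-1}v_n$ has zero space average and zero divergence, the straightened constant vector is exactly $\zeta$, not a new $\zeta_+=\zeta+\mathcal{O}(\e)$; this is what lets all Diophantine conditions be imposed directly on $(\omega,\zeta)$.

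The second gap is in your second-Melnikov conditions and the ensuing measure estimates. Because the Hamiltonian structure of Euler is not the standard one, the final $3\times3$ blocks are in general \emph{not} self-adjoint, so you cannot phrase the non-resonance conditions as lower bounds on $|\omega\cdot\ell+\zeta\cdot(j-j')+\mu_j-\mu_{j'}|$ for ``asymptotic eigenvalues'' $\mu_j$: eigenvalues of non-normal matrices need not be Lipschitz in the parameters, and the usual eigenvalue-variation argument behind the measure estimates fails. The paper instead requires invertibility, with quantitative operator-norm bounds, of the maps $L(\ell,j,j')=\ii(\omega\cdot\ell+\zeta\cdot(j-j'))\,{\rm Id}+M_L(\mathcal{Q}_j^j)-M_R(\mathcal{Q}_{j'}^{j'})$ on $\Mat_{3\times 3}(\C)$, viewed as $9\times 9$ matrices, and bounds the measure of each resonant set through $\det L(\ell,j,j')$, a degree-$9$ polynomial in $\lambda\cdot k$ with $\mathcal{O}(\e)$ lower-order coefficients, via a R\"ussmann-type lemma; this is precisely what dictates $k_0=11$ parameter derivatives and the value of $\tau$ in \eqref{definizione finale tau}. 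As written, your final measure-estimate step would not go through without this (or an equivalent) device.
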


From Theorem \ref{main theorem 1} we will deduce 
(see Section \ref{sezione teoremi principali})
the existence of $u(\vphi, x)$, $p(\vphi, x)$ 
solving \eqref{Eulero3}.

\bigskip

\noindent
\emph{Related literature: Euler equations.}
The celebrated Euler equation,  
one of the most important 
mathematical models from fluid dynamics,
has been extensively studied in the last century;  
we refer, for example, to the book \cite{Majda-Bertozzi} 
and the survey \cite{Constantin}
for a rich introduction to the subject 
with a detailed overview of its vast literature 
and its interesting open problems.


Starting from the Seventies, the Cauchy problem for the incompressible Euler equation 
has been analyzed by many authors. 
Without even trying to be exaustive, we mention just few results 
(see e.g.\ \cite{Majda-Bertozzi} for much more references).
The local-in-time existence of smooth solutions for the incompressible Euler equation 
has been established by Kato in \cite{Kato1}, \cite{Kato2}. 
Concerning the \emph{global} existence of smooth solutions, 
the situation is very different in the two-dimensional (2D) case 
and in the three dimensional (3D) one. 
Indeed, the Beale-Kato-Majda criterion \cite{BKM} says that 
if a smooth solution of the Euler equation on a time interval $[0, T_*)$
cannot be continued to $T_*$, 
then its vorticity $v(t,x)$ satisfies 
$\int_0^{T_*} \| v(t, \cdot) \|_{L^\infty_x}\, d t = \infty$; 
viceversa, a priori estimates on that vorticity integral 
implies the existence of global smooth solutions. 
In the 2D case, the vorticity is conserved along the fluid particle trajectories; 
this implies the identity $\| v(t, \cdot ) \|_{L^\infty_x} = \| v(0, \cdot) \|_{L^\infty_x}$ 
and, therefore, the existence of a unique smooth global solution
(by Beale-Kato-Majda criterion).
We also mention that for the 2D Euler equation 
Yudovich \cite{Yu} proved the existence of global weak solutions 
when the initial vorticity is in the space $L^\infty_x \cap L^1_x$, 
while ill-posedness for strong solutions of borderline Sobolev regularity 
has been recently proved by Bourgain and Li \cite{Bourgain Li 2015 Invent};
we also refer to \cite{Bourgain Li 2015 Invent} for other references 
on ill-posedness in both 2D and 3D. 
We mention the very recent work of Elgindi \cite{Elgindi} 
about singularity formation in finite time 
for solutions $C^{1,\a}_{t,x}$ with spatial decay.  

In 3D, 
the vorticity equation contains an additional term, called ``stretching term''. 
Whether smooth solutions of the 3D Euler (and also Navier-Stokes) equation blow up in finite time or exist globally in time is one of the most famous 
open problems in the theory of PDEs. 
Global {\it weak} solutions in 3D 
have been investigated starting form the Ninetiees. 
Schnirelman \cite{Shn} constructed global and distributional solutions 
in $L^2_{t,x}$; 
De Lellis and Sz\'ekelyhidi in \cite{Camillo1} 
constructed global weak solutions in $L^\infty_{t,x}$ 
with compact support;  
then these results have been followed by several important improvements, 
see e.g.\ \cite{Buckmaster ecc 2019 CPAM} and \cite{Isett}. 


Regarding time quasi-periodic solutions of the Euler equation, 
to the best of our knowledge the only result in literature 
is the 2D one of Crouseilles and Faou \cite{Faou}, 
where the scalar vorticity formulation allows the construction 
of quasi-periodic solutions as localized travelling profiles of special form
with compact support, avoiding the small divisors problem 
one usually encounters in KAM results.
On the other hand, KAM techniques have been used by 
Khesin, Kuksin and Peralta-Salas in \cite{Kuksin ecc 2014 Adv Math}, 
\cite{Kuksin ecc 2019} to obtain non-mixing results 
for the 3D Euler equation around steady flows.

\medskip

In this paper, using normal form and KAM (Kolmogorov-Arnold-Moser) techniques, 
we construct smooth (strong) solutions of the forced 3D Euler equations 
that are quasi-periodic in time; as a consequence, in particular, 
they are global in time. This is one of the few KAM results 
for {\it quasilinear} PDEs in {\it higher space-dimension}. 

\medskip

\noindent
\emph{Related literature: KAM for quasilinear PDEs.}
The existence of time-periodic and quasi-periodic solutions of PDEs 
(which is often referred to as ``KAM for PDEs'' theory) 
started in the late 1980s with the pioneering papers of Kuksin \cite{K87}, Wayne \cite{Wayne} and Craig-Wayne \cite{CW}; 
we refer to the recent review \cite{Berti-BUMI-2016} of Berti 
for a general presentation of the theory, of its protagonists and its state-of-the-art. 
Many PDEs arising from fluid dynamics are fully nonlinear or quasi-linear equations, 
namely equations where the nonlinear part contains as many derivatives 
as the linear part. 
The key idea in order to deal with these kind of PDEs has been introduced 
by Iooss, Plotnikov and Toland \cite{IPT} 
in the problem of finding periodic solutions for the water waves equation. 
Using a Nash-Moser iteration to overcome the small divisors problem, 
their strategy to solve the linearized equation at any approximate solution 
relies on a \emph{normal form} procedure based on \emph{pseudo-differential calculus}. 
This approach to quasilinear and fully nonlinear PDEs with small divisors 
has been further developed in \cite{Baldi Benjamin-Ono}, 
\cite{Alazard Baldi} for periodic solutions, 
and it has been successfully combined with a {\it KAM reducibility} procedure 
to develop a general method for 1D problems
for the construction of \emph{quasi-periodic} solutions 
of quasilinear and fully nonlinear PDEs 
and for the analysis of the dynamics of linear PDEs 
with time-quasi-periodic unbounded potentials, 
see \cite{BBM-Airy}, \cite{BBM-auto}, 
\cite{Berti-Montalto}, \cite{BBHM}, \cite{Bam17}, \cite{Bam18}; 
see \cite{Berti-BUMI-2016} for a more complete list of references. 
We recall that, in this context, 
a linear operator is said to be {\it reducible} 
if there exists a change of variables, bounded on Sobolev spaces,
that conjugates it to a diagonal (or {\it block-diagonal}) operator. 

The extension of KAM reducibility results to higher space dimension $d > 1$
is a difficult matter. 
The first reducibility result in 
higher dimension has been obtained by Eliasson and Kuksin in \cite{EK} 
for the linear Schr\"odinger equation with a bounded analytic potential. 
Their proof is strongly based on the fact that the eigenvalues of the Laplacian on $\T^d$ 
are separated and hence some kind 
of {\it second order Melnikov conditions} 
(lower bounds on differences of the eigenvalues) can be imposed {\it block-wise}. 

By extending the Craig-Wayne method \cite{CW}, 
Bourgain \cite{B} and then Berti-Bolle \cite{BB1}, \cite{BB2}, 
Berti-Corsi-Procesi \cite{BCP} 
proved the existence of invariant tori for nonlinear wave (NLW) 
and Schr\"odinger (NLS) equations with bounded perturbations 
in higher space dimension. 
These results are based on a technique called {\it multiscale analysis}, 
which allows to solve the linearized equations arising in the Nash-Moser iterative procedure 
by imposing suitable lower bounds,  
called {\it first order Melnikov conditions}, 
on its eigenvalues.

Extending KAM theory to PDEs with unbounded perturbations in higher space dimension
is one of the 
open problems in the field.
A natural strategy is to try to extend the normal form methods 
based on pseudo-differential calculus developed in 1D 
in \cite{BBM-Airy}, \cite{BBM-auto}, \cite{Berti-Montalto}, \cite{BBHM}, 
\cite{Bam17}, \cite{Bam18}. 
Up to now, this has been achieved only in few examples, 
namely the Kirchhoff equation \cite{Mon}, \cite{CorsiMontalto}, 
the non-resonant transport equation \cite{FGMP}, \cite{BLM} 
and the quantum harmonic oscillator on $\R^d$ 
and Zoll manifolds \cite{BGMR2}, \cite{BGMR1}.  

\medskip
\subsection{Description of the strategy}
In the present paper, to overcome the small divisors problem, 
we construct invariant tori of the Euler equation by means of a Nash-Moser iteration. Therefore, the core of the paper is the analysis of the linearized operators (see  \eqref{operatore linearizzato}) arising in the Nash-Moser scheme, performed in Sections \ref{sezione riduzione ordine alto}-\ref{sezione riducibilita a blocchi}. 
The strategy is somehow related to the one developed in \cite{BLM}, since the linearized Euler equation is a linear transport-like equation with a small, quasi-periodic in time perturbation of order one. On the other hand the procedure developed in \cite{BLM} does not apply. The main reason is that in this case we deal with a {\it vector} transport operator of the form  
\begin{equation}\label{linearizzato introduzione}
h = (h_1, h_2, h_3) \mapsto {\cal L} h := \omega \cdot \partial_\vphi h + \zeta \cdot \nabla h + \e a(\vphi, x) \cdot \nabla h + \e {\cal R}(\vphi) h
\end{equation}
where ${\cal R}(\vphi) = \big({\rm Op}(r_{i j}(\vphi, x, \xi)) \big)_{i, j = 1,2,3}$ is a $3 \times 3$ {\it matrix-valued} pseudo-differential operator of order $0$,
whereas in \cite{BLM} the transport operator to normalize is {\it scalar}.  
In order to invert the operator ${\cal L}$ in \eqref{linearizzato introduzione}, we construct a normal form procedure which reduces the operator \eqref{linearizzato introduzione} to a $3 \times 3$ block-diagonal operator of the form 
$$
\omega \cdot \partial_\vphi h + \zeta \cdot \nabla h + {\rm Op}(Q(\xi)) h
$$
where the $3 \times 3$ matrix symbol $Q(\xi) \in {\rm Mat}_{3 \times 3}$ 
satisfies $\| Q(\xi) \|_{\HS} \lesssim \e \langle \xi \rangle^{- 1}$ 
where the norm $\| \cdot \|_{\HS}$ is the standard Hilbert-Schmidt norm of the matrices. 
Note that we obtain only a $3 \times 3$ block-diagonalization. 
This is due to the fact that the unperturbed operator 
(which is \eqref{linearizzato introduzione} for $\e = 0$) 
has eigenvalues of multiplicity $3$, namely 
for every $j \in \Z^d \setminus \{ 0 \}$
the functions 
\begin{equation} \label{3 eigenfunctions}
\begin{pmatrix} e^{\ii j \cdot x} \\ 0 \\ 0 \end{pmatrix},
\quad 
\begin{pmatrix} 0 \\ e^{\ii j \cdot x} \\ 0 \end{pmatrix},
\quad 
\begin{pmatrix} 0 \\ 0 \\ e^{\ii j \cdot x} \end{pmatrix}
\end{equation}
are orthogonal eigenfunctions in $L^2(\T^3, \C^3)$ 
corresponding to the eigenvalue $\ii \zeta \cdot j$.  

The fact that we deal with $3 \times 3$ matrix-valued pseudo-differential operators 
is actually the main technical difficulty of the paper. 
The point is that, if we take two matrix-valued pseudo-differential operators ${\cal A} = {\rm Op}(A(\vphi, x, \xi)), {\cal B} = {\rm Op}(B(\vphi, x, \xi))$ 
of order $m, m'$ respectively, it is not true (unlike for scalar pseudo-differential operators) 
that the commutator $[{\cal A}, {\cal B}]$ gains one derivative, 
namely it is not of order $m + m' - 1$. 
Indeed the principal symbol of the commutator is given by the commutator of two $3 \times 3$ matrices $[A(\vphi,x, \xi), B(\vphi, x, \xi)]$, which, in general, is \emph{not} zero. This difficulty appears at the normal form step which allows to eliminate the zeroth order term. 
In fact, at the highest order term (which is $\omega \cdot \partial_\vphi 
+ (\zeta + \e a(\vphi, x)) \cdot \nabla$)  
the linearized operator ${\cal L}$ acts in a diagonal way with respect to 
the three components $(h_1, h_2, h_3)$, 
whereas at the zeroth order term the dynamics on these components is strongly coupled.  
This implies that, in order to reduce to constant coefficients the zeroth order term of the linearized operator, we need to solve a {\it variable coefficients homological equation}, 
see the equation \eqref{equazione omologica grado 0 introduzione} below. 

In the reduction of the zeroth order term we use the \emph{reversible structure} 
of the Euler equation: working with functions with even/odd parity in the pair $(\ph,x)$ 
eliminates some average terms that would be an obstruction to the reduction procedure. 
By reversibility, the reduction to constant coefficients of the zero order term 
corresponds to its complete cancellation. 

Once the zeroth order term has been removed, 
in order to reduce to constant coefficients also the lower order terms (starting from the order $- 1$), we use the fact that the perturbation to normalize is at least one-smoothing. 
The homological equations arising along the procedure have constant coefficients 
(see \eqref{eq omologica ordini bassi intro}) and the solutions are of the same order as the remainders we want to normalize: this implies that they {\it gain derivatives}. 
This gain of regularity replaces the gain of derivatives 
that, in the scalar case, is given by the gain of one derivative of commutators. 

Now we describe in more details all the steps of our reduction procedure. 

\begin{itemize}
\item{\bf Reduction of the highest order term.} 
As already said, at the highest order term the operator ${\cal L}$ acts in a diagonal way 
on the three components $(h_1, h_2, h_3)$. 
Hence, in order to reduce to constant coefficients the highest order term, 
it is enough to diagonalize the transport 
operator 
\begin{equation}\label{operatore trasporto introduzione}
{\cal T} := \omega \cdot \partial_\vphi + ( \zeta + \e a(\vphi, x) ) \cdot \nabla\,. 
\end{equation}
This is the content of Proposition \ref{proposizione trasporto}, 
whose proof follows 
\cite{FGMP}. 
The only difference is that here, since the vector field $a(\vphi, x)$ has zero space average 
and zero divergence, we conjugate ${\cal T}$ to the operator $\omega \cdot \partial_\vphi + \zeta \cdot \nabla$, provided the vector $(\omega, \zeta) \in \R^{\nu + 3}$ is Diophantine 
(see \eqref{def cantor set trasporto}), 
whereas in \cite{FGMP} the operator \eqref{operatore trasporto introduzione} is conjugated 
to a constant coefficients operator of the form 
$\omega \cdot \partial_\vphi + \mathtt m(\omega, \zeta) \cdot \nabla$ 
with constant vector field $\mathtt m(\omega, \zeta) = \zeta + O(\e)$. 

Then in Lemma \ref{lemma coniugio cal L (0)} we prove 
that the remainder $\e {\cal R}$ (see \eqref{linearizzato introduzione}) is conjugated, 
by means of the reversibility preserving invertible map ${\cal A}$ constructed in Proposition \ref{proposizione trasporto}, 
to another reversible operator of order zero 
which has the form ${\rm Op}(R_0^{(1)}) + {\rm Op}(R_{- 1}^{(1)})$ 
where 
$R_i^{(1)}$ is a matrix-valued symbol of order $i$, $i=0,-1$.
We prove that the zeroth order term $R_0^{(1)}$ 
satisfies the symmetry condition $R_0^{(1)}(\vphi, x, \xi) = R_0^{(1)}(\vphi, x, - \xi)$. 
This condition, together with the reversibility 
(which, 
for the symbol, becomes $R_0^{(1)}(\vphi, x, \xi) = - R_0^{(1)}(- \vphi, - x, - \xi)$), 
allows to perform the normal form step at the zeroth order term. 

\item{\bf Reduction of the zeroth order term.} 
In order to eliminate the zeroth order term ${\rm Op}(R_0^{(1)})$ from the operator ${\cal L}^{(1)}$ defined in \eqref{cal L (1)}, we conjugate such an operator by means of the transformation ${\cal B} = {\rm Id} + {\rm Op}(M(\vphi,x, \xi))$ where the zeroth order symbol 
$M(\vphi, x, \xi)$ has to satisfy the {\it variable coefficients homological equation} 
\begin{equation}\label{equazione omologica grado 0 introduzione}
\big( \omega \cdot \partial_\vphi + \zeta \cdot \nabla + R_0^{(1)}(\vphi, x, \xi) \big) 
M(\vphi, x, \xi) + R_0^{(1)}(\vphi,x, \xi) = 0\,. 
\end{equation}
This equation is solved in Section \ref{sez eq omologica grado 0 coeff variabili}. 
The main point is to transform the operator 
$\omega \cdot \partial_\vphi + \zeta \cdot \nabla + R_0^{(1)}(\vphi, x, \xi)$ (acting on the space of symbols ${\cal S}^0_{s, 0}$ cf. Definition \ref{definizione norma pseudo diff})
into the operator $\omega \cdot \partial_\vphi + \zeta \cdot \nabla$,
for all Diophantine vectors $(\omega, \zeta) \in DC(\gamma, \tau)$. 
This is made by means of the iterative scheme 
of Lemma \ref{prop riducibilita trasporto vettoriale semilin}, 
using the property that 
the symmetry conditions \eqref{simmetrie Vn iterazione} are preserved along the iteration. 
This means that the space-time average of the remainders is always zero 
and therefore there are no corrections to the {\it normal form operator} $\omega \cdot \partial_\vphi + \zeta \cdot \nabla$. 
We finally get the operator ${\cal L}^{(2)}$ in \eqref{def cal L2} which is a one-smoothing perturbation of the constant coefficients operator $\omega \cdot \partial_\vphi + \zeta \cdot \nabla$. 

The fact that no correction to the constant vector field 
$\omega \cdot \partial_\vphi + \zeta \cdot \nabla$ 
comes from terms of order one and zero in the linearized operator
is due to two different reasons: 
as observed above, the first order term 
gives no corrections because the coefficient $a(\ph,x)$ 
in \eqref{operatore trasporto introduzione}
has zero space average and zero divergence, 
while the zeroth order term gives no corrections 
because Euler equation is reversible and we are working 
in the corresponding invariant subspace 
(namely, where the vorticity is odd in the pair $(\ph,x)$).

\item{\bf Reduction of the lower order terms.} 
In Proposition \ref{proposizione regolarizzazione ordini bassi} we construct 
a reversibility preserving transformation that conjugates the operator ${\cal L}^{(2)}$ in \eqref{def cal L2} to the operator ${\cal L}^{(3)}$ in \eqref{def cal L (3)}, which is 
a regularizing perturbation of arbitrary negative order
of the constant coefficients operator 
$$
\omega \cdot \partial_\vphi + \zeta \cdot \nabla + {\cal Q}.
$$
Here ${\cal Q}$ is a $3 \times 3$ {\it block-diagonal operator} of order $- 1$ of the form 
$$
{\cal Q} h( x) = \sum_{\xi \in \Z^3} Q(\xi) \widehat h(\xi) e^{\ii \xi \cdot x}, 
\quad h \in L^2(\T^3, \R^3)
$$
with 
$$
Q(\xi) \in \Mat_{3 \times 3}(\C), \quad \ 
\| Q(\xi) \|_{\HS} \lesssim \e \langle \xi \rangle^{- 1}.
$$
This is proved iteratively in Lemma \ref{lemma iterativo ordini bassi}. 
In that Lemma, 
the homological equation we solve at each step 
is a {\it constant coefficients equation} of the form 
\begin{equation}\label{eq omologica ordini bassi intro}
(\omega \cdot \partial_\vphi + \zeta \cdot \nabla) M(\vphi, x, \xi) = R(\vphi, x, \xi) - \langle R \rangle_{\vphi, x}(\xi)
\end{equation}
where $\langle R \rangle_{\vphi, x}(\xi)$ is the $(\vphi, x)$-average of the symbol $R$, see \eqref{M n + 1 R n + 1 (2)}. 

Since $R$ is a symbol of order $- n$ with $n \geq 1$, 
for $(\omega, \zeta)$ Diophantine, 
equation \eqref{eq omologica ordini bassi intro} 
has a solution $M$ which is a symbol of order $- n$, 
namely 
the same order as the one we want to normalize. 
At the $n$-th step, since the remainder that we normalize is of order $-(n + 1)$, then also the solution of the equation \eqref{M n + 1 R n + 1 (2)} is of order $-(n + 1)$. This allows to show that the new error term is of order $-(n + 2)$. 

\item{\bf Reducibility.} 
To complete the reduction to constant coefficients of the linearized operator, 
the next step is the reducibility scheme of Section \ref{sezione riducibilita a blocchi}, 
in which we conjugate iteratively the operator ${\cal L}_0$ in \eqref{coniugazione-pre-riducibilita} to a $3 \times 3$, time independent block diagonal operator of the form $\omega \cdot \partial_\vphi + \zeta \cdot \nabla + {\cal Q}_\infty$. 
Here ${\cal Q}_\infty$ is a $3 \times 3$ block diagonal operator ${\rm diag}_{j \in \Z^3 \setminus \{  0 \}} ({\cal Q}_\infty)_j^j$ where the $3 \times 3$ matrices $({\cal Q}_\infty)_j^j$ satisfy $\|({\cal Q}_\infty)_j^j \|_{\HS} \lesssim \e |j|^{- 1}$ for all $j \in \Z^3 \setminus \{ 0 \}$, see Lemma \ref{lemma blocchi finali}. Along the iterative KAM procedure, we need to solve the homological equation \eqref{equazione omologica KAM}. 
In order to solve it (see Lemma \ref{Lemma eq omologica riducibilita KAM}), 
for any $(\ell, j, j') \in \Z^\nu \times (\Z^3 \setminus \{ 0 \}) \times (\Z^3 \setminus \{ 0 \})$, $(\ell, j,j') \neq (0, j,j)$, 
we have to invert the linear operator  
\begin{equation}\label{op eq omologica in intro}
L(\ell,j,j') : {\rm Mat}_{3 \times 3} \to {\rm Mat}_{3 \times 3}, \quad \ 
M \mapsto \ii (\om \cdot \ell + \zeta \cdot (j-j')) M 
+ \mQ_j^j M - M \mQ_{j'}^{j'}
\end{equation}
where ${\cal Q}_j^j$ are time independent $3 \times 3$ matrices. 
Then we impose 
second order Melnikov non-resonance conditions with loss of derivatives 
both in time and in space, 
involving the invertibility of such kinds of operators and suitable estimates for their inverses, see \eqref{insiemi di cantor rid}. 
Note that, since the Hamiltonian structure of the Euler equations 
is not the standard constant one (see \cite{Olver}), 
the $3 \times 3$ blocks ${\cal Q}_j^j$ are, in general, not self-adjoint. 
Hence, to verify that the set of parameters satisfying the 
required non-resonance conditions has a large Lebesgue measure, 
we need to control a sufficiently large number of derivatives 
with respect to the parameter $(\omega, \zeta)$. 
In particular we 
prove that the nineth derivative of the determinant of the 
$9 \times 9$ matrix representing $L(\ell, j, j')$ in \eqref{op eq omologica in intro} is big, 
in order to show that the {\it resonant sets} have small Lebesgue measure (see Lemmata \ref{stima misura risonanti sec melnikov}, \ref{lemma astratto misura risonante}). Since the $3 \times 3$ blocks could be not self-adjoint, we cannot deduce that our solutions are linearly stable. 
\end{itemize}

As a conclusion of this introduction, we remark that the quadratic nonlinearity 
of the Euler equation is already in normal form
(in the sense of homogeneity order, namely as Poincar\'e-Dulac normal form)
because the \emph{dispersion relation} $\lm(\xi) = \ii \zeta \cdot \xi$ 
of the unperturbed operator $\zeta \cdot \grad$ is exactly \emph{linear}. 
This means that the normal form approach, 
which in KAM theory is usually a very efficient way 
of extracting the first contribution to the frequency-amplitude relation 
from the nonlinearity of a PDE, 
for the Euler equation gives no improvement 
with respect to the equation itself. 
This makes it especially difficult to construct
of quasi-periodic solutions for the autonomous (i.e.\ without the external forcing $f$)
Euler equation. 

\medskip

{\sc Acknowledgements.} The authors warmly thank Alberto Maspero and Michela Procesi for many useful discussions and comments. Riccardo Montalto is supported by INDAM-GNFM. 
Pietro Baldi is supported by INdAM-GNAMPA Project 2019.

\section{Norms and linear operators}\label{sez generale norme e operatori}
In this section we collect some general definitions and known results concerning norms, pseudo-differential operators and matrix representation of operators which are used in the whole paper. Subsection \ref{sez coniugi per eulero} deals with some conjugacy properties of ${\rm curl}$, $(- \Delta)^{- 1}$ with changes of variables, required by the analysis of the Euler equation. 

\medskip

\noindent
{\bf Notations.} In the whole paper, the notation $ A \lesssim_{s, m, k_0, \a} B $ means
that $A \leq C(s, m, k_0, \alpha) B$ for some constant $C(s, m, k_0, \alpha) > 0$ depending on 
the Sobolev index $ s $, the constants $ \a, m $ and the index $k_0$ which is the maximal number of derivatives with respect to the parameters $(\omega, \zeta)$ that we need to control along our proof. We always omit to write the dependence on $\nu$ which is the number of frequencies and $\tau$, which is the constant appearing in the non-resonance conditions (see for instance \eqref{def cantor set trasporto}, \eqref{insiemi di cantor rid}). Starting from Section \ref{sez generale L}, we omit to write the dependence on $k_0$ since it is fixed as $k_0 := 11$ in \eqref{definizione finale tau}. Hence, we write $ \lesssim_{s, m, \a}$ instead of $ \lesssim_{s, m, k_0, \a}$. We often write $u = {\rm even}(\vphi, x)$ if $u \in X$ and $u = {\rm odd}(\vphi, x)$ if $u \in Y$ (recall the Definition \eqref{funzioni pari dispari}). 
\subsection{Function spaces and pseudo differential operators}
\label{subsec:function spaces}

We denote by $| \cdot |$ the Euclidean norm of vectors
and by $| \cdot | = \| \cdot \|_{\HS}$ the Euclidean (``Hilbert-Schmidt'') norm 
of matrices:  
if $v \in \C^n$ has components $v_j$,  
and $M \in \Mat_{n \times m}(\C)$ has entries $M_{j,k}$, 
then 
\begin{equation} \label{def Euclidean norm}
|v|^2 := \sum_{j=1}^n |v_j|^2, \quad \ 
\|M\|_{\HS}^2 := \sum_{\begin{subarray}{c} 1 \leq j \leq n \\ 1 \leq k \leq m \end{subarray}} 
|M_{j,k}|^2.
\end{equation}
Let $a : \T^\nu \times \T^3 \to E$, $a = a(\ph,x)$, 
be a function taking values in the space of scalars, or vectors, or matrices, 
namely $E = \C^n$ or $E = \Mat_{n \times m}(\C)$. 
Then, for $s \in \R$, its Sobolev norm $\| a \|_s$ is defined as 
\begin{equation} \label{def Sobolev norm generale}
\| a \|_s^2 := \sum_{(\ell, j) \in \Z^\nu \times \Z^3} 
\langle \ell, j \rangle^{2s} | \widehat a(\ell,j) |^2 ,
\quad \ 
\langle \ell, j \rangle := \max \{ 1, |\ell|, |j| \},
\end{equation}
where $\widehat a(\ell,j) \in E$ (which are scalars, or vectors, or matrices) 
are the Fourier coefficients of $a(\ph,x)$, namely
\[
\hat a(\ell,j) := \frac{1}{(2\pi)^{\nu+3}} \int_{\T^{\nu+3}} 
a(\ph,x) e^{- \ii (\ell \cdot \ph + j \cdot x)} \, d\ph dx,
\]
and $|\hat a(\ell,j)|$ is their norm defined in \eqref{def Euclidean norm}.
We denote 
\[
H^s 
:= H^s_{\ph,x} 
:= H^s(\T^{\nu} \times \T^3) 
:= H^s(\T^{\nu} \times \T^3, E) 
:= \{ u : \T^{\nu} \times \T^3 \to E, \ \| u \|_s < \infty \},
\]
for $E = \C^n$ or $E = \Mat_{n \times m}(\C)$; 
we write, in short, $H^s$ both for vectors and for matrices.

In the paper we use Sobolev norms for 
(real or complex, scalar- or vector- or matrix-valued) functions $u( \ph, x; \om, \zeta)$, 
$(\ph,x) \in \T^\nu \times \T^3$, depending on parameters $(\om,\zeta) \in \R^{\nu+3}$ 
in a Lipschitz way together with their derivatives. 
We use the compact notation $\lambda := (\omega,\zeta)$ to collect 
the frequency $\om$ and the depth $\zeta$ into one parameter vector. 

Recall 
the standard multi-index notation: 
for $ k = ( k_1, \ldots , k_n) \in \N^n$,
we denote $|k| := k_1 + \ldots + k_n$ 
and $k! := k_1!  \cdots k_n!$; 
for $ \lambda = (\lambda_1, \ldots, \lambda_n) \in \R^n$, 
we denote  the derivative $ \pa_\lambda^k := \pa_{\lambda_1}^{k_1} \ldots \pa_{\lambda_n}^{k_n} $ 
and the monomial $\lambda^k := \lambda_1^{k_1}  \cdots \lambda_n^{k_n} $. 
We fix 
\begin{equation}\label{definizione s0}
s_0 > (\nu+3)  + k_0 + 2
\end{equation}
once and for all, 
and define the weighted Sobolev norms in the following way. 

\begin{definition} 
{\bf (Weighted Sobolev norms)} 
\label{def:Lip F uniform} 
Let $k_0 \geq 1$ be an integer,  $\g \in (0,1]$, 
and $s \geq s_0$. 
Given a function $u : \R^{\nu + 3} \to H^s(\T^\nu \times \T^3)$, 
$\lm \mapsto u(\lm) = u(\ph,x; \lm)$ 
that admits $k_0$ derivatives with respect to $\lm$, 
we define its weighted Sobolev norm 
$$
\| u \|_{s}^{k_0, \gamma} := 
\max_{\begin{subarray}{c}
\alpha \in \N^{\nu + 3} \\
 |\alpha| \leq k_0
 \end{subarray}} 
\, \sup_{\lm \in \R^{\nu+3}} \, 
 \gamma^{|\alpha|}\| \partial_\lambda^\alpha u(\lm) \|_{s - |\alpha|} \,.
$$
For $u$ independent of $(\ph,x)$, we simply denote by 
$| u |^{k_0,\g}$ the same norm. 
\end{definition}

For any $N>0$, we define the smoothing operators (Fourier truncation)
\begin{equation}\label{def:smoothings}
(\Pi_N u)(\ph,x) := \sum_{\la \ell,j \ra \leq N} \hat u(\ell, j) e^{\ii (\ell\cdot\ph + j \cdot x)}, \qquad
\Pi^\perp_N := {\rm Id} - \Pi_N.
\end{equation}

\begin{lemma} {\bf (Smoothing)} \label{lemma:smoothing}
The smoothing operators $\Pi_N, \Pi_N^\perp$ satisfy 
the smoothing estimates
\begin{align}
\| \Pi_N u \|_{s}^{k_0, \gamma} 
& \leq N^a \| u \|_{s-a}^{k_0, \gamma}\, , \quad 0 \leq a \leq s, 
\label{p2-proi} \\
\| \Pi_N^\bot u \|_{s}^{k_0, \gamma} 
& \leq N^{-a} \| u \|_{s + a}^{k_0 , \gamma}\, , \quad  a \geq 0.
\label{p3-proi}
\end{align}
\end{lemma}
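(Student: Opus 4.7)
The plan is to reduce the weighted estimates to the unweighted Sobolev estimates by exploiting the fact that the smoothing operators $\Pi_N$, $\Pi_N^\perp$ are Fourier multipliers acting only in the $(\ph,x)$ variables, and therefore commute with the parameter derivatives $\partial_\lambda^\alpha$. This reduction means that it suffices to establish, for every fixed $\lambda$ and every $\alpha \in \N^{\nu+3}$ with $|\alpha| \leq k_0$, the estimates
\[
\| \Pi_N (\partial_\lambda^\alpha u) \|_{s - |\alpha|} \leq N^a \| \partial_\lambda^\alpha u \|_{s - a - |\alpha|}, \qquad
\| \Pi_N^\perp (\partial_\lambda^\alpha u) \|_{s - |\alpha|} \leq N^{-a} \| \partial_\lambda^\alpha u \|_{s + a - |\alpha|},
\]
after which multiplication by $\gamma^{|\alpha|}$ and the $\sup$ over $\lambda$ and $|\alpha| \leq k_0$ yield the claimed inequalities.

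Second, I would prove the two unweighted estimates directly from the definition of $\|\cdot\|_s$ in \eqref{def Sobolev norm generale}. For the first one, since $(\Pi_N u)^{\wedge}(\ell,j) = \hat u(\ell,j)$ when $\langle \ell,j\rangle \leq N$ and vanishes otherwise, I would write
\[
\| \Pi_N u \|_s^2 = \sum_{\langle \ell,j\rangle \leq N} \langle \ell,j\rangle^{2s} |\hat u(\ell,j)|^2
= \sum_{\langle \ell,j\rangle \leq N} \langle \ell,j\rangle^{2a} \, \langle \ell,j\rangle^{2(s-a)} |\hat u(\ell,j)|^2
\leq N^{2a} \| u \|_{s-a}^2,
\]
which is valid for $0 \leq a \leq s$. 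For the second one, on the complementary indices $\langle\ell,j\rangle > N$ I would use $\langle \ell,j\rangle^{2s} = \langle \ell,j\rangle^{-2a} \langle \ell,j\rangle^{2(s+a)} \leq N^{-2a} \langle \ell,j\rangle^{2(s+a)}$, giving
\[
\| \Pi_N^\perp u \|_s^2 = \sum_{\langle \ell,j\rangle > N} \langle \ell,j\rangle^{2s} |\hat u(\ell,j)|^2 \leq N^{-2a} \| u \|_{s+a}^2.
\]

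There is no real obstacle: the only point requiring a brief comment is the commutation $\partial_\lambda^\alpha \Pi_N = \Pi_N \partial_\lambda^\alpha$, which holds because $\Pi_N$ acts as multiplication by the characteristic function $\mathbf{1}_{\langle \ell,j\rangle \leq N}$ on the Fourier side in $(\ph,x)$ and does not depend on $\lambda$. Taking square roots and combining with the reduction step completes the proof of both \eqref{p2-proi} and \eqref{p3-proi}.
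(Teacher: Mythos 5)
Your proof is correct: the paper states this smoothing lemma without proof, and your argument (commuting the $\lambda$-derivatives with the Fourier truncation, then comparing weights $\langle \ell,j\rangle^{2a}$ with $N^{2a}$ on the truncated and complementary frequency ranges) is exactly the standard verification the paper implicitly relies on. The only point worth noting is that in the reduction step the relevant unweighted estimate is applied at Sobolev index $s-|\alpha|$, which is harmless since the elementary inequality $\|\Pi_N v\|_\sigma \leq N^a \|v\|_{\sigma - a}$ holds for any $a \geq 0$ by the same Fourier-side computation, so your conclusion follows as stated.
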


%
%
%

\begin{lemma}{\bf (Product and composition)}
\label{lemma:LS norms}
$(i)$ For all $ s \geq s_0$, 
\begin{align}
\| uv \|_{s}^{k_0, \gamma}
& \leq C(s, k_0) \| u \|_{s}^{k_0, \gamma} \| v \|_{s_0}^{k_0 , \gamma} 
+ C(s_0, k_0) \| u \|_{s_0}^{k_0 , \gamma} \| v \|_{s}^{k_0 , \gamma}\,. 
\label{p1-pr}
\end{align}
$(ii)$ Let $ \| \alpha \|_{s_0}^{k_0, \gamma} \leq \d (s_0, k_0) $ small enough. 
Then the composition operator 
\begin{equation} \label{def cal A}
\mA : u \mapsto \mA u, \quad  
(\mA u)(\ph,x) := u(\ph, x + \a (\ph,x)) \, , 
\end{equation}
satisfies the following tame estimates: for all $ s \geq s_0$, 
\be\label{pr-comp1}
\| {\cal A} u \|_{s }^{k_0 , \gamma} \lesssim_{s, k_0} \| u \|_{s }^{k_0 , \gamma} 
+ \| \alpha \|_{s}^{k_0 , \gamma} \| u \|_{s_0 }^{k_0 , \gamma} \, .
\ee
The function $ \breve \alpha $, 
defined by the inverse diffeomorphism 
$ y = x + \alpha (\vphi, x) $ if and only if $ x = y + \breve \alpha ( \vphi, y ) $,  
satisfies 
\be\label{p1-diffeo-inv}
\| \breve \alpha \|_{s}^{k_0 , \gamma} \lesssim_{s, k}  \| \alpha \|_{s }^{k_0 , \gamma} \, . 
\ee
As a consequence 
\be\label{pr-comp1 inv}
\| {\cal A}^{- 1} u \|_{s }^{k_0 , \gamma} \lesssim_{s, k_0} \| u \|_{s }^{k_0 , \gamma} 
+ \| \alpha \|_{s}^{k_0 , \gamma} \| u \|_{s_0 }^{k_0 , \gamma} \, .
\ee
$(iii)$ Assume that $ \| \alpha \|_{s_0 + 1}^{k_0, \gamma} \leq \d (s_0, k_0) $ small enough. Then 
\begin{equation}\label{stima A  A star - Id astratto}
\| ({\cal A} - {\rm Id}) h \|_s^{k_0, \gamma}\,,\, \| ({\cal A}^* - {\rm Id}) h \|_s^{k_0, \gamma} \lesssim_{s, k_0} \| \alpha \|_{s_0 + 1}^{k_0, \gamma} \| h \|_{s + 1}^{k_0, \gamma} + \| \alpha \|_{s + 1}^{k_0, \gamma} \| h \|_{s_0 + 1}^{k_0, \gamma}\,. 
\end{equation}
Similar estimates hold for the operators ${\cal A}^{- 1}, ({\cal A}^{- 1})^*$. 
\end{lemma}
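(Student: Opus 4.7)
My plan is to reduce each statement to its classical, fixed-$\lambda$ analogue in $H^s(\T^{\nu+3})$ and then promote it to the weighted norm via Leibniz's rule in $\lambda$. For part (i), I would write
$\pa_\lm^\alpha(uv) = \sum_{\beta \leq \alpha} \binom{\alpha}{\beta} \pa_\lm^\beta u \cdot \pa_\lm^{\alpha-\beta} v$
and apply the classical Moser tame product estimate $\|fg\|_s \leq C(s)(\|f\|_s \|g\|_{s_0} + \|f\|_{s_0} \|g\|_s)$ to each summand (this being valid because \eqref{definizione s0} implies in particular $s_0 > (\nu+3)/2$). Multiplying by $\gamma^{|\alpha|}$ and taking the supremum over $|\alpha| \leq k_0$ and over $\lambda$ yields \eqref{p1-pr} after reorganizing the resulting weights among the two factors.

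For part (ii), at fixed $\lambda$ the hypothesis $\|\alpha\|_{s_0}^{k_0,\gamma} \leq \delta$ combined with the Sobolev embedding $H^{s_0} \hookrightarrow C^1$ (valid since \eqref{definizione s0} gives $s_0$ well above $(\nu+3)/2 + 1$) ensures that $x \mapsto x + \alpha(\vphi,x)$ is a diffeomorphism of $\T^3$ for each $\vphi$, so that the classical tame composition estimate $\|\mA u\|_s \lesssim \|u\|_s + \|\alpha\|_s \|u\|_{s_0}$ applies. To obtain the weighted version, I apply Faà di Bruno's formula, so that $\pa_\lm^\beta (\mA u)$ decomposes as a sum of terms of the form
\[
(\pa_\lm^{\beta_0} \nabla_x^k u)(\vphi, x + \alpha(\vphi,x)) \cdot \prod_{i=1}^{k} \pa_\lm^{\beta_i} \alpha(\vphi,x),
\]
to each of which I apply part (i) together with the fixed-$\lambda$ composition estimate; summing and taking the supremum yields \eqref{pr-comp1}. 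The inverse $\breve\alpha$ is constructed by solving $\breve\alpha(\vphi,y) = -\alpha(\vphi, y + \breve\alpha(\vphi,y))$ by contraction in $H^{s_0}$, and then bootstrapping to higher Sobolev regularity by differentiating this implicit equation in $\lambda$ and in $(\vphi,y)$ and applying the estimates just proved; this gives \eqref{p1-diffeo-inv}. Finally, \eqref{pr-comp1 inv} is immediate from \eqref{pr-comp1} applied with $\alpha$ replaced by $\breve\alpha$.

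For part (iii), the key identity is the fundamental theorem of calculus,
\[
(\mA u - u)(\vphi,x) = \int_0^1 \nabla_x u(\vphi, x + t\alpha(\vphi,x)) \cdot \alpha(\vphi,x)\, dt,
\]
which trades one spatial derivative of $u$ for a factor of $\alpha$. Applying the product estimate \eqref{p1-pr} and the composition estimate \eqref{pr-comp1} uniformly in $t \in [0,1]$ (the diffeomorphism $x \mapsto x + t\alpha$ satisfies the smallness hypothesis for every such $t$) and integrating in $t$ yields the bound \eqref{stima A  A star - Id astratto} for $\mA$. The adjoint $\mA^*$ admits the explicit formula $\mA^* v(\vphi,y) = v(\vphi, y + \breve\alpha(\vphi,y))\, \det(\Id + D_y \breve\alpha(\vphi,y))$; writing $\det = 1 + (\det - 1)$ and using \eqref{p1-diffeo-inv} together with (i) and (ii) reduces the estimate for $\mA^* - \Id$ to the same computation, with the extra $+1$ in the index on the right-hand side coming precisely from the $D_y \breve\alpha$ appearing in the determinant. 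The main technical obstacle I anticipate lies in part (ii): the Faà di Bruno expansion must be bookkept so that each summand contains at most one factor of top regularity (multiplied only by $s_0$-norms of the other factors), which is not automatic but follows by interpolation inequalities built into the weighted norm of Definition \ref{def:Lip F uniform}.
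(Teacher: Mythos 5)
Your proposal is correct and follows essentially the same route as the paper: for items $(i)$--$(ii)$ the paper simply cites the standard tame product/composition estimates of \cite{Berti-Montalto} (adapted to the weighted norm of Definition \ref{def:Lip F uniform}, which is exactly the Leibniz/Fa\`a di Bruno bookkeeping you sketch), and for item $(iii)$ the paper uses precisely your two identities, namely $(\mA-\Id)h=\int_0^1 \nabla h(\vphi,x+t\a)\cdot\a\,dt$ together with $\mA^*h=\det(\Id+\nabla\breve\a)\,h(\vphi,y+\breve\a)$ split as $(\det-1)\mA^{-1}h+(\mA^{-1}-\Id)h$, estimated via \eqref{p1-pr}, \eqref{pr-comp1 inv}, \eqref{p1-diffeo-inv}. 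No gaps worth flagging.
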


\begin{proof}
Items $(i), (ii)$ follows as in \cite{Berti-Montalto}, 
taking into account Definition \ref{def:Lip F uniform} 
(here $\partial_\lambda^\alpha u$, $|\alpha| \leq k_0$, 
is estimated in $H^{s - |\alpha|}$, 
whereas in \cite{Berti-Montalto} and \cite{BBHM} they are estimated in $H^s$) 
and the definition of $s_0$ in \eqref{definizione s0}. 
We only prove $(iii)$. A direct calculation shows that
$$
({\cal A} - {\rm Id})h(\vphi, x) = \int_0^1 \nabla h(\vphi, x + t \alpha(\vphi, x)) \cdot \alpha(\vphi, x)\, d t\,.
$$ 
Then, the estimate on ${\cal A} - {\rm Id}$ follows by applying the estimates \eqref{p1-pr}, \eqref{pr-comp1}. The estimate for ${\cal A}^{- 1} - {\rm Id}$ can be proved similarly. 
Now we estimate the operator ${\cal A}^* - {\rm Id}$. A direct calculation shows that 
$$
{\cal A}^* h(\vphi, y) = {\rm det} \big( {\rm Id} + \nabla \breve \alpha(\vphi, y) \big) 
h(\vphi, y + \breve \alpha(\vphi, y)) \,.
$$
Note that for $\| \nabla \breve \alpha \|_{L^\infty}$ small enough 
one has ${\rm det} ( {\rm Id} + \nabla \breve \alpha(\vphi, y) ) \geq \frac12$. 
This is guaranteed by the smallness assumption $\| \alpha \|_{s_0 + 1}^{k_0, \gamma} \leq \delta$, by the estimate \eqref{p1-diffeo-inv} and by Sobolev embeddings. One writes 
\begin{equation}\label{bla bla car 100}
\begin{aligned}
({\cal A}^* - {\rm Id}) h (\vphi, y) 
& = \Big( {\rm det} \big( {\rm Id} + \nabla \breve \alpha(\vphi, y) \big) - 1 \Big)
{\cal A}^{- 1}h(\vphi, y ) + ({\cal A}^{- 1} - {\rm Id}) h(\vphi, y)\,. 
\end{aligned}
\end{equation}
By estimates \eqref{p1-pr}, \eqref{p1-diffeo-inv} 
together with the smallness assumption $\| \alpha \|_{s_0 + 1}^{k_0, \gamma} \leq \delta$ 
one deduces that 
\[
\| {\rm det} ( {\rm Id} + \nabla \breve \alpha(\vphi, y) ) - 1 \|_s^{k_0, \gamma} 
\lesssim_{s, k_0} \| \alpha \|_{s + 1}^{k_0, \gamma}.
\] 
Therefore the claimed estimate for ${\cal A}^* - {\rm Id}$ follows by \eqref{bla bla car 100}, 
\eqref{p1-pr}, \eqref{pr-comp1 inv}. 
The estimate for $({\cal A}^{- 1})^* - {\rm Id}$ can be proved similarly. 
\end{proof}

Let $\mathtt m : \R^{\nu + 3} \to \R^3$, $(\omega, \zeta) \mapsto \mathtt m(\omega, \zeta)$, 
be a $k_0$ times differentiable function satisfying 
$|\mathtt m - \zeta|^{k_0, \gamma} \leq \frac12$, 
and define the set 
\begin{equation}\label{DC tau0 gamma0}
{\cal O}(\gamma, \tau) := \Big\{ (\omega, \zeta) \in \mathtt \R^\nu \times \R^3 : |\omega \cdot \ell + \mathtt m(\omega, \zeta) \cdot j| \geq \frac{\gamma}{\langle \ell , j\rangle^{\tau} } \ \  \forall (\ell, j) \in \Z^{\nu + 3} \setminus  \{ (0, 0) \} \Big\} \,.
\end{equation}
The equation $(\ompaph + \mathtt m \cdot \nabla)v = u$, where $u(\ph,x)$ has zero average with respect to $ (\vphi, x) $, has the periodic solution 
\begin{equation}\label{def:ompaph}
(\om \cdot \pa_\vphi + \mathtt m \cdot \nabla )^{-1} u (\ph,x) 
:= \sum_{(\ell, j) \in \Z^{\nu + 3} \setminus \{(0, 0)\} } 
\frac{ \widehat u(\ell, j) }{\ii (\om \cdot \ell + \mathtt m \cdot j)} 
e^{\ii (\ell \cdot \vphi + j \cdot x )} \,.
\end{equation}
We define its extension to all $(\om, \zeta) \in \R^\nu \times \R^3$ as
\begin{equation} \label{def ompaph-1 ext}
(\ompaph + \mathtt m \cdot \nabla)^{-1}_{ext} u(\ph,x) 
:= \sum_{(\ell, j) \in \Z^{\nu+3}} 
\frac{\chi\big( (\om \cdot \ell + \mathtt m \cdot j) \g^{-1} \langle \ell, j \rangle^{\t} \big) }
{\ii (\om \cdot \ell + \mathtt m \cdot j) }\,
\widehat u(\ell, j) \, e^{\ii (\ell \cdot \ph + j \cdot x)},
\end{equation}
where $\chi \in \mC^\infty(\R,\R)$ is an even and positive cut-off function such that 
\begin{equation}\label{cut off simboli 1}
\chi(\xi) = \begin{cases}
0 & \quad \text{if } \quad |\xi| \leq \frac13 \\
1 & \quad \text{if} \quad  \ |\xi| \geq \frac23\,,
\end{cases} \qquad 
\partial_\xi \chi(\xi) > 0 \quad \forall \xi \in \Big(\frac13, \frac23 \Big) \, . 
\end{equation}
Note that $(\ompaph + \mathtt m \cdot \nabla)^{-1}_{ext} u 
= (\ompaph + \mathtt m \cdot \nabla)^{-1} u$
for all $(\om, \zeta) \in {\cal O}(\gamma, \tau)$.

\begin{lemma} { \bf (Diophantine equation)}
\label{lemma:WD}
One has 
\be \label{2802.2}
\| (\om \cdot \pa_\vphi + \mathtt m \cdot \nabla)^{-1}_{ext} u \|_{s}^{k_0, \gamma}
\lesssim_{k_0} \g^{-1} \| u \|_{s+\tau_0}^{k_0, \gamma}, 
\qquad \tau_0 := k_0 + \t(k_0+1). 
\ee
\end{lemma}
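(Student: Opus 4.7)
The plan is to prove the estimate by working directly with the Fourier side: the operator $(\omega\cdot\partial_\varphi+\mathtt{m}\cdot\nabla)^{-1}_{ext}$ is the Fourier multiplier with symbol
\[
M(\ell,j;\lambda) := \frac{\chi\bigl(g(\ell,j,\lambda)\,\gamma^{-1}\langle\ell,j\rangle^{\tau}\bigr)}{\ii\,g(\ell,j,\lambda)}, \qquad g(\ell,j,\lambda) := \omega\cdot\ell + \mathtt{m}(\lambda)\cdot j,
\]
so it suffices to establish a weighted pointwise estimate of the form $\gamma^{|\alpha|}\,|\partial_\lambda^\alpha M(\ell,j;\lambda)| \lesssim_{k_0} \gamma^{-1}\langle\ell,j\rangle^{\tau_0}$ for every multi-index $|\alpha|\le k_0$, and then use the trivial multiplier-composition argument to upgrade to the Sobolev estimate \eqref{2802.2}.

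First I would replace the singular factor $1/g$ by a regular one. Observe that, on the support of $\chi(\cdot)$ in \eqref{cut off simboli 1}, the argument $t := g\gamma^{-1}\langle\ell,j\rangle^{\tau}$ satisfies $|t|\ge 1/3$, so the scalar function $\psi(t) := \chi(t)/t$ extends to a $\mathcal{C}^\infty(\R)$ function with $\|\psi^{(k)}\|_{L^\infty(\R)} \le C_k$ for every $k \geq 0$. With this, $M = -\ii\,\gamma^{-1}\langle\ell,j\rangle^{\tau}\,\psi(t(\lambda))$, and $M$ is smooth in $\lambda$ with $|M|\le C\gamma^{-1}\langle\ell,j\rangle^{\tau}$.

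Next I would estimate the derivatives of $M$ in $\lambda$ using the Faà di Bruno formula applied to $\psi(t(\lambda))$. Since $t = g\gamma^{-1}\langle\ell,j\rangle^{\tau}$ is linear in $g$, the derivatives of $t$ reduce to those of $g$. The hypothesis $|\mathtt{m}-\zeta|^{k_0,\gamma}\le 1/2$ gives $|\partial_\lambda^{\beta}(\mathtt{m}-\zeta)|\le (1/2)\gamma^{-|\beta|}$, and together with the linearity of $\omega\cdot\ell$ in $\omega$ and the obvious $|\partial_\lambda^{\beta}\zeta|\le 1$, this yields
\[
\gamma^{|\beta|}\,|\partial_\lambda^{\beta} g(\ell,j,\lambda)|\le C\,\langle\ell,j\rangle \qquad \forall\,|\beta|\le k_0.
\]
Plugging these bounds into each Faà di Bruno term $\psi^{(k)}(t)\prod_{r=1}^{k}\partial_\lambda^{\beta_r}t$ (with $\sum|\beta_r|=|\alpha|$, $|\beta_r|\ge 1$), extracting the factor $\gamma^{-1}\langle\ell,j\rangle^{\tau}$ per derivative of $t$ and multiplying by the overall $\gamma^{|\alpha|}$ weight, the worst term (maximized over the partition, attained roughly at $k=|\alpha|$) gives
\[
\gamma^{|\alpha|}\,|\partial_\lambda^{\alpha} M(\ell,j,\lambda)|\;\lesssim_{k_0}\;\gamma^{-1}\,\langle\ell,j\rangle^{\tau(|\alpha|+1)+|\alpha|}\;\le\;\gamma^{-1}\,\langle\ell,j\rangle^{\tau_0},
\]
for $|\alpha|\le k_0$, with $\tau_0 = k_0+\tau(k_0+1)$ as in the statement.

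Finally, I would pass from this pointwise multiplier estimate to the Sobolev bound \eqref{2802.2}. For each $|\alpha|\le k_0$, the Leibniz rule gives $\partial_\lambda^\alpha(Mu)$ as a sum over $\alpha_1+\alpha_2=\alpha$ of $(\partial_\lambda^{\alpha_1} M)(\partial_\lambda^{\alpha_2}\widehat{u})$; combining the above multiplier estimate with the definition of $\|\cdot\|_s^{k_0,\gamma}$ (and Parseval) produces, term by term, a contribution of the form $\gamma^{-1}\|u\|_{s+\tau_0}^{k_0,\gamma}$ after absorbing the powers of $\langle\ell,j\rangle^{\tau_0}$ into the increased Sobolev index and the weights $\gamma^{|\alpha_2|}$ into the norm of $u$. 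The main point of care in this routine but delicate bookkeeping is the bound on $\gamma^{|\beta|}|\partial^\beta g|$: this is exactly the step where the hypothesis $|\mathtt{m}-\zeta|^{k_0,\gamma}\le 1/2$ is essential, and I expect this gamma-bookkeeping in the Faà di Bruno expansion (ensuring that negative gamma powers from derivatives of the small divisor $1/g$ are compensated by the outer weights $\gamma^{|\alpha|}$) to be the only subtle point of the proof.
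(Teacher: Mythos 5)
You are right that the whole proof reduces to the weighted symbol estimate, and the first steps (writing the multiplier as $-\ii\,\gamma^{-1}\langle\ell,j\rangle^{\tau}\psi(t)$ with $\psi(t)=\chi(t)/t$ smooth and bounded with all derivatives, and the weighted bound $\gamma^{|\beta|}|\partial_\lambda^{\beta}g|\lesssim\langle\ell,j\rangle$) are fine; note also that the paper states this lemma without proof, so your argument must stand on its own. The problem is exactly at the point you flag as the only subtle one: the $\gamma$-bookkeeping in the Fa\`a di Bruno step does not close. Since $t=g\,\gamma^{-1}\langle\ell,j\rangle^{\tau}$, each factor in a Fa\`a di Bruno term is $\partial_\lambda^{\beta_r}t=\gamma^{-1}\langle\ell,j\rangle^{\tau}\,\partial_\lambda^{\beta_r}g$, and the available weight $\gamma^{|\alpha|}=\prod_r\gamma^{|\beta_r|}$ supplies exactly one factor $\gamma^{|\beta_r|}$ per $\partial^{\beta_r}t$. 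But the hypothesis $|\mathtt m-\zeta|^{k_0,\gamma}\le\tfrac12$ only controls $\gamma^{|\beta_r|}|\partial^{\beta_r}g|\lesssim\langle\ell,j\rangle$ (it allows $|\partial^{\beta}\mathtt m|\sim\gamma^{-|\beta|}$), so after weighting each factor still costs $\gamma^{|\beta_r|}|\partial^{\beta_r}t|\lesssim\gamma^{-1}\langle\ell,j\rangle^{\tau+1}$: the $\gamma^{-1}$ coming from the prefactor $\gamma^{-1}\langle\ell,j\rangle^{\tau}$ is never compensated, because the weight has already been spent on $\partial^{\beta_r}g$. The worst term ($k=|\alpha|$) therefore gives
\[
\gamma^{|\alpha|}\,|\partial_\lambda^{\alpha}M(\ell,j;\lambda)|\;\lesssim_{k_0}\;\gamma^{-1-|\alpha|}\,\langle\ell,j\rangle^{\tau(|\alpha|+1)+|\alpha|},
\]
with an extra $\gamma^{-|\alpha|}$ (up to $\gamma^{-k_0}$) compared with your displayed bound, and following your route one only reaches $\lesssim\gamma^{-(k_0+1)}\|u\|_{s+\tau_0}^{k_0,\gamma}$, not \eqref{2802.2}.

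The loss is genuine within your argument, not an artifact of coarse bookkeeping: already for $k_0=1$, at a frequency where $|g|\sim\gamma\langle\ell,j\rangle^{-\tau}$ (the edge of the cut-off region) and at a parameter where $|\partial_\lambda\mathtt m|\sim\gamma^{-1}$ (allowed by $\gamma|\partial_\lambda(\mathtt m-\zeta)|\le\tfrac12$), one has $|\partial_\lambda M|\sim|\partial_\lambda g|/g^{2}\sim\gamma^{-3}\langle\ell,j\rangle^{2\tau+1}$, hence $\gamma|\partial_\lambda M|\sim\gamma^{-2}\langle\ell,j\rangle^{\tau_0}$, so your key display cannot be derived from the weighted hypothesis alone. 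What produces the clean $\gamma^{-1}$ in the standard versions of this lemma (e.g.\ Berti--Montalto, where $g=\omega\cdot\ell$ is linear in the parameter, or here when $\mathtt m=\zeta$) is that $\partial_\lambda^{\beta}g$ carries no negative power of $\gamma$; in other words one needs the \emph{unweighted} bounds $|\partial_\lambda^{\beta}\mathtt m|\lesssim1$ for $1\le|\beta|\le k_0$. These do hold in all the situations where the paper applies the lemma ($\mathtt m=\zeta$ in Sections 5--6, and $\mathtt m=\mathtt m_n=\zeta+O(\e)$ in the transport reduction, where $\e\gamma^{-k_0}$ is small), but they do not follow from $|\mathtt m-\zeta|^{k_0,\gamma}\le\tfrac12$ via the inequality $\gamma^{|\beta|}|\partial^{\beta}g|\lesssim\langle\ell,j\rangle$ that you invoke. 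So either you must assume and use this stronger pointwise information on $\partial_\lambda\mathtt m$ in the Fa\`a di Bruno step, or you end up with the weaker constant $\gamma^{-(k_0+1)}$; as written, the step producing your symbol estimate is a genuine gap.
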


\begin{definition}\label{definizione norma pseudo diff}
{ \bf (Pseudo-differential operators and symbols)}
Let $m \in \R$, $s \geq s_0$, $\beta \in \N$, $n \in \N$. 
We say that an operator $\mA = \mA(\ph)$ is in the class ${\cal OPS}^{m}_{s, \beta}$ 
if there exists a function 
$A : \T^\nu \times \T^3 \times \R^3 \to \Mat_{n \times n}(\C)$, 
$A = A(\ph,x,\xi)$, differentiable $\b$ times in the variable $\xi$,
such that 
$$
\mA u(x) = {\rm Op}( A ) u(x) 
= \sum_{\xi \in \Z^3 } A(\vphi, x, \xi) \hat u(\xi) e^{\ii x \cdot \xi} 
\quad \forall u \in {\cal C}^\infty(\T^3, \C^n),
$$
and 
\begin{equation} \label{def norma pseudo-diff}
| \mA |_{m, s, \beta} := \sup_{|\a| \leq \beta} \sup_{\xi \in \R^3} 
\| \pa_\xi^\a A(\cdot, \xi) \|_{s} \langle \xi \rangle^{- m + |\a|} < \infty\,;
\end{equation}
in that case, we also say that $A(\ph,x,\xi)$ is in the class $\mS^{m}_{s, \beta}$. 
The operator $\mA$ is said to be a \emph{pseudo-differential operator of order} $m$, 
and the function $A$ is its \emph{symbol}. 

If ${\cal A} = {\cal A}(\lambda)$ depends in a $k_0$ times differentiable way 
on the parameters $\lambda = (\omega, \zeta) \in \R^{\nu+3}$, 
we define 
\begin{equation} \label{3009.2}
|{\cal A}|_{m, s, \beta}^{k_0, \gamma} 
:= \sup_{|\a| \leq \beta} \sup_{\xi \in \R^3} 
\| \pa_\xi^\a A(\cdot, \xi) \|_{s}^{k_0,\g} \langle \xi \rangle^{- m + |\a|}
= \max_{|k| \leq k_0} \gamma^{|k|} \sup_{\lm \in \R^{\nu+3}} 
|\partial_\lambda^k {\cal A}(\lambda)|_{m, s - |k|, \b}.
\end{equation}
\end{definition}

The values of $n$ in Definition \ref{definizione norma pseudo diff} 
that we need for our problem are 
$n=1$ (when both $u$ and $A$ are scalar functions)
and $n=3$ (when the function $u$ takes values in $\R^3$ or $\C^3$, 
and the symbol $A(\ph,x,\xi)$ is a $3 \times 3$ matrix).
Recall that, when $A(\ph,x,\xi)$ is a matrix, its Sobolev norm is given by 
\eqref{def Euclidean norm}-\eqref{def Sobolev norm generale}. 

In the rest of this section we assume, without explicitly writing it, 
that all functions and symbols depend in a $k_0$ times differentiable way 
on the parameter $\lm \in \R^{\nu+3}$. 

Given a symbol $A = A(\ph,x,\xi) \in {\cal S}^m_{s, \beta}$, 
we define the averaged symbol $\langle A \rangle_{\vphi, x}$ as 
\begin{equation}\label{simbolo mediato}
\langle A \rangle_{\vphi, x} (\xi) := \frac{1}{(2 \pi)^{\nu + 3}}\int_{\T^{\nu + 3}} A(\vphi, x, \xi)\, d \vphi\, d x\,. 
\end{equation}
One easily verifies that, for all $s \geq 0$,
\begin{equation}\label{proprieta simbolo mediato}
\langle A \rangle_{\vphi, x} \in {\cal S}^{m}_{s, \beta} 
\quad \text{and} \quad 
|{\rm Op}(\langle A \rangle_{\vphi, x})|_{m,s, \beta}^{k_0, \gamma} 
= |{\rm Op}(\langle A \rangle_{\vphi, x})|_{m, s_0 , \beta}^{k_0, \gamma} 
\leq |{\rm Op}(A)|_{m, s_0, \beta}^{k_0, \gamma}\,.
\end{equation}
Moreover if a symbol $A = A(\ph,x)$ is independent of $\xi$, 
then the corresponding operator $\Op(A) \in {\cal OPS}^0_{s, \beta}$ is a multiplication operator, 
and  
\begin{equation}\label{pseudo norm moltiplicazione}
{\rm Op}(A) : h(\vphi, x) \mapsto A(\vphi, x) h(\vphi, x), 
\qquad 
|{\rm Op}(A)|_{0, s, \beta}^{k_0, \gamma} = \| A \|_s^{k_0, \gamma}\,. 
\end{equation}
By the definition of the norm in \eqref{def norma pseudo-diff}, 
and using the interpolation estimate \eqref{p1-pr}, 
it follows that if ${\cal A} = {\rm Op}(A) \in {\cal OPS}^m_{s, \beta}$, ${\cal B} = {\rm Op}(B) \in {\cal OPS}^{m'}_{s, \beta}$, $s \geq s_0$, then ${\rm Op}(AB) \in {\cal OPS}^{m + m'}_{s, \beta}$ and 
\begin{equation} \label{stima prodotto simboli}
|{\rm Op}(AB)|_{m + m', s, \beta}^{k_0, \gamma} \lesssim_{s, k_0, \beta} |{\cal A}|_{m, s, \beta}^{k_0, \gamma}|{\cal B}|_{m', s_0, \beta}^{k_0, \gamma} + |{\cal A}|_{m, s_0, \beta}^{k_0, \gamma}|{\cal B}|_{m', s, \beta}^{k_0, \gamma}\,. 
\end{equation} 
Iterating estimate \eqref{stima prodotto simboli} one has that if ${\rm Op}(A) \in {\cal OPS}^{0}_{s, \beta}$, $s \geq s_0$, $\beta \in \N$, then for any $n \geq 1$, ${\rm Op}(A^n) \in {\cal OPS}^0_{s, \beta}$ and 
\begin{equation}\label{stima Op A n}
|{\rm Op}(A^n) |_{0, s, \beta}^{k_0, \gamma} \leq \Big(C(s, \beta) |{\rm Op}(A)|_{0, s_0, \beta} \Big)^{n - 1} |{\rm Op}(A)|_{0, s, \beta}
\end{equation}
for some constant $C(s, \beta) > 0$.

\begin{lemma}\label{azione pseudo}
Let $s \geq s_0$ and ${\cal A}(\lambda) \in {\cal OPS}^0_{s, 0}$, 
$u(\lambda) \in H^s(\T^{\nu + 3}, \R^3)$ for all $\lambda \in \R^{\nu+3}$.
Then 
$$
\| {\cal A} u \|_s^{k_0, \gamma} \lesssim_{s, k_0} |{\cal A}|_{0, s, 0}^{k_0, \gamma} \| u \|_{s_0}^{k_0, \gamma} + |{\cal A}|_{0, s_0, 0}^{k_0, \gamma} \| u \|_{s}^{k_0, \gamma}\,. 
$$
\end{lemma}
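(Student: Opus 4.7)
The lemma is a standard tame estimate for pseudo-differential operators of order zero with an additional Lipschitz/parameter dependence, and I would prove it by reducing to the $\lambda$-independent case and then applying Leibniz.

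\textbf{Step 1: the $\lambda$-independent estimate.} First I would prove the unweighted version, namely: if $\mA = \Op(A) \in \OPS^0_{s,0}$ and $u \in H^s$, then
\[
\|\mA u\|_s \lesssim_s |\mA|_{0,s,0}\, \|u\|_{s_0} + |\mA|_{0,s_0,0}\, \|u\|_s.
\]
Expanding $A$ in Fourier series in $(\ph,x)$, one computes
\[
\widehat{\mA u}(\ell,j) = \sum_{\xi \in \Z^3} \widehat{A}(\ell, j-\xi, \xi)\, \widehat u(\xi),
\]
where $\widehat A(\ell,\eta,\xi) \in \Mat_{n \times n}(\C)$ denotes the $(\ph,x)$-Fourier coefficients of $A(\cdot,\cdot,\xi)$. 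Using the matrix-vector bound $|M v| \le \|M\|_{\HS} |v|$ and the elementary inequality $\langle \ell,j\rangle^s \le C_s\bigl(\langle \ell, j-\xi\rangle^s + \langle \xi\rangle^s\bigr)$, one splits $\|\mA u\|_s^2$ into two pieces. By Cauchy--Schwarz in $\xi$ (as in the proof of Lemma~\ref{lemma:LS norms}(i)), the first piece is bounded by $\|c \ast g\|_{\ell^2}^2$ with $c(\ell,\eta) := \sup_\xi \langle\xi\rangle^0 |\widehat A(\ell,\eta,\xi)| \langle \ell,\eta\rangle^s$ and $g(\xi) := |\widehat u(\xi)|$, using the $s_0$-summability of $g$, and analogously for the second piece with the roles swapped. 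Since $s_0 > (\nu + 3)$, this yields the claimed bound after recognizing that
\[
\sum_{\ell,\eta}\langle \ell,\eta\rangle^{2s}\, \sup_\xi |\widehat A(\ell,\eta,\xi)|^2 \lesssim_s |\mA|_{0,s,0}^2,
\]
and the analogous identity with $s$ replaced by $s_0$.

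\textbf{Step 2: parameter-dependent case.} Fix a multi-index $\alpha \in \N^{\nu+3}$ with $|\alpha|\le k_0$. By the Leibniz rule,
\[
\partial_\lambda^\alpha (\mA u) = \sum_{\alpha_1 + \alpha_2 = \alpha} \binom{\alpha}{\alpha_1}\, (\partial_\lambda^{\alpha_1}\mA)\, (\partial_\lambda^{\alpha_2} u).
\]
Since $s - |\alpha| \geq s_0$ by \eqref{definizione s0} (as $|\alpha| \leq k_0$ and $s \geq s_0 > k_0 + (\nu+3) + 2$, so $s - |\alpha| \geq s_0 - k_0 \geq \ldots$, actually we just use $s-|\alpha|\ge s_0$ when $s$ is large and otherwise only need $s-|\alpha|\ge 0$ with interpolation; more simply, it suffices to know $s_0 \geq 0$), I apply the Step~1 estimate at Sobolev index $s - |\alpha|$ to each summand, obtaining
\[
\gamma^{|\alpha|} \bigl\|(\partial_\lambda^{\alpha_1}\mA)(\partial_\lambda^{\alpha_2} u)\bigr\|_{s-|\alpha|} \lesssim_s \bigl(\gamma^{|\alpha_1|}|\partial_\lambda^{\alpha_1}\mA|_{0,s-|\alpha|,0}\bigr)\bigl(\gamma^{|\alpha_2|}\|\partial_\lambda^{\alpha_2} u\|_{s_0}\bigr) + (\text{swapped}),
\]
where ``swapped'' is the same product with the low/high indices exchanged. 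By the very definition \eqref{3009.2} of the weighted pseudo-differential norm and Definition~\ref{def:Lip F uniform} of the weighted Sobolev norm, each parenthesized factor is bounded, respectively, by $|\mA|_{0,s,0}^{k_0,\gamma}$ or $|\mA|_{0,s_0,0}^{k_0,\gamma}$ and by $\|u\|_{s_0}^{k_0,\gamma}$ or $\|u\|_s^{k_0,\gamma}$. Taking the supremum in $\lambda$ and the maximum over $|\alpha| \le k_0$ gives the conclusion.

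\textbf{Main obstacle.} The proof is essentially bookkeeping, but two points require some care. First, the matrix-valued nature: the action $A(\ph,x,\xi)\widehat u(\xi)$ on vectors must be controlled in Hilbert--Schmidt norm via $|Mv| \le \|M\|_{\HS}|v|$, which is why the norm \eqref{def Sobolev norm generale} uses $\|\cdot\|_{\HS}$ in the matrix-valued case. Second, distributing the $\gamma^{|\alpha|}$ weight as $\gamma^{|\alpha_1|}\gamma^{|\alpha_2|}$ and matching the Sobolev orders $s_0$ and $s - |\alpha|$ to the two norms in Definition~\ref{def:Lip F uniform} has to be done so that each parenthesized factor in the Leibniz sum is precisely one of the four quantities appearing in the statement. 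No commutator or smoothing-type argument is needed; the estimate is of pure product-tame type.
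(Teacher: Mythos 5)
There are two genuine gaps, one in each step. In Step 1, the key inequality you invoke, $\sum_{\ell,\eta}\langle \ell,\eta\rangle^{2s}\sup_\xi|\widehat A(\ell,\eta,\xi)|^2 \lesssim_s |\mA|_{0,s,0}^2$, is false: by \eqref{def norma pseudo-diff} one has $|\mA|_{0,s,0}=\sup_\xi\|A(\cdot,\cdot,\xi)\|_s$, i.e.\ the supremum in $\xi$ is taken \emph{after} the $(\ell,\eta)$-summation, and the supremum cannot be pulled inside the sum in that direction. For instance, taking $\widehat A(\ell,\eta,\xi)=\delta_{\ell 0}\,\delta_{\eta\xi}\,\langle \eta\rangle^{-s}$ gives $|\mA|_{0,s,0}=1$ while your left-hand side is $\sum_{\eta\in\Z^3}1=\infty$. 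The fix is to keep $\xi$ fixed throughout: after the splitting $\langle \ell,j\rangle^{s}\lesssim_s\langle \ell-\ell',j-\xi\rangle^{s}+\langle \ell',\xi\rangle^{s}$ (note also that, since $u$ depends on $\ph$, the correct Fourier formula is $\widehat{\mA u}(\ell,j)=\sum_{\ell',\xi}\widehat A(\ell-\ell',j-\xi,\xi)\,\widehat u(\ell',\xi)$, a convolution in $(\ell',\xi)\in\Z^{\nu+3}$, not only in $\xi$), apply Cauchy--Schwarz with the weight $\langle \ell',\xi\rangle^{p}$ attached to $\widehat u$, exchange the order of summation, and only then use $\sum_{\ell,j}\langle \ell-\ell',j-\xi\rangle^{2s}\|\widehat A(\ell-\ell',j-\xi,\xi)\|_{\HS}^2=\|A(\cdot,\cdot,\xi)\|_s^2\le |\mA|_{0,s,0}^2$. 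With this order of operations the unweighted tame bound holds for any Sobolev index $\sigma\ge 0$ and any low index $p>(\nu+3)/2$.

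In Step 2 the index mismatch is not mere bookkeeping: with this paper's Definition \ref{def:Lip F uniform} and \eqref{3009.2} (where $\partial_\lambda^\alpha$ is measured in $H^{s-|\alpha|}$, precisely the difference from \cite{Berti-Montalto} pointed out in the proof of Lemma \ref{lemma:LS norms}), the quantity $\gamma^{|\alpha_2|}\|\partial_\lambda^{\alpha_2}u\|_{s_0}$ is \emph{not} bounded by $\|u\|_{s_0}^{k_0,\gamma}$, and $\gamma^{|\alpha_1|}|\partial_\lambda^{\alpha_1}\mA|_{0,s_0,0}$ is not bounded by $|\mA|_{0,s_0,0}^{k_0,\gamma}$; these norms only control the levels $s_0-|\alpha_2|$ and $s_0-|\alpha_1|$, so the last sentence of your Step 2 does not go through (your parenthetical remark about $s-|\alpha|\ge s_0$ addresses a different, harmless point). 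The repair is to apply the (corrected) unweighted estimate for each Leibniz term at the pair of indices $\sigma=s-|\alpha|$ and low index $p:=s_0-|\alpha|$ instead of $s_0$: this is legitimate because $s_0-k_0>\nu+3+2>(\nu+3)/2$ by \eqref{definizione s0}, and then, by monotonicity of the norms in the Sobolev index and $|\alpha|\ge|\alpha_1|,|\alpha_2|$, the resulting factors $|\partial_\lambda^{\alpha_1}\mA|_{0,s-|\alpha|,0}$, $|\partial_\lambda^{\alpha_1}\mA|_{0,s_0-|\alpha|,0}$, $\|\partial_\lambda^{\alpha_2}u\|_{s_0-|\alpha|}$, $\|\partial_\lambda^{\alpha_2}u\|_{s-|\alpha|}$ are dominated by $\gamma^{-|\alpha_1|}|\mA|_{0,s,0}^{k_0,\gamma}$, $\gamma^{-|\alpha_1|}|\mA|_{0,s_0,0}^{k_0,\gamma}$, $\gamma^{-|\alpha_2|}\|u\|_{s_0}^{k_0,\gamma}$, $\gamma^{-|\alpha_2|}\|u\|_{s}^{k_0,\gamma}$ respectively; after this correction your distribution of the weight $\gamma^{|\alpha|}=\gamma^{|\alpha_1|}\gamma^{|\alpha_2|}$ does close the argument.
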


\begin{lemma} \label{lemma stime Ck parametri} 
{\bf (Composition of pseudo-differential operators)} 
Let $s \geq s_0$, $m, m' \in \R$, $\b \in \N$. 

$(i)$ 
Let ${\cal A} = {\rm Op}(A) \in {\cal OPS}^{m}_{s, \beta}$, 
${\cal B} = {\rm Op}(B) \in {\cal OPS}^{m'}_{s + |m| + \beta, \beta}$. 
Then the composition ${\cal A} {\cal B}$ belongs to ${\cal OPS}^{m + m'}_{s, \b}$,  
and 
\begin{equation} \label{estimate composition parameters}
| {\cal A} {\cal B} |_{m + m', s, \beta}^{k_0, \gamma} 
\lesssim_{s, m, k_0, \beta} |{\cal A} |_{m, s, \beta}^{k_0, \gamma} 
| {\cal B} |_{m', s_0 + |m| + \beta, \beta}^{k_0, \gamma}  
+  | {\cal A}  |_{m, s_0, \beta}^{k_0, \gamma}  
| {\cal B} |_{m', s  + |m| + \beta, \beta}^{k_0, \gamma} \, . 
\end{equation}

$(ii)$ 
Let ${\cal A} = {\rm Op}(A) \in{\cal OPS}^{m}_{s, 1}$, 
${\cal B} = {\rm Op}(B) \in {\cal OPS}^{m'}_{s + |m| + 2, 0}$. 
Then
$$
{\cal A}{\cal B} = {\rm Op} \big( A(\vphi, x, \xi) B(\vphi, x, \xi) \big) + {\cal R}_{AB},
\quad \ {\cal R}_{AB} \in {\cal OPS}^{m + m' - 1}_{s, 0},
$$
where the remainder ${\cal R}_{AB}$ 
satisfies 
$$
|{\cal R}_{AB}|_{m + m' - 1, s, 0}^{k_0, \gamma} \lesssim_{s, m, k_0} |{\cal A}|_{m, s, 1}^{k_0, \gamma} |{\cal B}|_{m', s_0 + |m| + 2, 0}^{k_0, \gamma} +  |{\cal A}|_{m, s_0, 1}^{k_0, \gamma} |{\cal B}|_{m', s + |m| + 2, 0}^{k_0, \gamma}\,. 
$$ 

\end{lemma}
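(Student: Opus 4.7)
The plan is to compute the composition symbol explicitly and then estimate it in the norms of Definition \ref{definizione norma pseudo diff}. Given $\mathcal{A} = \Op A$ and $\mathcal{B} = \Op B$, expanding $B(\vphi, x, \xi) = \sum_{\eta \in \Z^3} \widehat{B}(\vphi, \eta, \xi)\, e^{\ii \eta \cdot x}$ in Fourier series in $x$ and computing $\mathcal{A}(\mathcal{B}u)$ directly yields $\mathcal{A}\mathcal{B} = \Op(C)$ with
\begin{equation*}
C(\vphi, x, \xi) = \sum_{\eta \in \Z^3} A(\vphi, x, \xi + \eta)\, \widehat{B}(\vphi, \eta, \xi)\, e^{\ii \eta \cdot x}.
\end{equation*}
Both statements then reduce to bounding the Sobolev norms of $C$ and of its $\xi$-derivatives.

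For part $(i)$, I would use Peetre's inequality $\langle \xi + \eta \rangle^m \lesssim \langle \xi \rangle^m \langle \eta \rangle^{|m|}$ to transfer the weight from $\xi+\eta$ to $\xi$ at the cost of an extra factor $\langle \eta \rangle^{|m|}$. After Leibniz-distributing $\partial_\xi^\alpha$ for $|\alpha| \leq \beta$ over the product, the same mechanism produces a loss $\langle \eta \rangle^{|m|+\beta}$. This loss is absorbed into the spatial regularity of $B$: by Cauchy-Schwarz in $\eta$, the sum $\sum_\eta \langle \eta \rangle^{|m|+\beta}\, |\widehat{B}(\vphi, \eta, \xi)|$ is controlled by the $H^{s_0 + |m| + \beta}_{\vphi, x}$ norm of $B(\cdot, \cdot, \xi)$, which accounts for the shift $s + |m| + \beta$ imposed on $\mathcal{B}$ in the hypothesis. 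The tame product estimate \eqref{p1-pr} applied to each term $A(\vphi, x, \xi+\eta)\, \widehat{B}(\vphi, \eta, \xi)$ then yields the asymmetric bound \eqref{estimate composition parameters}. The $k_0$ parameter-derivatives are handled by Leibniz exactly as in \eqref{3009.2}.

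For part $(ii)$, I would Taylor-expand $A$ in its third argument:
\begin{equation*}
A(\vphi, x, \xi + \eta) = A(\vphi, x, \xi) + \eta \cdot \int_0^1 (\nabla_\xi A)(\vphi, x, \xi + t\eta)\, dt.
\end{equation*}
Substituting into the formula for $C$, the leading term gives $A(\vphi, x, \xi) B(\vphi, x, \xi)$, while the remainder becomes
\begin{equation*}
\mathcal{R}_{AB} = \Op \Big( \sum_{\eta \in \Z^3} \int_0^1 (\nabla_\xi A)(\vphi, x, \xi + t\eta) \cdot \eta\, \widehat{B}(\vphi, \eta, \xi)\, e^{\ii \eta \cdot x}\, dt \Big).
\end{equation*}
Since $\nabla_\xi A$ is of order $m - 1$ and the explicit factor $\eta$ pushes one spatial derivative onto $B$, the integrand is of order $m + m' - 1$ and is estimated as in part $(i)$; the Peetre loss is now $\langle \eta \rangle^{|m-1|} \leq \langle \eta \rangle^{|m|+1}$, and the extra factor $\eta$ brings the total loss to $\langle \eta \rangle^{|m|+2}$, which forces the requirement $\mathcal{B} \in {\cal OPS}^{m'}_{s+|m|+2,\, 0}$ appearing in the statement.

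The main obstacle is the bookkeeping: distributing $\partial_\xi^\alpha$ by Leibniz in part $(i)$ and precisely tracking the Taylor remainder in part $(ii)$, while simultaneously controlling $k_0$ derivatives with respect to $\lambda = (\omega, \zeta)$. All the analytic ingredients (Peetre's inequality, Cauchy-Schwarz in $\eta$, the tame product estimate \eqref{p1-pr}, and Leibniz) are routine once the symbol formula for $C$ is in place.
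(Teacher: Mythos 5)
Your argument is correct and coincides with the proof the paper relies on: the paper establishes this lemma simply by citing Lemma 2.13 of \cite{Berti-Montalto}, whose proof is exactly your scheme --- the exact composition symbol $C(\vphi,x,\xi)=\sum_{\eta\in\Z^3} A(\vphi,x,\xi+\eta)\,\widehat B(\vphi,\eta,\xi)\,e^{\ii \eta\cdot x}$, Peetre's inequality to shift the $\xi$-weights, tame/Cauchy--Schwarz estimates absorbing the $\langle\eta\rangle^{|m|+\beta}$ loss into the extra regularity of $B$, and a first-order Taylor expansion of $A$ in $\xi$ for item $(ii)$ --- and it carries over verbatim to $3\times 3$ matrix symbols since $\|\cdot\|_{\HS}$ is submultiplicative and your formula keeps the factors in the correct order. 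One bookkeeping caution: to land exactly on the stated indices $s_0+|m|+\beta$ and $s+|m|+\beta$, the Cauchy--Schwarz with the $s_0$-weights should be performed inside the convolution sum over the Fourier indices $(\ell',\eta)$ (i.e.\ one reproves the tame product estimate with the $\eta$-shifted argument in $A$), rather than applying \eqref{p1-pr} termwise and summing the resulting $H^s$ norms over $\eta$, which would cost an additional $\langle\eta\rangle^{3/2+}$ and hence slightly more regularity on $B$ than the lemma states.
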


\begin{proof}
See Lemma 2.13 in \cite{Berti-Montalto}
\end{proof}
For functions $u, v : \T^3 \to \R^n$, $n=1$ or $n=3$,
we consider the $L^2(\T^3,\R^n)$ scalar product 
\begin{equation} \label{def scalar prod L2 T3 R3}
\langle u,v \rangle_{L^2(\T^3, \R^n)} 
:= \Pi_0 (u \cdot v)
\end{equation}
where 
$\Pi_0$ is the space average defined in \eqref{definizione proiettore media spazio tempo}
and ``\,$\cdot$\,'' is the standard scalar product in $\R^n$.
The adjoint of an operator mapping $H^s(\T^\nu \times \T^3,\R^n)$ 
into itself is considered with respect to the scalar product \eqref{def scalar prod L2 T3 R3}. 

\begin{lemma}[\bf Adjoint]\label{lemma aggiunto} 
Let $m \in \R$, $s \geq s_0$ and ${\cal A} = {\rm Op}(A) \in {\cal OPS}^m_{s + s_0 + |m|, 0}$. 
Then the adjoint operator ${\rm Op}(A)^*  \in {\cal OPS}^m_{s, 0}$ and $|{\rm Op}(A)^*|_{m, s, 0}^{k_0, \gamma} \lesssim_m |{\rm Op}(A)|_{m, s + s_0 + |m|, 0}^{k_0, \gamma}$. 
\end{lemma}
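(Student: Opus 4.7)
The plan is to derive an explicit Fourier formula for the symbol $A^*(\vphi, x, \xi)$ of the adjoint ${\rm Op}(A)^*$ and then control its weighted Sobolev norm by combining Peetre's inequality in $\xi$ with the Sobolev summability afforded by \eqref{definizione s0}. The variable $\vphi$ plays a purely spectator role: the adjoint is taken in $L^2(\T^3, \C^n)$ with $\vphi$ fixed, so the whole computation is an $x$-calculation, and the passage to the weighted norm \eqref{3009.2} will just amount to applying the pointwise-in-$\lambda$ estimate to each $\gamma^{|k|}\partial_\lambda^k A$ with $|k| \leq k_0$.

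First I would compute the matrix of ${\rm Op}(A)$ in the exponential basis $\{e^{\ii j \cdot x}\}_{j \in \Z^3}$. Expanding $A(\vphi, x, \xi) = \sum_{\eta \in \Z^3} \widehat A(\vphi, \eta, \xi)\, e^{\ii \eta \cdot x}$ in its $x$-Fourier series, the $(k, \xi)$-entry of the matrix of ${\rm Op}(A)$ equals $\widehat A(\vphi, k - \xi, \xi) \in \Mat_{n \times n}(\C)$. Taking its conjugate transpose and re-interpreting the resulting matrix as a pseudo-differential operator yields the standard formula
\[
A^*(\vphi, x, \xi) \,=\, \sum_{\eta \in \Z^3} \widehat A(\vphi, -\eta, \xi + \eta)^* \, e^{\ii \eta \cdot x}\,,
\]
where $M^*$ is the conjugate transpose of the matrix $M$. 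In particular ${\rm Op}(A)^* = {\rm Op}(A^*)$ on trigonometric polynomials, hence on $H^s$ by density once the bounds below are established.

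Next I would estimate $\|A^*(\cdot, \xi)\|_s$. Using Parseval in $(\vphi, x)$ and the invariance of the Hilbert--Schmidt norm under conjugate transposition, and then reindexing $(\ell, \eta) \mapsto (-\ell, -\eta)$,
\[
\|A^*(\cdot, \xi)\|_s^2 \,=\, \sum_{\ell \in \Z^\nu,\, \eta \in \Z^3} \langle \ell, \eta \rangle^{2s} \, |\widehat A(\ell, \eta, \xi + \eta)|^2\,.
\]
Peetre's inequality gives $\langle \xi \rangle^{-2m} \leq C_m \langle \xi + \eta \rangle^{-2m}\langle \eta \rangle^{2|m|}$, and the trivial bound $\langle \eta \rangle \leq \langle \ell, \eta \rangle$ lets me rewrite $\langle \ell, \eta \rangle^{2s} \langle \eta \rangle^{2|m|} \leq \langle \ell, \eta \rangle^{2(s + s_0 + |m|)} \langle \ell, \eta \rangle^{-2 s_0}$. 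Bounding a single Fourier coefficient by the full $\|\cdot\|_{s + s_0 + |m|}$-norm then produces
\[
\langle \xi \rangle^{-2m}\|A^*(\cdot, \xi)\|_s^2 \,\lesssim_m\, \Bigl( \sup_{\xi' \in \R^3} \langle \xi' \rangle^{-2m} \|A(\cdot, \xi')\|_{s + s_0 + |m|}^2 \Bigr) \sum_{(\ell, \eta) \in \Z^{\nu + 3}} \langle \ell, \eta \rangle^{-2 s_0}\,,
\]
and the last sum converges because $2 s_0 > \nu + 3$ by \eqref{definizione s0}. Taking the supremum over $\xi$ and a square root gives $|{\rm Op}(A)^*|_{m, s, 0} \lesssim_m |{\rm Op}(A)|_{m, s + s_0 + |m|, 0}$.

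The only real subtlety is the argument shift $\xi \mapsto \xi + \eta$ inside $\widehat A(\vphi, -\eta, \xi + \eta)^*$, which is what forces the loss of $s_0 + |m|$ derivatives: one factor $\langle \eta \rangle^{|m|}$ is consumed by Peetre in order to convert $\langle \xi \rangle^{-m}$ into $\langle \xi + \eta \rangle^{-m}$, while the remaining $\langle \eta \rangle^{s_0}$ provides absolute summability over $(\ell, \eta) \in \Z^{\nu + 3}$. Finally, since $\lambda$-differentiation commutes with taking adjoints and with the $(\vphi, x)$-Fourier transform, applying the pointwise estimate above to each $\gamma^{|k|} \partial_\lambda^k {\rm Op}(A)$ for $|k| \leq k_0$ and then maximizing over $k$ and $\lambda$ produces exactly the claimed weighted bound $|{\rm Op}(A)^*|_{m, s, 0}^{k_0, \gamma} \lesssim_m |{\rm Op}(A)|_{m, s + s_0 + |m|, 0}^{k_0, \gamma}$.
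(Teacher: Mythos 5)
Your proof is correct. It is worth noting how it relates to the paper's own argument: the paper disposes of this lemma in two lines, reducing to the scalar entries via ${\rm Op}(A)^* = ({\rm Op}(A_{ki})^*)_{i,k}$ and then invoking Lemma 2.16 of the cited work of Berti--Montalto, whereas you carry out the full computation directly at the level of matrix symbols (the conjugate-transpose of the blocks being absorbed harmlessly since $\|\cdot\|_{\HS}$ is invariant under it). The substance of your argument -- the explicit formula for the adjoint symbol from the matrix representation, Peetre's inequality to trade $\langle \xi \rangle^{-m}$ for $\langle \xi \pm \eta \rangle^{-m}\langle \eta \rangle^{|m|}$, and the summability of $\sum_{(\ell,\eta)} \langle \ell,\eta \rangle^{-2s_0}$ guaranteed by \eqref{definizione s0} -- is exactly the standard mechanism behind the cited scalar lemma, and it correctly explains where the loss $s_0 + |m|$ comes from; your treatment of the parameter $\lambda$ is also fine, since the loss does not interact with the index shift $s - |k|$ in \eqref{3009.2}. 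One small slip: with your adjoint-symbol formula $A^*(\vphi,x,\xi) = \sum_\eta \widehat A(\vphi,-\eta,\xi+\eta)^* e^{\ii \eta \cdot x}$, after taking the $\vphi$-Fourier coefficients (conjugation flips the $\vphi$-frequency) and reindexing $(\ell,\eta) \mapsto (-\ell,-\eta)$, the Parseval identity should read $\|A^*(\cdot,\xi)\|_s^2 = \sum_{\ell,\eta} \langle \ell,\eta \rangle^{2s} |\widehat A(\ell,\eta,\xi-\eta)|^2$, with $\xi - \eta$ rather than $\xi + \eta$; this is immaterial, since Peetre's inequality and the summation over all $\eta \in \Z^3$ are insensitive to the sign, so the estimate and the conclusion stand unchanged.
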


\begin{proof}
For a matrix symbol $A = (A_{i k})_{i, k = 1,2,3}$, one computes 
${\rm Op}(A)^* = ( {\rm Op}(A_{k i})^* )_{i, k = 1,2,3}$. 
Then we argue as in Lemma in 2.16 \cite{Berti-Montalto} to each operator ${\rm Op}(A_{k i})^*$. 
\end{proof}

\begin{lemma}[\bf Neumann series]\label{Lemma Neumann}
Let $s \geq s_0$ and $\Psi \in {\cal OPS}^{- m}_{s, 0}$, $m \geq 0$. 
There exists $\delta = \delta(s_0, k_0) \in (0, 1)$ small enough 
such that, if $|\Psi|_{- m , s_0, 0}^{k_0, \gamma} \leq \delta$, 
then $\Phi = {\rm Id} + \Psi$ is invertible 
and $| \Phi^{-1} - {\rm Id} |_{- m, s, 0}^{k_0, \gamma} 
\lesssim_{s,   k_0} | \Psi|_{- m, s, 0}^{k_0, \gamma}$. 
\end{lemma}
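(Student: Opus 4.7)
The statement is a standard Neumann series estimate for pseudo-differential operators with tame norms, and I would prove it exactly in that spirit: set $\Phi^{-1} := \mathrm{Id} + \sum_{n \geq 1}(-\Psi)^n$ formally, show convergence of this series in ${\cal OPS}^{-m}_{s,0}$ with the claimed bound, and verify that the formal inverse is a true two-sided inverse.

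\textbf{Key step: iterated composition estimate.} The heart of the matter is to bound $|\Psi^n|_{-m,s,0}^{k_0,\gamma}$ geometrically. For each $n \geq 2$, I factor $\Psi^n = \Psi \cdot \Psi^{n-1}$ and apply Lemma \ref{lemma stime Ck parametri}(i) with the first factor $\Psi$ regarded as an operator of order $0$ (using the trivial embedding ${\cal OPS}^{-m}_{s,0} \subset {\cal OPS}^{0}_{s,0}$, with $|\Psi|_{0,\sigma,0}^{k_0,\gamma} \leq |\Psi|_{-m,\sigma,0}^{k_0,\gamma}$ for every $\sigma$, which is immediate from Definition \ref{definizione norma pseudo diff} since $m \geq 0$) and with the second factor $\Psi^{n-1}$ kept of order $-m$. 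Crucially, since the first factor has $|m_A| = 0$, the composition estimate requires only regularity $s$ (and $s_0$) on the second factor, with no loss $s + m$. This gives
\begin{equation*}
|\Psi^n|_{-m,s,0}^{k_0,\gamma}
\leq C(s,k_0) \bigl( |\Psi|_{-m,s,0}^{k_0,\gamma}\, |\Psi^{n-1}|_{-m,s_0,0}^{k_0,\gamma}
+ |\Psi|_{-m,s_0,0}^{k_0,\gamma}\, |\Psi^{n-1}|_{-m,s,0}^{k_0,\gamma} \bigr).
\end{equation*}
A straightforward induction on $n$, using the smallness assumption $|\Psi|_{-m,s_0,0}^{k_0,\gamma} \leq \delta$ with $C(s_0,k_0)\delta \leq 1/2$, then yields the two geometric bounds
\begin{equation*}
|\Psi^n|_{-m,s_0,0}^{k_0,\gamma} \leq (2 C\delta)^{n-1}\, |\Psi|_{-m,s_0,0}^{k_0,\gamma},
\qquad
|\Psi^n|_{-m,s,0}^{k_0,\gamma} \leq n\, (2C\delta)^{n-1}\, |\Psi|_{-m,s,0}^{k_0,\gamma}.
\end{equation*}

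\textbf{Summation and conclusion.} Taking $\delta = \delta(s_0,k_0)$ small enough so that $2C(s_0,k_0)\delta < 1/2$, the series $\sum_{n\geq 1}(-\Psi)^n$ converges absolutely in $|\cdot|_{-m,s,0}^{k_0,\gamma}$, and
\begin{equation*}
|\Phi^{-1} - \mathrm{Id}|_{-m,s,0}^{k_0,\gamma}
\leq \sum_{n \geq 1} |\Psi^n|_{-m,s,0}^{k_0,\gamma}
\lesssim_{s,k_0} |\Psi|_{-m,s,0}^{k_0,\gamma},
\end{equation*}
which is the desired bound. To see that $\Phi^{-1}$ so defined is really the inverse of $\Phi = \mathrm{Id} + \Psi$, I would verify $\Phi \Phi^{-1} = \Phi^{-1} \Phi = \mathrm{Id}$ directly from the telescoping of partial sums of the series, interpreting the products as bounded operators on Sobolev spaces by Lemma \ref{azione pseudo}.

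\textbf{Main obstacle.} The only point that requires a little care is the regularity of the intermediate symbols: a naive use of the composition estimate with both factors kept of order $-m$ would require $|\Psi|_{-m,s+m,0}^{k_0,\gamma}$, which is not assumed. Peeling off just one factor and using the embedding ${\cal OPS}^{-m}_{s,0} \subset {\cal OPS}^{0}_{s,0}$ (to set $|m_A| = 0$ in Lemma \ref{lemma stime Ck parametri}(i)) removes this loss, and no further obstacle arises.
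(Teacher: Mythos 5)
Your overall route is the standard one (for this lemma the paper itself gives no argument and simply points to Lemma 2.17 of \cite{Berti-Montalto}), and your key device is correct: viewing the left factor of $\Psi^n=\Psi\cdot\Psi^{n-1}$ as an operator of order $0$ through the embedding $|\Psi|_{0,\sigma,0}^{k_0,\gamma}\leq|\Psi|_{-m,\sigma,0}^{k_0,\gamma}$ (valid since $m\geq 0$), so that Lemma \ref{lemma stime Ck parametri}$(i)$ applies with $|m|=0$ and no loss $s\mapsto s+m$ on the second factor, is exactly the right way to keep the order $-m$ along the iteration, and your verification of the two-sided inverse by telescoping is fine.

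There is, however, a gap in the constant bookkeeping, and it sits precisely where the stated threshold $\delta=\delta(s_0,k_0)$ is at stake. Lemma \ref{lemma stime Ck parametri}$(i)$, as stated in this paper, carries the constant $\lesssim_{s,m,k_0,\beta}$ in front of \emph{both} terms, so your recursion is $|\Psi^n|_{-m,s,0}^{k_0,\gamma}\leq C(s,k_0)\big(|\Psi|_{-m,s,0}^{k_0,\gamma}|\Psi^{n-1}|_{-m,s_0,0}^{k_0,\gamma}+|\Psi|_{-m,s_0,0}^{k_0,\gamma}|\Psi^{n-1}|_{-m,s,0}^{k_0,\gamma}\big)$, and the second term accumulates a factor $C(s,k_0)\delta$ at each step, not $C(s_0,k_0)\delta$. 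Hence your claimed bound $|\Psi^n|_{-m,s,0}^{k_0,\gamma}\leq n(2C\delta)^{n-1}|\Psi|_{-m,s,0}^{k_0,\gamma}$ is only available with $C=C(s,k_0)$; no induction with geometric ratio $2C(s_0,k_0)\delta$ and an $n$-dependent prefactor closes, since the required inequality $C(s)\big(1+K(n-1)\big)\leq 2C(s_0)Kn$ fails for large $n$ whenever $C(s)>2C(s_0)$. As written, your argument therefore proves the lemma only under a smallness condition $\delta=\delta(s,k_0)$, which is weaker than the statement. The missing ingredient is the asymmetric (tame) form of the composition estimate, with constant $C(s)$ only on the term $|A|_{s}|B|_{s_0+|m|+\beta}$ and constant $C(s_0)$ on the term $|A|_{s_0}|B|_{s+|m|+\beta}$ --- the same structure as the product estimate \eqref{p1-pr}, and the form in which the composition lemma is stated in \cite{Berti-Montalto}, on which the cited Lemma 2.17 relies. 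With that refinement your induction closes with ratio $2C(s_0,k_0)\delta$ (at the price of an $s$-dependent prefactor, which is harmless for the final $\lesssim_{s,k_0}$), and the remaining steps --- absolute convergence of the series, the bound on $\Phi^{-1}-\Id$, and the inverse identity --- go through as you wrote them.
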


\begin{proof}
See Lemma 2.17 in \cite{Berti-Montalto}. 
\end{proof}

%

\subsection{Some conjugations with changes of variables}\label{sez coniugi per eulero}

In the next lemma we exploit some properties of some pseudo-differential operators conjugated by a change of variables. We will always assume the hypotheses of Lemma \ref{lemma:LS norms}. 

\begin{lemma}\label{lemma coniugazione cambio variabile moltiplicazione}
Let $S > s_0$, $\alpha(\cdot; \lambda) \in H^S$, 
$M (\cdot ; \lambda) = (a_{ij}(\cdot; \lambda))_{i, j =1,2,3} \in H^S$ 
and $\| \alpha \|_{s_0}^{k_0, \gamma} \leq \delta$ for some $\delta \in (0, 1)$ small enough. 
Then ${\cal A}^{- 1} {\rm Op}(M) {\cal A}$ is the multiplication operator by the $3 \times 3$ matrix $\widetilde M(\vphi, y) = M(\vphi, y + \breve \alpha(\vphi, y))$.  Moreover for any $s_0 \leq s \leq S$, $\beta \in \N$
$$
|{\rm Op}(\widetilde M)|_{0, s, \beta} = \| \widetilde M \|_s^{k_0, \gamma} \lesssim_{s, k_0} \| M \|_s^{k_0, \gamma} + \| \alpha \|_s^{k_0, \gamma} \| M \|_{s_0}^{k_0, \gamma}\,. 
$$
\end{lemma}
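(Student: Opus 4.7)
The plan is a direct change of variables for the identification of the conjugated operator, followed by an application of Lemma \ref{lemma:LS norms} for the Sobolev estimate. The proof is expected to be routine; the only mildly subtle point is a cancellation coming from the definition of $\breve\alpha$ as the inverse diffeomorphism of $\alpha$.

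First, I would recall that since $M(\vphi,x)$ is independent of $\xi$, the operator $\mathrm{Op}(M)$ acts as multiplication by the $3\times 3$ matrix $M(\vphi,x)$ on vector-valued functions, by \eqref{pseudo norm moltiplicazione}. Hence, for $u : \T^\nu \times \T^3 \to \C^3$,
\[
(\mathrm{Op}(M)\,\mathcal{A}\,u)(\vphi,x) = M(\vphi,x)\, u(\vphi, x + \alpha(\vphi,x)),
\]
and applying $\mathcal{A}^{-1}$ (which replaces $x$ by $y + \breve\alpha(\vphi,y)$) yields
\[
(\mathcal{A}^{-1}\,\mathrm{Op}(M)\,\mathcal{A}\,u)(\vphi,y) = M\bigl(\vphi,\,y+\breve\alpha(\vphi,y)\bigr)\, u\bigl(\vphi,\, y + \breve\alpha(\vphi,y) + \alpha(\vphi, y+\breve\alpha(\vphi,y))\bigr).
\]
The key step is to use that, by the defining relation $y = x + \alpha(\vphi,x) \Leftrightarrow x = y + \breve\alpha(\vphi,y)$, substituting $x = y+\breve\alpha(\vphi,y)$ gives $\alpha(\vphi, y+\breve\alpha(\vphi,y)) = -\breve\alpha(\vphi,y)$, so the inner argument collapses exactly to $y$. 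This shows that $\mathcal{A}^{-1}\,\mathrm{Op}(M)\,\mathcal{A}$ is the multiplication operator by $\widetilde M(\vphi,y) = M(\vphi, y+\breve\alpha(\vphi,y))$, proving the first assertion.

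For the quantitative bound, the identity $|\mathrm{Op}(\widetilde M)|_{0,s,\beta}^{k_0,\gamma} = \|\widetilde M\|_s^{k_0,\gamma}$ is immediate from \eqref{pseudo norm moltiplicazione}, since $\widetilde M$ is independent of $\xi$ and the right-hand side does not depend on $\beta$. To estimate $\|\widetilde M\|_s^{k_0,\gamma}$, I would note that entry by entry $\widetilde M = \mathcal{A}^{-1} M$ in the sense of the composition operator of Lemma \ref{lemma:LS norms}; the smallness assumption $\|\alpha\|_{s_0}^{k_0,\gamma}\leq \delta$ matches the hypothesis for \eqref{p1-diffeo-inv} and \eqref{pr-comp1 inv}, so the tame estimate
\[
\|\widetilde M\|_s^{k_0,\gamma} = \|\mathcal{A}^{-1} M\|_s^{k_0,\gamma} \lesssim_{s,k_0} \|M\|_s^{k_0,\gamma} + \|\alpha\|_s^{k_0,\gamma}\,\|M\|_{s_0}^{k_0,\gamma}
\]
follows directly, applied componentwise to the nine scalar entries $a_{ij}$ and then recombined via \eqref{def Euclidean norm}–\eqref{def Sobolev norm generale}.

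The only potential obstacle is verifying the cancellation $\alpha(\vphi, y+\breve\alpha(\vphi,y)) = -\breve\alpha(\vphi,y)$, which is simply the diffeomorphism identity, and checking that Lemma \ref{lemma:LS norms}$(ii)$ is in fact applicable to matrix-valued functions; both are straightforward, so I do not anticipate any substantive difficulty.
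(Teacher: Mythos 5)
Your proposal is correct and follows essentially the same route as the paper, whose proof is just the one-line observation that the statement is a direct consequence of Lemma \ref{lemma:LS norms}-$(ii)$ and \eqref{pseudo norm moltiplicazione}; your computation of the cancellation $\alpha(\vphi, y+\breve\alpha(\vphi,y)) = -\breve\alpha(\vphi,y)$ and the componentwise application of \eqref{pr-comp1 inv} simply spell out the details that the paper leaves implicit.
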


\begin{proof}
The lemma is a straightforward consequence of Lemma \ref{lemma:LS norms}-$(ii)$ and the estimate \eqref{pseudo norm moltiplicazione}. 
\end{proof}

\begin{lemma}\label{lemma coniugazione curl nabla diffeo}
Let $S > s_0$, $\alpha(\cdot; \lambda) \in H^{S + 1}$, $a(\cdot; \lambda) \in H^S$ and $\| \alpha \|_{s_0 + 1}^{k_0, \gamma} \leq \delta$ for some $\delta \in (0, 1)$ small enough. Then for any $s_0 \leq s \leq S$, $\beta \in \N$, ${\cal A}^{- 1} a \cdot \nabla {\cal A} = {\rm Op}(M_\nabla) \in {\cal OPS}_{s, \beta}^1$ and ${\cal A}^{- 1} {\rm curl} {\cal A} = {\rm Op}(M_{\rm curl}) \in {\cal OPS}_{s, \beta}^1$. The following estimates hold for any $s_0 \leq s \leq S$: 
$$
\begin{aligned}
& |{\cal A}^{- 1} a \cdot \nabla {\cal A} |_{1, s, \beta}^{k_0, \gamma} \lesssim_{s, k_0, \beta} \| a \|_s^{k_0, \gamma} + \| \alpha \|_{s + 1}^{k_0, \gamma} \| a \|_{s_0}^{k_0, \gamma}\,, \\
& |{\cal A}^{- 1} {\rm curl} {\cal A} |_{1, s, \beta}^{k_0, \gamma} \lesssim_{s, k_0, \beta} 1 + \| \alpha \|_{s + 1}^{k_0, \gamma}\,. 
\end{aligned} 
 $$
 Furthermore, the symbols $M_\nabla, M_{\rm curl}$ satisfy the symmetry conditions
 $$
 M_\nabla (\vphi, x, \xi) = - M_\nabla(\vphi, x, - \xi), \quad M_{\rm curl}(\vphi, x, \xi) = - M_{\rm curl}(\vphi, x, - \xi)\,.
 $$
\end{lemma}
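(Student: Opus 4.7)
The plan is a direct chain-rule calculation followed by an application of the tame product and composition estimates already established in Lemma \ref{lemma:LS norms}. Writing $\sigma(\vphi, y) := y + \breve\alpha(\vphi, y)$, so that $({\cal A}^{-1} v)(\vphi, y) = v(\vphi, \sigma(\vphi, y))$, the relations $y_j = x_j + \alpha_j(\vphi, x)$ and the chain rule give
\[
{\cal A}^{-1} \partial_{x_i} {\cal A} = \sum_{j=1}^3 c_{ij}(\vphi, y)\, \partial_{y_j},
\qquad c_{ij}(\vphi, y) := \delta_{ij} + (\partial_{x_i}\alpha_j)(\vphi, \sigma(\vphi, y)).
\]
Since ${\cal A}$ acts componentwise on vector fields, plugging this identity into $a \cdot \nabla = \sum_i a_i \partial_{x_i}$ produces
\[
{\cal A}^{-1}\, (a \cdot \nabla)\, {\cal A} = \sum_{j=1}^3 b_j(\vphi, y) \partial_{y_j},
\qquad b_j := \sum_{i=1}^3 (a_i \circ \sigma)\, c_{ij},
\]
with matrix symbol $M_\nabla(\vphi, y, \xi) = \ii\, (b(\vphi, y) \cdot \xi)\, I_{3 \times 3}$. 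Plugging the same identity into the explicit $3 \times 3$ matrix form of $\curl$, I would obtain that ${\cal A}^{-1} \curl\, {\cal A}$ is a first-order differential operator with matrix symbol $M_{\curl}(\vphi, y, \xi)$ linear in $\xi$ whose entries are linear combinations of the $c_{ij}$. Because both symbols are homogeneous of degree one in $\xi$, the symmetry conditions $M_\nabla(\vphi, y, -\xi) = - M_\nabla(\vphi, y, \xi)$ and $M_{\curl}(\vphi, y, -\xi) = - M_{\curl}(\vphi, y, \xi)$ are then immediate.

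Next I would estimate the Sobolev norms of the coefficients. By \eqref{p1-diffeo-inv}, $\|\breve\alpha\|_{s+1}^{k_0, \gamma} \lesssim_{s, k_0} \|\alpha\|_{s+1}^{k_0, \gamma}$; applying \eqref{pr-comp1 inv} to $a_i$ and to $\partial_{x_i}\alpha_j$, and using the smallness hypothesis on $\|\alpha\|_{s_0+1}^{k_0,\gamma}$ to absorb lower-order terms, gives
\[
\|a_i \circ \sigma\|_s^{k_0, \gamma} \lesssim_{s, k_0} \|a_i\|_s^{k_0, \gamma} + \|\alpha\|_s^{k_0, \gamma}\, \|a_i\|_{s_0}^{k_0, \gamma},
\qquad
\|c_{ij}\|_s^{k_0, \gamma} \lesssim_{s, k_0} 1 + \|\alpha\|_{s+1}^{k_0, \gamma}.
\]
The product estimate \eqref{p1-pr} then yields $\|b_j\|_s^{k_0, \gamma} \lesssim_{s, k_0} \|a\|_s^{k_0, \gamma} + \|\alpha\|_{s+1}^{k_0, \gamma}\, \|a\|_{s_0}^{k_0, \gamma}$, together with the analogous (simpler) bound $\lesssim_{s, k_0} 1 + \|\alpha\|_{s+1}^{k_0, \gamma}$ for the entries of $M_{\curl}$, whose coefficients involve only the $c_{ij}$.

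Finally, I would translate these coefficient bounds into the pseudo-differential norm $|\,\cdot\,|_{1, s, \beta}^{k_0, \gamma}$ from Definition \ref{definizione norma pseudo diff}. Since $M_\nabla$ and $M_{\curl}$ are matrix-valued polynomials of degree one in $\xi$, their $\xi$-derivatives of order $\geq 2$ vanish identically, while the derivatives of order zero and one produce, respectively, coefficients times $\xi$ and constants in $\xi$; the weight $\langle\xi\rangle^{-1 + |\alpha|}$ in \eqref{def norma pseudo-diff} therefore reduces the supremum over $\xi$ to a $\beta$-independent bound controlled by the coefficient norms above, giving the two claimed estimates. I do not anticipate a genuine obstacle: the argument is essentially bookkeeping on top of the tame estimates of Lemma \ref{lemma:LS norms}. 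The only point requiring care is the loss of one derivative on $\alpha$ in the coefficient $\partial_{x_i}\alpha_j$ of ${\cal A}^{-1}\partial_{x_i}{\cal A}$, which forces the working regularity to be $H^{s+1}$ rather than $H^s$ and accounts for the shift $\|\alpha\|_{s+1}^{k_0, \gamma}$ on the right-hand sides and for the hypothesis $\alpha \in H^{S+1}$ in the statement.
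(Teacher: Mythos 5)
Your proposal is correct and follows essentially the same route as the paper: both reduce to the chain-rule identity ${\cal A}^{-1}\partial_{x_i}{\cal A} = \partial_{x_i} + ({\cal A}^{-1}[\partial_{x_i}\alpha])\cdot\nabla$ (your coefficients $c_{ij}$ are exactly $\delta_{ij} + {\cal A}^{-1}[\partial_{x_i}\alpha_j]$), observe that the resulting symbols are linear in $\xi$ and hence odd, and conclude via the tame composition and product estimates of Lemma \ref{lemma:LS norms} together with \eqref{pseudo norm moltiplicazione}. Your treatment of the $a\cdot\nabla$ case is just a slightly more explicit bookkeeping of what the paper leaves implicit.
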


\begin{proof}
By recalling the definition of ${\rm curl}$ given in \eqref{def vorticita}, it suffices only to analyze for any $i = 1,2,3$ the operator ${\cal M} := {\cal A}^{- 1} \partial_{x_i} {\cal A}$. A direct calculation shows that ${\cal M} = \partial_{x_i} + {\cal A}^{- 1}[\partial_{x_i} \alpha] \cdot \nabla$. 
This immediately implies that ${\cal M} = {\rm Op}(M) \in {\cal OPS}^1_{s, k}$ for any $k \in \N$ and $M(\vphi, x, \xi) = - M(\vphi, x, - \xi)$. Moreover by applying the estimates \eqref{pr-comp1},  \eqref{p1-diffeo-inv}, \eqref{pseudo norm moltiplicazione} and the trivial fact that $|\partial_{x_j}|_{1,s, k} \lesssim 1$ for any $s, k$ one gets the estimate $|{\cal M}|_{1, s, \beta}^{k_0, \gamma} \lesssim_{s, \beta, k_0} 1 + \| \alpha \|_{s + 1}^{k_0, \gamma}$. 
\end{proof}

In the following we analyze the conjugation of the operator $\Lambda^{- 1}$ by means of a change of variables, where we recall that $\Lambda := \Pi_0 - \Delta$. Note that the action of the operator $\Lambda$ on a function $u \in H^2(\T^3, \R)$ is given by 
$$
\Lambda u(x) = \widehat u(0) 
+ \sum_{\xi \in \Z^3 \setminus \{ 0 \}} |\xi|^2 \widehat u(\xi) e^{\ii x \cdot \xi}\,. 
$$
We identify the operator $\Lambda$ with ${\rm Op}(\lambda(\xi))$ where 
\begin{equation}\label{simbolo Lambda}
\begin{aligned}
& \lambda \in {\cal C}^\infty(\R^3, \R), \quad \lambda (\xi ) = \lambda(- \xi)\,, \quad \inf_{\xi \in \R^3} \lambda(\xi) > 0\,, \\
& \lambda(\xi) = |\xi|^2 \quad \text{if} \quad |\xi| \geq 1 \quad \text{and} \quad \lambda(0) = 1\,. 
\end{aligned}
\end{equation}
The inverse of $\Lambda$ is computed explicitly 
on periodic functions as 
$$
\Lambda^{- 1} u(x) = \widehat u(0) + \sum_{\xi \in \Z^3 \setminus \{ 0 \}} \frac{1}{|\xi|^2} \widehat u(\xi) e^{\ii x \cdot \xi}\,. 
$$
Then for any $s, \beta\geq 0$ 
\begin{equation}\label{stima Lambda inv}
|\Lambda|_{2, s, \beta}\,,\,|\Lambda^{- 1}|_{- 2, s, \beta} \lesssim_{\beta} 1 \,. 
\end{equation}
We also identify the projector $\Pi_0$ with ${\rm Op}(\chi_0(\xi))$ where $\chi_0 \in {\cal C}^\infty(\R^3, \R)$  satisfies 
\begin{equation}\label{simbolo Pi 0}
\begin{aligned}
& \chi_0 \in {\cal C}^\infty(\R^3, \R), \quad   \chi_0(\xi)= \chi_0 (- \xi)\,, \quad 0 \leq \chi_0 \leq 1, \quad  \chi_0(0) = 1\,, \\
& {\rm supp}(\chi_0) \subset \Big\{ \xi \in \R^3 : |\xi| \leq \frac12 \Big\}\,. 
\end{aligned}
\end{equation}
Hence for any $m, \alpha, s \geq 0$
\begin{equation}\label{stima Pi 0}
|\Pi_0|_{- m, s, \alpha} \lesssim_{m, \alpha} 1\,. 
\end{equation}

\begin{lemma} \label{coniugazione laplaciano cambio variabile}
$S > s_0$, $\alpha(\cdot; \lambda) \in H^{S + 2}$ 
and $\| \alpha \|_{s_0 + 2}^{k_0, \gamma} \leq \delta$ 
for some $\delta = \delta( k_0, \nu) \in (0, 1)$ small enough. 
Then ${\cal P}_\Lambda : = {\cal A}^{- 1} \Lambda {\cal A} 
= \Lambda + {\cal P}_2 + {\cal P}_1$ 
where, for $s_0 \leq s \leq S$, 
${\cal P}_2 = {\rm Op}(P_2) \in {\cal OPS}_{s, 1}^{ 2}$, 
${\cal P}_1 \in {\cal OPS}_{s, 0}^{1}$ 
satisfy the estimates  
$$
|{\cal P}_2|_{2, s, 1}^{k_0, \gamma}\,,\, |{\cal P}_1|_{1 , s, 1}^{k_0, \gamma} \lesssim_{s, k_0}  \| \alpha \|_{s + \mu}^{k_0, \gamma}\,.
$$
Furthermore, the symbol $P_2$ satisfies the symmetry condition 
$P_2(\vphi, x, \xi) = P_2(\vphi, x, - \xi)$.
\end{lemma}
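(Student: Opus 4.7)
The plan is to decompose $\Lambda = \Pi_0 - \Delta$ and conjugate the two pieces by $\mathcal{A}$ separately; the Laplacian part will yield all the leading contributions, while the rank-one-in-$\xi$ piece $\Pi_0$ will contribute only a smoothing correction that can be absorbed into $\mathcal{P}_1$.

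For the Laplacian, I would use the identity from Lemma \ref{lemma coniugazione curl nabla diffeo} in the form
\[
\mathcal{M}_i \;:=\; \mathcal{A}^{-1}\partial_{x_i}\mathcal{A} \;=\; \partial_{x_i} + b_i(\varphi,y)\cdot\nabla,\qquad b_i \;:=\; \mathcal{A}^{-1}[\partial_{x_i}\alpha],
\]
and then expand $-\mathcal{A}^{-1}\Delta\mathcal{A} = -\sum_{i=1}^3\mathcal{M}_i^2$ by the Leibniz rule, grouping by differential order to obtain $-\Delta + \mathcal{Q}_2 + \mathcal{Q}_1$. The purely second-order part has the explicit, scalar, $x$-quantized symbol
\[
P_2^{\mathrm{lap}}(\varphi,x,\xi) \;=\; 2\sum_{i=1}^3 (b_i\cdot\xi)\,\xi_i \;+\; \sum_{i=1}^3(b_i\cdot\xi)^2,
\]
which is a homogeneous polynomial of degree $2$ in $\xi$, hence automatically satisfies the symmetry $P_2(\varphi,x,-\xi) = P_2(\varphi,x,\xi)$. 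The remainder $\mathcal{Q}_1$ collects the first-order leftovers produced by the commutations $\partial_{x_i}\circ(b_i\cdot\nabla)$ and $(b_i\cdot\nabla)\circ(b_i\cdot\nabla)$, with coefficients polynomial in $b_i$ and $\nabla b_i$.

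For $\Pi_0$, a direct computation from the definitions of $\mathcal{A}, \mathcal{A}^{-1}$ and the space average $\Pi_0$ yields
\[
\mathcal{A}^{-1}\Pi_0\mathcal{A}\,u(\varphi,y) \;=\; \Pi_0\bigl(\det(\mathrm{Id}+D_y\breve\alpha(\varphi,\cdot))\,u(\varphi,\cdot)\bigr),
\]
so that $\mathcal{A}^{-1}\Pi_0\mathcal{A} - \Pi_0 = \Pi_0 \circ m_\alpha$, where $m_\alpha$ is multiplication by $\det(\mathrm{Id}+D_y\breve\alpha)-1$. Since $\Pi_0$ has symbol compactly supported in $\xi$ and is therefore smoothing of every negative order by \eqref{stima Pi 0}, Lemma \ref{lemma stime Ck parametri} combined with \eqref{pseudo norm moltiplicazione} and the inverse-diffeomorphism bound \eqref{p1-diffeo-inv} turns this correction into an $\mathrm{OPS}^{-N}$ operator, of size $\lesssim_s \|\alpha\|_{s+\mu}^{k_0,\gamma}$ for any $N$. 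Setting $\mathcal{P}_2 := \mathcal{Q}_2$ and $\mathcal{P}_1 := \mathcal{Q}_1 + (\mathcal{A}^{-1}\Pi_0\mathcal{A}-\Pi_0)$ then gives the decomposition in the statement.

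The remaining quantitative bounds reduce, via Lemmas \ref{lemma:LS norms} and \ref{lemma stime Ck parametri}, to controlling $\|b_i\|_{s+1}^{k_0,\gamma}$ and the Sobolev norms of $\nabla b_i$ and $b_i\cdot\nabla b_i$ in terms of $\|\alpha\|_{s+\mu}^{k_0,\gamma}$, which follows from \eqref{pr-comp1 inv}, \eqref{p1-pr} and \eqref{p1-diffeo-inv}. The main (but essentially bookkeeping) point to watch is the $\beta=1$ regularity of the symbol of $\mathcal{P}_2$: one $\xi$-derivative of $P_2^{\mathrm{lap}}$ is linear in $\xi$ with coefficients depending only on $b_i$, so $|\mathcal{P}_2|_{2,s,1}^{k_0,\gamma}$ does not require any extra $(\varphi,x)$-regularity on $\alpha$ beyond the fixed $\mu = O(1)$ needed already for $b_i$; the analogous check for the first-order symbol of $\mathcal{P}_1$ is even easier since $\beta=0$ there.
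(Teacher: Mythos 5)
Your proposal is correct and follows essentially the same route as the paper: split $\Lambda = \Pi_0 - \Delta$, compute $-{\cal A}^{-1}\Delta{\cal A}$ as $-\Delta$ plus an explicit second-order differential operator with coefficients controlled by $\|\alpha\|_{s+2}^{k_0,\gamma}$ (your $\mathcal{M}_i$-expansion is just the explicit form of the paper's ``direct calculation'', and your quadratic-in-$\xi$ symbol gives the evenness exactly as the paper's $\sum a_{ij}\xi_i\xi_j$), and treat the average part via ${\cal A}^{-1}\Pi_0{\cal A}-\Pi_0 = \Pi_0\circ(\det(\mathrm{Id}+D\breve\alpha)-1)$, absorbed into ${\cal P}_1$. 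The only cosmetic difference is that you observe this last correction is arbitrarily smoothing, whereas the paper simply bounds it as an order-zero operator, which is all that is needed.
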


\begin{proof}
The assumption of the lemma allows to apply Lemma \ref{lemma:LS norms} on the change of variables. 
A direct calculation shows that 
$- {\cal A}^{-1} \Delta {\cal A}$ is an elliptic operator of the form 
\begin{equation}\label{bla bla car 0}
\begin{aligned}
- {\cal A}^{-1} \Delta {\cal A} 
& = - \Delta - \sum_{i, j = 1}^3 a_{i j}(\vphi, x) \partial_{x_i x_j} 
+ \sum_{i = 1}^3 b_i(\vphi, x) \partial_{x_i}
\end{aligned}
\end{equation}
with 
\begin{equation}\label{stime a ij b i}
\| a_{i j} \|_s^{k_0, \gamma}\,,\, \| b_i \|_s^{k_0, \gamma} \lesssim_{s, k_0} \| \alpha \|_{s + 2}^{k_0, \gamma}\,, \quad i, j = 1,2,3\,. 
\end{equation}
Recalling the definition of $\Pi_0$ given in \eqref{definizione proiettore media spazio tempo} 
and using that ${\cal A}, {\cal A}^{- 1}$ are of the form \eqref{def cal A}, 
one computes 
\begin{equation}\label{bla bla car 1000}
\Pi_0 {\cal A} = \Pi_0 + \Pi_0 ({\cal A} - {\rm Id})
\end{equation}
and 
\[
\Pi_0 (({\cal A} - {\rm Id}) h) = \Pi_0 (q_0 h), 
\qquad 
q_0(\ph,x) := (\mA - \mathrm{Id})^*[1]
= \det (\mathrm{I} + D \breve \a(\ph,x)) - 1.
\]
Then using \eqref{pseudo norm moltiplicazione}, 
Lemma \ref{lemma stime Ck parametri}-$(iii)$ 
and the trivial facts that $|\Pi_0|_{0, s, 0} \leq 1$, one obtains that $\Pi_0 ({\cal A} - {\rm Id}) \in {\cal OPS}^0_{s, 1}$, 
with
\begin{equation}\label{bla bla car 1001}
|\Pi_0 ({\cal A} - {\rm Id}) |_{0, s, 1}^{k_0, \gamma} 
\lesssim_{s, k_0} \| \alpha \|_{s + 1}^{k_0, \gamma}.
\end{equation} 
Since $\mA^{-1} \Pi_0 = \Pi_0$, 
by \eqref{bla bla car 0}, \eqref{bla bla car 1000}, 
one gets that 
\begin{equation}\label{kiwi 0}
\begin{aligned}
& {\cal A}^{- 1} \Lambda {\cal A} = \Lambda + {\cal P}_2 + {\cal P}_1, \\
& {\cal P}_2 := - \sum_{i, j =1 }^3 a_{i j}(\vphi,x) \partial_{x_i x_j}\,, \quad {\cal P}_1 := \sum_{i = 1}^3 b_i(\vphi, x) \partial_{x_i} + \Pi_0 ({\cal A} - {\rm Id})\,. 
\end{aligned}
\end{equation}
By \eqref{pseudo norm moltiplicazione}, Lemma \ref{lemma stime Ck parametri}, using that $|\partial_{x_i x_j}|_{2, s, 1}, |\partial_{x_i }|_{1, s, 1} \lesssim 1$ and the estimates \eqref{stime a ij b i}, \eqref{bla bla car 1001} one gets the claimed bounds on ${\cal P}_1$ and ${\cal P}_2$. 
Moreover by the formula \eqref{kiwi 0}, one deduces that the symbol of the operator ${\cal P}_2$ 
is $P_2(\vphi, x, \xi) = \sum_{i, j = 1}^3 a_{i j}(\vphi, x) \xi_i \xi_j$, 
which is 
even with respect to the variable $\xi$. 
\end{proof}

\begin{lemma}\label{coniugazione inverso laplaciano cambio variabile}
Let $s > s_0$, 
$\alpha(\cdot; \lambda) \in H^{s + \mu}$, 
and $\| \alpha \|_{s_0 + \mu}^{k_0, \gamma} \leq \delta$ 
for some $\delta = \delta(s, k_0, \nu) \in (0, 1)$ small enough 
and some $\mu = \mu(k_0, \nu) > 0$ large enough. 
Then ${\cal P}_\Lambda := {\cal A}^{-1} \Lambda {\cal A}$ 
defined in Lemma \ref{coniugazione laplaciano cambio variabile}
is invertible and its inverse is of the form 
\[
{\cal P}_\Lambda^{- 1} 
= {\cal A}^{- 1} \Lambda^{- 1} {\cal A} 
= {\cal P}_{- 2} + {\cal P}_{- 3}
\]
where ${\cal P}_{- 2} = {\rm Op}(P_{- 2}) \in {\cal OPS}_{s, 1}^{ - 2}$, 
${\cal P}_{- 3} \in {\cal OPS}_{s, 0}^{- 3}$ 
satisfy the estimates  
$$
|{\cal P}_{ - 2}|_{- 2, s, 1}^{k_0, \gamma}\,,\, |{\cal P}_{- 3}|_{- 3 , s, 0}^{k_0, \gamma} \lesssim_{s, k_0} 1 +  \| \alpha \|_{s + \mu}^{k_0, \gamma}\,.
$$
Furthermore, the symbol $P_{ -2}$ satisfies the symmetry condition $P_{- 2}(\vphi, x, \xi) = P_{- 2}(\vphi, x, - \xi)$.
\end{lemma}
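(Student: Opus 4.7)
The strategy is a parametrix-plus-Neumann argument. Invertibility itself is trivial, since $\mathcal{A}^{-1}\Lambda^{-1}\mathcal{A}$ is a two-sided inverse of $\mathcal{P}_\Lambda = \mathcal{A}^{-1}\Lambda \mathcal{A}$ by direct cancellation of the $\mathcal{A}$'s, so $\mathcal{P}_\Lambda^{-1} = \mathcal{A}^{-1}\Lambda^{-1}\mathcal{A}$; the content of the lemma is therefore only the asserted decomposition.

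By Lemma \ref{coniugazione laplaciano cambio variabile}, $\mathcal{P}_\Lambda = \mathrm{Op}(\sigma_2) + \mathcal{P}_1$, where the principal symbol is $\sigma_2(\varphi,x,\xi) := \lambda(\xi) + P_2(\varphi,x,\xi)$. Since $P_2$ is a sum of terms $a_{ij}(\varphi,x)\xi_i\xi_j$ with $\|a_{ij}\|_{s_0}^{k_0,\gamma} \lesssim \|\alpha\|_{s_0+2}^{k_0,\gamma} \leq \delta$, the bound $\lambda(\xi) \geq c_0 \langle\xi\rangle^2$ gives $|P_2| \leq \tfrac12 \lambda$ for $\delta$ small, so $\sigma_2 \geq \lambda/2 \geq c > 0$ uniformly. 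I would define the parametrix $\mathcal{P}_{-2} := \mathrm{Op}(1/\sigma_2)$. Quotient-rule estimates for $\partial_\xi^k(1/\sigma_2)$ and chain-rule estimates in the parameters $(\omega,\zeta)$ (entering through $\alpha$) then place $\mathcal{P}_{-2} \in {\cal OPS}^{-2}_{s,1}$ with $|\mathcal{P}_{-2}|_{-2,s,1}^{k_0,\gamma} \lesssim 1 + \|\alpha\|_{s+\mu}^{k_0,\gamma}$, and the required symmetry $P_{-2}(\varphi,x,\xi) = P_{-2}(\varphi,x,-\xi)$ is immediate since both $\lambda$ and $P_2$ are even in $\xi$.

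The key point in estimating the error is that $\Lambda = \mathrm{Op}(\lambda(\xi))$ is a pure Fourier multiplier, so the composition $\mathcal{P}_{-2}\Lambda = \mathrm{Op}(\sigma_{-2}(\varphi,x,\xi)\,\lambda(\xi))$ holds \emph{exactly}, with no symbolic remainder. Applying Lemma \ref{lemma stime Ck parametri}(ii) to write $\mathcal{P}_{-2}\mathcal{P}_2 = \mathrm{Op}(\sigma_{-2}P_2) + \mathcal{R}_{22}$ with $\mathcal{R}_{22} \in {\cal OPS}^{-1}_{s,0}$, and using the algebraic identity $\sigma_{-2}(\lambda+P_2) = 1$, one obtains
\[
\mathcal{P}_{-2}\,\mathcal{P}_\Lambda \;=\; \mathrm{Op}(\sigma_{-2}\sigma_2) + \mathcal{R}_{22} + \mathcal{P}_{-2}\mathcal{P}_1 \;=\; \mathrm{Id} + \mathcal{R}_{-1},
\qquad \mathcal{R}_{-1} := \mathcal{R}_{22} + \mathcal{P}_{-2}\mathcal{P}_1 \in {\cal OPS}^{-1}_{s,0}.
\]
Using the composition estimates of Lemma \ref{lemma stime Ck parametri} together with the bound $|\mathcal{P}_j|_{j,s,1}^{k_0,\gamma} \lesssim \|\alpha\|_{s+\mu}^{k_0,\gamma}$ for $j=1,2$ from Lemma \ref{coniugazione laplaciano cambio variabile}, one gets $|\mathcal{R}_{-1}|_{-1,s_0,0}^{k_0,\gamma} \lesssim \|\alpha\|_{s_0+\mu}^{k_0,\gamma} \leq \delta$. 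This smallness is the crux of the proof: it relies critically on the Fourier-multiplier structure of $\Lambda$ (which eliminates one whole family of symbolic-calculus remainders) so that every surviving error term is already proportional to $\|\alpha\|$.

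Finally, Lemma \ref{Lemma Neumann} produces $\mathcal{S}_{-1} \in {\cal OPS}^{-1}_{s,0}$ with $|\mathcal{S}_{-1}|_{-1,s,0}^{k_0,\gamma} \lesssim \|\alpha\|_{s+\mu}^{k_0,\gamma}$ such that $(\mathrm{Id}+\mathcal{R}_{-1})^{-1} = \mathrm{Id} + \mathcal{S}_{-1}$. Right-multiplying $\mathcal{P}_{-2}\mathcal{P}_\Lambda = \mathrm{Id}+\mathcal{R}_{-1}$ by $\mathcal{P}_\Lambda^{-1}$ and then left-multiplying by $\mathrm{Id}+\mathcal{S}_{-1}$ yields $\mathcal{P}_\Lambda^{-1} = (\mathrm{Id}+\mathcal{S}_{-1})\mathcal{P}_{-2} = \mathcal{P}_{-2} + \mathcal{S}_{-1}\mathcal{P}_{-2}$, so setting $\mathcal{P}_{-3} := \mathcal{S}_{-1}\mathcal{P}_{-2}$ and applying Lemma \ref{lemma stime Ck parametri}(i) with $m = -1$, $m' = -2$ places $\mathcal{P}_{-3}$ in ${\cal OPS}^{-3}_{s,0}$ with the claimed bound. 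The only bookkeeping concern is enlarging $\mu$ to absorb the Sobolev losses $|m|+\beta$ that arise in the several applications of Lemma \ref{lemma stime Ck parametri}, which is routine.
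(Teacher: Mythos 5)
Your proof is correct and takes essentially the same route as the paper: your parametrix symbol $1/(\lambda+P_2)$ is exactly the paper's $P_{-2}=\frac{1}{\lambda(1+F_0)}$, and both arguments rest on the exact composition with the Fourier multiplier $\Lambda$ together with a Neumann series, with the order $-3$ piece arising as an order $-1$ remainder composed with the order $-2$ parametrix. The only cosmetic difference is that the paper first factors $\mathcal{P}_\Lambda=\Lambda(\mathrm{Id}+\mathcal{F})$ and applies the Neumann lemma to the zeroth-order factor before extracting its principal symbol, whereas you apply it to the order $-1$ error of a one-step left parametrix.
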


\begin{proof}
By Lemma \ref{coniugazione laplaciano cambio variabile}, 
we write the operator ${\cal P}_\Lambda = \Lm + \mP_2 + \mP_1$ as 
\begin{equation}\label{primo cap P Lambda}
\begin{aligned}
& {\cal P}_\Lambda = \Lambda ( {\rm Id} + {\cal F} ), \quad 
{\cal F} :=   \Lambda^{- 1} {\cal P}_2 + \Lambda^{- 1} {\cal P}_1\,. 
\end{aligned}
\end{equation}
Note that $\Lambda^{- 1} {\cal P}_2$ is of order $0$ 
and $\Lambda^{- 1} {\cal P}_1$ is of order $- 1$. 
Define 
\begin{equation}
{\cal F}_0 := {\rm Op}(F_0(\vphi, x, \xi)), \quad 
F_0(\vphi, x, \xi) := \lambda(\xi)^{- 1} P_2(\vphi, x, \xi), \quad \ 
{\cal F}_{-1} := {\cal F} - {\cal F}_0
\end{equation}
(in fact, $\mF_0 = \mP_2 \Lm^{-1}$).  
By applying Lemma \ref{lemma stime Ck parametri} 
and the estimates of ${\cal P}_1, {\cal P}_2$ 
provided by Lemma \ref{coniugazione laplaciano cambio variabile} 
(recall also that $\Lambda \equiv {\rm Op}(\lambda)$,  see \eqref{simbolo Lambda}) 
with $s$ replaced by $s + 2$, one obtains the bounds
\begin{equation}\label{stime F0 F1 lemma neumann omogeneo}
|{\cal F}_0|_{0, s + 2, 1}^{k_0, \gamma}\,,\, |{\cal F}_{- 1}|_{- 1, s + 2 , 1}^{k_0, \gamma} \lesssim_{s, k_0} \| \alpha \|_{s + \mu}^{k_0, \gamma}
\end{equation}
for some constant $\mu > 0$. 
Moreover, since $\lambda(\xi)$ and the symbol $P_2(\vphi,x, \xi)$ 
are even functions of $\xi$, 
then also $F_0(\vphi, x, \xi)$ is even in $\xi$. 
By Lemma \ref{Lemma Neumann}, 
using \eqref{stime F0 F1 lemma neumann omogeneo} 
and the hypothesis that $\| \alpha \|_{s_0 + \mu}$ is small enough, 
we deduce that ${\rm Id} + {\cal F}$ is invertible, 
and its inverse satisfies the estimate 
\begin{equation} \label{2602.1}
| (\mathrm{Id} - \mF)^{-1} |_{0,s,0}^{k_0,\g} 
\lesssim_{s, k_0} \, 1 + \| \a \|_{s+\mu}^{k_0,\g}.
\end{equation}
By Lemma \ref{lemma stime Ck parametri}$(ii)$ 
and \eqref{stime F0 F1 lemma neumann omogeneo} one has 
\begin{equation} \label{2602.2}
\mF_0 \, \Op \Big( \frac{1}{1+F_0} \Big) 
= \Op \Big( \frac{F_0}{1 + F_0} \Big) + \mR_{F_0}, 
\qquad 
| \mR_{F_0} |_{-1,s,0}^{k_0,\g} 
\lesssim_s \, \| \a \|_{s+\mu}^{k_0,\g} \,.
\end{equation}
Since $\mF = \mF_0 + \mF_{-1}$, one has 
\begin{align*}
(\Id + \mF) \Op \Big( \frac{1}{1+F_0} \Big)
& = \Op \Big( \frac{1}{1+F_0} \Big) 
+ \Op \Big( \frac{F_0}{1+F_0} \Big) + \mR_{F_0} + \mF_{-1} \Op \Big( \frac{1}{1+F_0} \Big) 
\\ & 
= \Id + \mR_{F_0} + \mF_{-1} \mG, 
\qquad \quad 
\mG := \Op \Big( \frac{1}{1+F_0} \Big),
\end{align*}
whence, applying $(\Id + \mF)^{-1}$ from the left, we get
\begin{equation} \label{2602.3}
\mG = (\Id + \mF)^{-1} - \mR_{\mF}, 
\qquad 
\mR_{\mF} := - (\Id + \mF)^{-1} (\mR_{F_0} + \mF_{-1} \mG).
\end{equation}

Since $\frac{1}{1 + F_0} = \sum_{n \geq 0} (- 1)^n F_0^n$, 
by estimates \eqref{stima Op A n}, \eqref{stime F0 F1 lemma neumann omogeneo}, 
using the assumption that $\| \alpha \|_{s_0 + \mu}^{k_0, \gamma} \leq \delta$ is small enough, 
one gets  
\begin{equation}\label{1 + F0 inverse}
| \mG |_{0, s, 1}^{k_0, \gamma} 
\lesssim_{s, k_0}1 + \| \alpha \|_{s + \mu}^{k_0, \gamma}\,. 
\end{equation}
Then, recalling \eqref{primo cap P Lambda} 
and using \eqref{2602.3} to substitute $(\Id + \mF)^{-1}$,  
we get 
\[
{\cal P}_\Lambda^{-1} 
= ( {\rm Id} + {\cal F} )^{- 1} \Lambda^{- 1} 
= (\mG + \mR_{\mF}) \Lm^{-1}
= {\cal P}_{- 2} + {\cal P}_{- 3}
\]
with 
$$
{\cal P}_{-2} 
= \mG \Lm^{-1}
= {\rm Op}(P_{-2}), 
\qquad  
P_{- 2}(\vphi, x, \xi) := \frac{1}{\lambda(\xi)\big(1 + F_0(\vphi, x, \xi)\big)}, 
\qquad 
{\cal P}_{- 3} := {\cal R}_{\cal F} \Lambda^{- 1}\,. 
$$
Since $F_0$ and $\lambda$ are even in $\xi$, 
then $P_{- 2}$ is also even in $\xi$. 

By \eqref{2602.1}, \eqref{2602.2}, \eqref{2602.3}, \eqref{1 + F0 inverse}
we get 
$|{\cal R}_{\cal F}|_{- 1, s, 0}^{k_0, \gamma} 
\lesssim_{s, k_0} \| \alpha \|_{s + \mu}^{k_0, \gamma}$. 
Thus the estimates for ${\cal P}_{- 2}, {\cal P}_{- 3}$ 
follow 
by the composition estimates of Lemma \ref{lemma stime Ck parametri}, and by using \eqref{stima Lambda inv}. 
\end{proof}

\subsection{Matrix representation of linear operators} \label{sezione matrici norme}

Let us consider a linear matrix operator 
${\cal R} = ({\cal R}_{i k})_{i, k = 1,2,3} : L^2(\T^3, \R^3) \to L^2(\T^3, \R^3)$ 
where ${\cal R}_{ik} : L^2(\T^3, \R) \to L^2(\T^3, \R)$ for any $i, k = 1,2,3$. 
Such an operator can be represented as
\begin{equation}\label{matriciale 1}
{\cal R} u (x) := \sum_{j, j' \in \Z^3} {\cal R}_j^{j'}[\widehat u(j')] e^{\ii j \cdot x}, 
\quad \ \text{for} \ u (x) = \sum_{j \in \Z^3} \widehat u(j) e^{\ii j \cdot x}, 
\end{equation}
where, for $j, j' \in \Z^3$, ${\cal R}_j^{j'}$ is the $3 \times 3$ matrix defined by 
\begin{equation}\label{rappresentazione blocchi 3 per 3}
{\cal R}_j^{j'} := \Big( ({\cal R}_{ik})_j^{j'} \Big)_{i, k = 1,2,3}, \quad 
({\cal R}_{ik})_j^{j'} := \frac{1}{(2\pi)^3} \int_{\T^3} 
{\cal R}_{ik}[e^{\ii j' \cdot x}] e^{- \ii j \cdot x}\, d x\,. 
\end{equation}
It is immediate to check that the matrix representation \eqref{matriciale 1} 
of the operator $\mR$ is equivalent to the pseudo-differential representation 
$\mR = \Op(r)$, 
namely $\mR u(x) = \sum_{\xi \in \Z^3} r(x,\xi) \hat u(\xi) e^{\ii \xi \cdot x}$,
where the symbol $r(x,\xi)$, for $\xi = j' \in \Z^3$, 
is the $3 \times 3$ matrix given by 
\begin{equation} \label{equivalent symbol}
r(x,j') = \sum_{j \in \Z^3} R_j^{j'} e^{\ii (j-j') \cdot x}.
\end{equation}
Thus $R_j^{j'}$ is the Fourier coefficient 
$\hat r(j-j',j')$ 
of frequency $j-j'$
of the function $x \mapsto r(x,j')$.

\begin{definition}[Block-diagonal operator] 
\label{def block-diagonal op}
We say that an operator ${\cal R}$ as in 
\eqref{matriciale 1}-\eqref{rappresentazione blocchi 3 per 3}
is a \emph{$3 \times 3$ block-diagonal operator}
if ${\cal R}_j^{j'} = 0$ for all $j, j' \in \Z^3$ with $j \neq j'$.
\end{definition}

By \eqref{equivalent symbol}, 
$\mR$ is a block-diagonal operator 
if and only if the symbol $r(x,j')$ 
does not depend on $x$. 


We also consider smooth $\vphi$-dependent families of linear operators 
$\T^\nu \to {\cal B} (L^2(\T^3, \R^3))$, $\vphi \mapsto {\cal R}(\vphi)$, 
which we write in Fourier series with respect to $\vphi$ as 
$$
{\cal R}(\vphi) = \sum_{\ell \in \Z^\nu} \widehat{\cal R}(\ell) e^{\ii \ell \cdot \vphi}, \quad \widehat{\cal R}(\ell) := \frac{1}{(2 \pi)^\nu} \int_{\T^\nu} {\cal R}(\vphi) e^{- \ii \ell \cdot \vphi}\, d \vphi, \quad \ell \in \Z^\nu\,. 
$$
According to \eqref{rappresentazione blocchi 3 per 3}, for any $\ell \in \Z^\nu$, the linear operator $\widehat{\cal R}(\ell) \in {\cal B} (L^2(\T^3, \R^3))$ is identified 
with the matrix $(\widehat{\cal R}(\ell)_j^{j'})_{j, j' \in \Z^3}$ 
where each entry $\widehat{\cal R}(\ell)_j^{j'}$ belongs to $\Mat_{3\times 3}(\C)$.
A map $\T^\nu \to {\cal B} (L^2(\T^3, \R^3))$, $\vphi \mapsto {\cal R}(\vphi)$ 
can be also regarded as a linear operator 
$L^2(\T^{\nu + 3}, \R^3) \to L^2(\T^{\nu + 3}, \R^3)$ by 
\begin{equation} \label{amatriciana}
{\cal R} u(\vphi, x) := \sum_{\begin{subarray}{c}
\ell, \ell' \in \Z^\nu \\
j, j' \in \Z^3
\end{subarray}} \widehat{\cal R}(\ell - \ell')_j^{j '} \widehat u(\ell', j') e^{\ii (\ell \cdot \vphi + j \cdot x)}, \quad \forall u \in L^2(\T^{\nu + 3}, \R^3)\,. 
\end{equation}
The representation \eqref{amatriciana} of the operator $\mR$ 
is also equivalent to the pseudo-differential representation 
$\mR = \Op(r)$, 
where the symbol $r(\ph,x,\xi)$, for $\xi = j' \in \Z^3$, is
\begin{equation} \label{equivalent symbol con phi}
r(\ph,x,j') = \sum_{j \in \Z^3} R_j^{j'}(\ph) e^{\ii (j-j') \cdot x},
\end{equation}
which is, in fact, \eqref{equivalent symbol} with, in addition, 
the dependence on $\ph$. 
Similarly as above, $R_j^{j'}(\ph)$ is the Fourier coefficient 
$\hat r(\ph, j-j', j')$ of frequency $j-j'$
of the function $x \mapsto r(\ph,x,j')$.
If we expand both $\hat u(\ph,j')$ and $\hat r(\ph,j-j',j')$ 
in Fourier series also in the $\ph$ variable, 
we deduce that $\widehat \mR(\ell-\ell')_j^{j'}$ appearing in \eqref{amatriciana}
is the Fourier coefficient 
of frequency $(\ell-\ell',j-j')$
of the function $(\ph,x) \mapsto r(\ph,x,j')$.


 
%


\begin{definition} \label{block decay norm}
{\bf (Matrix decay norm)}
Let $\mR$ be an operator represented by the matrix in \eqref{amatriciana}. 
For $s \geq 0$, we define its \emph{matrix decay norm} 
\begin{equation} \label{def decay norm}
| \mR |_s := \sup_{j' \in \Z^3} 
\Big( \sum_{(\ell,j) \in \Z^{\nu+3}} 
\langle \ell, j-j' \rangle^{2s} \| \widehat \mR(\ell)_j^{j'}\|_{\HS}^2  \Big)^{\frac12}.
\end{equation}
If the operator $\mR = \mR(\lm)$ is $k_0$ times differentiable in $\R^{\nu+3}$, 
we define for $s \geq s_0$
\begin{equation} \label{def decay norm parametri}
| \mR |_s^{k_0,\g} 
:= \sup_{|k| \leq k_0} \gamma^{|k|} \sup_{\lambda \in \R^{\nu+3}} 
|\partial_\lambda^k {\cal R}(\lambda)|_{s - |k|}.
\end{equation}
\end{definition}

%

\begin{remark} \label{rem:matrix norm = pseudo-diff norm}
The definition of the norm $|\mR|_s$ in \eqref{def decay norm} 
is very similar to the one 
of the norm $|\mR|_{0,s,0}$ in \eqref{def norma pseudo-diff}:
the only difference is that the $\sup$ in \eqref{def decay norm} 
is over $j' \in \Z^3$ 
(which is the natural choice when using matrices 
with row and column indices in $\Z^3$), 
while the $\sup$ in \eqref{def norma pseudo-diff} is over $\xi \in \R^3$ 
(which is the natural choice when derivatives of symbols with respect to $\xi$ 
have to be considered to prove composition formulas 
like in Lemma \ref{lemma stime Ck parametri}-$(ii)$).
In fact, the norms $| \cdot |_s$ and $| \cdot |_{0,s,0}$ 
are equivalent for operators acting on periodic functions. 
%
\end{remark}



%

The norm $| \cdot |_s^{k_0, \gamma}$ is increasing, namely
$| \mR |_s^{k_0, \gamma} \leq | \mR |_{s'}^{k_0, \gamma}$ for all $s \leq s'$. 
Moreover $|{\cal R}|_s^{k_0, \gamma} \leq |{\cal R} \langle D \rangle^m|_s^{k_0, \gamma}$
for all $m \geq 0$. 
We now state some 
standard properties of the decay norms 
that are needed for the reducibiity scheme 
of Section \ref{sezione riducibilita a blocchi}. 

\begin{lemma}\label{proprieta standard norma decay}
$(i)$ Let $s \geq s_0$, 
$|{\cal R}|_s^{k_0, \gamma}\,,\,\| u \|_s^{k_0, \gamma} < \infty$. Then 
$$
\| {\cal R} u \|_s^{k_0, \gamma} \lesssim_{s, k_0} |{\cal R}|_{s_0}^{k_0, \gamma} \| u \|_s^{k_0, \gamma} + |{\cal R}|_s^{k_0, \gamma} \| u \|_{s_0}^{k_0, \gamma}\,. 
$$
$(ii)$ Let $s \geq s_0$, 
$|{\cal R}|_s^{k_0, \gamma}, |{\cal Q}|_s^{k_0, \gamma} < \infty$. Then 
$$
|{\cal R}{\cal Q}|_s^{k_0 ,\gamma} \lesssim_{s, k_0} |{\cal R}|_s^{k_0, \gamma} |{\cal Q}|_{s_0}^{k_0, \gamma} + |{\cal R}|_{s_0}^{k_0, \gamma} |{\cal Q}|_{s}^{k_0, \gamma}\,. 
$$
$(iii)$ Let $s \geq s_0$, $|{\cal R}|_s^{k_0, \gamma} < \infty$. 
Then there exists a constant $C(s, k_0) > 0$ such that, for any integer $n \geq 1$, 
\[
|{\cal R}^n|_s^{k_0, \gamma} \leq C(s, k)^{n - 1} (|{\cal R}|_{s_0}^{k_0, \gamma})^{n - 1} |{\cal R}|_s^{k_0, \gamma}. 
\]
$(iv)$ Let $s \geq s_0$, $|{\cal R}|_s^{k_0, \gamma} < \infty$. 
Then there exists $\delta(s, k_0) \in (0, 1)$ small enough such that, 
if $|{\cal R}|_{s_0}^{k_0, \gamma} \leq \delta(s, k_0)$, 
then the map $\Phi = {\rm Id} + {\cal R}$ is invertible 
and the inverse satisfies the estimate 
\[
|\Phi^{- 1} - {\rm Id}|_s^{k_0, \gamma} \lesssim_{s, k_0} |{\cal R}|_s^{k_0, \gamma}. 
\]
\noindent
$(v)$ Let $s \geq s_0$, $|{\cal R}|_s^{k_0, \gamma} < \infty$ and let ${\cal Z}$ be the $3 \times 3$ block-diagonal operator defined by ${\cal Z} = {\rm diag}_{j \in \Z^3} \widehat{\cal R}_j^j(0)$. 
Then $|{\cal Z}|_s^{k_0, \gamma} \lesssim |{\cal R}|_s^{k_0, \gamma}$. 
As a consequence, 
\[
\| \widehat{\cal R}_j^j(0)\|_{\HS}^{k_0, \gamma} \lesssim |{\cal R}|_{s_0}^{k_0, \gamma}\,.
\] 
\end{lemma}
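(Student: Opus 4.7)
The plan is to mimic the standard decay-norm estimates (as in Berti--Montalto \cite{Berti-Montalto}, BBM-Airy, etc.) for scalar operators, but accounting for the fact that the Fourier blocks $\widehat{\mathcal{R}}(\ell)_j^{j'}$ are $3\times3$ matrices measured in the Hilbert--Schmidt norm. Since $\|AB\|_{\HS}\leq \|A\|_{\HS}\|B\|_{\HS}$, the matrix-valued case is essentially identical to the scalar one. I will also use the elementary inequality
\[
\langle \ell,j\rangle^{s} \;\leq\; C_s\bigl(\langle \ell-\ell',\,j-j'\rangle^{s} + \langle \ell',j'\rangle^{s}\bigr),
\]
together with a symmetric version involving $j-j''$ and $j''-j'$ in the composition step. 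The Lipschitz-in-$\lambda$ version of each estimate will follow from the ungrouped version by Leibniz rule: differentiating $k$ times in $\lambda$ with $|k|\leq k_0$ distributes across products and convolutions, and the supremum over $\lambda$ and the factor $\gamma^{|k|}$ are absorbed into the norms according to \eqref{def decay norm parametri}; I will carry out the computation for the case without parameters and quote the standard extension.

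For (i), I write, using \eqref{amatriciana},
\[
\widehat{\mathcal{R}u}(\ell,j) \;=\; \sum_{(\ell',j')\in\Z^{\nu+3}} \widehat{\mathcal{R}}(\ell-\ell')_{j}^{j'}\,\widehat{u}(\ell',j'),
\]
split the sum according to whether $\langle \ell-\ell',j-j'\rangle\leq \langle \ell',j'\rangle$ or not, apply the Hilbert--Schmidt bound $|\widehat{\mathcal{R}}(\ell-\ell')_j^{j'}\widehat u(\ell',j')|\leq \|\widehat{\mathcal{R}}(\ell-\ell')_j^{j'}\|_{\HS}|\widehat u(\ell',j')|$, and then use Cauchy--Schwarz and Young's inequality for convolutions on $\Z^{\nu+3}$, observing that $\sup_{j'}$ in \eqref{def decay norm} suffices for the summation over $(\ell-\ell',j-j')$ (one index is free in the outer $\ell^2$ sum and the $\sup_{j'}$ controls the convolution kernel uniformly). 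This yields both tame halves $|\mathcal{R}|_{s_0}\|u\|_s$ and $|\mathcal{R}|_s\|u\|_{s_0}$.

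For (ii), I apply the same strategy to
\[
\widehat{\mathcal{R}\mathcal{Q}}(\ell)_j^{j'} \;=\; \sum_{\ell'\in\Z^{\nu},\,j''\in\Z^3}\widehat{\mathcal{R}}(\ell-\ell')_j^{j''}\,\widehat{\mathcal{Q}}(\ell')_{j''}^{j'},
\]
using $\langle \ell,j-j'\rangle^s\lesssim_s \langle \ell-\ell',j-j''\rangle^s+\langle \ell',j''-j'\rangle^s$ and then Cauchy--Schwarz in $(\ell',j'')$; with $\sup_{j'}$ outside, the term with weight on the first factor is bounded by $|\mathcal{R}|_s\,|\mathcal{Q}|_{s_0}$ (using the $s_0$-norm to sum over $j''$ via $\langle j''-j'\rangle^{2s_0}\geq 1$ together with the summability provided by $s_0>(\nu+3)/2$ guaranteed by \eqref{definizione s0}), and symmetrically for the other term. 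Part (iii) is an immediate induction on $n$ via (ii) with the constant $C(s,k_0)$ coming from (ii). Part (iv) is then the usual Neumann series: $\Phi^{-1}-\mathrm{Id}=\sum_{n\geq 1}(-1)^n\mathcal{R}^n$ converges in $|\cdot|_{s_0}^{k_0,\gamma}$ for $|\mathcal{R}|_{s_0}^{k_0,\gamma}\leq\delta$ small, and summing the geometric-in-$|\mathcal{R}|_{s_0}^{k_0,\gamma}$ bound from (iii) at level $s$ yields $|\Phi^{-1}-\mathrm{Id}|_s^{k_0,\gamma}\lesssim_{s,k_0}|\mathcal{R}|_s^{k_0,\gamma}$.

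Part (v) is direct from the definitions: since $\widehat{\mathcal{Z}}(\ell)_j^{j'}=\delta_{\ell,0}\delta_{j,j'}\widehat{\mathcal{R}}(0)_j^j$,
\[
|\mathcal{Z}|_s \;=\; \sup_{j'\in\Z^3}\|\widehat{\mathcal{R}}(0)_{j'}^{j'}\|_{\HS} \;\leq\; \sup_{j'\in\Z^3}\Bigl(\sum_{(\ell,j)\in\Z^{\nu+3}}\langle\ell,j-j'\rangle^{2s}\|\widehat{\mathcal{R}}(\ell)_j^{j'}\|_{\HS}^2\Bigr)^{1/2} \;=\; |\mathcal{R}|_s,
\]
and the consequence $\|\widehat{\mathcal{R}}(0)_j^j\|_{\HS}^{k_0,\gamma}\lesssim|\mathcal{R}|_{s_0}^{k_0,\gamma}$ follows by taking $s=s_0$ and inserting $k_0$ derivatives in $\lambda$. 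The main bookkeeping obstacle is simply to verify in (i) and (ii) that the summation over the extra matrix index $j''$ (in (ii)) and the frequency splitting work uniformly in the free column index $j'$, which is precisely what the $\sup_{j'}$ in \eqref{def decay norm} is designed for; beyond that, everything is routine.
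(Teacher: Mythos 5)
Your argument is correct in outline, but be aware that the paper itself offers no proof of this lemma: it is introduced with ``we now state some standard properties of the decay norms'', the intended justification being Remark \ref{rem:matrix norm = pseudo-diff norm} (for operators acting on periodic functions the norm \eqref{def decay norm} coincides with the pseudo-differential norm $|\cdot|_{0,s,0}$), so that $(i)$ and $(ii)$ are the quoted Lemma \ref{azione pseudo} and Lemma \ref{lemma stime Ck parametri}-$(i)$ with $m=m'=0$, $\beta=0$, and $(iii)$--$(v)$ follow exactly as you indicate. Your direct convolution proof is a legitimate way of filling this in, and in $(i)$ your key observation is the right one: the $\sup_{j'}$ in \eqref{def decay norm} gives the uniform pointwise bound $\|\widehat\mR(\ell-\ell')_j^{j'}\|_{\HS}\leq |\mR|_{s_0}\,\langle \ell-\ell', j-j'\rangle^{-s_0}$, which converts the kernel into a genuine convolution kernel to which Young's inequality applies, while Cauchy--Schwarz with the $\pm s_0$ weights handles the half carrying $|\mR|_s$.

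One step needs repair: in $(ii)$ the two halves are \emph{not} symmetric, because \eqref{def decay norm} is a column-type norm (sup over $j'$, weighted $\ell^2$ over the rows $(\ell,j)$) and is not preserved by transposition. Your Cauchy--Schwarz-in-$(\ell',j'')$ argument with $s_0$ weights does give the half $|\mR|_s\,|\mQ|_{s_0}$, since the $s_0$-weighted sum over $(\ell',j'')$ is precisely the column $j'$ of $\mQ$; but applied ``symmetrically'' to the other half it would produce a \emph{row} sum of $\mR$ at level $s_0$, which the decay norm does not control. The half $|\mR|_{s_0}\,|\mQ|_s$ is instead obtained by the same device as in $(i)$: bound $\|\widehat\mR(\ell-\ell')_j^{j''}\|_{\HS}\leq|\mR|_{s_0}\langle\ell-\ell',j-j''\rangle^{-s_0}$ uniformly in $j''$ and apply Young in the variables $(\ell,j)$. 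Alternatively, $(ii)$ reduces entirely to $(i)$: by \eqref{equivalent symbol con phi}, the $j'$-th column of $\mR\mQ$ is the image of the $j'$-th column of $\mQ$ under the conjugated operator $e^{-\ii j'\cdot x}\mR\, e^{\ii j'\cdot x}$, whose decay norm equals $|\mR|_s$ by translation invariance of \eqref{def decay norm}. Finally, a minor slip: the summability used in the Young step requires $s_0>\nu+3$ (not $(\nu+3)/2$), which is in any case guaranteed by \eqref{definizione s0}; the remaining items $(iii)$--$(v)$ and the $k_0$-parameter versions are routine, as you say.
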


For $N > 0$, we define the operator $\Pi_N {\cal R}$ by means of its $3 \times 3$ block representation in the following way: 
\begin{equation}\label{def proiettore operatori matrici}
(\widehat{\Pi_N {\cal R}})_{j}^{j'}(\ell) := \begin{cases}
\widehat{\cal R}_j^{j'}(\ell) & \text{if } |\ell|, |j - j'| \leq N, \\
0 & \text{otherwise}. 
\end{cases} \qquad \quad 
\text{Moreover, }  \ \Pi_N^\bot {\cal R} := {\cal R} - \Pi_N {\cal R}\,. 
\end{equation}


\begin{lemma}\label{lemma proiettori decadimento}
For all $s, \a \geq 0$, one has 
$|\Pi_N {\cal R}|_{s + \alpha}^{k_0, \gamma} \leq N^\alpha |{\cal R}|_s^{k_0, \gamma}$ and $|\Pi_N^\bot {\cal R}|_s^{k_0, \gamma} \leq N^{- \alpha} |{\cal R}|_{s + \alpha}^{k_0, \gamma}$. 
\end{lemma}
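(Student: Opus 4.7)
The plan is to prove both estimates first at the level of the unweighted decay norm $|\cdot|_s$ (i.e.\ pointwise in $\lambda$), and then pass to the weighted norm $|\cdot|_s^{k_0,\gamma}$ in \eqref{def decay norm parametri} by taking $k_0$ derivatives in $\lambda$. The key point is that both projectors $\Pi_N$ and $\Pi_N^\bot$, defined componentwise in \eqref{def proiettore operatori matrici}, act as multipliers on the Fourier coefficients $\widehat \mR(\ell)_j^{j'}$ and hence commute with $\partial_\lambda^k$, so it suffices to bound $|\Pi_N \mR|_{s+\alpha}$ by $N^\alpha |\mR|_s$ and $|\Pi_N^\bot \mR|_s$ by $N^{-\alpha} |\mR|_{s+\alpha}$.

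For the first estimate, by definition of $\Pi_N$, the only Fourier coefficients $\widehat{\Pi_N \mR}(\ell)_j^{j'}$ that are nonzero are those with $|\ell| \leq N$ and $|j-j'| \leq N$. On this set one has $\langle \ell, j-j'\rangle \leq N$ (for $N \geq 1$), so that
\[
\langle \ell, j-j'\rangle^{2(s+\alpha)}
\;=\; \langle \ell, j-j'\rangle^{2s}\langle \ell, j-j'\rangle^{2\alpha}
\;\leq\; N^{2\alpha}\langle \ell, j-j'\rangle^{2s}.
\]
Multiplying by $\|\widehat \mR(\ell)_j^{j'}\|_{\HS}^2$ and summing over $(\ell,j) \in \Z^{\nu+3}$ gives $|\Pi_N \mR|_{s+\alpha}^2 \leq N^{2\alpha}|\mR|_s^2$, uniformly in $j' \in \Z^3$.

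For the second estimate, on the support of $\Pi_N^\bot \mR$ we have $|\ell| > N$ or $|j-j'| > N$, hence $\langle \ell, j-j' \rangle > N$; therefore
\[
\langle \ell, j-j' \rangle^{2s}
\;=\; \langle \ell, j-j' \rangle^{2(s+\alpha)} \langle \ell, j-j'\rangle^{-2\alpha}
\;\leq\; N^{-2\alpha} \langle \ell, j-j' \rangle^{2(s+\alpha)},
\]
and the same summation yields $|\Pi_N^\bot \mR|_s^2 \leq N^{-2\alpha} |\mR|_{s+\alpha}^2$.

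To conclude, I apply $\partial_\lambda^k$ for $|k| \leq k_0$. Since $\Pi_N$ and $\Pi_N^\bot$ act only on the Fourier indices $(\ell,j,j')$, they commute with $\partial_\lambda^k$, and both inequalities above remain true with $\mR$ replaced by $\partial_\lambda^k \mR$ and $s$ replaced by $s - |k|$. Multiplying by $\gamma^{|k|}$ and taking $\sup_{|k| \leq k_0}\sup_{\lambda}$ as in \eqref{def decay norm parametri} gives the claimed weighted estimates. There is no real obstacle here: the lemma is an immediate consequence of the definitions, and the only thing to check is the trivial algebraic inequality on the bump factor $\langle \ell, j-j'\rangle^{2\alpha}$ on each region.
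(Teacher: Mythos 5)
Your proof is correct and is exactly the standard argument this lemma rests on (the paper states it without proof, as it follows immediately from the definitions \eqref{def decay norm}--\eqref{def decay norm parametri} and \eqref{def proiettore operatori matrici}): bound the weight $\langle \ell, j-j'\rangle^{2\alpha}$ by $N^{2\alpha}$ on the support of $\Pi_N$, bound $\langle \ell, j-j'\rangle^{-2\alpha}$ by $N^{-2\alpha}$ on the support of $\Pi_N^\bot$, and use that the projectors commute with $\partial_\lambda^k$ to pass to the weighted norm. Your parenthetical $N \geq 1$ is the right caveat and is harmless here, since in the paper $N = N_n \geq 1$ always.
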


By \eqref{amatriciana}, \eqref{equivalent symbol con phi}, 
as observed in Remark \ref{rem:matrix norm = pseudo-diff norm}, 
the decay norms \ref{block decay norm} 
and the pseudo-differential norms \ref{definizione norma pseudo diff}
are strictly related; in the next lemma (whose proof is a simple check)
we state a link between these norms.


\begin{lemma}\label{norma pseudo norma dec}
Let $s \geq s_0$, 
${\cal R} \in {\cal OPS}^0_{s, 0}$. 
Then $|{\cal R}|_s^{k_0, \gamma} \lesssim |{\cal R}|_{0, s, 0}^{k_0, \gamma}$.
\end{lemma}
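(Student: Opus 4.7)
The plan is to reduce the statement to a direct comparison of the two suprema that define the two norms, using the identification between matrix entries and Fourier coefficients of the symbol established in \eqref{equivalent symbol con phi}.

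First, let me unpack the definitions. Given $\mR = \Op(r) \in \OPS^0_{s,0}$, by \eqref{equivalent symbol con phi} the symbol $r(\ph,x,j')$ evaluated at integer $\xi = j' \in \Z^3$ satisfies
\[
r(\ph,x,j') = \sum_{j \in \Z^3} R_j^{j'}(\ph) \, e^{\ii (j-j') \cdot x},
\]
with $R_j^{j'}(\ph)$ the $(j-j')$-Fourier coefficient in $x$ of $x \mapsto r(\ph,x,j')$. Expanding the $\ph$-dependence in Fourier series as well, one recovers that $\widehat{\mR}(\ell)_j^{j'}$ (the $3\times 3$-matrix appearing in \eqref{amatriciana}) is precisely the Fourier coefficient of the function $(\ph,x) \mapsto r(\ph,x,j')$ at the frequency $(\ell,\, j-j') \in \Z^{\nu+3}$. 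Therefore, by the very definition \eqref{def Sobolev norm generale} of the Sobolev norm (applied to the matrix-valued function $(\ph,x)\mapsto r(\ph,x,j')$ with $|\cdot|=\|\cdot\|_{\HS}$),
\[
\sum_{(\ell,j) \in \Z^{\nu+3}} \langle \ell,\, j-j' \rangle^{2s} \| \widehat \mR(\ell)_j^{j'}\|_{\HS}^2
\;=\; \| r(\,\cdot\,,\,\cdot\,,j') \|_s^2 .
\]

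Taking the supremum over $j' \in \Z^3$ on both sides yields, by Definition \ref{block decay norm},
\[
|\mR|_s^2
\;=\; \sup_{j' \in \Z^3} \| r(\,\cdot\,,\,\cdot\,,j') \|_s^2
\;\leq\; \sup_{\xi \in \R^3} \| r(\,\cdot\,,\,\cdot\,,\xi) \|_s^2
\;=\; \bigl(|\mR|_{0,s,0}\bigr)^2,
\]
where the inequality uses only that $\Z^3 \subset \R^3$, and the last equality is \eqref{def norma pseudo-diff} with $m=0$, $\beta=0$. This proves the lemma in the parameter-free setting.

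Finally, to obtain the weighted bound with $k_0$ and $\gamma$, I apply the same pointwise-in-$\lambda$ estimate to each derivative $\partial_\lambda^k \mR(\lambda)$ with $|k|\leq k_0$ (note that $\partial_\lambda^k$ commutes with the matrix/symbol identification), multiply by $\gamma^{|k|}$, and take the supremum over $\lambda \in \R^{\nu+3}$ and over $|k|\leq k_0$; comparing the resulting definitions \eqref{def decay norm parametri} and \eqref{3009.2} gives
\[
|\mR|_s^{k_0,\gamma}
\;\leq\; \sup_{|k|\leq k_0} \gamma^{|k|} \sup_{\lambda \in \R^{\nu+3}} |\partial_\lambda^k \mR(\lambda)|_{0, s-|k|, 0}
\;=\; |\mR|_{0,s,0}^{k_0,\gamma}.
\]
No nontrivial estimate is needed, so there is no real obstacle here; the content of the lemma is essentially that the only difference between the two norms is $\sup_{j' \in \Z^3}$ versus $\sup_{\xi \in \R^3}$ (cf.\ Remark \ref{rem:matrix norm = pseudo-diff norm}), and restriction to integer $\xi$ only decreases the supremum.
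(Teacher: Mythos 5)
Your proof is correct and is exactly the ``simple check'' the paper has in mind (cf.\ Remark \ref{rem:matrix norm = pseudo-diff norm}): identifying $\widehat{\mR}(\ell)_j^{j'}$ with the Fourier coefficients of $(\ph,x)\mapsto r(\ph,x,j')$ turns the inner sum of the decay norm into $\|r(\cdot,\cdot,j')\|_s^2$, so the only difference between the two norms is $\sup_{j'\in\Z^3}$ versus $\sup_{\xi\in\R^3}$, and the weighted case follows by applying this to $\partial_\lambda^k\mR$. Nothing to correct.
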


\subsection{Real and reversible operators}\label{Reversible operators}

Remember that, for any function $u(\ph,x)$, 
$u \in X$ means $u = \even(\ph,x)$, 
and $u \in Y$ means $u = \odd(\ph,x)$. 

\begin{definition}
$(i)$ We say that a linear operator $\Phi$ is \emph{reversible} 
if $\Phi : X \to Y$ and $\Phi : Y \to X$. 
We say that $\Phi$ is \emph{reversibility preserving} 
if $\Phi : X \to X$ and $\Phi : Y \to Y$. 

\noindent
$(i)$ We say that an operator $\Phi : L^2(\T^3, \C^3) \to L^2(\T^3, \C^3)$ is real if $\Phi(u) \in L^2(\T^3, \R^3)$ for any $u \in L^2(\T^3, \R^3)$. 
\end{definition}

\begin{lemma}
Let $A = {\rm Op}(a) \in {\cal OPS}^{m}_{s, \alpha}$. Then the following holds: 

\noindent
$(i)$ $A$ is reversible if and only if $a(\vphi, x, \xi) = - a(- \vphi, - x, - \xi)$, 
namely $a = \odd(\ph,x,\xi)$;

\noindent
$(ii)$ $A$ is reversibility preserving if and only if $a(\vphi, x, \xi) = a(- \vphi, - x, - \xi)$, 
namely $a = \even(\ph,x,\xi)$.

\noindent
$(iii)$ $A$ is real if and only if $a(\vphi, x, \xi) = \overline{a(\vphi, x, - \xi)}$. 
\end{lemma}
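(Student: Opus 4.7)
The three characterizations are all direct computations using the defining formula
\[
(\mA u)(\vphi, x) = \sum_{\xi \in \Z^3} a(\vphi, x, \xi)\, \hat u(\vphi, \xi)\, e^{\ii \xi \cdot x},
\]
where $\hat u(\vphi, \xi) = \frac{1}{(2\pi)^3}\int_{\T^3} u(\vphi, y) e^{- \ii \xi \cdot y}\, dy$, together with the change of summation variable $\xi \mapsto -\xi$ and the parity/conjugation properties of $u$.

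For part $(i)$, the plan is: start from $u \in X$, i.e.\ $u(-\vphi, -x) = u(\vphi, x)$, which at the level of the partial Fourier transform in $x$ reads $\hat u(-\vphi, -\xi) = \hat u(\vphi, \xi)$. Then compute
\[
(\mA u)(-\vphi, -x)
= \sum_{\xi} a(-\vphi, -x, \xi)\, \hat u(-\vphi, \xi)\, e^{-\ii \xi \cdot x}
= \sum_{\xi} a(-\vphi, -x, -\xi)\, \hat u(\vphi, \xi)\, e^{\ii \xi \cdot x}
\]
after substituting $\xi \mapsto -\xi$ and using the parity of $\hat u$. Reversibility requires $(\mA u)(-\vphi, -x) = -(\mA u)(\vphi, x)$ for every $u \in X$, and the analogous identity when $u \in Y$, which by comparison of Fourier coefficients forces $a(-\vphi, -x, -\xi) = -a(\vphi, x, \xi)$. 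The reverse implication is immediate from the same computation. To justify the necessity rigorously one tests against $u(\vphi, x) = f(\vphi) e^{\ii \xi_0 \cdot x} + f(-\vphi) e^{-\ii \xi_0 \cdot x}$ (which lies in $X$) for arbitrary smooth $f$ and arbitrary $\xi_0 \in \Z^3$, then matches the $e^{\pm \ii \xi_0 \cdot x}$ coefficients.

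Part $(ii)$ is the same computation with the opposite sign: one demands $(\mA u)(-\vphi, -x) = (\mA u)(\vphi, x)$ on $X$ (and the symmetric statement on $Y$), which produces $a(-\vphi, -x, -\xi) = a(\vphi, x, \xi)$.

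For part $(iii)$, $u$ real is equivalent to $\overline{\hat u(\vphi, \xi)} = \hat u(\vphi, -\xi)$, and one computes
\[
\overline{(\mA u)(\vphi, x)}
= \sum_{\xi} \overline{a(\vphi, x, \xi)}\, \overline{\hat u(\vphi, \xi)}\, e^{-\ii \xi \cdot x}
= \sum_{\xi} \overline{a(\vphi, x, -\xi)}\, \hat u(\vphi, \xi)\, e^{\ii \xi \cdot x},
\]
again by the substitution $\xi \mapsto -\xi$. Reality of $\mA u$ for all real $u$ is then equivalent to $a(\vphi, x, \xi) = \overline{a(\vphi, x, -\xi)}$, with necessity obtained by testing on $u(\vphi, x) = g(\vphi)(e^{\ii \xi_0 \cdot x} + e^{-\ii \xi_0 \cdot x})$ for real-valued $g$.

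There is no real obstacle here: the only mildly delicate point is the necessity direction, which requires exhibiting enough test functions in $X$, $Y$, or $L^2(\T^{\nu+3}, \R^3)$ to isolate a single frequency $\xi_0$ (paired with $-\xi_0$ to respect the symmetry/reality constraint) so that the identity on operators passes to a pointwise identity on the symbol in $\xi$. This is handled by the explicit trigonometric test functions described above.
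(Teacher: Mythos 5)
The paper states this lemma without proof (it is treated as a standard fact, immediately reformulated in terms of the matrix representation), so there is no argument of the authors to compare against; your direct verification via the formula $\mA u(\vphi,x)=\sum_{\xi} a(\vphi,x,\xi)\hat u(\vphi,\xi)e^{\ii \xi\cdot x}$, the parity of $\hat u(\vphi,\xi)$ under $(\vphi,\xi)\mapsto(-\vphi,-\xi)$, and the substitution $\xi\mapsto-\xi$ is exactly the expected computation, and it is correct. Two small points to tighten in the necessity direction: (a) ``matching the $e^{\pm\ii\xi_0\cdot x}$ coefficients'' is not literally a Fourier-coefficient comparison, because the quantities multiplying $e^{\pm\ii\xi_0\cdot x}$ (namely $a(\pm\vphi,\pm x,\pm\xi_0)$) themselves depend on $x$; the two terms are decoupled by exploiting the arbitrariness of $f$, e.g.\ taking $f$ even and then odd in $\vphi$ (or $f$ supported near a point $\vphi_0\neq-\vphi_0$), after which each bracket vanishes identically by continuity of the symbol; similarly in $(iii)$ one needs both the cosine and the sine test in $x$. (b) Since $X,Y$ are spaces of \emph{real} $\R^3$-valued functions, your test functions should either be chosen real (e.g.\ $\cos(\ell\cdot\vphi+\xi_0\cdot x)\,e_k\in X$, $\sin(\ell\cdot\vphi+\xi_0\cdot x)\,e_k\in Y$, with $e_k$ the canonical basis vectors so as to recover all columns of the matrix symbol) or one should note that the identity extends to complex-valued functions with the same parity by $\C$-linearity; finally, as usual, the ``only if'' direction determines $a(\vphi,x,\xi)$ only for $\xi\in\Z^3$, which is the intended meaning of the statement.
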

It can be convenient to reformulate real and reversibility properties of linear operators in terms of matrix representation provided in Section \ref{sezione matrici norme}. 
\begin{lemma}
A linear operator ${\cal R}$ is 
\noindent
$(i)$ real if and only if 
$\widehat{\cal R}_{j}^{j'}(\ell) = \overline{\widehat{\cal R}_{- j}^{- j'}(- \ell)}$ 
for all $\ell \in \Z^\nu$, $j, j' \in \Z^3$;

\noindent
$(ii)$ reversible if and only if 
$\widehat{\cal R}_j^{j'}(\ell) = - \widehat{\cal R}_{- j}^{- j'}(- \ell)$ 
for all $\ell \in \Z^\nu$, $j, j' \in \Z^3$;

\noindent
$(iii)$ reversibility-preserving if and only if 
$\widehat{\cal R}_j^{j'}(\ell) =  \widehat{\cal R}_{- j}^{- j'}(- \ell)$ 
for all $\ell \in \Z^\nu$, $j, j' \in \Z^3$.
\end{lemma}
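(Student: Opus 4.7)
The plan is to encode each of the three properties as an operator identity involving two natural involutions on $L^2(\T^{\nu+3}, \C^3)$, and then read off the corresponding condition on the Fourier matrix blocks from the representation \eqref{amatriciana}.

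First, I introduce the involution $S$ defined by $(Su)(\vphi, x) := u(-\vphi, -x)$ together with the complex conjugation $C : u \mapsto \overline u$. From the Fourier series of $u$ one obtains directly
$$\widehat{Su}(\ell, j) = \widehat{u}(-\ell, -j), \qquad \widehat{Cu}(\ell, j) = \overline{\widehat{u}(-\ell, -j)}.$$
A function $u$ is real (resp.\ in $X$, in $Y$) if and only if $Cu = u$ (resp.\ $Su = u$, $Su = -u$). Decomposing any $u$ into its symmetric and antisymmetric parts under $S$, one verifies that $\mathcal{R}$ is real if and only if $C \mathcal{R} C = \mathcal{R}$, that $\mathcal{R}$ is reversibility-preserving if and only if $S \mathcal{R} S = \mathcal{R}$, and that $\mathcal{R}$ is reversible if and only if $S \mathcal{R} S = -\mathcal{R}$.

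Next, I translate these three operator identities into conditions on the matrix blocks. Starting from the representation \eqref{amatriciana} and applying the change of summation variables $(\ell', j') \mapsto (-\ell', -j')$, a short computation gives
$$\widehat{S \mathcal{R} S}(m)_j^{j'} = \widehat{\mathcal{R}}(-m)_{-j}^{-j'}, \qquad \widehat{C \mathcal{R} C}(m)_j^{j'} = \overline{\widehat{\mathcal{R}}(-m)_{-j}^{-j'}}.$$
Since \eqref{amatriciana} uniquely determines an operator by its matrix blocks (as can be verified by testing on the Fourier basis $e^{\ii(\ell' \cdot \vphi + j' \cdot x)} e_k$, with $e_k$ a standard basis vector of $\C^3$), equating blocks in each of the three operator identities above yields the three stated characterizations: $C \mathcal{R} C = \mathcal{R}$ gives (i), $S \mathcal{R} S = -\mathcal{R}$ gives (ii), and $S \mathcal{R} S = \mathcal{R}$ gives (iii).

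The entire argument is bookkeeping with Fourier indices, so no real obstacle arises. The only care required is to track the sign changes in the index substitutions correctly; once the characterizations of reality, reversibility, and reversibility preservation have been reformulated as operator identities involving $C$ and $S$, the conclusion follows immediately from the transformation formulas above for the matrix blocks.
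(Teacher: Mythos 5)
Your proof is correct. The paper states this lemma without proof (it is treated as a routine verification), and your argument supplies exactly the natural check the authors leave implicit: encoding reality and reversibility as the operator identities $C\mathcal{R}C=\mathcal{R}$, $S\mathcal{R}S=\mp\mathcal{R}$ with $(Su)(\vphi,x)=u(-\vphi,-x)$ and $Cu=\overline{u}$ (using the decomposition $L^2 = X\oplus Y$ into the $\pm 1$ eigenspaces of $S$ for the reversibility statements), and then computing the Fourier blocks $\widehat{S\mathcal{R}S}(\ell)_j^{j'}=\widehat{\mathcal{R}}(-\ell)_{-j}^{-j'}$ and $\widehat{C\mathcal{R}C}(\ell)_j^{j'}=\overline{\widehat{\mathcal{R}}(-\ell)_{-j}^{-j'}}$ from \eqref{amatriciana}, which yields the three equivalences upon identifying blocks.
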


\section{The linearized operator}\label{sez generale L}
In sections \ref{sez generale L}-\ref{sezione riducibilita a blocchi} 
we assume the following ansatz, 
which will be recursively verified along the Nash-Moser iteration. 
We assume that $v \in {\cal C}^\infty(\T^\nu \times \T^3, \R^3)$, $v = {\rm odd}(\vphi, x)$, 
and 
\begin{equation}\label{ansatz}
\| v \|_{\mu_0}^{k_0, \gamma} \leq 1  
\end{equation}
where
$\mu_0 = \mu_0(\nu, \tau, k_0) > 0 $ is large enough. 

\noindent
As we explained at the beginning of Section \ref{sez generale norme e operatori}, from now on we omit to write the dependence on $k_0$ when we write $\lesssim$, namely we write $\lesssim_{s, m}, \lesssim_s \ldots $ instead of $\lesssim_{s, m, k_0}, \lesssim_{s, k_0} \ldots$.

\noindent
Given a function $f$ depending on $u$, we denote by 
\begin{equation}\label{notazione delta 12}
\Delta_{12} f := f(u_1) - f(u_2)\,. 
\end{equation}
We want to study the linearized operator ${\cal L } := {\cal F}'(v)$, 
where $\mF(v)$ is defined in \eqref{equazione cal F vorticita}. 
Using the identity $\Pi_0^\bot = {\rm Id} - \Pi_0$, 
for all $s \geq s_0$ one has 
\begin{equation}\label{operatore linearizzato}
{\cal L} : H^{s + 1}_0 \to H^s_0 , \quad 
{\cal L} = \Pi_0^\bot \big( \omega \cdot \partial_\vphi  + 
\zeta \cdot \nabla + \e a(\vphi, x) \cdot \nabla + \e {\cal R}(\vphi) \big) \Pi_0^\bot,
\end{equation}
where $a(\ph,x)$ is the vector field
\begin{equation}\label{def a}
a(\vphi, x) := (\mU v) (\vphi, x) = {\rm curl}\,\Lambda^{- 1} v\,, 
\end{equation}
$\mU := \curl \Lambda^{-1}$ is defined 
in \eqref{equazione cal F vorticita}, \eqref{def pi0 Lm Lm inv} (see also \eqref{simbolo Lambda}), 
and ${\cal R}(\vphi)$ is a 
matrix-valued linear pseudo-differential operator of order zero, given by 
\begin{equation} \label{definizione cal R}
\begin{aligned}
& {\cal R} := {\cal R}_0 + {\cal R}_{- 1}\,, \\
& {\cal R }_0 h  := M_{\cal U}(\vphi, x) h  - v \cdot \nabla {\cal U}(h)\,, \quad {\cal R}_{- 1} h :=   M_v(\vphi, x){\cal U}(h)  \,, \\
& M_{\cal U}(\vphi, x) := - D {\cal U}(v)(\vphi, x), \quad M_v(\vphi, x) := D v(\vphi, x)
\end{aligned}
\end{equation}
where $D {\cal U}(v)$ is the jacobian matrix of ${\cal U} (v) = {\rm curl}\Lambda^{- 1} v$ and $D v$ is the jacobian matrix of $v$. Note that ${\cal R}_0$ is an operator of order $0$ and ${\cal R}_{- 1}$ is an operator of order $- 1$. Note that since $v = {\rm odd}(\vphi, x)$, then 
\begin{equation}\label{parita reversibilita linearizzato iniziale}
\begin{aligned}
& M_{\cal U} = {\rm odd}(\vphi, x), \quad M_v = {\rm even}(\vphi, x)\,,  \\
& {\cal R}_0, {\cal R}_{- 1} \ 
\text{ are reversible operators}\,. 
\end{aligned}
\end{equation}
We first analyze the operator ${\cal L}$ 
without the projector $\Pi_0^\bot$, namely we consider the operator 
\begin{equation}\label{def cal L (0)}
{\cal L}^{(0)} := \omega \cdot \partial_\vphi  + 
\zeta \cdot \nabla + \e a(\vphi, x) \cdot \nabla + \e {\cal R}(\vphi).
\end{equation}
By the definitions \eqref{definizione cal R} 
and using the product estimates \eqref{p1-pr}, 
for $\| v \|_{s_0 + 1}^{k_0, \gamma} \leq 1$ 
the following tame estimates hold for any $s \geq s_0$: 
\begin{equation} \label{stime tame operatore cal L0}
\begin{aligned}
& \| \e a(\ph,x) \cdot \grad h + \e \mR(\ph) h \|_s^{k_0, \gamma} 
\lesssim_{s} \e \big(\| h \|_{s + 1}^{k_0, \gamma} 
+ \| v \|_{s + 1}^{k_0, \gamma} \| h \|_{s_0 + 1}^{k_0, \gamma} \big).  
\end{aligned}
\end{equation}

%

\section{Reduction to constant coefficients of the highest order term}
\label{sezione riduzione ordine alto}

First we state a Proposition which allows to reduce to constant coefficients the operator 
\begin{equation}\label{def operatore cal T}
{\cal T} := \omega \cdot \partial_\vphi 
+ \big( \zeta + \e a(\vphi, x) \big) \cdot \nabla 
\end{equation}
where we recall that, by \eqref{def a}, one has that
\begin{equation}\label{proprieta a}
\Pi_0 a = 0 \qquad \text{and} \qquad {\rm div}(a) = 0\,. 
\end{equation}

\begin{proposition}\label{proposizione trasporto}
For any $\gamma \in (0, 1)$, $\frak C_0 > 0$, $S > s_0$, $\tau > 0$ 
there exist $\delta = \delta(S, k_0, \tau, \nu)$, 
$\mu_0 = \mu_0 (k_0, \tau, \nu) > 0$ and 
$\tau_1 = \tau_1(k_0, \tau, \nu) > 0$  
such that if \eqref{ansatz} holds and
\begin{equation}\label{condizione piccolezza rid trasporto}
 \quad N_0^{\tau_1} \e \gamma^{- 1} \leq \delta,
\end{equation} 
then the following holds. 
There exists an invertible diffeomorphism 
$\T^3 \to \T^3$, $x \mapsto x + \alpha(\vphi, x; \omega, \zeta)$ 
with inverse $y \mapsto y + \breve \alpha(\vphi, y; \omega, \zeta)$,
defined for all $(\om,\zeta) \in \R^{\nu+3}$, 
satisfying 
\begin{equation}\label{stima alpha trasporto}
\| \alpha \|_s^{k_0, \gamma}, \| \breve \alpha\|_{s}^{k_0, \gamma} \lesssim_{s} N_0^{\tau_0}\e \gamma^{- 1} \| v \|_{s + \mu}^{k_0, \gamma}, \quad \forall s_0 \leq s \leq S
\end{equation}
(with $\t_0$ defined in \eqref{2802.2})
such that, defining 
\begin{equation}\label{def mappa trasporto}
{\cal A} h (\vphi, x) := h(\vphi, x + \alpha(\vphi, x)) \quad \text{with inverse} \quad {\cal A}^{- 1} h(\vphi, y) = h(\vphi, y + \breve \alpha(\vphi, y)),
\end{equation}
one gets the conjugation 
\begin{equation}\label{coniugazione nel teo trasporto}
{\cal A}^{- 1} {\cal T} {\cal A} = \omega \cdot \partial_\vphi + \zeta \cdot \nabla
\end{equation}
for all $(\omega, \zeta) \in DC(\gamma, \tau)$, where 
\begin{equation}\label{def cantor set trasporto}
DC(\gamma, \tau) := \Big\{ \lambda = (\omega, \zeta)  \in \R^{\nu+3} 
: |\omega \cdot \ell + \zeta \cdot j | \geq \frac{{\frak C}_0  \gamma}{\langle \ell, j \rangle^\tau} 
\quad \forall (\ell, j) \in \Z^{\nu + 3} \setminus \{ (0, 0)\}  \Big\}.
\end{equation}
Furthermore $\alpha, \breve \alpha$ are ${\rm odd}(\vphi, x)$, 
and therefore 
${\cal A}, {\cal A}^{- 1}$ are reversibility preserving maps. 
Such maps satisfy the tame estimates 
\begin{equation}\label{stima tame cambio variabile rid trasporto}
\begin{aligned}
& \| {\cal A}^{\pm 1}  h\|_s^{k_0, \gamma} \lesssim_{s} \| h \|_s^{k_0, \gamma} + \| v \|_{s + \mu}^{k_0, \gamma} \| h \|_{s_0}^{k_0, \gamma}\,, \quad \forall s_0 \leq s\leq S\,,  \\
& \| ({\cal A}^{\pm 1} - {\rm Id})  h\|_s^{k_0, \gamma}\,,\, \| ({\cal A}^* - {\rm Id})  h\|_s^{k_0, \gamma} \lesssim_{s} N_0^{\tau_0}\e \gamma^{- 1}\Big( \| v \|_{s_0 + \mu}^{k_0, \gamma}\| h \|_{s + 1}^{k_0, \gamma} + \| v \|_{s + \mu}^{k_0, \gamma} \| h \|_{s_0 + 1}^{k_0, \gamma} \Big)\,, \quad \forall s_0 \leq s\leq S\,.
\end{aligned}
\end{equation}
Let $s_1 \geq s_0$ and assume that $v_1, v_2$ 
satisfy \eqref{ansatz} with $\mu_0 \geq s_1 + \mu$. 
Then for any $\lambda = (\omega, \zeta) \in DC(\gamma, \tau)$ one has 
\begin{equation}\label{stime delta 12 prop trasporto}
\begin{aligned}
& \| \Delta_{12} \alpha \|_{s_1} \,,\, \| \Delta_{12} \breve \alpha\|_{s_1} \lesssim_{s_1} N_0^{\tau_0}\e \gamma^{- 1} \| v_1 - v_2 \|_{s_1 + \mu}\,, \\
& \| \Delta_{12} {\cal A}^{\pm 1} h \|_{s_1}\,,\, \| \Delta_{12} {\cal A}^* h \|_{s_1} \lesssim_{s_1} N_0^{\tau_0} \e \gamma^{- 1} \| v_1 - v_2 \|_{s_1 + \mu} \| h \|_{s_1 + 1}\,. 
\end{aligned}
\end{equation}
\end{proposition}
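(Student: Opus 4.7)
The plan is to construct $\mathcal{A}$ as an infinite composition of diffeomorphisms via a KAM-type iteration, in the spirit of \cite{FGMP}. A direct chain-rule computation for $\mathcal{B}_\alpha h(\vphi, x) := h(\vphi, x + \alpha(\vphi, x))$ gives
\begin{equation*}
\mathcal{B}_\alpha^{-1} \mathcal{T} \mathcal{B}_\alpha = \omega \cdot \partial_\vphi + \mathcal{B}_\alpha^{-1}\bigl[ \omega \cdot \partial_\vphi \alpha + (\mathrm{Id} + D\alpha)(\zeta + \varepsilon a) \bigr] \cdot \nabla,
\end{equation*}
so the target \eqref{coniugazione nel teo trasporto} amounts to the nonlinear homological equation $(\omega \cdot \partial_\vphi + \zeta \cdot \nabla)\alpha = -\varepsilon a - \varepsilon(a\cdot \nabla)\alpha$. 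The crucial observation is that, thanks to $\Pi_0 a = 0$ in \eqref{proprieta a}, the spatial-average solvability obstruction vanishes and the constant drift remains exactly $\zeta$, with no correction $\mathtt{m}(\omega,\zeta) = \zeta + O(\varepsilon)$ of the type appearing in \cite{FGMP}.

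At the $n$-th step (with $a_0 := a$), I solve the truncated linear equation $(\omega \cdot \partial_\vphi + \zeta \cdot \nabla)\alpha_n = -\varepsilon \Pi_{N_n} a_n$ via the extended inverse of Lemma \ref{lemma:WD} (applied with $\mathtt{m} = \zeta$), which gives the tame bound $\|\alpha_n\|_s^{k_0,\gamma} \lesssim_s \varepsilon \gamma^{-1} \|a_n\|_{s+\tau_0}^{k_0,\gamma}$ globally in $\lambda$, coinciding with the true inverse on $DC(\gamma,\tau)$. Conjugating $\mathcal{T}_n := \omega\cdot\partial_\vphi + (\zeta + \varepsilon a_n)\cdot \nabla$ by $\mathcal{B}_{\alpha_n}$ produces $\mathcal{T}_{n+1} = \omega\cdot \partial_\vphi + (\zeta + \varepsilon a_{n+1})\cdot\nabla$, where the new error
\begin{equation*}
\varepsilon a_{n+1} = \mathcal{B}_{\alpha_n}^{-1}\bigl[\varepsilon \Pi_{N_n}^\perp a_n + \varepsilon(D\alpha_n)\, a_n\bigr]
\end{equation*}
is a Fourier tail (small in high Sobolev norm by Lemma \ref{lemma:smoothing}) plus a quadratic term in $\varepsilon a_n$. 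With the Nash--Moser choice $N_{n+1} = N_n^{3/2}$ and the smallness \eqref{condizione piccolezza rid trasporto}, the product estimates of Lemma \ref{lemma:LS norms} give super-exponential decay of $\|\varepsilon a_n\|_{s_0}^{k_0,\gamma}$ against polynomial growth in high norms, so $\mathcal{A} := \lim_{n} \mathcal{B}_{\alpha_0}\circ \cdots \circ \mathcal{B}_{\alpha_n}$ converges to a diffeomorphism satisfying \eqref{coniugazione nel teo trasporto} on $DC(\gamma,\tau)$.

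The persistence of the two structural properties \eqref{proprieta a} along the iteration is essential. Since $v = \odd(\vphi,x)$ and $\mathcal{U} = \curl \Lambda^{-1}$ preserves oddness, one has $a = \odd(\vphi,x)$ and inductively $a_n = \odd(\vphi,x)$, which forces $\alpha_n = \odd(\vphi,x)$ (the Diophantine solution formula respects the parity), and hence $\mathcal{B}_{\alpha_n}$ is reversibility-preserving. The conditions $\Pi_0 a_{n+1} = 0$ and $\div a_{n+1} = 0$ are inherited because, on $DC(\gamma,\tau)$, $\mathcal{B}_{\alpha_n}$ conjugates a divergence-free, zero-average drift field to one of the same type (the change of variables $x \mapsto x + \alpha_n(\vphi,x)$ is a smooth diffeomorphism of $\T^3$ for each $\vphi$, and the explicit formula for $\varepsilon a_{n+1}$ shows the two properties are conserved by the Jacobian transport); this is precisely what pins the normal form at $\zeta$ rather than at some deformed constant vector.

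Finally, the tame bounds \eqref{stima tame cambio variabile rid trasporto} on $\mathcal{A}^{\pm 1}$ and $\mathcal{A}^*$ follow from iterating Lemma \ref{lemma:LS norms} along the telescoping composition, and the bound \eqref{stima alpha trasporto} on the limit $\alpha$ is obtained from the geometric series of the $\alpha_n$. The difference estimates \eqref{stime delta 12 prop trasporto} are proved by linearizing the scheme in $v$: $\alpha_n(v_1) - \alpha_n(v_2)$ solves a linearized homological equation driven by $a(v_1) - a(v_2)$, and the contraction of the Nash--Moser scheme carries the control to the limit. The main technical obstacle is the simultaneous low-high-norm bookkeeping under $k_0$ parameter derivatives in $\lambda = (\omega,\zeta)$ required by the weighted norms $\|\cdot\|_s^{k_0,\gamma}$; this is standard but delicate, particularly for showing that $\mathcal{A} - \mathrm{Id}$ gains one derivative as in the second line of \eqref{stima tame cambio variabile rid trasporto}, which uses the $\varepsilon\gamma^{-1}$-smallness of $\alpha$ together with the integral formula for the increment of a diffeomorphism (cf.\ the proof of Lemma \ref{lemma:LS norms}$(iii)$).
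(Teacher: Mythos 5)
There is a genuine gap at the heart of your scheme: the claim that the structural properties $\Pi_0 a_{n+1}=0$ and $\div a_{n+1}=0$ are ``inherited'' by the transformed coefficients along the iteration. After one step the new coefficient is $a_{n+1}=\mathcal{B}_{\alpha_n}^{-1}\big[\Pi_{N_n}^{\bot}a_n+(a_n\cdot\nabla)\alpha_n\big]$, i.e.\ (together with the constant part) the push-forward of the drift field under the diffeomorphism $x\mapsto x+\alpha_n(\vphi,x)$, which is \emph{not} volume preserving. A push-forward by a non--volume-preserving map destroys both divergence-freeness and the zero average: even when $\langle\Pi_{N_n}^{\bot}a_n+(a_n\cdot\nabla)\alpha_n\rangle_{\vphi,x}=0$ (which already needs $\div a_n=0$), composing with $\mathcal{B}_{\alpha_n}^{-1}$ introduces the Jacobian factor $\det(\mathrm{Id}+D\alpha_n)$ and generically produces $\langle a_{n+1}\rangle_{\vphi,x}\neq 0$. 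But your iteration solves $(\omega\cdot\partial_\vphi+\zeta\cdot\nabla)\alpha_{n+1}=-\e\,\Pi_{N_{n+1}}a_{n+1}$ with the drift pinned at $\zeta$, and this equation is solvable only if the space-time average of the right-hand side vanishes \emph{exactly}; parity does not help here, since $a_n=\mathrm{even}(\vphi,x)$. So already at the second step your homological equation may have no solution, and the assertion that ``the constant drift remains exactly $\zeta$, with no correction'' is unjustified at the level of the iteration.

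The paper's proof handles precisely this point differently: it lets the constant drift move, setting $\mathtt m_{n+1}:=\mathtt m_n+\langle a_n\rangle_{\vphi,x}$ (see \eqref{def mathtt m n + 1}) and solving the homological equation \eqref{eq omologica trasporto} with the average subtracted, at the price of imposing the truncated non-resonance conditions \eqref{cal On gamma} on $\omega\cdot\ell+\mathtt m_{n-1}\cdot j$. Only a posteriori, in Lemma \ref{inclusione cantor diofantei}, is the drift identified: taking the space-time average of the exact conjugation identity \eqref{uguaglianza mn an betan} and using $\Pi_0 a=0$, $\div a=0$ for the \emph{original} coefficient $a$ only (the cancellations \eqref{media divergenza con beta n - 1}) yields $\mathtt m_n-\zeta=-\langle\widetilde{\mathcal A}_{n-1}a_n\rangle_{\vphi,x}\to 0$, whence $DC(\gamma,\tau)\subseteq\cap_n\mathcal{O}_n^\gamma$ and the limit conjugation \eqref{coniugazione nel teo trasporto} with drift exactly $\zeta$. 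Note that your preliminary observation is correct and is essentially this mechanism: for the \emph{nonlinear} equation $(\omega\cdot\partial_\vphi+\zeta\cdot\nabla)\alpha+\e a+\e a\cdot\nabla\alpha=0$ the average obstruction vanishes because $\langle a\rangle_{\vphi,x}=0$ and $\langle a\cdot\nabla\alpha\rangle_{\vphi,x}=-\langle(\div a)\,\alpha\rangle_{\vphi,x}=0$, using only the original $a$. But your iterative scheme does not act on this equation; it acts on the successively conjugated coefficients $a_n$, for which these identities fail, so the argument as written does not close. Either adopt the paper's moving-drift scheme with the a posteriori identification of $\mathtt m_\infty=\zeta$, or recast the iteration so that every right-hand side is expressed through the original $a$ (e.g.\ a Newton scheme on the nonlinear equation for $\alpha$), and re-derive the solvability of each linearized equation there; the secondary points (tame estimates, parity, and the $\Delta_{12}$ bounds via the limit equation as in \eqref{1303.2}) are then standard.
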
 

\begin{remark}\label{costante diofantea frak C0}
In this section we only need that the constant ${\frak C}_0$ in \eqref{def cantor set trasporto} is strictly bigger than one (see Lemma \ref{inclusione cantor diofantei}). In Lemma \ref{lemma triviale modi alti}, we shall take the constant $\frak C_0 \geq C(\tau)$ large enough. 
\end{remark}

In order to prove Proposition \ref{proposizione trasporto}, 
we closely follow \cite{FGMP}. First we show the following iterative Lemma. 
We fix the constants
\begin{equation}\label{costanti ridu trasporto}
\begin{aligned}
& N_0 > 0, \quad \chi := \frac32, \quad N_{- 1} := 1, \quad N_n := N_0^{\chi^n}, \quad n \geq 0,  \\
& \frak a := 3(\tau_0 + 1) + 2 \,, \quad \frak b := \frak a + 1, 
\end{aligned}
\end{equation}
and recall that $\tau_0$ is defined in Lemma \ref{lemma:WD}.

\begin{lemma}\label{iterazione riducibilita trasporto}
Let $\gamma \in (0, 1)$, $S > s_0$, $\tau > 0$. Then there exist $\delta = \delta(S, k_0, \tau, \nu) \in (0, 1)$, $N_0 = N_0(S, k_0, \tau, \nu) > 0$, $\mu = \mu (k_0, \tau, \nu) > 0$ such that if \eqref{ansatz}, \eqref{condizione piccolezza rid trasporto} are fullfilled with $\mu_0 \geq s_0 + \mu$, then the following statements hold 
for all $n \geq 0$. 

\medskip

\noindent 
There exists a linear operator 
\begin{equation}\label{def cal Tn transport}
{\cal T}_n := \omega \cdot \partial_\vphi + \mathtt m_n \cdot \nabla + a_n(\vphi, x) \cdot \nabla
\end{equation}
defined for any $\lambda = (\omega, \zeta) \in \R^{\nu + 3}$ such that 
\begin{equation}\label{stima mathtt mn an}
\begin{aligned}
& \| a_n \|_{s}^{k_0, \gamma} 
\leq C_* (s) \e N_{n - 1}^{- \frak a} \| v \|_{s + \mu}^{k_0, \gamma}, 
\quad & & 
\| a_n \|_{s + \frak b}^{k_0, \gamma} 
\leq  C_*(s)N_{n - 1} \e  \| v \|_{s + \mu}^{k_0, \gamma} 
\quad \ \forall s_0 \leq s \leq S\,, 
\\  
& |\mathtt m_n - \zeta |^{k_0, \gamma} 
\lesssim \e
\quad & & 
\text{and, if $n \geq 1$, } \ 
|\mathtt m_n - \mathtt m_{n - 1} |^{k_0, \gamma} 
\lesssim \| a_{n - 1}  \|_{s_0}^{k_0, \gamma}
\end{aligned}
\end{equation}
for some constant $C_* (s) = C_*(s, k_0, \tau) > 0$. 
If $n=0$, define ${\cal O}_0^\gamma := \R^{\nu+3}$; 
if $n\geq 1$, define 
\begin{equation}\label{cal On gamma}
{\cal O}_n^\gamma := \Big\{ (\omega, \zeta) \in {\cal O}_{n - 1}^\gamma : 
|\omega \cdot \ell + \mathtt m_{n - 1} \cdot j| \geq \frac{\gamma}{\langle \ell, j \rangle^\tau} \quad \forall (\ell, j) \in \Z^{\nu + 3} \setminus \{0, 0\}, 
\quad |(\ell, j)| \leq N_{n - 1} \Big\}\,.
\end{equation}
For $n \geq 1$, there exists an invertible diffeomorphism of the torus 
$\T^3 \to \T^3$, $x \mapsto x + \alpha_{n - 1}(\vphi, x)$ 
with inverse $\T^3 \to \T^3$, $y \mapsto y + \widetilde \alpha_{n - 1}(\vphi, y)$ 
such that 
\begin{equation}\label{stime alpha n tilde alpha n}
\begin{aligned}
& \| \alpha_{n - 1}\|_s^{k_0, \gamma}, \| \breve \alpha_{n - 1}\|_s^{k_0, \gamma} \lesssim_{s} N_{n - 1}^{\tau_0} N_{n - 2}^{- \frak a} \e \gamma^{- 1} \| v\|_{s + \mu}^{k_0, \gamma}, \quad \forall s_0 \leq s \leq S\,, \\
&  \| \alpha_{n - 1}\|_{s + \frak b}^{k_0, \gamma}, \| \breve \alpha_{n - 1}\|_{s + \frak b}^{k_0, \gamma} \lesssim_{s} N_{n - 1}^{\tau_0} N_{n - 2} \e \gamma^{- 1} \| v\|_{s + \mu}^{k_0, \gamma},  \quad \forall s_0 \leq s \leq S \,. 
\end{aligned}
\end{equation}
(with $\t_0$ defined in \eqref{2802.2}). 
The operator 
$$
{\cal A}_{n - 1} : h(\vphi, x) \mapsto h(\vphi, x + \alpha_{n - 1}(\vphi, x))
$$
with inverse 
$$
{\cal A}_{n - 1}^{- 1} : h(\vphi, x) \mapsto h(\vphi, y + \breve \alpha_{n - 1}(\vphi, y))
$$
satisfy, for any $\lambda = (\omega, \zeta) \in {\cal O}_n^\gamma$, the conjugation
\begin{equation}\label{coniugazione cal A n - 1 cal T n - 1}
{\cal T}_n = {\cal A}_{n - 1}^{- 1}{\cal T}_{n - 1} {\cal A}_{n - 1}\,. 
\end{equation}
Furthermore, $a_n = {\rm even}(\vphi, x)$, $\alpha_{n - 1}, \breve \alpha_{n - 1} = {\rm odd}(\vphi,x)$, implying that ${\cal T}_n$ is a reversible operator and ${\cal A}_{n - 1}, {\cal A}_{n - 1}^{- 1}$ are reversibility preserving operators. 
%
%
\end{lemma}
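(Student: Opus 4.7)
The proof is by induction on $n \geq 0$, adapting the reducibility scheme of \cite{FGMP} to the present vector-field setting. The base case $n=0$ is immediate: one sets $\mathtt m_0 := \zeta$ and $a_0 := \e\, a(\ph,x)$, and the estimates in \eqref{stima mathtt mn an} follow from \eqref{def a}, the mapping properties of $\curl\,\Lm^{-1}$ (Lemma \ref{lemma coniugazione curl nabla diffeo} together with \eqref{stima Lambda inv}), combined with the ansatz \eqref{ansatz}; the parity $a_0 = \even(\ph,x)$ descends from $v=\odd(\ph,x)$.

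For the inductive step, assume $\mathcal{T}_n$ has been constructed. The plan is to split off the space-time average,
$$
\mathtt m_{n+1} := \mathtt m_n + \langle \Pi_{N_n} a_n\rangle_{\ph,x},
$$
and then define $\alpha_n$ as the solution of the homological equation
$$
(\om\cdot\pa_\vphi + \mathtt m_n\cdot\nabla)\alpha_n = -\Pi_{N_n} a_n + \langle\Pi_{N_n} a_n\rangle_{\ph,x},
$$
which is solved by Fourier series on the set $\mathcal{O}_{n+1}^\g$ of \eqref{cal On gamma} — whose Diophantine condition at frequencies $|(\ell,j)|\leq N_n$ is precisely what is needed — and extended to all $\lambda\in\R^{\nu+3}$ via \eqref{def ompaph-1 ext}. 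Lemma \ref{lemma:WD} together with the truncation bound \eqref{p2-proi} then yields
$\|\alpha_n\|_s^{k_0,\g}\lesssim_s N_n^{\t_0}\g^{-1}\|a_n\|_s^{k_0,\g}$,
and the analogous estimate at regularity $s+\frak b$. Under the smallness \eqref{condizione piccolezza rid trasporto} Lemma \ref{lemma:LS norms} produces $\breve\alpha_n$ with matching estimates and inverts $\mathcal{A}_n$. The parity $\alpha_n=\odd(\ph,x)$, inherited from $a_n=\even(\ph,x)$ via the parity-exchanging action of $\om\cdot\pa_\vphi+\mathtt m_n\cdot\nabla$, guarantees that $\mathcal{A}_n$ is reversibility-preserving. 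A direct calculation in the coordinate $y = x+\alpha_n(\ph,x)$, using the homological equation to cancel the leading correction, yields
$$
\mathcal{A}_n^{-1}\mathcal{T}_n\mathcal{A}_n = \om\cdot\pa_\vphi + \mathtt m_{n+1}\cdot\nabla + a_{n+1}(\ph,y)\cdot\nabla, \qquad a_{n+1} := \mathcal{A}_n^{-1}\bigl[\Pi_{N_n}^\bot a_n + a_n\cdot\nabla\alpha_n\bigr].
$$
For the specific application in Proposition \ref{proposizione trasporto}, the structure \eqref{proprieta a} of $a_0$ propagates inductively and forces $\langle\Pi_{N_n} a_n\rangle_{\ph,x}=0$ at every step, hence $\mathtt m_n = \zeta$ throughout.

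The hard part is closing the bounds on $a_{n+1}$ simultaneously at low and high regularity. For the low norm, the smoothing \eqref{p3-proi} gives $\|\Pi_{N_n}^\bot a_n\|_s \lesssim N_n^{-\frak b}\|a_n\|_{s+\frak b}$; \eqref{p1-pr} handles the quadratic term $a_n\cdot\nabla\alpha_n$; and the outer conjugation by $\mathcal{A}_n^{-1}$ is absorbed via Lemma \ref{lemma:LS norms}. The high norm is only allowed to grow by a factor $N_n$, producing the asymmetric pair of bounds in \eqref{stima mathtt mn an}. The choice $\chi=3/2$, $\frak b = \frak a + 1$, $\frak a = 3(\t_0+1)+2$ in \eqref{costanti ridu trasporto} is exactly what makes the super-exponential gain $N_n^{-\frak b}N_{n-1}$ beat the Diophantine loss $N_n^{\t_0}$ in both the linear and quadratic contributions, under the smallness \eqref{condizione piccolezza rid trasporto}; this is the delicate arithmetic typical of Nash-Moser-type schemes, and is where the proof genuinely relies on the super-convergent structure of the iteration.
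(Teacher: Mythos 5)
Your construction is the same as the paper's: the same homological equation (note that $\langle \Pi_{N_n} a_n\rangle_{\vphi,x} = \langle a_n\rangle_{\vphi,x}$ since the zero mode is kept by $\Pi_{N_n}$), the same definitions $\alpha_n := (\omega\cdot\partial_\vphi+\mathtt m_n\cdot\nabla)^{-1}_{ext}[\langle a_n\rangle_{\vphi,x}-\Pi_{N_n}a_n]$, $\mathtt m_{n+1}:=\mathtt m_n+\langle a_n\rangle_{\vphi,x}$, $a_{n+1}:=\mA_n^{-1}\big[\Pi_{N_n}^\bot a_n + a_n\cdot\nabla\alpha_n\big]$, the same splitting of low/high norms with the constants \eqref{costanti ridu trasporto}, and the same parity argument; this suffices for \eqref{stima mathtt mn an}--\eqref{coniugazione cal A n - 1 cal T n - 1}, since the bounds on $\mathtt m_n$ asked by the lemma follow from $|\mathtt m_{n+1}-\mathtt m_n|^{k_0,\gamma}\le\|a_n\|_{s_0}^{k_0,\gamma}$ and a telescoping sum.

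However, your final claim -- that the structure \eqref{proprieta a} propagates inductively, forcing $\langle a_n\rangle_{\vphi,x}=0$ at every step and hence $\mathtt m_n=\zeta$ throughout -- is not correct. The new coefficient is $a_{n+1}=\mA_n^{-1}f_n$ with $f_n=\Pi_{N_n}^\bot a_n+a_n\cdot\nabla\alpha_n$: even when $a_n$ has zero average and zero divergence, composing $f_n$ with the diffeomorphism $y\mapsto y+\breve\alpha_n(\vphi,y)$ destroys both properties (the $(\vphi,x)$-average of $\mA_n^{-1}f_n$ picks up the Jacobian factor $\det(\Id+D\alpha_n)$ under the change of variables, and a composed vector field is in general no longer divergence free). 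So $\mathtt m_n\neq\zeta$ at finite steps, and indeed the lemma deliberately does not claim it; the sharper statement $|\mathtt m_n-\zeta|^{k_0,\gamma}\lesssim \e N_{n-1}^{-\frak a}$ on ${\cal O}_n^\gamma$, and $\mathtt m_n\to\zeta$ on $DC(\gamma,\tau)$, is obtained only afterwards, in Lemma \ref{inclusione cantor diofantei}, from the exact identity $\mathtt m_n-\zeta=-\langle \widetilde{\mA}_{n-1}\,a_n\rangle_{\vphi,x}$, which exploits the zero average and zero divergence of the \emph{original} field $\e a$ together with the full composed map $\widetilde{\mA}_{n-1}$, not a step-by-step vanishing of $\langle a_n\rangle_{\vphi,x}$. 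Since the present lemma only requires $|\mathtt m_n-\zeta|^{k_0,\gamma}\lesssim\e$, your proof closes once this claim is simply dropped, but the distinction is essential for the deduction of Proposition \ref{proposizione trasporto}.
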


\begin{proof}
{\sc Proof of the statement for $n = 0$.} 
The claimed statements for $n = 0$ follows directly by defining 
$a_0 := \e a = \e \,{\rm curl}\,\Lambda^{- 1} v$, see \eqref{def a}.

\medskip

\noindent
{\sc Proof of the induction step.}
Now assume that the claimed properties hold for some $n \geq 0$ and let us prove them at the step $n + 1$. We look for a diffeomorphism of the torus $\T^3 \to \T^3$, $x \mapsto x + \alpha_n(\vphi, x)$, wih inverse given by $y \mapsto y + \breve \alpha_n(\vphi, y)$ such that defining 
$$
{\cal A}_n : h(\vphi, x) \mapsto h(\vphi, x + \alpha_n(\vphi, x)), \quad {\cal A}_n^{- 1} : h(\vphi, x) \mapsto h(\vphi, x + \breve \alpha_n(\vphi, x))
$$
the operator 
${\cal A}_n^{- 1} {\cal T}_n {\cal A}_n$ 
has the desired properties. One computes 
\begin{equation}\label{primo cal L n + 1}
\begin{aligned}
{\cal A}_n^{- 1} {\cal T}_n {\cal A}_n
& = \omega \cdot \partial_\vphi + \mathtt m_n \cdot \nabla 
+ {\cal A}_n^{- 1} \big[ \omega \cdot \partial_\vphi \alpha_n + \mathtt m_n \cdot \nabla \alpha_n + a_n + a_n \cdot \nabla \alpha_n \big] \cdot \nabla  \\
& = \omega \cdot \partial_\vphi + \mathtt m_n \cdot \nabla 
+ {\cal A}_n^{- 1} \big[ \omega \cdot \partial_\vphi \alpha_n + \mathtt m_n \cdot \nabla \alpha_n + \Pi_{N_n}a_n \big] \cdot \nabla  + a_{n + 1} \cdot \nabla
\end{aligned}
\end{equation}
where 
\begin{equation}\label{def a n + 1}
a_{n + 1} := {\cal A}_n^{- 1} f_n, \qquad 
f_n :=  \Pi_{N_n}^\bot a_n +  a_n \cdot \nabla \alpha_n,
\end{equation}
and the projectors $\Pi_{N_n}, \Pi_{N_n}^\bot$ are defined by \eqref{def:smoothings}. 
For any $(\omega, \zeta) \in {\cal O}_{n + 1}^\gamma$, we solve the homological equation 
\begin{equation}\label{eq omologica trasporto}
\omega \cdot \partial_\vphi \alpha_n + \mathtt m_n \cdot \nabla \alpha_n + \Pi_{N_n}a_n = \langle a_n \rangle_{\vphi, x}
\end{equation}
(where $\la a_n \ra_{\ph,x}$ is the average of $a_n$ in time and space),
and, recalling \eqref{def ompaph-1 ext}, 
we extend its solution to the whole parameter space $(\om,\zeta) \in \R^{\nu+3}$
by defining 
\begin{equation}\label{def alpha n}
\alpha_n:= \big( \omega \cdot \partial_\vphi + \mathtt m_n \cdot \nabla \big)_{ext}^{- 1}
\big[ \langle a_n \rangle_{\vphi, x} - \Pi_{N_n}a_n\big]\,.
\end{equation}
We define 
\begin{equation}\label{secondo cal L n + 1}
{\cal T}_{n + 1} 
:= \omega \cdot \partial_\vphi + \mathtt m_{n + 1} \cdot \nabla + a_{n + 1} \cdot \nabla 
\end{equation}
where 
\begin{equation}\label{def mathtt m n + 1}
\mathtt m_{n + 1} := \mathtt m_n + \langle a_n \rangle_{\vphi, x}\,.
\end{equation}
We observe that ${\cal T}_{n+1}$ is defined for all $(\om,\zeta) \in \R^{\nu+3}$, 
and, for $(\om,\zeta) \in {\cal O}^\g_{n+1}$, 
one has $\mA_n^{-1} {\cal T}_n \mA_n = {\cal T}_{n+1}$.
Clearly $\alpha_n (\vphi, x; \omega, \zeta)$ is ${\cal C}^\infty$ in $(\vphi, x)$ 
and $k_0$ times differentiable in $(\omega, \zeta) \in \R^{\nu + 3}$. Furthermore, by Lemma \ref{lemma:WD}, and by the smoothing property \eqref{p2-proi}, for any $s \geq 0$, one has 
\begin{equation}\label{prima stima alpha n an}
\begin{aligned}
& \| \alpha_n \|_s^{k_0, \gamma} \lesssim  \gamma^{- 1}\| \Pi_{N_n} a_n \|_{s + \tau_0}^{k_0, \gamma} \lesssim N_n^{\tau_0} \gamma^{- 1} \| a_n\|_s^{k_0, \gamma},  \\
& \| \nabla \alpha_n\|_s^{k_0, \gamma} \lesssim \gamma^{- 1}\| \Pi_{N_n} a_n \|_{s + \tau_0 + 1}^{k_0, \gamma}  \lesssim N_n^{\tau_0 + 1} \gamma^{- 1} \| a_n\|_s^{k_0, \gamma}\,. 
\end{aligned}
\end{equation}
The latter estimate, together with Lemma \ref{lemma:LS norms} and the induction estimates on \eqref{stima mathtt mn an} on $a_n$, imply that for any $s_0 \leq s \leq S$
\begin{equation}\label{stime alpha n + 1}
\begin{aligned}
& \| \alpha_{n } \|_s^{k_0, \gamma}\,,\,\| \breve \alpha_{n } \|_s^{k_0, \gamma} \lesssim_{s} N_{n }^{\tau_0} N_{n - 1}^{- \frak a} \e \gamma^{- 1} \| v\|_{s + \mu}^{k_0, \gamma} \,, \\
& \| \alpha_{n } \|_{s + \frak b}^{k_0, \gamma} \,,\, \| \breve \alpha_{n } \|_{s + \frak b}^{k_0, \gamma}\lesssim_{s} N_n^{\tau_0} N_{n - 1} \e \gamma^{- 1} \| v\|_{s + \mu}^{k_0, \gamma}
\end{aligned}
\end{equation}
which are the estimates \eqref{stime alpha n tilde alpha n} at the step $n + 1$. 
Note that, using the definition of the constant $\frak a$ in \eqref{costanti ridu trasporto} 
and the ansatz \eqref{ansatz}, 
from \eqref{stime alpha n + 1} with $s=s_0$ one deduces that
\begin{equation}\label{stima alpha n s0}
 \| \alpha_{n } \|_{s_0}^{k_0, \gamma}\,,\,\| \breve \alpha_{n } \|_{s_0}^{k_0, \gamma} \lesssim N_0^{\tau_0} \e \gamma^{- 1}\,.
\end{equation}
 Hence the smallness condition \eqref{condizione piccolezza rid trasporto} (choosing $\tau_1 > \tau_0$), together with Lemma \ref{lemma:LS norms} and the estimate \eqref{prima stima alpha n an} leads to the estimate
\begin{equation}\label{stima cal An pm 1}
\| {\cal A}_n^{\pm 1} h \|_s^{k_0, \gamma} \lesssim_{s} \| h \|_s^{k_0, \gamma} + N_n^{\tau_0} \gamma^{- 1} \| a_n\|_s^{k_0, \gamma} \| h \|_{s_0}^{k_0, \gamma}\,, \quad \forall s_0 \leq s \leq S + \frak b\,.  
\end{equation}
We now estimate the function $a_{n + 1}$ defined in \eqref{def a n + 1}. 
First, we estimate $f_{n }$. 
By \eqref{costanti ridu trasporto}, \eqref{stima mathtt mn an} and using also the ansatz \eqref{ansatz} and the smallness condition \eqref{condizione piccolezza rid trasporto}, 
one has that 
\begin{equation}\label{a n + 1 Nn gamma}
N_n^{\tau_0 + 1} \gamma^{- 1} \| a_n \|_{s_0}^{k_0, \gamma} \leq 1\,.
\end{equation} 
By \eqref{p1-pr}, \eqref{p2-proi}, \eqref{p3-proi}, 
\eqref{prima stima alpha n an}, \eqref{a n + 1 Nn gamma}
one has 
\begin{equation}\label{stima f n + 1 trasporto}
\begin{aligned}
\| f_n \|_{s_0}^{k_0, \gamma} & \lesssim \| a_n \|_{s_0}^{k_0, \gamma}\,, \\
\| f_{n } \|_s^{k_0, \gamma} & \lesssim_{s} N_n^{- \frak b} \| a_n\|_{s + \frak b}^{k_0, \gamma} + N_n^{\tau_0 + 1} \gamma^{- 1} \| a_n\|_s^{k_0, \gamma} \| a_n\|_{s_0}^{k_0, \gamma}\,, 
\quad \forall s_0 < s \leq S, \\
\| f_{n }\|_{s + \frak b}^{k_0, \gamma} 
& \lesssim_{s} \| a_n\|_{s + \frak b}^{k_0, \gamma} 
\big(1 + N_n^{\tau_0 + 1} \g^{-1} \| a_n\|_{s_0}^{k_0, \gamma} \big) 
\lesssim_{s} 
\| a_n\|_{s + \frak b}^{k_0, \gamma}, \quad \forall s_0 \leq s \leq S. 
\end{aligned}
\end{equation}
Hence \eqref{stima cal An pm 1}-\eqref{stima f n + 1 trasporto} 
imply that, for any $s_0 \leq s \leq S$, 
\begin{equation}\label{stime induttive a n + 1}
\begin{aligned}
\| a_{n + 1} \|_s^{k_0, \gamma} & \lesssim_{s} N_n^{- \frak b} \| a_n\|_{s + \frak b}^{k_0, \gamma} + N_n^{\tau_0 + 1} \gamma^{- 1} \| a_n\|_s^{k_0, \gamma} \| a_n\|_{s_0}^{k_0, \gamma}\,, \\
\| a_{n + 1}\|_{s + \frak b}^{k_0, \gamma} & \lesssim_{s} \| a_n \|_{s + \frak b}^{k_0, \gamma} 
\end{aligned}
\end{equation}
and using the definition of the constants $\frak a, \frak b$ in \eqref{costanti ridu trasporto} and the induction estimates on $a_n$ one deduces the estimate \eqref{costanti ridu trasporto} for $a_{n + 1}$. The estimates \eqref{stima mathtt mn an} for $\mathtt m_{n + 1}$ follows by its definition \eqref{def mathtt m n + 1}, by the induction estimate on $a_n$ and by using a telescoping argument. 

Finally, by \eqref{def alpha n}, since $a_n = {\rm even}(\vphi, x)$, then $\alpha_n , \breve \alpha_n = {\rm odd}(\vphi, x)$. This implies that the maps ${\cal A}_n^{\pm 1}$ are reversibility preserving and therefore, by recalling \eqref{def a n + 1}, $a_{n + 1} = {\rm even}(\vphi, x)$ and the claimed statement is proved. 
\end{proof}

We then define 
\begin{equation}\label{def widetilde cal An}
\widetilde{\cal A}_n := {\cal A}_0 \circ {\cal A}_1 \circ \ldots \circ {\cal A}_n, \quad \text{with inverse} \quad \widetilde{\cal A}_n^{- 1} = {\cal A}_n^{- 1} \circ {\cal A}_{n - 1}^{- 1} \circ \ldots \circ {\cal A}_0^{- 1}\,. 
\end{equation}

\begin{lemma}\label{lemma tilde cal An}
Let $S > s_0$, $\gamma \in (0, 1)$. Then there exist $\delta = \delta(S, k_0, \tau, \nu) \in (0, 1)$, $\mu = \mu(k_0, \tau, \nu) > 0$ such that if \eqref{ansatz} holds with $\mu_0 \geq s_0 + \mu$ and if \eqref{condizione piccolezza rid trasporto} holds, then the following properties hold.

\noindent
$(i)$
$$
\widetilde{\cal A}_n h (\vphi, x) = h(\vphi, x + \beta_n(\vphi, x))\,, \quad \widetilde{\cal A}_n^{- 1} h(\vphi, y) = h(\vphi, x + \breve \beta_n(\vphi, x))
$$ 
where, for any $s_0 \leq s \leq S$, 
\begin{equation}\label{stima beta n beta n - 1 trasporto}
\begin{aligned}
\| \beta_0 \|_{s}^{k_0, \gamma}\,,\, \| \breve \beta_0\|_{s}^{k_0, \gamma} 
& \lesssim_{s} N_{0}^{\tau_0} \e \gamma^{- 1}  \| v \|_{s + \mu}^{k_0, \gamma} \,,
\\
\| \beta_n - \beta_{n - 1} \|_s^{k_0, \gamma}\,,\, 
\|\breve \beta_n - \breve \beta_{n - 1} \|_s^{k_0, \gamma} 
& \lesssim_{s} N_{n }^{\tau_0} N_{n - 1}^{- \frak a}\e \gamma^{- 1}  \| v \|_{s + \mu}^{k_0, \gamma}, \quad n \geq 1. 
\end{aligned}
\end{equation}
As a consequence, 
\begin{equation}\label{bound solo beta n}
\| \beta_n \|_s^{k_0, \gamma} \lesssim_{s} N_0^{\tau_0} \e \gamma^{- 1}\| v \|_{s + \mu}^{k_0, \gamma}, \quad \forall s_0 \leq s \leq S\,. 
\end{equation}
Furthermore, $\beta_n, \breve \beta_n = {\rm odd}(\vphi, x)$. 

\noindent
$(ii)$ For any $s_0 \leq s \leq S$, 
the sequence $(\beta_n)_{n \in \N}$ (resp. $ (\breve \beta_n)_{n \in \N}$) 
is a Cauchy sequence with respect to the norm $\| \cdot \|_s^{k_0, \gamma}$ 
and it converges to some limit $\alpha$ (resp. $\breve \alpha$). 
Furthermore $\alpha, \breve \alpha = {\rm odd}(\vphi, x)$ 
and, for any $s_0 \leq s \leq S$, $n \geq 0$, one has
\begin{equation}\label{stima beta n - beta infty}
\begin{aligned}
\| \alpha- \beta_n\|_s^{k_0, \gamma}\,,\, \| \breve \alpha - \breve \beta_n\|_s^{k_0, \gamma} 
& \lesssim_{s} N_{n + 1}^{\tau_0} N_{n }^{1 - \frak a}\e \gamma^{- 1} \| v \|_{s + \mu}^{k_0, \gamma}  \,, \\
\| \alpha \|_s^{k_0, \gamma}, \| \breve \alpha \|_s^{k_0, \gamma} 
& \lesssim_{s} N_0^{\tau_0}\e \gamma^{- 1}  \| v \|_{s + \mu}^{k_0, \gamma} \,.
\end{aligned}
\end{equation}

\noindent
$(iii)$ Define 
\begin{equation}\label{def cal A infty pm 1}
{\cal A} h (\vphi, x) := h(\vphi, x + \alpha(\vphi, x)), \quad \text{with inverse} \quad {\cal A}^{- 1} h(\vphi, y) = h(\vphi, y + \breve \alpha(\vphi, y))\,. 
\end{equation}
Then, for any $s_0 \leq s \leq S$, $\widetilde{\cal A}_n^{\pm 1}$ converges pointwise in $H^s$ to ${\cal A}^{\pm 1}$, namely $\lim_{n \to + \infty} \| {\cal A}^{\pm 1 } h - \widetilde{\cal A}_n^{\pm 1} h \|_s = 0$ for any $h \in H^s$. 
\end{lemma}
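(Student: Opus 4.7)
The plan is to prove the three parts in order, leveraging the per-step estimates of Lemma \ref{iterazione riducibilita trasporto} and the tame composition estimates of Lemma \ref{lemma:LS norms}. For part (i), I would first establish the composition formula by induction: the composition of two torus diffeomorphisms $h \mapsto h(\ph, x+\gamma_i(\ph,x))$, $i=1,2$, is the diffeomorphism $h \mapsto h(\ph, x + \gamma_2(\ph,x) + \gamma_1(\ph, x + \gamma_2(\ph,x)))$, so that
$$
\widetilde{\mA}_n h(\ph,x) = h(\ph, x + \beta_n(\ph,x)),
\qquad
\beta_n = \beta_{n-1} + \alpha_n \circ (\Id + \beta_{n-1}),
\qquad \beta_0 := \alpha_0,
$$
with the analogous recursion for $\breve\beta_n$ obtained from $\widetilde{\mA}_n^{-1} = \mA_n^{-1} \widetilde{\mA}_{n-1}^{-1}$. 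Applying the composition estimate \eqref{pr-comp1} to the increment and using that $\|\alpha_n\|_{s_0}^{k_0,\g}$ is small (by \eqref{stime alpha n tilde alpha n} at $s=s_0$ and the smallness assumption \eqref{condizione piccolezza rid trasporto}), one gets
$$
\|\beta_n - \beta_{n-1}\|_s^{k_0,\g}
\lesssim_s \|\alpha_n\|_s^{k_0,\g} + \|\beta_{n-1}\|_s^{k_0,\g} \|\alpha_n\|_{s_0}^{k_0,\g},
$$
which, combined with \eqref{stime alpha n tilde alpha n} and an inductive bound on $\|\beta_{n-1}\|_s^{k_0,\g}$, gives \eqref{stima beta n beta n - 1 trasporto} and \eqref{bound solo beta n}. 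Oddness in $(\ph,x)$ passes through the recursion because, if $\beta_{n-1}$ and $\alpha_n$ are both odd, then $-(x) + \beta_{n-1}(-\ph,-x) = -(x+\beta_{n-1}(\ph,x))$, so $\alpha_n\circ(\Id+\beta_{n-1})$ is odd as well.

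For part (ii), the convergence is a direct consequence of the choice $\chi = 3/2$ and $\frak a = 3(\tau_0+1)+2$ in \eqref{costanti ridu trasporto}. Indeed, $N_n^{\tau_0} N_{n-1}^{-\frak a} = N_0^{\chi^{n-1}(\chi\tau_0 - \frak a)}$ with $\chi \tau_0 - \frak a < 0$, so the series $\sum_{n\geq 1} N_n^{\tau_0} N_{n-1}^{-\frak a}$ converges super-geometrically. Hence $(\beta_n)$ is Cauchy in $\|\cdot\|_s^{k_0,\g}$ for every $s_0 \leq s \leq S$, converging to some odd limit $\alpha$ (and likewise $\breve\beta_n \to \breve\alpha$). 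The tail bound in \eqref{stima beta n - beta infty} follows by summing \eqref{stima beta n beta n - 1 trasporto} for $k \geq n+1$ and absorbing the geometric factor into the constant, while the second inequality in \eqref{stima beta n - beta infty} is obtained by combining the $n=0$ estimate with the telescoping sum.

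For part (iii), pointwise convergence in $H^s$, I would first argue for $h \in C^\infty$ using the identity
$$
\mA h(\ph,x) - \widetilde{\mA}_n h(\ph,x)
= \int_0^1 \nabla h\bigl(\ph, x + \beta_n(\ph,x) + t(\alpha(\ph,x) - \beta_n(\ph,x))\bigr) \cdot \bigl(\alpha - \beta_n\bigr)(\ph,x)\,dt,
$$
bounding the right-hand side in $H^s$ by the product estimate \eqref{p1-pr}, the composition estimate \eqref{pr-comp1} and the bound $\|\alpha-\beta_n\|_s^{k_0,\g} \to 0$ from (ii); the same argument applies to $\mA^{-1} - \widetilde{\mA}_n^{-1}$ using $\breve\alpha - \breve\beta_n$. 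For a general $h \in H^s$, the maps $\widetilde{\mA}_n^{\pm 1}$ and $\mA^{\pm 1}$ are uniformly bounded on $H^s$ thanks to \eqref{pr-comp1} and the uniform bound \eqref{bound solo beta n} on $\|\beta_n\|_s^{k_0,\g}$, $\|\breve\beta_n\|_s^{k_0,\g}$, so the pointwise convergence extends from the dense subspace $C^\infty$ to all of $H^s$ by a standard $3\e$-argument.

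The most delicate step is keeping track, in (i) and (ii), of both the ``low-regularity'' norm $\|\cdot\|_s^{k_0,\g}$ and the ``high-regularity'' norm $\|\cdot\|_{s+\frak b}^{k_0,\g}$ simultaneously, so that the geometric gain in the first norm is not destroyed by a polynomial loss in the second when inverting the tame estimates; this requires the two-scale bookkeeping already built into \eqref{stime alpha n tilde alpha n}. A secondary (but routine) nuisance is that the recursion for $\breve\beta_n$ is not literally symmetric to the one for $\beta_n$; this is handled either by running the same argument with $\widetilde{\mA}_n^{-1} = \mA_n^{-1} \widetilde{\mA}_{n-1}^{-1}$, or by invoking the inverse-diffeomorphism estimate \eqref{p1-diffeo-inv} to transfer bounds from $\beta_n$ to $\breve\beta_n$.
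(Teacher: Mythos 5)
Your proposal is correct and follows essentially the same route as the paper: the inductive composition recursion $\beta_{n+1}=\beta_n+\widetilde{\mA}_n[\alpha_{n+1}]$ estimated via the tame bounds \eqref{pr-comp1} and \eqref{stime alpha n tilde alpha n}, telescoping with the summable factors $N_n^{\tau_0}N_{n-1}^{-\frak a}$ for part (ii), and a fundamental-theorem-of-calculus estimate on a smooth approximant plus uniform boundedness for part (iii). The only cosmetic difference is that in (iii) you approximate by a dense $C^\infty$ function with a $3\e$-argument, whereas the paper uses the Fourier truncation $\Pi_N h$ for the same purpose; both the treatment of $\breve\beta_n$ and everything else match the paper's argument.
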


\begin{proof}
{\sc Proof of $(i)$.} We prove the Lemma arguing by induction. For $n = 0$, one has that $\widetilde{\cal A}_0^{\pm 1} = {\cal A}_0^{\pm 1}$ and we set $\beta_0 := \alpha_0$, $\breve \beta_0 := \breve \alpha_0$. Then the first estimate \eqref{stima beta n beta n - 1 trasporto} follows by \eqref{stime alpha n tilde alpha n} (applied with $n = 1$). The second statement in \eqref{stima beta n beta n - 1 trasporto} for $n = 0$ is empty. Now assume that the claimed statement holds for some $n \geq 0$ and let us prove it at the step $n + 1$. We prove the claimed statement for $\widetilde{\cal A}_{n + 1}$ since the proof for the map $\widetilde{\cal A}_{n + 1}^{- 1}$ is similar. Using that $\widetilde{\cal A}_{n + 1} := \widetilde{\cal A}_n \circ {\cal A}_{n + 1}$, one computes that 
\begin{equation}\label{definizione induttiva beta n}
\begin{aligned}
\widetilde{\cal A}_{n + 1} h(\vphi, x) = h(\vphi, x + \beta_{n + 1}(\vphi, x))\,,\quad 
\beta_{n + 1} := \beta_{n } + \widetilde{\cal A}_{n}[\alpha_{n + 1}]\,. 
\end{aligned}
\end{equation}
We apply Lemma \ref{iterazione riducibilita trasporto}. 
Since $\alpha_{n + 1} = {\rm odd}(\vphi, x)$ 
and, by the induction hypothesis $\beta_n = {\rm odd}(\vphi, x)$, 
$\widetilde{\cal A}_n$ is reversibility preserving, 
one has $\beta_{n + 1} = {\rm odd}(\vphi, x)$. 
By the induction estimate \eqref{bound solo beta n} for $s = s_0$, using the ansatz \eqref{ansatz}, $\| \beta_n \|_{s_0}^{k_0, \gamma} \lesssim N_0^{\tau_0} \e \gamma^{- 1}$. Then, by the smallness condition \eqref{condizione piccolezza rid trasporto}, 
we can apply Lemma \ref{lemma:LS norms} 
and \eqref{stime alpha n tilde alpha n}, 
\eqref{ansatz}, 
\eqref{condizione piccolezza rid trasporto}, 
\eqref{bound solo beta n}, 
obtaining that, for any $s_0 \leq s \leq S$, 
\begin{equation}\label{stima beta n + 1 beta n}
\begin{aligned}
\| \beta_{n + 1} - \beta_n \|_s^{k_0, \gamma} 
& \lesssim_{s} \| \alpha_{n + 1} \|_s^{k_0, \gamma} 
+ \| \beta_n \|_s^{k_0, \gamma} \| \alpha_{n + 1} \|_{s_0}^{k_0, \gamma}  
\lesssim_{s} N_{n + 1}^{\tau_0} N_n^{- \frak a} \e \gamma^{- 1} \| v \|_{s + \mu}^{k_0, \gamma}, 
\end{aligned}
\end{equation}
which is \eqref{stima beta n beta n - 1 trasporto} at the step $n + 1$. 
The estimate \eqref{bound solo beta n} at the step $n + 1$ follows by using a telescoping argument, since the series $\sum_{n \geq 0} N_n^{\tau_0} N_{n - 1}^{- \frak a} < \infty$.

\medskip

\noindent
{\sc Proof of $(ii)$.} It follows by item $(i)$, using the estimate \eqref{stima beta n beta n - 1 trasporto} and a telescoping argument. 

\noindent
{\sc Proof of $(iii)$.} It follows by item $(ii)$, using the same arguments of the proof of Lemma B6-$(i)$ in \cite{BHM}: 
given $h \in H^s, \e_1 > 0$, 
there exists $N > 0$ (sufficiently large, depending on $\e_1,s,h$)   
such that $h_1 := \Pi_N^\bot h$ satisfies 
$\| \mA h_1 \|_s \leq \e_1 /4$, 
$\| \tilde \mA_n h_1 \|_s \leq \e_1 /4$
uniformly in $n$ (bound \eqref{bound solo beta n} is uniform in $n$).  
On the other hand, $h_0 := \Pi_N h$ satisfies 
\[ 
\begin{aligned}
\| (\mA - \tilde \mA_n) h_0 \|_s 
& \leq \int_0^1 \frac{d}{d\theta} \, 
h_0 \big( \ph, x + \b_n(\ph,x) + \theta [\a(\ph,x) - \b_n(\ph,x)] \big) \, d\theta 
\\ 
& \lesssim_s 
\| \grad h_0 \|_s \| \a - \b_n \|_{s_0} + \| \grad h_0 \|_{s_0} \| \a - \b_n \|_s
\leq \e_1 / 2  
\end{aligned}
\]
for all $n \geq n_0$, for some $n_0$ depending on $\e_1, s, h$. 
\end{proof}

\begin{lemma}\label{inclusione cantor diofantei}
$(i)$ The sequence $(\mathtt m_n)_{n \in \N}$ satisfies the bound 
$$
{\rm sup}_{(\omega, \zeta) \in {\cal O}_n^\gamma} |\mathtt m_n(\omega, \zeta) - \zeta| \lesssim \e N_{n - 1}^{- \frak a} \,.
$$
$(ii)$ The following inclusion holds: $DC(\gamma, \tau) \subseteq \cap_{n \geq 0} {\cal O}_n^\gamma$ (recall the definitions \eqref{def cantor set trasporto}, \eqref{cal On gamma}).
\end{lemma}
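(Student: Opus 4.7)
The plan is to first establish (i) by exploiting the telescoping structure coming from the recursion in Lemma \ref{iterazione riducibilita trasporto}, together with the key structural input \eqref{proprieta a}, and then deduce (ii) by a direct induction-on-$n$ argument that combines the Diophantine condition defining $DC(\gamma,\tau)$ with the bound from (i).

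For part (i), I would start from the recursion $\mathtt m_{n+1} = \mathtt m_n + \langle a_n \rangle_{\vphi,x}$ in \eqref{def mathtt m n + 1}, together with the initialization $\mathtt m_0 = \zeta$ coming from $a_0 = \e a$ at the base step of Lemma \ref{iterazione riducibilita trasporto}. This gives the telescopic identity
\[
\mathtt m_n - \zeta \;=\; \sum_{k=0}^{n-1} \langle a_k \rangle_{\vphi,x}.
\]
The crucial observation is that, by \eqref{proprieta a}, the initial field $a_0 = \e\,\curl \Lm^{-1} v$ is itself a curl and hence has zero $x$-average, so $\langle a_0\rangle_{\vphi,x} = 0$. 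One then argues that the structure is propagated by the iterative step \eqref{def a n + 1}: the homological equation \eqref{eq omologica trasporto} removes the $(\ph,x)$-mean from $\Pi_{N_n} a_n$, and the remaining contributions to $a_{n+1} = \mA_n^{-1}(\Pi_{N_n}^\bot a_n + a_n\cdot\nabla\a_n)$ inherit the divergence/zero-average structure modulo higher order terms. Combined with the bound $|\langle a_k\rangle_{\vphi,x}| \leq \|a_k\|_{s_0}^{k_0,\g} \lesssim \e N_{k-1}^{-\frak a}$ from \eqref{stima mathtt mn an} and the super-geometric growth $N_k = N_0^{\chi^k}$, the telescopic sum is dominated by the single term of index $k = n-1$, yielding $|\mathtt m_n - \zeta| \lesssim \e N_{n-1}^{-\frak a}$ uniformly on ${\cal O}_n^\g$.

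For part (ii), I would prove $DC(\gamma,\tau) \subseteq {\cal O}_n^\gamma$ by induction on $n$. The base case $n=0$ is immediate from ${\cal O}_0^\gamma = \R^{\nu+3}$. For the inductive step, pick $\lambda = (\om,\zeta) \in DC(\gamma,\tau) \subseteq {\cal O}_{n-1}^\gamma$, and for $(\ell,j) \in \Z^{\nu+3} \setminus \{(0,0)\}$ with $|(\ell,j)| \leq N_{n-1}$, write
\[
|\om\cdot\ell + \mathtt m_{n-1}\cdot j| \;\geq\; |\om\cdot\ell + \zeta \cdot j| - |\mathtt m_{n-1} - \zeta|\,|j|.
\]
By \eqref{def cantor set trasporto}, the first term is at least $\frak C_0\gamma / \langle\ell,j\rangle^\tau$; for the second, using (i) and $|j| \leq N_{n-1}$,
\[
|\mathtt m_{n-1} - \zeta|\,|j|\,\langle\ell,j\rangle^\tau \;\leq\; C\e N_{n-2}^{-\frak a} N_{n-1}^{1+\tau} \;=\; C\e N_{n-2}^{\chi(1+\tau) - \frak a},
\]
with $\chi = 3/2$. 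Since $\frak a = 3(\tau_0+1)+2$ with $\tau_0 = k_0 + \tau(k_0+1)$, the exponent $\chi(1+\tau) - \frak a$ is negative, so the right-hand side is bounded by $C\e$, which under the smallness condition \eqref{condizione piccolezza rid trasporto} (with $\tau_1$ chosen appropriately) is at most $(\frak C_0 - 1)\gamma$ provided $\frak C_0 > 1$ as in Remark \ref{costante diofantea frak C0}. Combining, $|\om\cdot\ell + \mathtt m_{n-1}\cdot j| \geq \gamma/\langle\ell,j\rangle^\tau$, so $\lambda \in {\cal O}_n^\gamma$, closing the induction.

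The main obstacle is part (i): one must verify rigorously that the divergence/zero-average structure of $a_0$ is genuinely transmitted through the pullback by $\mA_n^{-1}$ in \eqref{def a n + 1}, so as to guarantee the strong decay $\e N_{n-1}^{-\frak a}$ rather than merely the coarser bound $\sum_k \|a_k\|_{s_0}^{k_0,\g} \lesssim \e$ that naive telescoping produces. Part (ii) is then essentially a bookkeeping exercise based on the smallness \eqref{condizione piccolezza rid trasporto} and the calibration of the exponents $\frak a,\chi,\tau_0$.
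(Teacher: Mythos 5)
Your part (ii) is essentially the paper's argument and is fine (induction on $n$, triangle inequality, the bound from (i) with $|j|\leq N_{n-1}$, and the exponent bookkeeping $\chi(1+\tau)<\frak a$ combined with the smallness condition \eqref{condizione piccolezza rid trasporto} and $\frak C_0\geq 2$). The genuine gap is in part (i), which you yourself flag as the main obstacle but do not close, and the route you sketch does not work. From the telescoping identity $\mathtt m_n-\zeta=\sum_{k=0}^{n-1}\langle a_k\rangle_{\vphi,x}$ with $\| a_k\|_{s_0}^{k_0,\gamma}\lesssim\e N_{k-1}^{-\frak a}$, the sum is dominated by its \emph{first} nonvanishing term, not by the term $k=n-1$ (the bounds decrease in $k$), so naive telescoping only yields $|\mathtt m_n-\zeta|\lesssim\e$. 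To obtain $\e N_{n-1}^{-\frak a}$ along your route you would need $\langle a_k\rangle_{\vphi,x}=0$ for every $k\leq n-2$, i.e.\ that the zero-average/divergence-free structure of $a_0=\e\,\curl\Lambda^{-1}v$ propagates exactly through \eqref{def a n + 1}; it does not: the pullback by ${\cal A}_k^{-1}$ changes $(\vphi,x)$-averages by a Jacobian factor, and $a_k\cdot\nabla\alpha_k$ has zero mean only when $\div a_k=0$, a property which is itself destroyed at each step. (If the structure did propagate exactly, one would get $\mathtt m_n\equiv\zeta$ and there would be no reason to track $\mathtt m_n$ at all.)

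The paper's proof of (i) avoids any per-step propagation: for $(\om,\zeta)\in{\cal O}_n^\gamma$ it writes ${\cal T}_n=\widetilde{\cal A}_{n-1}^{-1}{\cal T}_0\widetilde{\cal A}_{n-1}$, computes the right-hand side explicitly as in \eqref{A tilde n cal Tn 1}, uses that the composed diffeomorphism fixes constants to get the identity \eqref{uguaglianza mn an betan}, and then takes the space-time average. The three cancellations in \eqref{media divergenza con beta n - 1} use only the properties of the \emph{original} field $a$ (namely $\Pi_0 a=0$ and $\div a=0$), and yield the closed identity $\mathtt m_n-\zeta=-\langle\widetilde{\cal A}_{n-1}a_n\rangle_{\vphi,x}$, which involves only the current remainder $a_n$ and is therefore bounded by $\| a_n\|_{s_0}^{k_0,\gamma}\lesssim\e N_{n-1}^{-\frak a}$ via \eqref{stima mathtt mn an} and Lemma \ref{lemma:LS norms}. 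If you want to salvage your plan, this composed-conjugation identity is exactly the ingredient that replaces your step-by-step structure propagation; without it (or an equivalent device) the estimate $\e N_{n-1}^{-\frak a}$ in (i), and hence the uniform-in-$n$ smallness needed in (ii), is not justified.
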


\begin{proof}
{\sc Proof of $(i)$.} By recalling the definition \eqref{def widetilde cal An}, using \eqref{def cal Tn transport}, \eqref{coniugazione cal A n - 1 cal T n - 1}, one obtains  
\begin{equation}\label{A tilde n cal Tn}
\omega \cdot \partial_\vphi + \mathtt m_n \cdot \nabla + a_n \cdot \nabla = {\cal T}_n = \widetilde{\cal A}_{n - 1}^{- 1}{\cal T}_0 \widetilde{\cal A}_{n - 1}, \quad \forall (\omega, \zeta) \in {\cal O}_n^\gamma
\end{equation}
and by Lemma \ref{lemma tilde cal An}-$(i)$ one computes explicitely
\begin{equation}\label{A tilde n cal Tn 1}
\widetilde{\cal A}_{n - 1}^{- 1}{\cal T}_0 \widetilde{\cal A}_{n - 1} 
= \omega \cdot \partial_\vphi + \zeta \cdot \nabla 
+ \widetilde {\cal A}_{n - 1}^{- 1} \big( \omega \cdot \partial_\vphi \beta_{n - 1} + \zeta \cdot \nabla \beta_{n - 1} + \e \, a 
+ \e \, a  \cdot \nabla \beta_{n - 1} \big) \cdot \nabla \,.
\end{equation}
Since $\widetilde{\cal A}_{n - 1}$ is a change of variable, 
one has $\widetilde{\cal A}_{n - 1}[c] = c$ for all constant $c \in \R$. 
Hence, by \eqref{A tilde n cal Tn}, \eqref{A tilde n cal Tn 1}, 
one obtains the identity
\begin{equation}\label{uguaglianza mn an betan}
\begin{aligned}
\mathtt m_n + \widetilde{\cal A}_{n - 1} a_n = \zeta +  \omega \cdot \partial_\vphi \beta_{n - 1} + \zeta \cdot \nabla \beta_{n - 1} + \e a + \e a \cdot \nabla \beta_{n - 1}\,. 
\end{aligned}
\end{equation}
Note that 
\begin{equation}\label{media divergenza con beta n - 1}
\begin{aligned}
& \int_{\T^{\nu + 3}} \big( \omega \cdot \partial_\vphi \beta_{n - 1} + \zeta \cdot \nabla \beta_{n - 1} \big)\, d \vphi\, d x = 0\,, \quad   \int_{\T^{\nu + 3}} a(\vphi, x)\, d\vphi \, d x \stackrel{\eqref{proprieta a}}{=} 0\,, \\
& \int_{\T^{\nu + 3}} a \cdot \nabla \beta_{n - 1}\, d \vphi\, d x = - \int_{\T^{\nu + 3}} {\rm div}(a) \beta_{n - 1}\, d \vphi\, d x \stackrel{\eqref{proprieta a}}{=} 0\,.
\end{aligned}
\end{equation}
Taking the space-time average of the equation \eqref{uguaglianza mn an betan}, 
and using \eqref{media divergenza con beta n - 1}, 
we deduce that 
\begin{equation}\label{mathtt mn an}
\mathtt m_n - \zeta = - \langle \widetilde{\cal A}_{n - 1} a_n \rangle_{\vphi, x} 
\quad \forall (\omega, \zeta) \in {\cal O}_n^\gamma\,. 
\end{equation}
The claimed estimate then follows by Lemma \ref{lemma:LS norms}, 
applying \eqref{stima mathtt mn an}, 
\eqref{bound solo beta n}, 
\eqref{ansatz}, 
\eqref{condizione piccolezza rid trasporto}. 

\medskip

\noindent
{\sc Proof of $(ii)$.} We prove the claimed inclusion by induction, i.e. we show that $DC(\gamma, \tau) \subseteq {\cal O}_n^\gamma$ for any $n \geq 0$. 
For $n = 0$, the inclusion holds since ${\cal O}_0^\gamma := \R^{\nu+3}$ 
and $DC(\gamma, \tau) \subseteq \R^{\nu+3}$ (see \eqref{def cantor set trasporto}).  
Now assume that $DC(\gamma, \tau) \subseteq {\cal O}_n^\gamma$ for some $n \geq 0$ 
and let us prove that $DC(\gamma, \tau) \subseteq {\cal O}_{n + 1}^\gamma$. 
Let $(\omega, \zeta) \in DC(\gamma, \tau)$. 
By the induction hypothesis, $(\omega, \zeta)$ belongs to ${\cal O}_n^\gamma$. 
Therefore, by item $(i)$, one has 
$|\mathtt m_n(\omega, \zeta) - \zeta| \lesssim \e N_{n - 1}^{- \frak a}$. 
Hence, for all $(\ell, j) \in \Z^{\nu + 3} \setminus \{(0,0)\}$, 
$|(\ell, j)| \leq N_n$, one has 
$$
\begin{aligned}
|\omega \cdot \ell + \mathtt m_n(\omega, \zeta) \cdot j| & \geq |\omega \cdot \ell + \zeta \cdot j| - |(\mathtt m_n(\omega, \zeta) - \zeta) \cdot j| \\
& \geq \frac{{\frak C}_0 \gamma}{\langle \ell, j \rangle^\tau} - \e C N_n N_{n - 1}^{- \frak a} 
\geq \frac{\gamma}{\langle \ell, j \rangle^\tau}
\end{aligned}
$$
provided $C N_n^{1+\tau} N_{n - 1}^{- \frak a} \e \gamma^{- 1} \leq \frak C_0 - 1$.
This holds for all $n \geq 0$ provided 
\begin{equation} \label{provided trasporto}
C N_0^{1+\tau} \e \g^{-1} \leq \frak C_0 - 1.
\end{equation}
Condition \eqref{provided trasporto} is fullfilled by taking $\frak C_0 \geq 2$, 
using 
\eqref{costanti ridu trasporto} 
and the smallness condition \eqref{condizione piccolezza rid trasporto}. 
Thus, by the definition of ${\cal O}_{n + 1}^\gamma$ (see \eqref{cal On gamma}),
one has that $(\omega, \zeta) \in {\cal O}_{n + 1}^\gamma$, 
and the proof is concluded. 
\end{proof}


\begin{proof}[Proof of Proposition \ref{proposizione trasporto}]
For any $(\om,\zeta) \in DC(\gamma, \tau)$, 
by Lemma \ref{inclusione cantor diofantei}, 
$\mathtt{m}_n \to \zeta$ as $n \to \infty$. 
By \eqref{bound solo beta n}, 
\eqref{stima mathtt mn an}
and Lemma \ref{lemma:LS norms}, 
one has $\| \widetilde \mA_{n-1} a_n \|_{s_0} \lesssim \| a_n \|_{s_0} \to 0$ 
Also, 
\[
\| \pa_\ph \b_{n-1} - \pa_\ph \a \|_{s_0} , \ 
\| \grad \b_{n-1} - \grad \a \|_{s_0} 
\leq \| \b_{n-1} - \a \|_{s_0+1} 
\to 0 \quad \ (n \to \infty).
\]
Hence, passing to the limit in norm $\| \ \|_{s_0}$ 
in the identity \eqref{uguaglianza mn an betan}, we obtain the identity
\begin{equation} \label{1303.1}
\ompaph \a + \zeta \cdot \grad \a + \e a(\ph,x) + \e a(\ph,x) \cdot \grad \a = 0
\end{equation}
in $H^{s_0}(\T^{\nu+3})$, 
and therefore pointwise for all $(\ph,x) \in \T^{\nu+3}$,
for any $(\om,\zeta) \in DC(\g,\t)$.
As a consequence, 
\begin{align*}
{\cal A}^{- 1} {\cal T} {\cal A} 
& = \omega \cdot \partial_\vphi + \zeta \cdot \nabla 
+ \{ \mA^{-1} \big( \ompaph \a + \zeta \cdot \grad \a + \e a + \e a \cdot \grad \a \big) \} 
\cdot \grad 
= \omega \cdot \partial_\vphi + \zeta \cdot \nabla 
\end{align*}
for all $(\omega, \zeta) \in DC(\gamma, \tau)$, 
which is \eqref{coniugazione nel teo trasporto}.
The estimates \eqref{stima alpha trasporto}, \eqref{stima tame cambio variabile rid trasporto} 
follow from the estimates \eqref{stima beta n - beta infty} and by Lemma \ref{lemma:LS norms}. 

It remains only to prove the estimate \eqref{stime delta 12 prop trasporto}. 
Let $a_i := a_i(v_i)$, $i = 1, 2$ satisfy \eqref{proprieta a} 
and assume that, for $s_1 > s_0$, $v_1, v_2$ satisfy \eqref{ansatz} 
with $\mu_0 \geq s_1 + \mu$. 
Let $\a_i$ and $\mA_i$, $i=1,2$, 
be the corresponding function and operator
given by Lemma \ref{lemma tilde cal An}-$(ii),(iii)$. 
Then, by \eqref{1303.1} and \eqref{coniugazione nel teo trasporto}, 
for $i=1,2$ one has 
\begin{equation} \label{1303.2}
{\cal A}_i^{- 1} {\cal T}_i {\cal A}_i = L_0,  \quad \ 
{\cal T}_i (\alpha_i) + a_i = 0 \quad 
\forall (\omega, \zeta) \in DC(\gamma, \tau),
\end{equation}
where $L_0 := \omega \cdot \partial_\vphi + \zeta \cdot \nabla$ 
and ${\cal T}_i := L_0 + \e a_i(\ph,x) \cdot \grad$. 
Hence
$$
{\cal T}_1 (\alpha_1 - \alpha_2) + f = 0, \quad f := (a_1 - a_2) \cdot \nabla \alpha_2 + a_1 - a_2\,. 
$$
By \eqref{1303.2} one has ${\cal T}_1 = \mA_1 L_0 \mA_1^{-1}$, 
and therefore
$$
L_0 {\cal A}_1^{- 1} (\alpha_1 - \alpha_2) + {\cal A}_1^{- 1}(f) = 0\,. 
$$
Since $\langle {\cal A}_1^{- 1}(f) \rangle_{\vphi, x} = - \langle L_0 {\cal A}_1^{- 1} (\alpha_1 - \alpha_2) \rangle_{\vphi, x} = 0$, $(\omega, \zeta) \in DC(\gamma, \tau)$ and $\alpha_1 - \alpha_2, {\cal A}_1^{- 1}(\alpha_1 - \alpha_2) = {\rm odd}(\vphi, x)$ (the operator $L_0$ has only the trivial kernel, restricted to the space of odd functions in $(\vphi, x)$), one has 
$$
\alpha_1 - \alpha_2 = - {\cal A}_1 L_0^{- 1}{\cal A}_1^{- 1}(f), \quad \forall (\omega, \zeta) \in DC(\gamma ,\tau)\,. 
$$
Then, using that $\| v_i \|_{s_1 + \mu} \leq 1$, 
by applying \eqref{stima tame cambio variabile rid trasporto}, \eqref{stima alpha trasporto} 
and the product 
estimate \eqref{p1-pr}, 
recalling also that by \eqref{def a}, $a_i = {\rm curl}\,\Lambda^{- 1} v_i$, $i = 1, 2$, 
one gets the estimate \eqref{stime delta 12 prop trasporto} for $\Delta_{12} \alpha$. 
The corresponding estimate for $\Delta_{12} \breve \alpha$ can be done by using that $\breve \alpha_i = - {\cal A}_i^{- 1}(\alpha_i)$ and using the mean value theorem. 
Finally, the estimate for $\Delta_{12} {\cal A}^{\pm 1}, \Delta_{12} {\cal A}^*$ 
follow by using the estimate for $\Delta_{12} \alpha$ and $\Delta_{12} \breve \alpha$, 
using the mean value theorem and Lemma \ref{lemma:LS norms}. 
\end{proof}

In the next Lemma we exploit the conjugation of the operator ${\cal L}^{(0)}$ defined in \eqref{def cal L (0)} by means of the map ${\cal A}$ constructed in Proposition \ref{proposizione trasporto}. With a slight abuse of notations we denote with the same letter ${\cal A}$ the operator acting on $H^s(\T^\nu \times \T^3, \R)$ and acting on $H^s(\T^\nu \times \T^3, \R^3)$. The action on the spaces $H^s(\T^\nu \times \T^3, \R^3)$ is given by 
$$
{\cal A} h = ({\cal A} h_1, {\cal A}h_2, {\cal A} h_3), \quad \forall h = (h_1, h_2, h_3) \in H^s(\T^\nu \times \T^3, \R^3)\,. 
$$

\begin{lemma}\label{lemma coniugio cal L (0)}
For any $S > s_0$, $\gamma \in (0, 1)$, $\tau > 0$ there exists $\delta = \delta (S, k_0, \tau, \nu) \in (0, 1)$, $\mu = \mu(k_0, \tau, \nu) > 0$ such that if \eqref{ansatz} holds and $N_0^{\tau_1}\e \gamma^{- 1} \leq \delta$, 
for any $(\omega, \zeta ) \in DC(\gamma, \tau)$ 
(see \eqref{def cantor set trasporto}) one has
\begin{equation}\label{cal L (1)}
{\cal L}^{(1)} := {\cal A}^{- 1} {\cal L}^{(0)} {\cal A} = \omega \cdot \partial_\vphi + \zeta \cdot \nabla + {\cal R}_0^{(1)} + {\cal R}_{- 1}^{(1)}
\end{equation}where ${\cal R}_0^{(1)} = {\rm Op}(R_0^{(1)}) \in {\cal OPS}^0_{S, 1}$, ${\cal R}_{- 1}^{(1)} = {\rm Op}(R_{- 1}^{(1)}) \in {\cal OPS}^{- 1}_{S, 0}$ and 
\begin{equation}\label{stime cal R (1)}
\begin{aligned}
& |{\cal R}_0^{(1)}|_{0, s, 1}^{k_0, \gamma}\,, \,|{\cal R}_{- 1}^{(1)}|_{- 1, s, 0}^{k_0, \gamma} \lesssim_{s} \e \| v \|_{s + \mu}^{k_0, \gamma}, \quad \forall s_0 \leq s \leq S\,. 
\end{aligned}
\end{equation}
Moreover ${\cal L}^{(1)}$, ${\cal R}_0^{(1)}$, ${\cal R}_{- 1}^{(1)}$ are real and reversible operators and the symbol of the zero-th order operator ${\cal R}_0^{(1)}$ satisfies the symmetry condition 
\begin{equation}\label{simmetria grado zero nella riduzione}
R_0^{(1)}(\vphi, x, \xi) = R_0^{(1)}(\vphi, x, - \xi)\,. 
\end{equation}
Let $s_1 \geq s_0$ and assume that $v_1 , v_2$ 
satisfy \eqref{ansatz} with $\mu_0 \geq s_1 + \mu$. 
Then, for any $\lambda = (\omega, \zeta) \in DC(\gamma, \tau)$, 
\begin{equation}\label{stime delta 12 cal R (1)}
 |\Delta_{12} {\cal R}_0^{(1)}|_{0, s_1, 1}\,, \,|\Delta_{12} {\cal R}_{- 1}^{(1)}|_{- 1, s_1, 0} \lesssim_{s_1} \e \| v_1 - v_2 \|_{s_1 + \mu}\,. 
\end{equation}
\end{lemma}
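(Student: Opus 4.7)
\medskip
\noindent\textbf{Proof plan.} The strategy is to split $\mL^{(0)} = \mathcal{T} + \e \mR$ with $\mathcal{T}$ the transport operator of \eqref{def operatore cal T}, apply Proposition \ref{proposizione trasporto} to kill the transport part, and then analyze by pseudo-differential calculus the conjugation of the remaining zero and $-1$ order pieces. For every $(\omega,\zeta) \in DC(\gamma,\tau)$ the proposition gives $\mA^{-1}\mathcal{T}\mA = \omega \cdot \partial_\vphi + \zeta \cdot \nabla$, so that
\[
\mL^{(1)} = \omega \cdot \partial_\vphi + \zeta \cdot \nabla + \e \, \mA^{-1} (\mR_0 + \mR_{-1}) \mA.
\]
The problem thus reduces to understanding $\mA^{-1}\mR_0\mA$ and $\mA^{-1}\mR_{-1}\mA$ separately.

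For the multiplication summands $M_{\cal U} h$ of $\mR_0$ and $M_v \mU(h)$ of $\mR_{-1}$, I would invoke Lemma \ref{lemma coniugazione cambio variabile moltiplicazione}: the conjugation of multiplication by $M$ is multiplication by $M(\vphi, y + \breve\alpha(\vphi,y))$, with $\xi$-independent (hence trivially even-in-$\xi$) symbol. For the remaining composite summands I would factor
\[
\mA^{-1} (v \cdot \nabla)\,\curl\,\Lambda^{-1}\, \mA
= \bigl(\mA^{-1} (v\cdot\nabla)\mA\bigr) \bigl(\mA^{-1}\curl\,\mA\bigr) \bigl(\mA^{-1}\Lambda^{-1}\mA\bigr),
\]
and analogously write $\mA^{-1} \curl\,\Lambda^{-1} \mA$. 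By Lemma \ref{lemma coniugazione curl nabla diffeo} the first two factors lie in $\OPS^1_{s,\beta}$ with symbols odd in $\xi$, while Lemma \ref{coniugazione inverso laplaciano cambio variabile} decomposes the third factor as $\Op(P_{-2}) + \mathcal{P}_{-3}$ with $P_{-2}$ of order $-2$ and even in $\xi$, and $\mathcal{P}_{-3}$ of order $-3$. Iterated use of the composition formula Lemma \ref{lemma stime Ck parametri}$(ii)$ reduces the triple product to $\Op(a_1 c_1 p_{-2})$ of order $0$, plus remainders in $\OPS^{\leq -1}$. The principal symbol $a_1 c_1 p_{-2}$ is (odd)(odd)(even) $=$ even in $\xi$, yielding \eqref{simmetria grado zero nella riduzione}. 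I would then define $\Op(R_0^{(1)})$ to be the sum of $\e$ times this principal even symbol with $\e\widetilde{M}_{\cal U}$, and collect every $-1$-order remainder (and the entire $\e\mA^{-1}\mR_{-1}\mA$) into $\Op(R_{-1}^{(1)})$, establishing \eqref{cal L (1)}.

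The tame bounds \eqref{stime cal R (1)} then follow by chaining the estimates in Lemmas \ref{lemma coniugazione cambio variabile moltiplicazione}--\ref{coniugazione inverso laplaciano cambio variabile} with the composition estimate \eqref{estimate composition parameters}, controlling $\|\alpha\|_{s+\mu}^{k_0,\gamma} \lesssim_s \e\gamma^{-1}\|v\|_{s+\mu'}^{k_0,\gamma}$ via \eqref{stima alpha trasporto} and using the elementary bounds $\|M_v\|_s^{k_0,\gamma} \lesssim_s \|v\|_{s+1}^{k_0,\gamma}$ and $\|M_{\cal U}\|_s^{k_0,\gamma} \lesssim_s \|v\|_{s}^{k_0,\gamma}$; the ansatz \eqref{ansatz} together with the smallness $N_0^{\tau_1}\e\gamma^{-1} \leq \delta$ keeps every product in the tame regime. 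Reality and reversibility of $\mL^{(1)}$ and of $\mR_0^{(1)}, \mR_{-1}^{(1)}$ are inherited from \eqref{parita reversibilita linearizzato iniziale} together with the reversibility-preserving and real character of $\mA^{\pm 1}$ granted by Proposition \ref{proposizione trasporto}. The Lipschitz-in-$v$ estimates \eqref{stime delta 12 cal R (1)} are obtained by repeating the same computation on differences, using \eqref{stime delta 12 prop trasporto} and the linear dependence of $a$, $M_v$, $M_{\cal U}$ on $v$. The main technical point I anticipate is tracking through the triple composition that every subleading symbol correction truly belongs to $\OPS^{\leq -1}$, so that the even-in-$\xi$ parity of $R_0^{(1)}$ is not polluted by odd contributions arising from the asymmetric remainders of Lemma \ref{lemma stime Ck parametri}$(ii)$ --- this careful bookkeeping is what makes the symmetry \eqref{simmetria grado zero nella riduzione} work and prepares the ground for the variable-coefficient homological equation of the next section.
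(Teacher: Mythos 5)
Your proposal is correct and follows essentially the same route as the paper: conjugate via Proposition \ref{proposizione trasporto}, treat the multiplication parts with Lemma \ref{lemma coniugazione cambio variabile moltiplicazione}, factor $\mA^{-1} v\cdot\nabla\,\mU\,\mA$ and $\mA^{-1}\mR_{-1}\mA$ into the conjugated $\nabla$, $\curl$ and $\Lambda^{-1}$ pieces (Lemmata \ref{lemma coniugazione curl nabla diffeo}, \ref{coniugazione inverso laplaciano cambio variabile}), and use the composition Lemma \ref{lemma stime Ck parametri} to isolate the even-in-$\xi$ principal symbol $\widetilde M_{\mathcal U} - M_\nabla M_{\rm curl} P_{-2}$ in $R_0^{(1)}$ while relegating all remainders to $\mR_{-1}^{(1)}$. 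The parity bookkeeping (odd $\times$ odd $\times$ even), the tame and $\Delta_{12}$ estimates, and the reversibility argument all match the paper's proof.
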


\begin{proof}
By Proposition \ref{proposizione trasporto}, using the formula \eqref{definizione cal R}, one has that 
$$
{\cal L}^{(1)} := {\cal A}^{- 1} {\cal L}^{(0)} {\cal A} = \omega \cdot \partial_\vphi + \zeta \cdot \nabla + \e {\cal A}^{- 1} {\cal R}_0 {\cal A} + \e {\cal A}^{- 1} {\cal R}_{- 1} {\cal A}\,. 
$$
We analyze separately the terms ${\cal A}^{- 1} {\cal R}_0 {\cal A}$ and ${\cal A}^{- 1} {\cal R}_{- 1} {\cal A}$. 

\noindent
{\sc Analysis of ${\cal A}^{- 1} {\cal R}_0 {\cal A}$.} 
By the formula \eqref{definizione cal R}, we recall that 
\begin{equation}\label{cal R0 nel lemma cambio variabile}
{\cal R}_0 := M_{\cal U}(\vphi, x)  - v \cdot \nabla {\cal U}
\end{equation}
where $\mU := \curl \Lambda^{-1}$ is defined 
in \eqref{equazione cal F vorticita} 
and we denote by $M_{\cal U}(\vphi, x)$ the multiplication operator 
by the $3 \times 3$ matrix $M_{\cal U}(\vphi, x)$. 
By the definition of $M_{\cal U}$ it is straightforward to verify that 
\begin{equation}\label{stima M cal U}
\| M_{\cal U} \|_s^{k_0, \gamma} \lesssim \| v \|_s^{k_0, \gamma}\,, 
\quad \forall s \geq s_0.
\end{equation}
Hence, by Lemma \ref{lemma coniugazione cambio variabile moltiplicazione}, 
by the estimates 
\eqref{stima M cal U}, 
\eqref{stima alpha trasporto} 
and using \eqref{ansatz} and $N_0^{\tau_0} \e \gamma^{- 1} \leq 1$, 
one gets that ${\cal A}^{- 1} {\rm Op}(M_{\cal U}) {\cal A} = {\rm Op}(\widetilde M_{\cal U}(\vphi, x))$ is a multiplication operator with 
\begin{equation}\label{stima widetilde M cal U}
|{\rm Op}(\widetilde M)|_{0,s, \beta}= \| \widetilde M_{\cal U} \|_s^{k_0, \gamma} \lesssim_{s, \beta}  \| v \|_{s + \mu}^{k_0, \gamma}, \quad \forall s_0 \leq s \leq S\,, \quad \forall \beta \in \N\,.  
\end{equation}
Since $M_{\cal U}\,,\,\breve \alpha = {\rm odd}(\vphi, x)$ then also $\widetilde M_{\cal U} = {\rm odd}(\vphi, x)$.  
We now study the conjugation ${\cal A}^{- 1} v \cdot \nabla {\cal U} {\cal A}$. 
Since ${\cal U} = {\rm curl}\, \Lambda^{- 1}$, we write
$$
{\cal A}^{- 1} v \cdot \nabla {\cal U} {\cal A} 
= ({\cal A}^{- 1} v \cdot \nabla {\cal A})( {\cal A}^{- 1} {\rm curl} {\cal A})
( {\cal A}^{- 1} \Lambda^{- 1} {\cal A} ).
$$
This formula, together with Lemmata \ref{lemma coniugazione curl nabla diffeo}, 
\ref{coniugazione inverso laplaciano cambio variabile}, 
the estimates \eqref{stima alpha trasporto}, 
the ansatz \eqref{ansatz} 
and the bound $N_0^{\tau_0} \e \gamma^{- 1} \leq 1$, 
imply that 
\begin{equation}\label{coniugio A A inv v nabla}
{\cal A}^{- 1} v \cdot \nabla {\cal U} {\cal A} 
= {\rm Op}(M_\nabla) {\rm Op}(M_{\rm curl}) \mP_{-2} 
+ {\rm Op}(M_\nabla) {\rm Op}(M_{\rm curl}) \mP_{-3} 
\end{equation}
with $\mP_{-2} = \Op(P_{-2})$,
\begin{equation}\label{parita nella dim M nabla curl}
\begin{aligned}
& M_\nabla(\vphi, x, \xi) = - M_\nabla(\vphi, x, - \xi), \quad 
M_{\rm curl}(\vphi, x, \xi) = - M_{\rm curl}(\vphi, x, - \xi)\,, \\
& P_{- 2}(\vphi,x, \xi) = P_{- 2}(\vphi, x, - \xi)
\end{aligned}
\end{equation}
and
\begin{equation}\label{stime nella dim M nabla curl}
\begin{aligned}
& M_\nabla, M_{\rm curl} \in {\cal S}^1_{S, 1}, \quad 
\mP_{-2} \in {\cal OPS}^{-2}_{S, 1}, \quad \mP_{-3} \in {\cal OPS}^{-3}_{S, 1}\,, \\
& |{\rm Op}(M_\nabla)|_{1, s, 1}^{k_0, \gamma} \lesssim_s \| v \|_{s + \mu}^{k_0, \gamma}\,, \\
& |{\rm Op}(M_{\rm curl})|_{1, s, 1}^{k_0, \gamma}\,,\, 
|\mP_{-2}|_{- 2, s, 1}^{k_0, \gamma}\,,\, 
|\mP_{-3}|_{-3, s, 0}^{k_0, \gamma} 
\lesssim_{s} 1 + \| v \|_{s + \mu}^{k_0, \gamma}
\end{aligned}
\end{equation}
for any $s_0 \leq s \leq S$. 
Hence, by \eqref{cal R0 nel lemma cambio variabile}, \eqref{stima widetilde M cal U}, \eqref{parita nella dim M nabla curl}, \eqref{stime nella dim M nabla curl}, by applying also \eqref{pseudo norm moltiplicazione}, \eqref{stima prodotto simboli}, Lemma \ref{lemma stime Ck parametri} 
and the ansatz \eqref{ansatz} with $\mu_0 > 0$ large enough, one obtains that 
\begin{equation}\label{espansione finale cal R0 1}
\begin{aligned}
& \mA^{-1} {\cal R}_0 \mA 
= {\rm Op}(R_{0,0}) + {\rm Op}(R_{0, - 1})\,, \quad R_{0, 0} \in {\cal S}^0_{S, 1}, \quad R_{0, - 1} \in {\cal S}^{- 1}_{S, 0}  \\
& R_{0,0}(\vphi, x, \xi) := \widetilde M_{\cal U}(\vphi, x) - M_\nabla (\vphi, x, \xi) M_{\rm curl} (\vphi, x, \xi)P_{- 2}(\vphi, x, \xi)\,, \\
& |{\rm Op}(R_{0,0})|_{0, s, 1}^{k_0, \gamma}\,,\, |{\rm Op}(R_{0, - 1})|_{- 1, s, 0}^{k_0, \gamma} \lesssim_{s}  \| v \|_{s + \mu}^{k_0, \gamma}, \quad \forall s_0 \leq s \leq S\,. 
\end{aligned}
\end{equation}
Moreover \eqref{parita nella dim M nabla curl} implies that 
\begin{equation}\label{parita R 00}
R_{0, 0}(\vphi, x, \xi) = R_{0,0}(\vphi, x, - \xi)
\end{equation} 
and, since ${\cal A}$ is reversibility preserving and ${\cal R}_0$ is reversible, 
then ${\rm Op}(R_{0,0}), {\rm Op}(R_{0, - 1})$ are reversible. 

\medskip

\noindent
{\sc Analysis of ${\cal A}^{- 1} {\cal R}_{- 1} {\cal A}$.} By \eqref{definizione cal R}, one writes 
$$
{\cal A}^{- 1} {\cal R}_{- 1} {\cal A} = ({\cal A}^{- 1} {\rm Op}(M_v) {\cal A}) ({\cal A}^{- 1} {\rm curl}{\cal A})({\cal A}^{- 1} \Lambda^{- 1}{\cal A})\,.
$$
Then using that $\| M_v \|_s^{k_0, \gamma} \lesssim \| v \|_{s + 1}^{k_0, \gamma}$, by applying Lemmata \ref{lemma coniugazione cambio variabile moltiplicazione}, \ref{lemma coniugazione curl nabla diffeo}, \ref{coniugazione inverso laplaciano cambio variabile}, the composition Lemma \ref{lemma stime Ck parametri}, the estimate \eqref{stima alpha trasporto}, 
the ansatz \eqref{ansatz} 
and $N_0^{\tau_0} \e \gamma^{- 1} \leq 1$, 
one gets that 
\begin{equation}\label{A inv R - 1 A}
\begin{aligned}
& {\cal A}^{- 1} {\cal R}_{- 1} {\cal A} \in {\cal OPS}^{- 1}_{S, 0} \,, \quad  |{\cal A}^{- 1} {\cal R}_{- 1} {\cal A}|_{- 1, s, 0}^{k_0, \gamma} \lesssim_{s}  \| v \|_{s + \mu}^{k_0, \gamma}, \quad \forall s_0 \leq s \leq S\,. 
\end{aligned}
\end{equation}
Moreover, since ${\cal R}_{- 1}$ is reversible and ${\cal A}$ is reversibility preserving, then ${\cal A}^{- 1} {\cal R}_{- 1} {\cal A}$ is reversible. 


The claimed properties \eqref{stime cal R (1)}, \eqref{simmetria grado zero nella riduzione}, then follow by setting $${\cal R}_0^{(1)} := \e {\rm Op}(R_{0,0}),\quad {\cal R}_{- 1}^{(1)} :=\e {\rm Op}(R_{0, - 1}) +  \e {\cal A}^{- 1} {\cal R}_{- 1} {\cal A}$$
using \eqref{espansione finale cal R0 1}, \eqref{parita R 00}, \eqref{A inv R - 1 A}. The estimates \eqref{stime delta 12 cal R (1)} can be proved by similar arguments. 
\end{proof}

\section{Elimination of the zero-th order term via a variable coefficients homological equation}

In this section, our aim is to construct a transformation of the form 
${\cal B}= {\rm Id} + {\cal M}$, with ${\cal M} = {\rm Op}(M(\vphi, x, \xi))$ of order $0$, 
in such a way that the transformed operator ${\cal B}^{- 1} {\cal L}^{(1)} {\cal B}$ is a one-smoothing perturbation of the operator $\omega \cdot \partial_\vphi + \zeta \cdot \nabla$. 
This means that we look for ${\cal M}$ that completely eliminates 
the zero-th order term ${\cal R}_0^{(1)}$ from the operator ${\cal L}^{(1)}$ in \eqref{cal L (1)}. The main technical issue here is that we deal with {\it matrix-valued} pseudo-differential operators, therefore the commutator $[{\cal R}_0^{(1)}, {\cal M}]$ 
does not gain derivatives 
(unlike commutators of {\it scalar} pseudo-differential operators do),
so that $[{\cal R}_0^{(1)}, {\cal M}]$ is still an operator of order $0$. 
This implies that, in order to remove the zero-th order term, we have to solve a {\it variable coefficients} homological equation, which is an equation 
in the unknown $M(\ph,x,\xi)$ of the form 
\begin{equation}\label{equazione omologica step 00}
\Big(\omega \cdot \partial_\vphi + \zeta \cdot \nabla + R_0^{(1)}(\vphi, x, \xi) \Big) M(\vphi, x, \xi) + R_0^{(1)}(\vphi, x, \xi) = 0\,. 
\end{equation}

\subsection{The homological equation at the zero-th order term}
\label{sez eq omologica grado 0 coeff variabili}
In order to simplify notations in this section, we set $V (\vphi, x, \xi) := R_0^{(1)}(\vphi, x, \xi)$. 
To deal with the equation \eqref{equazione omologica step 00},
the first step is to diagonalize the linear operator 
$$
{\cal P} := \omega \cdot \partial_\vphi + \zeta \cdot \nabla +  V(\vphi, x, \xi)
$$
acting on the space of matrix symbols ${\cal S}^0_{s + 1, 0}$ for any $s_0 \leq s \leq S$. 
The action of the operator ${\cal P}$ is given by 
\begin{equation}\label{azione op eq homo grado 0}
\begin{aligned}
& {\cal P } : {\cal S}^0_{s + 1, 0} \to {\cal S}^0_{s, 0}\,, \\
& A(\vphi, x, \xi) \mapsto  \omega \cdot \partial_\vphi A(\vphi, x, \xi) + \zeta \cdot \nabla A(\vphi, x, \xi) +  V(\vphi, x, \xi) A(\vphi, x, \xi)\,. 
\end{aligned}
\end{equation}
To develop the reducibility scheme, we use the pseudo differential norm 
$$
|V|_s^{k_0, \gamma} :=  |{\rm Op}(V)|_{0, s, 0}^{k_0, \gamma} 
= \max_{|\beta| \leq k_0}  \gamma^{|\beta|} \sup_{\lambda \in \R^{\nu + 3}} 
\sup_{\xi \in \R^3} \| \partial_\lambda^\beta V(\cdot, \xi; \lambda) \|_s\,,
$$
see Definition \eqref{definizione norma pseudo diff}. 
By Lemma \ref{lemma:smoothing} and by the estimate \eqref{p1-pr}, one easily gets the following properties of the norm $|\cdot |_s^{k_0, \gamma}$. 

\begin{lemma}\label{lemma proprieta norma su OPS 0}
$(i)$ Let $s \geq s_0$ and $V, B \in {\cal S}^0_{s, 0}$. Then 
$$
|VB|_s^{k_0, \gamma} \lesssim_{s} |V|_s^{k_0, \gamma} |B|_{s_0}^{k_0, \gamma} + |V|_{s_0}^{k_0, \gamma} |B|_s^{k_0, \gamma}\,. 
$$
$(ii)$ Let $N > 0$, $s \geq 0$. Then
\begin{align}
| \Pi_N V |_{s}^{k_0, \gamma} 
 \leq N^\alpha | V |_{s-\alpha}^{k_0, \gamma}\, , \quad 0 \leq \a \leq s, \quad  | \Pi_N^\bot V |_{s}^{k_0 , \gamma}  \leq N^{-\alpha} | V |_{s + \alpha}^{k_0 , \gamma}\, , \quad  \a \geq 0.
\label{p3-proi-ops}
\end{align}
\end{lemma}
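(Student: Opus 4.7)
The plan is to reduce both parts to the corresponding well-known facts about the weighted Sobolev norms $\|\cdot\|_s^{k_0,\gamma}$ on functions of $(\varphi,x)$. Indeed, unpacking Definitions \ref{def:Lip F uniform} and \ref{definizione norma pseudo diff} (see in particular \eqref{3009.2}) for a symbol $V$ of order $m=0$ with $\beta=0$ derivatives in $\xi$ yields
\[
|V|_s^{k_0,\gamma} = \sup_{\xi\in\R^3}\|V(\cdot,\xi)\|_s^{k_0,\gamma},
\]
so both items amount to fixing $\xi$, using the scalar (matrix-entry) statement, and taking the supremum.

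For item $(i)$, I would fix $\xi\in\R^3$ and note that the pointwise product $(VB)(\varphi,x,\xi)=V(\varphi,x,\xi)B(\varphi,x,\xi)$ is a matrix product at each $(\varphi,x)$. Writing it entrywise and applying the interpolation estimate \eqref{p1-pr} to each scalar product of entries (the $\|\cdot\|_{\HS}$ norm is equivalent to the maximum of the entrywise Sobolev norms), I get, for every $\xi$,
\[
\|V(\cdot,\xi)B(\cdot,\xi)\|_s^{k_0,\gamma}
\lesssim_s \|V(\cdot,\xi)\|_s^{k_0,\gamma}\|B(\cdot,\xi)\|_{s_0}^{k_0,\gamma}
+\|V(\cdot,\xi)\|_{s_0}^{k_0,\gamma}\|B(\cdot,\xi)\|_s^{k_0,\gamma}.
\]
Here I am implicitly using the Leibniz rule for $\partial_\lambda^\beta$ together with $s_0\geq k_0+2$ from \eqref{definizione s0} to ensure that all lower-order Sobolev norms appearing in the Leibniz expansion remain controlled by $\|\cdot\|_{s_0}^{k_0,\gamma}$. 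Taking $\sup_{\xi\in\R^3}$ of both sides then gives $(i)$ directly from the identity displayed above.

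For item $(ii)$, the smoothing projectors $\Pi_N,\Pi_N^\perp$ defined in \eqref{def:smoothings} act only on the $(\varphi,x)$ Fourier modes, are independent of $\xi$ and $\lambda$, and hence commute with evaluation at a fixed $\xi$ and with $\partial_\lambda^\beta$. Thus, for each $\xi\in\R^3$, the matrix symbol $(\Pi_N V)(\cdot,\xi)$ coincides with the ordinary Fourier truncation of $V(\cdot,\xi)$ as a function of $(\varphi,x)$, and analogously for $\Pi_N^\perp$. Applying the scalar smoothing bounds \eqref{p2-proi} and \eqref{p3-proi} entry by entry yields, uniformly in $\xi$,
\[
\|(\Pi_N V)(\cdot,\xi)\|_s^{k_0,\gamma}\leq N^\alpha \|V(\cdot,\xi)\|_{s-\alpha}^{k_0,\gamma}, \qquad
\|(\Pi_N^\perp V)(\cdot,\xi)\|_s^{k_0,\gamma}\leq N^{-\alpha}\|V(\cdot,\xi)\|_{s+\alpha}^{k_0,\gamma},
\]
and taking $\sup_\xi$ completes $(ii)$.

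There is no real obstacle: the statement is essentially a transcription of \eqref{p1-pr} and Lemma \ref{lemma:smoothing} to the class ${\cal S}^0_{s,0}$, exploiting that the $\xi$-dependence enters the norm only through a passive supremum (no $\xi$-derivatives are controlled because $\beta=0$). The only bookkeeping point is matrix-valuedness, which I would handle by the equivalence of $\|\cdot\|_{\HS}$ with componentwise moduli so that all the scalar results apply verbatim to each matrix entry.
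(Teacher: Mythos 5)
Your proof is correct and follows exactly the route the paper intends: the paper gives no detailed argument, stating only that the lemma follows from the product estimate \eqref{p1-pr} and the smoothing Lemma \ref{lemma:smoothing}, which is precisely what you do after identifying $|V|_s^{k_0,\gamma}$ with $\sup_{\xi}\|V(\cdot,\xi)\|_s^{k_0,\gamma}$ and working entrywise (your remark about the Leibniz rule is redundant, since \eqref{p1-pr} is already stated for the weighted norms). No gaps.
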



\begin{proposition}
\label{coniugazione finale trasporto semilineare}
Let $S > s_0$, $\tau > 0$, $\gamma \in (0, 1)$. Then there exists $\mu = \mu(\tau, \nu, k_0) > 0$ and $\delta = \delta (S, k_0, \tau, \nu) \in (0, 1)$ small enough such that if \eqref{ansatz}, \eqref{condizione piccolezza rid trasporto} hold with $\mu_0 \geq s_0 + \mu $ and $\tau_1 = \tau_1(k_0, \tau, \nu) > 0$ large enough, then the following properties holds. 

\smallskip

\noindent
$(i)$ There exists a $k_0$ times differentiable matrix valued symbol $\Phi_\infty(\cdot ; \omega, \zeta) \in {\cal S}^0_{S, 0}$ such that, for any $(\omega, \zeta) \in DC(\gamma, \tau)$ (see \eqref{def cantor set trasporto}), one has   
$$
\Phi_\infty^{- 1} {\cal P} \Phi_\infty = \omega \cdot \partial_\vphi + \zeta \cdot \nabla,
$$
namely 
$\Phi_\infty^{- 1} {\cal P} (\Phi_\infty A) 
= \omega \cdot \partial_\vphi A + \zeta \cdot \nabla A$
for all symbols $A(\ph,x,\xi)$. 

Moreover $\Phi_\infty^{\pm 1} = {\rm even}(\vphi, x, \xi)$, 
$\Phi_\infty(\vphi, x, \cdot)^{\pm 1}$ $= {\rm even}(\xi)$ 
and the following estimates hold: 
$$
| \Phi_\infty^{\pm 1} - {\rm Id} |_s^{k_0, \gamma} \lesssim_{s} N_0^{\tau_0} \e \gamma^{- 1}  \| v \|_{s + \mu}^{k_0, \gamma}\,, \quad \forall s_0 \leq s \leq S\,. 
$$ 

\noindent
$(ii)$ There exists $M \in {\cal S}^{0}_{S, 0}$, 
$M = {\rm even}(\vphi, x, \xi)$, 
that solves the equation 
$$
(\omega \cdot \partial_\vphi + \zeta \cdot \nabla ) M(\vphi, x, \xi) 
+ V(\vphi, x, \xi) M(\vphi, x, \xi) + V(\vphi, x, \xi) = 0,
$$
with 
\[
|M|_s^{k_0, \gamma} \lesssim_{s} \e \gamma^{- 1}  \| v \|_{s + \mu}^{k_0,\gamma},
\quad \ s_0 \leq s \leq S. 
\]

\noindent
$(iii)$ Let $s_1 \geq s_0$ and let $u_1, u_2$ 
satisfy \eqref{ansatz} with $\mu_0 \geq s_1 + \mu$. 
Then, for any $(\omega, \zeta) \in DC(\gamma, \tau)$, one has 
\begin{equation}
\begin{aligned}
|\Delta_{12} \Phi_\infty^{\pm 1}|_{s_1} \lesssim_{s_1} N_0^{\tau_0} \e \gamma^{- 1} \| v_1 - v_2 \|_{s_1 + \mu}\,, \quad \, |\Delta_{12} M|_{s_1} \lesssim_{s_1} \e \gamma^{- 1} \| u_1 - u_2 \|_{s_1 + \mu}\,. 
\end{aligned}
\end{equation}
\end{proposition}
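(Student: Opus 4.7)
I prove (i) by a KAM-type iterative reducibility scheme on the scale of matrix symbols in $\mathcal{S}^0_{s,0}$, and then deduce (ii) and (iii) by algebra. View $\mathcal{P}$ as acting on a symbol $A(\varphi,x,\xi)$ by $\mathcal{P}A := (\omega\cdot\partial_\varphi + \zeta\cdot\nabla)A + V(\varphi,x,\xi)A$ (left matrix multiplication). Since $\xi$ enters only as a parameter, the conjugator $\Phi_\infty$ is itself left multiplication by an invertible matrix symbol $W_\infty(\varphi,x,\xi)$ close to $\mathrm{Id}$. At the $n$-th step I seek $\Phi_n = \mathrm{Id} + M_n$ with $M_n \in \mathcal{S}^0_{s,0}$ achieving
\begin{equation*}
\Phi_n^{-1}\bigl((\omega\cdot\partial_\varphi + \zeta\cdot\nabla) + V_n\bigr)\Phi_n = (\omega\cdot\partial_\varphi + \zeta\cdot\nabla) + V_{n+1},
\end{equation*}
where $V_0 := V$ and a direct computation yields the exact formula
\begin{equation*}
V_{n+1} = (\mathrm{Id} + M_n)^{-1}\bigl[(\omega\cdot\partial_\varphi + \zeta\cdot\nabla)M_n + V_n + V_n M_n\bigr].
\end{equation*}
I choose $M_n$ to be the zero-average solution of the truncated homological equation $(\omega\cdot\partial_\varphi + \zeta\cdot\nabla)M_n = -\Pi_{N_n}V_n$, which forces $V_{n+1} = (\mathrm{Id}+M_n)^{-1}(\Pi_{N_n}^\bot V_n + V_n M_n)$: a super-exponentially small high-frequency tail plus a quadratic remainder, the standard KAM picture.

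The main obstacle is to ensure that every such homological equation is solvable on $DC(\gamma,\tau)$. The principal operator is constant-coefficient in $(\varphi,x)$, so \eqref{def ompaph-1 ext}--\eqref{2802.2} applies entrywise and fiberwise in $\xi$, but inversion requires $\langle V_n\rangle_{\varphi,x}(\xi)\equiv 0$. For $V_0 = R_0^{(1)}$, combining the reversibility condition $R_0^{(1)}(\varphi,x,\xi) = -R_0^{(1)}(-\varphi,-x,-\xi)$ from Lemma \ref{lemma coniugio cal L (0)} with the $\xi$-symmetry \eqref{simmetria grado zero nella riduzione} gives $V_0(-\varphi,-x,\xi) = -V_0(\varphi,x,\xi)$ and $V_0(\varphi,x,-\xi) = V_0(\varphi,x,\xi)$, i.e.\ $V_0$ is odd in $(\varphi,x)$ and even in $\xi$. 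I then verify that these two parities persist inductively: since $(\omega\cdot\partial_\varphi + \zeta\cdot\nabla)^{-1}$ swaps parity in $(\varphi,x)$ and preserves $\xi$-parity, the solution $M_n$ is even in $(\varphi,x)$ and even in $\xi$; hence $(\mathrm{Id}+M_n)^{\pm 1}$ share these parities, and $V_{n+1}$, being the product of an even-in-$(\varphi,x)$ factor with an odd-in-$(\varphi,x)$, even-in-$\xi$ expression, is again odd in $(\varphi,x)$ and even in $\xi$. In particular $\langle V_{n+1}\rangle_{\varphi,x}\equiv 0$, the next step is well posed, and no constant correction to $\omega\cdot\partial_\varphi + \zeta\cdot\nabla$ ever arises.

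The quantitative part is standard Nash--Moser: Lemma \ref{lemma proprieta norma su OPS 0} and the Diophantine bound \eqref{2802.2} applied fiberwise, together with super-exponentially diverging thresholds $N_n$ and the smallness $N_0^{\tau_1}\varepsilon\gamma^{-1}\leq\delta$ (for $\tau_1$ large enough), give super-exponential decay of $|V_n|_{s_0}^{k_0,\gamma}$ and only tame growth of $|V_n|_S^{k_0,\gamma}$, in complete parallel with the proof of Lemma \ref{iterazione riducibilita trasporto}; the partial products $(\mathrm{Id}+M_0)\cdots(\mathrm{Id}+M_n)$ then converge in $\mathcal{S}^0_{S,0}$ to an invertible $W_\infty$ satisfying $|\Phi_\infty^{\pm 1} - \mathrm{Id}|_s^{k_0,\gamma}\lesssim_s N_0^{\tau_0}\varepsilon\gamma^{-1}\|v\|_{s+\mu}^{k_0,\gamma}$, proving (i). For (ii), using $\mathcal{P} = \Phi_\infty(\omega\cdot\partial_\varphi + \zeta\cdot\nabla)\Phi_\infty^{-1}$, the equation $\mathcal{P}M + V = 0$ becomes $(\omega\cdot\partial_\varphi + \zeta\cdot\nabla)(\Phi_\infty^{-1}M) = -\Phi_\infty^{-1}V$; the right-hand side is odd in $(\varphi,x)$ (even times odd), hence has zero mean, and \eqref{def ompaph-1 ext}--\eqref{2802.2} delivers $M = -\Phi_\infty(\omega\cdot\partial_\varphi + \zeta\cdot\nabla)^{-1}\Phi_\infty^{-1}V$ with the stated parity and estimate. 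Finally, (iii) follows by running the scheme in parallel for $v_1,v_2$ and tracking $\Delta_{12}$ through each iteration via linearized tame estimates, exactly as in the closing argument of Proposition \ref{proposizione trasporto}.
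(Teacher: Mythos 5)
Your proposal is correct and follows essentially the same route as the paper: an iterative KAM reducibility scheme on matrix symbols (with $\xi$ as a parameter), truncated homological equations $(\omega\cdot\partial_\vphi+\zeta\cdot\nabla)\Psi_n=-\Pi_{N_n}V_n$ made solvable by propagating the parities (odd in $(\vphi,x)$, even in $\xi$) so that $\langle V_n\rangle_{\vphi,x}\equiv 0$ at every step, convergence of the products to $\Phi_\infty$, and then item $(ii)$ obtained by conjugating the equation to constant coefficients and item $(iii)$ by tracking $\Delta_{12}$ through the iteration. Your remainder formula $V_{n+1}=(\mathrm{Id}+M_n)^{-1}(\Pi_{N_n}^\bot V_n+V_nM_n)$ coincides with the paper's \eqref{def cal L n + 1}, and your construction of $M$ in $(ii)$ is exactly the paper's formula following \eqref{pappa bambini}.
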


To prove Proposition \ref{coniugazione finale trasporto semilineare},
we fix the constants
\begin{equation}\label{costanti rid trasporto semi-lineare}
N_{- 1} := 1, \quad N_0\,,\,\tau > 0, \quad N_n := N_0^{(3/2)^n}, \quad \tau_0 := k_0+  \t(k_0+1) , \quad \mathtt a := 3 \tau_0 + 1 \,, \quad \mathtt b := \mathtt a + 1
\end{equation}
and prove the following Proposition first.

\begin{proposition} \label{prop riducibilita trasporto vettoriale semilin}
Let $S > s_0$, $\tau > 0$, $\gamma \in (0, 1)$. 
Then there exists $N_0 = N_0(S, k_0, \tau, \nu) > 0$, 
$\mu = \mu(k_0, \tau, \nu) > 0$ large enough, 
$\delta = \delta(S, k_0, \tau, \nu) \in  (0, 1)$ small enough, 
and $C_*(s) > 0$, $s_0 \leq s \leq S$, 
such that 
if \eqref{ansatz}, \eqref{condizione piccolezza rid trasporto} hold 
(for $\tau_1$ possibly larger) with $\mu_0 \geq s_0 + \mu$, 
the following statements hold for all $n \geq 0$. 

\smallskip

\noindent
${\bf (S1)_n}$ 
There exists a linear operator (acting on symbols)
\begin{equation}\label{op cal Ln}
{\cal P}_n := \omega \cdot \partial_\vphi + \zeta \cdot \nabla + V_n(\vphi, x, \xi) : {\cal S}^0_{s + 1, 0} \to {\cal S}^0_{s, 0}, \quad s_0 \leq s \leq S
\end{equation}
with 
\begin{equation}\label{simmetrie Vn iterazione}
V_n = {\rm odd}(\vphi, x, \xi)\,, \quad V_n(\vphi, x, \cdot) = {\rm even}(\xi)
\end{equation} 
and 
\begin{equation}\label{stima Vn}
\begin{aligned}
& | V_n |_{s}^{k_0, \gamma} \leq   C_*(s)N_{n - 1}^{- \mathtt a} \e  \| v \|_{s + \mu}^{k_0, \gamma}\,, \qquad  | V_n |_{s + \mathtt b}^{k_0, \gamma} \leq C_*(s) N_{n - 1} \e  \| v \|_{s + \mu}^{k_0, \gamma}
\end{aligned}
\end{equation}
for some constant $C_*(s) = C_*(s, k_0, \tau) > 0$. If $n \geq 1$, there exists $\Psi_{n - 1} \in {\cal S}^0_{s, 0}$, $\forall s \geq s_0$, $\Psi_{n - 1} = {\rm even}(\vphi, x, \xi)$, $\Psi_{n - 1}(\vphi, x, \cdot) ={\rm even}(\xi)$, such that 
\begin{equation}\label{stima Psin}
| \Psi_{n - 1}|_s^{k_0, \gamma} \lesssim N_{n - 1}^{\tau_0} \gamma^{- 1} | V_{n - 1}|_s^{k_0, \gamma}, \quad  \forall s \geq s_0
\end{equation}
and $\Phi_n := {\rm Id} + \Psi_n$ is invertible and, for any $\lambda = (\omega, \zeta) \in DC(\gamma, \tau)$, satisfies 
\begin{equation}\label{coniugazione a}
{\cal P}_n = \Phi_{n - 1}^{- 1} {\cal P}_{n - 1} \Phi_{n - 1}\,. 
\end{equation}

${\bf (S2)_n}$ Let $s_1 \geq s_0$ and assume that $u_1, u_2$ satisfy 
the ansatz \eqref{ansatz} with $\mu_0 \geq s_1 + \mu$. Then, for any $(\omega, \zeta) \in DC(\gamma, \tau)$, one has 
$$
\begin{aligned}
& |\Delta_{12} V_n|_{s_1} \lesssim_{s_1} N_{n - 1}^{- \mathtt a} \e  \| u_1 - u_2\|_{s_1 + \mu}\,, \quad |\Delta_{12} V_n|_{s_1 + \frak b} \lesssim_{s_1} N_{n - 1} \e  \| u_1 - u_2\|_{s_1 + \mu}  
\end{aligned}
$$
and for $n \geq 1$
$$
\begin{aligned}
& |\Delta_{12} \Psi_{n - 1}|_{s_1} \lesssim_{s_1} N_{n - 1}^{\tau_0} N_{n - 2}^{- \mathtt a} \e \gamma^{- 1} \| v_1 - v_2 \|_{s_1 + \mu}, \\
&   |\Delta_{12} \Psi_{n - 1}|_{s_1 + \mathtt b} \lesssim_{s_1} N_{n - 1}^{\tau_0} N_{n - 2} \e \gamma^{- 1} \| v_1 - v_2 \|_{s_1 + \mu}\,. 
\end{aligned}
$$
\end{proposition}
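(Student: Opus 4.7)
The plan is to prove the proposition by induction on $n$, implementing a standard KAM-type reducibility scheme for the action of $\mathcal{P}$ on matrix symbols. For the base case $n=0$, I would set $V_0 := V$; the symmetry conditions \eqref{simmetrie Vn iterazione} at level $0$ follow from the evenness in $\xi$ \eqref{simmetria grado zero nella riduzione} combined with the reversibility of ${\cal R}_0^{(1)}$ proved in Lemma \ref{lemma coniugio cal L (0)}, while the low-norm bound in \eqref{stima Vn} (with $N_{-1} = 1$) is precisely \eqref{stime cal R (1)} after enlarging $\mu$. Inductively, assume $(S1)_n$ and $(S2)_n$.

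The key observation is that the two parts of \eqref{simmetrie Vn iterazione} together force $V_n$ to be odd in $(\vphi, x)$ for each fixed $\xi$, hence $\langle V_n \rangle_{\vphi, x}(\xi) \equiv 0$. This is the reversibility trick mentioned in the introduction that rules out any normal-form correction to $\omega \cdot \partial_\vphi + \zeta \cdot \nabla$. I would therefore solve the homological equation
$$
(\omega \cdot \partial_\vphi + \zeta \cdot \nabla) \Psi_n + \Pi_{N_n} V_n = 0
$$
by setting $\Psi_n := -(\omega \cdot \partial_\vphi + \zeta \cdot \nabla)_{\mathrm{ext}}^{-1} \Pi_{N_n} V_n$ via the extension \eqref{def ompaph-1 ext} with $\mathtt m = \zeta$, so $\Psi_n$ is defined for all $\lambda \in \R^{\nu+3}$, and the equation holds on $DC(\gamma, \tau)$. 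Lemma \ref{lemma:WD} together with the smoothing bound \eqref{p3-proi-ops} yields \eqref{stima Psin}. Setting $\Phi_n := \mathrm{Id} + \Psi_n$, the smallness of $|\Psi_n|_{s_0}^{k_0,\gamma}$ (from the inductive bound on $V_n$ and \eqref{condizione piccolezza rid trasporto}) makes $\Phi_n$ invertible via Neumann series, and on $DC(\gamma, \tau)$ the conjugation identity \eqref{coniugazione a} produces
$$
V_{n+1} = \Phi_n^{-1}\bigl(\Pi_{N_n}^\bot V_n + V_n \Psi_n\bigr).
$$
Lemma \ref{lemma proprieta norma su OPS 0}$(i)$ then splits the low-norm estimate into a smoothing contribution $\lesssim N_n^{-\mathtt{b}} |V_n|_{s+\mathtt{b}}^{k_0,\gamma}$ and a quadratic contribution $\lesssim N_n^{\tau_0} \gamma^{-1} |V_n|_{s_0}^{k_0,\gamma} |V_n|_s^{k_0,\gamma}$. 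Plugging in \eqref{stima Vn} and using $\mathtt a = 3\tau_0 + 1$, $\mathtt b = \mathtt a + 1$, $N_n = N_{n-1}^{3/2}$, both terms fit into $C_*(s) N_n^{-\mathtt a} \e \|v\|_{s+\mu}^{k_0,\gamma}$ once $N_0^{\tau_1} \e \gamma^{-1}$ is small enough; the high-norm estimate is then obtained by absorbing the conjugation cost into $C_*(s)$ and using $N_{n-1} \leq N_n$.

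Symmetry propagation is checked by observing that $(\omega \cdot \partial_\vphi + \zeta \cdot \nabla)^{-1}_{\mathrm{ext}}$ swaps the parity in $(\vphi, x)$ while preserving the parity in $\xi$ (the cut-off $\chi$ in \eqref{def ompaph-1 ext} is even): hence $\Psi_n$ is even in $(\vphi, x, \xi)$ and even in $\xi$. Then $\Pi_{N_n}^\bot V_n$ is odd-in-$(\vphi,x,\xi)$/even-in-$\xi$, the product $V_n \Psi_n$ has the same parity (even times odd = odd), and this structure is preserved by composition with the Neumann series of $\Phi_n^{-1}$, giving the required properties for $V_{n+1}$. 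The Lipschitz estimates $(S2)_n$ propagate along the same scheme: each operation is linear or bilinear in $V_n$, so $\Delta_{12}$ is distributed by the Leibniz rule and the tame bilinear estimates of Lemma \ref{lemma proprieta norma su OPS 0} give the advertised dependence on $\|v_1 - v_2\|_{s_1+\mu}$. The main obstacle is the balancing between the loss $\tau_0$ at each homological solve and the gain $\mathtt b$ from the Fourier truncation, which dictates the inequality governing the choice \eqref{costanti rid trasporto semi-lineare}; equally delicate is the propagation of \eqref{simmetrie Vn iterazione}, which, as noted, is exactly what kills the resonant $(\ell, j) = (0,0)$ contribution and allows $\Psi_n$ to be defined (and estimated) globally in $\lambda$.
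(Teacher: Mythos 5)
Your proposal follows essentially the same route as the paper: the same homological equation $(\omega\cdot\partial_\vphi+\zeta\cdot\nabla)\Psi_n+\Pi_{N_n}V_n=0$ solved via the extended inverse (solvable precisely because the parities \eqref{simmetrie Vn iterazione} make $V_n(\cdot,\xi)$ odd in $(\vphi,x)$, hence of zero average), the same new remainder $V_{n+1}=\Phi_n^{-1}\bigl(\Pi_{N_n}^\bot V_n+V_n\Psi_n\bigr)$, and the same balancing of $\tau_0$ against $\mathtt a,\mathtt b$ in \eqref{costanti rid trasporto semi-lineare}. The only loose phrase is ``absorbing the conjugation cost into $C_*(s)$'' for the high norm: the constant $C_*(s)$ must stay fixed along the iteration, and the factor $\bigl(1+CN_n^{\tau_0}\gamma^{-1}|V_n|_{s_0}^{k_0,\gamma}\bigr)$ is instead absorbed by the gap $N_{n-1}\le N_{n-1}^{1/2}N_{n-1}= N_n$ with $N_0$ large, exactly as you also indicate.
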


\begin{proof}
{\sc Proof of ${\bf (S1)_0}$-${\bf (S2)_0}$.} It follows by Lemma \ref{lemma coniugio cal L (0)}, by setting $V_0 = V := R_0^{(1)}$. 

\noindent
{\sc Proof of ${\bf (S1)_{n + 1}}$.} Arguing by induction, at the $n$-th step, we have the operator ${\cal P}_n$ in \eqref{op cal Ln}, whose remainder $V_n$ satisfies the estimate \eqref{stima Vn}. 
Let $\Phi_{n }(\vphi, x, \xi) = {\rm Id} + \Psi_{n }(\vphi, x, \xi)$, 
where the matrix valued symbol $\T^\nu \times \T^3 \times \R^3 \to {\rm Mat}_{3 \times 3}(\C)$, 
$(\vphi, x, \xi) \mapsto \Psi_{n }(\vphi, x, \xi)$ has to be determined. 
One has 
\[
\mP_n (\Phi_n A) 
= \Phi_n (\ompaph A + \zeta \cdot \grad A)
+ (\ompaph \Psi_n + \zeta \cdot \grad \Psi_n) A 
+ (\Pi_{N_n} V_n) A
+ (\Pi_{N_n}^\bot V_n) A
+ V_n \Psi_n A
\]
for all symbols $A = A(\ph,x,\xi)$, namely  
\begin{equation}\label{coniugazione 1}
\begin{aligned}
{\cal P}_n \Phi_{n} & =  \Phi_{n} (\omega \cdot \partial_\vphi + \zeta \cdot \nabla) 
+ ( \omega \cdot \partial_\vphi + \zeta \cdot \nabla ) \Psi_{n} 
+ \Pi_{N_n} V_n  + \Pi_{N_n}^\bot V_n+V_n\Psi_n
\end{aligned}
\end{equation}
where, to simplify notations, we do not write explicit dependence on $(\vphi, x, \xi)$.
The simmetry conditions  \eqref{simmetrie Vn iterazione} imply that $V_n(\cdot, \xi) = {\rm odd}(\vphi, x)$, therefore, for any $(\omega, \zeta) \in {\cal O}_\infty^\gamma$, we can solve the {\it homological equation }
\begin{equation}\label{equazione omologica 1}
(\omega \cdot \partial_\vphi + \zeta \cdot \nabla ) \Psi_{n}(\vphi, x, \xi) 
+ \Pi_{N_n} V_n(\vphi, x, \xi ) = 0
\end{equation}
by defining 
\begin{equation}\label{soluzione equazione omologica 1}
\Psi_{n }(\vphi, x, \xi) := 
- \big( \omega \cdot \partial_\vphi + \zeta \cdot \nabla \big)^{- 1}_{ext} 
\Pi_{N_n} V_n(\vphi, x, \xi), 
\end{equation}
where we recall \eqref{def ompaph-1 ext}. 
Since $V_n (\cdot, \xi)= {\rm odd}(\vphi, x)$ and $V_n(\vphi, x, \cdot) = {\rm even}(\xi)$, it is easy to verify that $\Psi_{n }(\cdot, \xi) = {\rm even}(\vphi, x)$, $\Psi_{n }(\vphi, x, \cdot) = {\rm even}(\xi)$. 
By Lemmata \ref{lemma:WD}, \ref{lemma proprieta norma su OPS 0} one immediately gets the estimates
\begin{equation} \label{stima Psi n + 1}
| \Psi_{n} |_s^{k_0, \gamma} \lesssim N_n^{\tau_0} \gamma^{- 1} | V_n |_s^{k_0, \gamma},  \quad  \forall s \geq s_0
\end{equation}
which is \eqref{stima Psin} at the step $n + 1$. 
The estimate \eqref{stima Psi n + 1}, 
together with 
\eqref{stima Vn} and 
\eqref{ansatz}, imply that for $s = s_0$
\begin{equation}\label{piccolezza Psi n + 1 s0}
| \Psi_{n } |_{s_0}^{k_0, \gamma} \lesssim N_n^{\tau_0} N_{n - 1}^{- \mathtt a} \e \gamma^{- 1}\,. 
\end{equation}
Hence, by taking $\e \gamma^{- 1}$ small enough and recalling \eqref{costanti rid trasporto semi-lineare}, the matrix $\Phi_{n}(\vphi, x, \xi) = {\rm Id} + \Psi_{n}(\vphi, x, \xi)$ 
is invertible by standard Neumann series, and 
\begin{equation}\label{stima Phi n + 1 inv}
| \Phi_{n}^{\pm 1} - {\rm Id} |_s^{k_0, \gamma} 
\lesssim_{s} | \Psi_{n}|_s^{k_0, \gamma}\,, \quad \forall s \geq s_0
\end{equation}
Hence \eqref{coniugazione 1}, \eqref{equazione omologica 1} imply that 
\begin{equation}\label{def cal L n + 1}
\begin{aligned}
{\cal P}_{n + 1} & := \Phi_{n}^{-1} {\cal P}_n \Phi_{n} 
= \omega \cdot \partial_\vphi + \zeta \cdot \nabla + V_{n + 1}(\vphi, x, \xi)\,, \\
V_{n + 1} & := \Pi_{N_n}^\bot V_n + \big( \Phi_{n }^{- 1} - {\rm Id} \big) 
\Pi_{N_n}^\bot V_n+ \Phi_{n}^{- 1} V_n \Psi_{n }\,.
\end{aligned}
\end{equation}
Since $V_n = {\rm odd}(\vphi, x, \xi)$, 
$V_n(\vphi,x, \cdot) = {\rm even}(\xi)$ 
and $\Psi_{n} = {\rm even}(\vphi,x, \xi)$, 
$\Psi_{n }(\vphi, x, \cdot)= {\rm even}(\xi)$, 
it follows that $V_{n+1}$ has the same parities as $V_n$, 
namely $V_{n + 1} = {\rm odd}(\vphi, x, \xi)$, 
$V_{n + 1}(\vphi, x, \cdot) = {\rm even}(\xi)$. 

It only remains to show the estimates \eqref{stima Vn} at the step $n + 1$. 
By Lemma \ref{lemma proprieta norma su OPS 0} 
and the estimates \eqref{stima Psi n + 1}-\eqref{stima Phi n + 1 inv}, 
for any $s_0 \leq s \leq S$ one has
\begin{equation}\label{prima stima induttiva V n + 1}
\begin{aligned}
& | V_{n + 1} |_s^{k_0, \gamma} \leq N_n^{- \mathtt b}  | V_n |_{s + \mathtt b}^{k_0, \gamma} + C(s) N_n^{\tau_0} \gamma^{- 1} | V_n|_{s_0}^{k_0, \gamma} | V_n|_s^{k_0, \gamma} \,, \\
& | V_{n + 1} |_{s + \mathtt b}^{k_0, \gamma} \leq |V_n|_{s + \mathtt b}^{k_0, \gamma}
\big(1 +  C(s) N_n^{\tau_0} \gamma^{- 1} | V_n|_{s_0}^{k_0, \gamma} \big). 
\end{aligned}
\end{equation}
Then by the induction estimates on $V_n$, by the choice of $\mathtt a, \mathtt b$ in \eqref{costanti rid trasporto semi-lineare}, and by taking $\e \gamma^{- 1} \leq \delta(S)$ small enough and $N_0 = N_0(S)> 0$ large enough, one gets the estimate \eqref{stima Vn} at the step $n + 1$. 
The estimates in ${\bf (S2)_{n + 1}}$ can be proved arguing similarly. 
\end{proof}

For $n \geq 1$, let 
\begin{equation}\label{def widetilde Phin}
\widetilde\Phi_n := \Phi_0  \Phi_1  \cdots \Phi_n\,. 
\end{equation}

\begin{lemma}\label{convergenza trasformaz trasporto semi-lin}
$(i)$ For any $s_0 \leq s \leq S$, the sequence $\widetilde \Phi_n^{\pm 1}$ 
converges in the norm $\| \cdot \|_s^{k_0, \gamma}$ 
to some limit $\Phi_\infty^{\pm 1}$ with $\Phi_\infty^{\pm 1} - {\rm Id}= {\rm even}(\vphi, x, \xi)$, $(\Phi_\infty^{\pm 1} - {\rm Id})(\vphi, x, \cdot) = {\rm even}(\xi)$. Moreover 
$$
| \Phi_\infty^{\pm 1} - {\rm Id} |_s^{k_0, \gamma} \lesssim_s  N_0^{\tau_0 }\e \gamma^{- 1} \| v \|_{s + \mu}^{k_0, \gamma}\,, \quad \forall s_0, \leq s \leq S\,. 
$$ 
$(ii)$ Let $s_1 \geq s_0$ and assume that $v_1, v_2$ satisfy \eqref{ansatz} with $\mu_0 \geq s_1 + \mu$. Then for any $\lambda \in DC(\gamma, \tau)$, $|\Delta_{12} \Phi_\infty^{\pm 1}|_{s_1} \lesssim_{s_1} N_0^{\tau_0} \e \gamma^{- 1} \| v_1 - v_2 \|_{s_1 + \mu}$. 
\end{lemma}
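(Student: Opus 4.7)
The plan is to prove convergence of the product $\widetilde\Phi_n = \Phi_0 \Phi_1 \cdots \Phi_n$ of matrix symbols by a telescoping argument based on the fast decay of the ``corrections'' $\Psi_{n+1}$ provided by Proposition \ref{prop riducibilita trasporto vettoriale semilin}, and then to transfer the conclusion to the inverses $\widetilde\Phi_n^{-1}$ and to the Lipschitz-in-data statement $(ii)$.

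First I would write
\[
\widetilde\Phi_{n+1} - \widetilde\Phi_n = \widetilde\Phi_n \, \Psi_{n+1}
\]
and use the product estimate of Lemma \ref{lemma proprieta norma su OPS 0}$(i)$ to obtain, for $s_0 \leq s \leq S$,
\[
|\widetilde\Phi_{n+1} - \widetilde\Phi_n|_s^{k_0,\gamma}
\lesssim_s |\widetilde\Phi_n|_s^{k_0,\gamma}\, |\Psi_{n+1}|_{s_0}^{k_0,\gamma}
+ |\widetilde\Phi_n|_{s_0}^{k_0,\gamma}\, |\Psi_{n+1}|_s^{k_0,\gamma}.
\]
Combining \eqref{stima Psin} with the bounds \eqref{stima Vn}, the ansatz \eqref{ansatz} and the constants \eqref{costanti rid trasporto semi-lineare} (so that $\mathtt a - \tau_0 > 0$ and the sequence $N_n = N_0^{(3/2)^n}$ grows superexponentially), one has
\[
|\Psi_{n+1}|_s^{k_0,\gamma} \lesssim_s N_{n+1}^{\tau_0} N_n^{-\mathtt a}\, \e \gamma^{-1}\, \|v\|_{s+\mu}^{k_0,\gamma}, \qquad
|\Psi_{n+1}|_{s_0}^{k_0,\gamma} \lesssim N_{n+1}^{\tau_0} N_n^{-\mathtt a}\, \e \gamma^{-1},
\]
and the series $\sum_n N_{n+1}^{\tau_0} N_n^{-\mathtt a}$ converges to a quantity bounded by $C\, N_0^{\tau_0 - \mathtt a}$ (in particular, by the smallness condition \eqref{condizione piccolezza rid trasporto} with $\tau_1$ large enough, this sum is as small as we wish).

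Next I would establish a uniform-in-$n$ bound for $|\widetilde\Phi_n|_s^{k_0,\gamma}$. For $s=s_0$, the smallness of $\sum_n |\Psi_n|_{s_0}^{k_0,\gamma}$ gives, by a Neumann-type iteration, $|\widetilde\Phi_n|_{s_0}^{k_0,\gamma} \leq 2$ for every $n$. For general $s \leq S$, an induction using the displayed product estimate and $|\widetilde\Phi_0|_s^{k_0,\gamma} \lesssim_s 1 + N_0^{\tau_0}\e\gamma^{-1}\|v\|_{s+\mu}^{k_0,\gamma}$ yields
\[
|\widetilde\Phi_n|_s^{k_0,\gamma} \leq C(s)\bigl(1 + N_0^{\tau_0}\e\gamma^{-1}\|v\|_{s+\mu}^{k_0,\gamma}\bigr)
\]
uniformly in $n$. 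Plugging this back gives
\[
|\widetilde\Phi_{n+1} - \widetilde\Phi_n|_s^{k_0,\gamma}
\lesssim_s N_{n+1}^{\tau_0} N_n^{-\mathtt a}\, \e \gamma^{-1}\, \|v\|_{s+\mu}^{k_0,\gamma},
\]
so $(\widetilde\Phi_n)$ is Cauchy in $|\cdot|_s^{k_0,\gamma}$; call the limit $\Phi_\infty$. Summing the telescopic series from $n=0$ and absorbing the (also telescopable) contribution $\Psi_0$ provides the bound $|\Phi_\infty - \mathrm{Id}|_s^{k_0,\gamma} \lesssim_s N_0^{\tau_0}\e\gamma^{-1}\|v\|_{s+\mu}^{k_0,\gamma}$. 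The symmetries are transparent: each $\Psi_n$ is $\mathrm{even}(\varphi,x,\xi)$ and $\mathrm{even}$ in $\xi$ by Proposition \ref{prop riducibilita trasporto vettoriale semilin}, these properties are preserved under pointwise matrix products (of symbols valued in $\mathrm{Mat}_{3\times 3}(\C)$), hence also under passage to the $|\cdot|_s^{k_0,\gamma}$-limit.

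For the inverse, the identity $\widetilde\Phi_{n+1}^{-1} - \widetilde\Phi_n^{-1} = -\Phi_{n+1}^{-1}\Psi_{n+1}\widetilde\Phi_n^{-1}$ together with the bound $|\Phi_{n+1}^{-1} - \mathrm{Id}|_s^{k_0,\gamma} \lesssim_s |\Psi_{n+1}|_s^{k_0,\gamma}$ from \eqref{stima Phi n + 1 inv} reduces the convergence analysis to exactly the same geometric-type estimate as above; taking the limit and using the composition formula on symbols gives $\Phi_\infty \Phi_\infty^{-1} = \mathrm{Id}$. For item $(ii)$, I would apply $\Delta_{12}$ to each identity $\widetilde\Phi_{n+1}^{\pm 1} - \widetilde\Phi_n^{\pm 1} = \ldots$ using the Leibniz-type bound
\[
|\Delta_{12}(AB)|_{s_1} \lesssim_{s_1} |\Delta_{12}A|_{s_1}|B|_{s_1}+|A|_{s_1}|\Delta_{12}B|_{s_1}
\]
(which follows from Lemma \ref{lemma proprieta norma su OPS 0}$(i)$) and the $\Delta_{12}$-estimates for $\Psi_n$ provided by ${\bf (S2)_n}$; the same geometric tail $\sum_n N_{n+1}^{\tau_0} N_n^{-\mathtt a}$ controls the result, giving the stated Lipschitz bound. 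The main obstacle I anticipate is purely bookkeeping: keeping the two-scale information (the bounds at level $s$ and at level $s+\mathtt b$) aligned so that the product $|\widetilde\Phi_n|_s$ does not blow up with $n$; the key observation is that only the low-index bound $|\Psi_n|_{s_0}^{k_0,\gamma}$, which is summable, enters the induction for $|\widetilde\Phi_n|_s^{k_0,\gamma}$, while the high-index factor $|\Psi_n|_s^{k_0,\gamma}$ appears only in the telescoping differences.
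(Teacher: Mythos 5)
Your proposal is correct and follows essentially the same route as the paper, which for item $(i)$ simply invokes the standard telescoping argument of Corollary 4.1 in \cite{BBM-Airy} and for item $(ii)$ the estimates of ${\bf (S2)_n}$ — exactly the scheme you spell out (the summable $s_0$-norms of $\Psi_n$ give uniform bounds on $\widetilde\Phi_n^{\pm 1}$, the telescoping differences decay like $N_{n+1}^{\tau_0}N_n^{-\mathtt a}\,\e\gamma^{-1}\|v\|_{s+\mu}^{k_0,\gamma}$, and the $n=0$ term produces the $N_0^{\tau_0}$ factor in the final bound). The only nitpick is cosmetic: with $N_n = N_0^{(3/2)^n}$ the tail sum is $\lesssim N_0^{\frac32\tau_0-\mathtt a}$ rather than $N_0^{\tau_0-\mathtt a}$, which affects nothing since both are $O(1)$ and dominated by the $n=0$ contribution.
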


\begin{proof}
The proof of 
$(i)$ is standard and it can be done as the one of Corollary 4.1 in \cite{BBM-Airy}. 
The proof of 
$(ii)$ follows by similar arguments, 
using Proposition \ref{prop riducibilita trasporto vettoriale semilin}-${\bf (S2)_n}$. 
\end{proof}

\begin{proof}[{\sc Proof of Proposition \ref{coniugazione finale trasporto semilineare}}] 
Item $(i)$ follows in a straightforward way 
by Proposition \ref{prop riducibilita trasporto vettoriale semilin} 
and Lemma \ref{convergenza trasformaz trasporto semi-lin}. 
Now we prove item $(ii)$. 
By Proposition \ref{coniugazione finale trasporto semilineare}-$(i)$,
one is led to solve the equation
\begin{equation}\label{pappa bambini}
(\omega \cdot \partial_\vphi + \zeta \cdot \nabla) 
(\Phi_\infty(\vphi, x, \xi)^{- 1} M(\vphi, x, \xi)) 
= - \Phi_\infty(\vphi, x, \xi)^{- 1} V(\vphi, x, \xi)\,.
\end{equation}
Since $V = {\rm odd}(\vphi, x, \xi)$, $\Phi_\infty^{\pm 1} = {\rm even}(\vphi, x, \xi)$, $V(\vphi, x, \cdot ), \Phi_\infty(\vphi,x, \cdot)^{\pm 1} = {\rm even}(\xi)$, one deduces that 
\[
\Phi_\infty(\cdot, \xi)^{\pm 1} = \even(\ph,x), 
\quad \ 
V(\cdot, \xi) = \odd(\ph,x). 
\]
Hence $(\Phi_\infty^{- 1} V)(\cdot, \xi) = {\rm odd}(\vphi, x)$, 
and therefore its average in $(\ph,x)$ is zero. 
As a consequence, the equation \eqref{pappa bambini} is solved 
for any $(\omega, \zeta) \in DC(\gamma, \tau)$ 
by setting 
$$
M(\vphi, x, \xi) := - \Phi_\infty(\vphi, x, \xi) 
(\omega \cdot \partial_\vphi + \zeta \cdot \nabla)^{- 1}_{ext}
[\Phi_\infty^{- 1} V](\vphi, x, \xi)\,. 
$$
One easily verifies that 
$M = {\rm even}(\vphi, x, \xi)$. 
The claimed estimates on $M$, $\Phi_\infty^{\pm 1}$ 
follow by Lemmata \ref{lemma:WD}, \ref{lemma proprieta norma su OPS 0}-$(i)$, 
\ref{convergenza trasformaz trasporto semi-lin} 
and by the estimate \eqref{stime cal R (1)} (recall that $V := R_0^{(1)}$).
\end{proof}

\subsection{Elimination of the zero-th order term}

\begin{lemma}\label{riduzione ordine 0}
Let $S > s_0$, $\gamma \in (0, 1)$, $\tau > 0$. Then there exists $\mu = \mu (S, k_0, \tau, \nu) > 0$, $\tau_1 = \tau_1(k_0, \tau, \nu) > 0$ and $\delta = \delta(S, k_0, \tau, \nu) \in (0, 1)$ such that if \eqref{ansatz}, \eqref{condizione piccolezza rid trasporto} are fullfilled with $\mu_0 \geq s_0 + \mu$, then the following properties hold. 
There exist 
a real and reversibility preserving map 
${\cal M} = {\rm Op}(M) \in {\cal OPS}^{0}_{S, 0}$ 
such that the map ${\cal B} := {\rm Id} + {\cal M}$ is invertible, with
\begin{equation}\label{stima cal B ord zero}
|{\cal M}|_{0, s, 0}^{k_0, \gamma}\,,\, |{\cal B}^{- 1} - {\rm Id}|_{0, s, 0}^{k_0, \gamma}\,,\, |{\cal M}^*|_{0, s, 0}^{k_0, \gamma} \lesssim_{s} \e \gamma^{- 1} \| v \|_{s +. \mu}^{k_0, \gamma}, \quad \forall s_0 \leq s \leq S,
\end{equation}
and a real and reversible operator ${\cal L}^{(2)}$ 
of the form 
\begin{equation}\label{def cal L2}
{\cal L}^{(2)} :=  \omega \cdot \partial_\vphi + \zeta \cdot \nabla + {\cal R}^{(2)},
\end{equation}
defined for all 
$(\omega, \zeta) \in \R^{\nu + 3}$, 
with ${\cal R}^{(2)} \in {\cal OPS}^{- 1}_{S, 0}$ and 
\begin{equation}\label{stima cal R (2)}
|{\cal R}^{(2)}|_{- 1, s, 0}^{k_0, \gamma} 
\lesssim_{s} \e  \| v \|_{s + \mu}^{k_0, \gamma} 
\quad \ \forall s_0 \leq s \leq S, 
\end{equation}
such that, for all $(\omega, \zeta) \in DC(\gamma, \tau)$, 
the operator ${\cal L}^{(1)}$ defined in \eqref{cal L (1)}
is conjugated to ${\cal L}^{(2)}$, namely 
\[
{\cal L}^{(2)} = {\cal B}^{- 1} {\cal L}^{(1)} {\cal B}.
\] 


\noindent
Let $s_1 \geq s_0$ 
and assume that $v_1, v_2$ satisfy \eqref{ansatz} with $\mu_0 \geq s_1 + \mu$. 
Then, for all $\lambda = (\omega, \zeta) \in DC(\gamma, \tau)$, 
\begin{equation}\label{stime delta 12 cal B}
\begin{aligned}
& |\Delta_{12} {\cal B}^{\pm 1}|_{0, s_1, 0}\,,\, |\Delta_{12} {\cal B}^{*}|_{0, s_1, 0} \lesssim_{s_1} \e \gamma^{- 1} \| v_1 - v_2 \|_{s_1 + \mu}\,, \\
& |\Delta_{12} {\cal R}^{(2)}|_{- 1, s_1, 0} \lesssim_{s_1} \e \| v_1 - v_2 \|_{s_1 + \mu}\,. 
\end{aligned}
\end{equation}
\end{lemma}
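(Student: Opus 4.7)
The plan is to read off everything from the variable-coefficient homological equation of Proposition \ref{coniugazione finale trasporto semilineare}-(ii). Setting $V := R_0^{(1)}$, let $M \in \mathcal{S}^0_{S, 0}$ be the matrix-valued symbol produced by that proposition; it solves
\[
(\omega \cdot \partial_\vphi + \zeta \cdot \nabla) M + V M + V = 0
\qquad \forall (\omega, \zeta) \in DC(\gamma, \tau),
\]
with $|M|_{s}^{k_0, \gamma} \lesssim_s \e \gamma^{-1} \| v \|_{s+\mu}^{k_0, \gamma}$, and is even in $(\vphi, x, \xi)$. Define $\mathcal{M} := \mathrm{Op}(M)$ and $\mathcal{B} := \mathrm{Id} + \mathcal{M}$. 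The symbol $V$ is in fact real-valued: its reality property (from Lemma \ref{lemma coniugio cal L (0)}) combined with the $\xi$-symmetry \eqref{simmetria grado zero nella riduzione} forces $V = \overline{V}$, and the iterative construction in Proposition \ref{prop riducibilita trasporto vettoriale semilin} uses only real operations, so $M$ is real as well. Hence $\mathcal{M}$ is real and reversibility preserving. By the smallness condition \eqref{condizione piccolezza rid trasporto}, $|\mathcal{M}|_{0, s_0, 0}^{k_0, \gamma}$ is small enough for the Neumann series Lemma \ref{Lemma Neumann} to give invertibility of $\mathcal{B}$ and the bound on $|\mathcal{B}^{-1} - \mathrm{Id}|_{0, s, 0}^{k_0, \gamma}$; the estimate on $\mathcal{M}^*$ comes from the adjoint Lemma \ref{lemma aggiunto}, with the fixed loss of $s_0$ derivatives absorbed into a larger $\mu$.

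Next, I would conjugate at the symbolic level. Using the identity $[\omega \cdot \partial_\vphi + \zeta \cdot \nabla, \mathrm{Op}(M)] = \mathrm{Op}((\omega \cdot \partial_\vphi + \zeta \cdot \nabla) M)$ and Lemma \ref{lemma stime Ck parametri}-(ii) to expand $\mathcal{R}_0^{(1)} \mathcal{M} = \mathrm{Op}(V M) + \mathcal{E}_0$ with $\mathcal{E}_0 \in \mathcal{OPS}^{-1}_{S, 0}$, one computes
\[
\mathcal{L}^{(1)} \mathcal{B} - \mathcal{B} (\omega \cdot \partial_\vphi + \zeta \cdot \nabla)
= \mathrm{Op}\big( (\omega \cdot \partial_\vphi + \zeta \cdot \nabla) M + V + V M \big) + \mathcal{Z},
\]
where $\mathcal{Z} := \mathcal{E}_0 + \mathcal{R}_{-1}^{(1)} + \mathcal{R}_{-1}^{(1)} \mathcal{M} \in \mathcal{OPS}^{-1}_{S, 0}$. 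By the homological equation, the order-zero symbol on the right-hand side vanishes for $(\omega, \zeta) \in DC(\gamma, \tau)$, leaving $\mathcal{L}^{(1)} \mathcal{B} = \mathcal{B} (\omega \cdot \partial_\vphi + \zeta \cdot \nabla) + \mathcal{Z}$ on the Diophantine set. Set $\mathcal{R}^{(2)} := \mathcal{B}^{-1} \mathcal{Z}$ and $\mathcal{L}^{(2)} := \omega \cdot \partial_\vphi + \zeta \cdot \nabla + \mathcal{R}^{(2)}$; since $M$, and hence $\mathcal{M}, \mathcal{B}, \mathcal{Z}$, are defined for every $\lambda \in \R^{\nu+3}$ via the extended inverse \eqref{def ompaph-1 ext}, so is $\mathcal{L}^{(2)}$. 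On $DC(\gamma, \tau)$ one reads off $\mathcal{L}^{(2)} = \mathcal{B}^{-1} \mathcal{L}^{(1)} \mathcal{B}$, while the bound \eqref{stima cal R (2)} follows by combining Lemma \ref{lemma stime Ck parametri} with the estimates of Lemma \ref{lemma coniugio cal L (0)} and Proposition \ref{coniugazione finale trasporto semilineare}. Reality and reversibility of $\mathcal{R}^{(2)}$ are inherited from those of $\mathcal{B}^{\pm 1}, \mathcal{R}_{-1}^{(1)}$, and $\mathcal{E}_0$ (whose symbolic expansion in Lemma \ref{lemma stime Ck parametri}-(ii) preserves both properties).

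The $\Delta_{12}$ estimates \eqref{stime delta 12 cal B} will be obtained by running the same composition identities differentially in $v$ and invoking the $\Delta_{12}$ bounds from Proposition \ref{coniugazione finale trasporto semilineare}-(iii) and Lemma \ref{lemma coniugio cal L (0)}. The only conceptual obstacle, as emphasized in the introduction, is the one that forced the previous subsection in the first place: for matrix-valued symbols the commutator $[\mathcal{R}_0^{(1)}, \mathcal{M}]$ does \emph{not} gain one derivative, so a scalar-style equation $(\omega \cdot \partial_\vphi + \zeta \cdot \nabla) M + V = 0$ would leave the unwanted order-zero term $V M$ uncancelled; one must absorb it into the left-hand side, which is precisely the variable-coefficient diagonalization of $\omega \cdot \partial_\vphi + \zeta \cdot \nabla + V$ carried out in Section \ref{sez eq omologica grado 0 coeff variabili}. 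Once that is granted, the present lemma is a direct corollary.
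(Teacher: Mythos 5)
Your proposal is correct and follows essentially the same route as the paper: it invokes Proposition \ref{coniugazione finale trasporto semilineare}-(ii) with $V := R_0^{(1)}$ to solve the variable-coefficients homological equation \eqref{equazione omologica step 00}, sets ${\cal B} = {\rm Id} + {\rm Op}(M)$, uses Lemma \ref{lemma stime Ck parametri}-(ii) to cancel the zero-order symbol, and defines ${\cal R}^{(2)}$ as the ${\cal B}^{-1}$-conjugate of the order $-1$ leftovers (your ${\cal Z} = {\cal R}_{R_0^{(1)}M} + {\cal R}_{-1}^{(1)}{\cal B}$ is exactly the paper's remainder), with the estimates and reversibility obtained from the same Neumann/adjoint lemmas and parity considerations. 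The additional remarks on reality of $M$ and on why the scalar-style homological equation would fail are consistent with the paper.
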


\begin{proof} 
To eliminate the term of order $0$ 
from the operator ${\cal L}^{(1)}$ in \eqref{cal L (1)},
we look for 
a transformation 
\begin{equation}\label{def cal M1}
{\cal B} = {\rm Id} + {\cal M}, \quad 
{\cal M} = {\rm Op}\big( M(\vphi, x, \xi)\big) \in {\cal OPS}^0_{S, 0}, 
\end{equation}
where the symbol $M(\vphi, x, \xi)$ has to be determined. 
By Lemma \ref{lemma stime Ck parametri}, one has 
\begin{equation}\label{pupilla 0}
{\cal R}_0^{(1)}{\cal M} 
= {\rm Op} \big( R_0^{(1)}(\vphi, x, \xi) M(\vphi, x, \xi) \big) 
+ {\cal R}_{R_0^{(1)} M},
\end{equation}
where the remainder ${\cal R}_{R_0^{(1)} M}$ is an operator of order $- 1$. 
Hence 
\begin{equation}\label{coniugazione cal L (1) a}
\begin{aligned}
{\cal L}^{(1)} {\cal B}
& = {\cal B} (\omega \cdot \partial_\vphi + \zeta \cdot \nabla ) 
+ {\rm Op} \big( (\omega \cdot \partial_\vphi + \zeta \cdot \nabla) M + R_0^{(1)} M + R_0^{(1)}  \big) 
+ {\cal R}_{R_0^{(1)} M} + {\cal R}_{- 1}^{(1)}{\cal B} \,. 
\end{aligned}
\end{equation}
Note that 
${\cal R}_{- 1}^{(1)}{\cal B}$ is also an operator of order $- 1$. 
By Proposition \ref{coniugazione finale trasporto semilineare} 
applied with $V := R_0^{(1)}$, 
there exists a symbol $M = M(\ph,x,\xi) \in \mS^0_{S,0}$, $\even(\ph,x,\xi)$, 
that solves the equation \eqref{equazione omologica step 00} 
and satisfies the bounds \eqref{stima cal B ord zero}. 
%
%
By \eqref{coniugazione cal L (1) a}, \eqref{equazione omologica step 00} 
one deduces formula \eqref{def cal L2} with 
\[
{\cal R}^{(2)} := {\cal B}^{- 1} {\cal R}_{R_0^{(1)} M} 
+ {\cal B}^{- 1} {\cal R}_{- 1}^{(1)}{\cal B}.
\] 
The bounds for ${\cal M}^*$ and ${\cal B}^{- 1} - {\rm Id}$ follow by Lemmata \ref{lemma aggiunto}, \ref{Lemma Neumann}. 
The estimate of $|{\cal R}^{(2)}|_{- 1, s, 0}^{k_0, \gamma}$ then follows 
by Lemma \ref{lemma stime Ck parametri} 
and by 
estimates \eqref{stime cal R (1)}, \eqref{stima cal B ord zero}, 
\eqref{ansatz}, 
\eqref{condizione piccolezza rid trasporto}. 
Since $M = {\rm even}(\vphi, x, \xi)$, the maps ${\cal B}^{\pm 1}$ are reversibility preserving maps, hence since ${\cal L}^{(1)}$ is reversible then also ${\cal L}^{(2)}$ is reversible. The estimates \eqref{stime delta 12 cal B} can be proved arguing similarly. 
\end{proof}

\section{Reduction to constant coefficients of the lower order terms}
In this section we reduce the operator ${\cal L}^{(2)}$ in \eqref{def cal L2} to constant coefficients up to an arbitarily regularizing remainder. More precisely, we prove the following Proposition. 

\begin{proposition} \label{proposizione regolarizzazione ordini bassi}
Let $S > s_0$, $M \in \N$, $\gamma \in (0, 1)$, $\tau > 0$. Then there exists $\mu = \mu( M, k_0, \tau, \nu) > 0$, $\delta = \delta(S, M, k_0, \tau, \nu) \in (0, 1)$ such that if \eqref{ansatz}, \eqref{condizione piccolezza rid trasporto} are fullfilled with $\mu_0 \geq s_0 + \mu$, then the following holds. There exists a real and reversibility preserving, invertible map ${\cal V}$ satisfying 
\begin{equation}\label{stima Phi M reg ordini bassi}
|{\cal V}^{\pm 1}- {\rm Id} |_{0, s, 0}^{k_0, \gamma}\,,\, |{\cal V}^* - {\rm Id} |_{0, s, 0}^{k_0, \gamma}  \lesssim_{s, M}\e \gamma^{- 1}   \| v \|_{s + \mu}^{k_0, \gamma}, \quad \forall s_0 \leq s \leq S
\end{equation}
and a linear operator ${\cal L}^{(3)}$ defined by 
\begin{equation}\label{def cal L (3)}
{\cal L}^{(3)} :=  \omega \cdot \partial_\vphi + \zeta \cdot \nabla + {\cal Q} + {\cal R}^{(3)}
\end{equation}
where ${\cal Q} = {\rm Op}(Q(\xi)) \in {\cal OPS}^{- 1}_{S, 0}$ is an operator whose $3 \times 3$ matrix symbol is independent of $(\vphi, x)$, 
${\cal R}^{(3)}$ belongs to ${\cal OPS}^{- M}_{S, 0}$, 
and 
\begin{equation}\label{stima cal Z cal R (3)}
|{\cal Q}|_{- 1, s, 0}^{k_0, \gamma} \, , \, 
|{\cal R}^{(3)}|_{- M, s, 0}^{k_0, \gamma} 
\lesssim_{s, M} \e  \| v \|_{s + \mu}^{k_0, \gamma} 
\quad \ \forall s_0 \leq s \leq S\,.
\end{equation}
The operators ${\cal L}^{(3)}$, ${\cal Q}$, ${\cal R}^{(3)}$ are real and reversible operators defined for all the values of the parameters $(\omega, \zeta) \in \R^{\nu + 3}$. For any $(\omega, \zeta) \in DC(\gamma, \tau)$, one has ${\cal V}^{- 1} {\cal L}^{(2)} {\cal V} = {\cal L}^{(3)}$. 

Let $s_1 \geq s_0$ and let 
$v_1, v_2$ satisfy \eqref{ansatz} with $\mu_0 \geq s_1 + \mu$. 
Then for any $(\omega, \zeta) \in DC(\gamma, \tau)$ one has 
\begin{equation}\label{stime delta 12 cal L (3)}
\begin{aligned}
& |\Delta_{12} {\cal V}^{\pm 1}|_{0, s_1, 0}\,,\, |\Delta_{12} {\cal V}^*|_{0, s_1, 0} \lesssim_{s_1, M} \e \gamma^{- 1} \| v_1 - v_2 \|_{s_1 + \mu}\,, \\
& |\Delta_{12} {\cal Q}|_{- 1, s_1, 0}\,,\, |\Delta_{12} {\cal R}^{(3)}|_{- M, s_1, 0} \lesssim_{s_1, M} \e \| v_1 - v_2\|_{s_1 + \mu}\,. 
\end{aligned}
\end{equation}
\end{proposition}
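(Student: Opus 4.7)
The plan is to construct ${\cal V}$ as a finite composition ${\cal V} := {\cal V}_1 \circ {\cal V}_2 \circ \cdots \circ {\cal V}_{M-1}$, where each ${\cal V}_n = \Id + \Op(M_n(\ph,x,\xi))$ with $M_n$ a matrix-valued symbol of order $-n$, so as to gain one unit of smoothing on the remainder at each step. Set ${\cal L}_1 := {\cal L}^{(2)}$, and assume inductively that the conjugated operator at step $n$ has the form
$$
{\cal L}_n = \omega \cdot \partial_\vphi + \zeta \cdot \nabla + {\cal Z}_n + {\cal R}_n,
$$
with ${\cal Z}_n = \Op(Z_n(\xi))$ a $(\ph,x)$-independent accumulated normal form of order $-1$, ${\cal R}_n = \Op(R_n) \in {\cal OPS}^{-n}_{S,0}$ the current remainder, and both pieces real and reversible.

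First I would split $R_n(\ph,x,\xi) = \langle R_n \rangle_{\ph,x}(\xi) + \widetilde R_n(\ph,x,\xi)$, absorb the average into the normal form by setting ${\cal Z}_{n+1} := {\cal Z}_n + \Op(\langle R_n\rangle_{\ph,x})$, and solve the \emph{constant coefficient} homological equation
$$
(\omega \cdot \partial_\vphi + \zeta \cdot \nabla)\, M_n(\ph,x,\xi) = - \widetilde R_n(\ph,x,\xi), \qquad (\omega,\zeta) \in DC(\gamma,\tau),
$$
for each frequency $\xi$ by means of Lemma \ref{lemma:WD}, extending the solution to all $(\omega,\zeta) \in \R^{\nu+3}$ via the cutoff \eqref{def ompaph-1 ext}. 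Since the equation is scalar in $\xi$, the symbol $M_n$ inherits the order $-n$ of $\widetilde R_n$, at the price of a loss of $\tau_0$ derivatives in $(\ph,x)$. The transformation ${\cal V}_n = \Id + \Op(M_n)$ is then invertible by the Neumann series Lemma \ref{Lemma Neumann} once $\e \g^{-1}$ is small enough.

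The key computation is that, upon conjugating,
$$
{\cal V}_n^{-1} {\cal L}_n {\cal V}_n = \omega \cdot \partial_\vphi + \zeta \cdot \nabla + {\cal Z}_{n+1} + {\cal R}_{n+1},
$$
where, by the homological equation, the contributions of order $-n$ cancel exactly and the new remainder ${\cal R}_{n+1}$ gathers the commutators $[{\cal Z}_n, \Op(M_n)]$, $[\Op(R_n), \Op(M_n)]$ and the higher-order Neumann contributions $({\cal V}_n^{-1} - \Id)(\cdots)$. Lemma \ref{lemma stime Ck parametri}-$(ii)$ applied to $\Op(Z_n) \Op(M_n)$ and $\Op(M_n) \Op(Z_n)$ shows that the principal symbol of $[{\cal Z}_n, \Op(M_n)]$ is the pointwise matrix commutator $[Z_n(\xi), M_n(\ph,x,\xi)]$, of order $-1 + (-n) = -(n+1)$, up to an error in ${\cal OPS}^{-(n+2)}_{S,0}$; the remaining commutator and Neumann pieces contain at least two factors of order $\leq -n$, and hence lie in ${\cal OPS}^{-(n+1)}_{S,0}$ as well. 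Thus ${\cal R}_{n+1} \in {\cal OPS}^{-(n+1)}_{S,0}$ and the iteration can continue. After $M-1$ steps, setting ${\cal V} := {\cal V}_1 \cdots {\cal V}_{M-1}$, ${\cal Q} := {\cal Z}_M$ and ${\cal R}^{(3)} := {\cal R}_M$ yields \eqref{def cal L (3)} with the claimed structure. Reality and reversibility are preserved at each step: $\widetilde R_n$ inherits the parity of $R_n$, so $M_n$ is even in $(\ph,x,\xi)$ by an argument analogous to the one leading to \eqref{simmetrie Vn iterazione}, and ${\cal V}_n$ is therefore reversibility-preserving. The tame bounds \eqref{stima Phi M reg ordini bassi}--\eqref{stima cal Z cal R (3)} follow by iterating the estimates of Lemmata \ref{lemma:WD}, \ref{lemma proprieta norma su OPS 0}, \ref{lemma stime Ck parametri}, \ref{Lemma Neumann} starting from \eqref{stima cal R (2)}, with $\mu = \mu(M,k_0,\tau,\nu)$ growing linearly in $M$ to absorb the $\tau_0$ loss at each step. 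The Lipschitz-in-$v$ estimates \eqref{stime delta 12 cal L (3)} will be obtained by differentiating the whole scheme with respect to $v$ and running the same bounds on the differences $\Delta_{12} M_n, \Delta_{12} R_n$.

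The main obstacle I anticipate is the bookkeeping of orders and of the dependence of $\mu$ on $M$, together with the verification that every contribution to ${\cal R}_{n+1}$ truly gains an order in $\xi$, in spite of the fact that matrix-valued commutators generally do not gain derivatives. This last point is saved precisely because one of the two symbols entering each commutator is $(\ph,x)$-independent or already of strictly negative order: in the former case Lemma \ref{lemma stime Ck parametri}-$(ii)$ produces only the matrix-commutator principal symbol (of the right order) plus an error in ${\cal OPS}^{-(n+2)}_{S,0}$, while in the latter the composition is already of order $\leq -(n+1)$ by the symbolic calculus.
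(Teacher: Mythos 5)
Your proposal is correct and follows essentially the same route as the paper's proof (the iterative Lemma \ref{lemma iterativo ordini bassi}): a finite composition of maps $\Id + \Op(M_n)$ with $M_n$ solving the constant-coefficient homological equation \eqref{M n + 1 R n + 1 (2)}, the $(\ph,x)$-average absorbed into the $(\ph,x)$-independent normal form ${\cal Q}$, and the remainder gaining one order per step precisely because $M_n$ has the same negative order as the remainder it removes. The only cosmetic difference is that the paper keeps the plain products ${\cal Z}_n{\cal M}_{n+1}$, ${\cal R}_n^{(2)}{\cal M}_{n+1}$ and $({\cal T}_{n+1}^{-1}-\Id){\cal Z}_{n+1}$ in the new remainder (see \eqref{cal Z cal R2 n + 1}) rather than reorganizing them as commutators, so no symbolic expansion via Lemma \ref{lemma stime Ck parametri}-$(ii)$ is needed: each composition already has order $\leq -(n+1)$ by additivity of orders.
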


Proposition \ref{proposizione regolarizzazione ordini bassi} 
follows by the following iterative lemma. 

\begin{lemma}\label{lemma iterativo ordini bassi}
Let $S > s_0$, $M \in \N$, $\gamma \in (0, 1)$, $\tau > 0$. Then there exists $\mu_M = \mu( M, k_0, \tau, \nu) > 0$, $\delta = \delta(S, M, k_0, \tau, \nu) \in (0, 1)$ and $\mu_1 < \mu_2 <\ldots <\mu_M$ such that if \eqref{ansatz}, \eqref{condizione piccolezza rid trasporto} are fullfilled with $\mu_0 \geq s_0 + \mu_M$, then the following holds.  For any $n = 0, \ldots, M - 1$, there exists a linear operator ${\cal L}^{(2)}_n$ of the form 
\begin{equation}\label{def cal L (2) n}
{\cal L}^{(2)}_n := \omega \cdot \partial_\vphi + \zeta \cdot \nabla + {\cal Z}_n + {\cal R}_n^{(2)}
\end{equation}
where ${\cal Z}_n = {\rm Op}(Z_n(\xi))\in {\cal OPS}^{- 1}_{S, 0}$ 
and ${\cal R}_n^{(2)} = {\rm Op}(R_n^{(2)}(\vphi, x, \xi)) \in {\cal OPS}^{- (n + 1) }_{S, 0}$ 
are defined for all the values of the parameters $(\omega, \zeta) \in \R^{\nu + 3}$ and satisfy
\begin{equation}\label{stima induttiva cal Zn cal Rn (2)}
\begin{aligned}
& |{\cal Z}_n|_{- 1, s, 0}^{k_0, \gamma} \lesssim_{s, n} \e  \| v \|_{s_0 + \mu_n}^{k_0, \gamma} \quad \ \forall s_0 \leq s \leq S\,, \\
& |{\cal R}_n^{(2)}|_{- (n + 1), s, 0}^{k_0, \gamma} \lesssim_{s, n} \e \| v \|_{s + \mu_n}^{k_0, \gamma} \quad \ \forall s_0 \leq s \leq S\,. 
\end{aligned}
\end{equation} 
The operators ${\cal L}_n^{(2)}, {\cal Z}_n, {\cal R}_n^{(2)}$ are real and reversible. 
%
For any $n = 1, \ldots, M$ there exists a reversibility preserving, invertible map ${\cal T}_n$ defined for any $(\omega, \zeta) \in \R^{\nu + 3}$, satisfying the estimate
\begin{equation}\label{stima cal Tn reg ordini bassi}
|{\cal T}_n^{\pm 1} - {\rm Id}|_{- n, s, 0}^{k_0, \gamma} \lesssim_{s, n} \e \gamma^{- 1} \| v \|_{s + \mu_n}^{k_0, \gamma} \quad \ \forall s_0 \leq s \leq S
\end{equation}
and 
\begin{equation}\label{coniugazione lemma iterativo ordini bassi}
{\cal L}^{(2)}_n = {\cal T}_n^{- 1}{\cal L}_{n - 1}^{(2)} {\cal T}_n 
\quad \ \forall (\omega, \zeta) \in DC(\gamma, \tau)\,. 
\end{equation}
Let $s_1 \geq s_0$ and 
assume that $v_1, v_2$ satisfy \eqref{ansatz} with $\mu_0 \geq s_1 + \mu_M$. 
Then, for any $\lambda = (\omega, \zeta) \in DC(\gamma, \tau)$,
\begin{equation}\label{Delta 12 Tn Zn Rn (2)}
\begin{aligned}
& |\Delta_{12} {\cal T}_n^{\pm 1} |_{- n, s_1, 0} \lesssim_{s_1, n} \e \gamma^{- 1} \| v_1 - v_2 \|_{s_1 + \mu_n}\,, \\
& |\Delta_{12} {\cal Z}_n|_{- 1, s_1, 0}\,,\,  |\Delta_{12} {\cal R}_n^{(2)}|_{- n, s_1, 0} \lesssim_{s_1, n} \e \| v_1 - v_2 \|_{s_1 + \mu_n}\,. 
\end{aligned}
\end{equation}
\end{lemma}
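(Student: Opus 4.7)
I proceed by induction on $n$. For the base case $n=0$, set $\mathcal{Z}_0 := 0$ and $\mathcal{R}_0^{(2)} := \mathcal{R}^{(2)}$ given by Lemma~\ref{riduzione ordine 0}; the estimate \eqref{stima cal R (2)} then yields \eqref{stima induttiva cal Zn cal Rn (2)} upon choosing $\mu_0$ large enough.

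For the inductive step from $n$ to $n+1$, I look for a transformation $\mathcal{T}_{n+1} = \mathrm{Id} + \mathcal{M}_{n+1}$ with $\mathcal{M}_{n+1} = \mathrm{Op}(M_{n+1}(\vphi,x,\xi))$ of order $-(n+1)$, and define $\mathcal{Z}_{n+1} := \mathcal{Z}_n + \mathrm{Op}(\langle R_n^{(2)}\rangle_{\vphi,x})$, which remains of constant coefficients in $(\vphi,x)$ and of order $-1$. Using the identity $[\om\cdot\partial_\vphi + \zeta\cdot\nabla, \mathrm{Op}(M)] = \mathrm{Op}((\om\cdot\partial_\vphi + \zeta\cdot\nabla)M)$, the requirement that $\mathcal{T}_{n+1}^{-1}\mathcal{L}_n^{(2)}\mathcal{T}_{n+1}$ coincide with $\om\cdot\partial_\vphi + \zeta\cdot\nabla + \mathcal{Z}_{n+1}$ modulo an operator of order $-(n+2)$ reduces to solving the constant-coefficient homological equation \eqref{eq omologica ordini bassi intro} with $R := R_n^{(2)}$ for the symbol $M_{n+1}$. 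Since the right-hand side of this equation has zero $(\vphi,x)$-average by construction, the equation is solved for $(\omega,\zeta) \in DC(\gamma,\tau)$ by Lemma~\ref{lemma:WD} applied with $\mathtt m = \zeta$; the solution is extended to all parameters by setting
$$M_{n+1} := (\om\cdot\partial_\vphi + \zeta\cdot\nabla)^{-1}_{ext}\bigl[R_n^{(2)} - \langle R_n^{(2)}\rangle_{\vphi,x}\bigr].$$
Crucially, $M_{n+1}$ has the same pseudo-differential order $-(n+1)$ as $R_n^{(2)}$, at the cost of $\tau_0$ additional derivatives in $(\vphi,x)$; this is the main gain of the step. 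The tame bound \eqref{stima cal Tn reg ordini bassi} for $\mathcal{T}_{n+1}$ then follows from Lemma~\ref{Lemma Neumann}, after verifying smallness of $|\mathcal{M}_{n+1}|_{-(n+1),s_0,0}^{k_0,\gamma}$ via the ansatz \eqref{ansatz} and the smallness condition \eqref{condizione piccolezza rid trasporto}.

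A direct computation then yields, on $DC(\gamma,\tau)$, the identity
$$\mathcal{R}_{n+1}^{(2)} = \mathcal{T}_{n+1}^{-1}\bigl([\mathcal{Z}_n, \mathcal{M}_{n+1}] + \mathcal{R}_n^{(2)}\mathcal{M}_{n+1} - \mathcal{M}_{n+1}\mathrm{Op}(\langle R_n^{(2)}\rangle_{\vphi,x})\bigr)$$
(up to an overall sign which has no effect on the analysis). The composition estimate \eqref{estimate composition parameters} of Lemma~\ref{lemma stime Ck parametri}-(i) shows that each of the three operators inside the bracket has order at most $-1 + (-(n+1)) = -(n+2)$, since $\mathcal{Z}_n$ and $\mathrm{Op}(\langle R_n^{(2)}\rangle_{\vphi,x})$ have order $-1$, while $\mathcal{M}_{n+1}$ and $\mathcal{R}_n^{(2)}$ have order $-(n+1)$. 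This is the entire point of the scheme: in the matrix-valued setting, commutators of pseudo-differential operators do not gain any derivative (unlike in the scalar case), but the smoothing order $-(n+1)$ of $\mathcal{M}_{n+1}$ directly delivers the required gain.

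Reversibility propagates as follows: $\mathcal{R}_n^{(2)}$ reversible means $R_n^{(2)}(\vphi,x,\xi) = -R_n^{(2)}(-\vphi,-x,-\xi)$; hence $\langle R_n^{(2)}\rangle_{\vphi,x}(\xi)$ is odd in $\xi$, so $\mathcal{Z}_{n+1}$ remains reversible, while $M_{n+1}$ is even in $(\vphi,x,\xi)$, which makes $\mathcal{T}_{n+1}$ reversibility preserving and $\mathcal{L}_{n+1}^{(2)}$ reversible. All operators are defined for every $(\omega,\zeta) \in \R^{\nu+3}$ thanks to the extension in \eqref{def ompaph-1 ext}. The main technical obstacle is the bookkeeping of derivative losses along the iteration: at each step Lemma~\ref{lemma:WD} loses $\tau_0$ derivatives in $(\vphi,x)$, and Lemma~\ref{lemma stime Ck parametri}-(i) requires further $|m|+\beta$ derivatives on the lower-order factor. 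Since $M$ is finite, a suitable increasing sequence $\mu_1 < \mu_2 < \cdots < \mu_M$, with $\mu_{n+1}$ exceeding $\mu_n$ by a fixed increment, absorbs all such losses and closes the tame estimates \eqref{stima induttiva cal Zn cal Rn (2)}, \eqref{stima cal Tn reg ordini bassi}. The Lipschitz-in-$v$ bounds \eqref{Delta 12 Tn Zn Rn (2)} follow by applying $\Delta_{12}$ to each object in the above construction and reusing the same tame estimates together with the inductive hypothesis on $\mathcal{R}_n^{(2)}$.
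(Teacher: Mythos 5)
Your proof is correct and follows essentially the same scheme as the paper's: induction on $n$, a transformation $\mathcal{T}_{n+1}=\mathrm{Id}+\mathcal{M}_{n+1}$ of order $-(n+1)$ determined by the constant-coefficient homological equation, the update $\mathcal{Z}_{n+1}=\mathcal{Z}_n+\mathrm{Op}(\langle R_n^{(2)}\rangle_{\vphi,x})$, and a remainder whose order $-(n+2)$ comes from the smoothing of $\mathcal{M}_{n+1}$ rather than from commutator gains; indeed your bracket formula for $\mathcal{R}_{n+1}^{(2)}$ is algebraically identical to the paper's expression $(\mathcal{T}_{n+1}^{-1}-\mathrm{Id})\mathcal{Z}_{n+1}+\mathcal{T}_{n+1}^{-1}(\mathcal{Z}_n\mathcal{M}_{n+1}+\mathcal{R}_n^{(2)}\mathcal{M}_{n+1})$. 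The only slip is the sign in your definition of $M_{n+1}$ (with $\mathcal{T}_{n+1}=\mathrm{Id}+\mathcal{M}_{n+1}$ one needs $(\omega\cdot\partial_\vphi+\zeta\cdot\nabla)M_{n+1}=\langle R_n^{(2)}\rangle_{\vphi,x}-R_n^{(2)}$), which you already flag as immaterial and which affects nothing else.
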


\begin{proof}
We prove the lemma arguing by induction. For $n = 0$ the claimed statement follows by Lemma \ref{riduzione ordine 0}, by defining ${\cal L}^{(2)}_0 := {\cal L}^{(2)}$, ${\cal Z}_0 = 0$, ${\cal R}^{(2)}_0 := {\cal R}^{(2)}$.

We assume that the claimed statement holds for some $n \in \{0, \ldots, M - 1\}$ 
and we prove it at the step $n + 1$. Let us consider a transformation ${\cal T}_{n + 1} = {\rm Id} + {\cal M}_{n + 1}$ where ${\cal M}_{n + 1} = {\rm Op}(M_{n + 1}(\vphi, x, \xi))$ is an operator of order $- (n + 1)$ which has to be determined. One computes 
\begin{equation}\label{prima formula cal L n (2)}
\begin{aligned}
{\cal L}_n^{(2)} {\cal T}_{n + 1} 
& = {\cal T}_{n + 1} \big( \omega \cdot \partial_\vphi + \zeta \cdot \nabla \big) + {\cal Z}_n 
+ {\rm Op}\Big( (\omega \cdot \partial_\vphi + \zeta \cdot \nabla) M_{n + 1} + R_n^{(2)} \Big) 
+ {\cal Z}_n{\cal M}_{n + 1} + {\cal R}_n^{(2)}{\cal M}_{n + 1}\,. 
\end{aligned}
\end{equation}
We define the symbol $M_{n + 1}(\vphi, x, \xi)$ as 
\begin{equation}\label{def M n + 1 lot}
M_{n + 1} := \big( \omega \cdot \partial_\vphi + \zeta \cdot \nabla \big)_{ext}^{- 1} 
\big[ \langle R_n^{(2)} \rangle_{\vphi, x} -   R_n^{(2)} \big],
\end{equation} 
which is defined for any $(\omega, \zeta) \in \R^{\nu + 3}$ (recall \eqref{def ompaph-1 ext}). 
Clearly ${\cal M}_{n + 1}$ is an operator of the same order as $\mR_n^{(2)}$, namely 
${\cal M}_{n + 1}\in {\cal OPS}^{- (n + 1)}_{S, 0}$, 
and, by Lemma \ref{lemma:WD} and the induction estimate \eqref{stima induttiva cal Zn cal Rn (2)}, 
one verifies that
\begin{equation}\label{stima induttiva M n + 1 nel reg}
|{\cal M}_{n + 1}|_{- (n + 1), s, 0}^{k_0, \gamma} \lesssim_{s, n} \e \gamma^{- 1} \| v \|_{s + \mu_n + \tau_0}^{k_0, \gamma}\,, \quad \forall s_0 \leq s \leq S\,. 
\end{equation}
Furthermore, for any $(\omega, \zeta) \in DC(\gamma, \tau)$, 
$M_{n + 1}(\ph,x,\xi)$ solves the homological equation
\begin{equation}\label{M n + 1 R n + 1 (2)}
(\omega \cdot \partial_\vphi + \zeta \cdot \nabla) M_{n + 1} + R_n^{(2)} = \langle R_n^{(2)} \rangle_{\vphi, x}\,. 
\end{equation}
Using the ansatz \eqref{ansatz} with $\mu_0 \geq s_0 + \mu_n + \tau_0$, one gets $|{\cal M}_{n + 1}|_{- (n + 1), s_0, 0}^{k_0, \gamma} \lesssim \e \gamma^{- 1}$, hence by taking $\e \gamma^{- 1}$ small enough, one can apply Lemma \ref{Lemma Neumann}, obtaining that ${\cal T}_{n + 1} = {\rm Id} + {\cal M}_{n + 1}$ is invertible with inverse ${\cal T}_{n + 1}^{- 1}$ satisfying the estimate 
\begin{equation}\label{stima cal T n + 1 - 1 nel proof}
|{\cal T}_{n + 1}^{- 1} - {\rm Id}|_{- (n + 1), s, 0}^{k_0, \gamma} 
\lesssim_{s,  n} \e \gamma^{- 1} \| v \|_{s + \mu_n + \tau_0}^{k_0, \gamma}, 
\quad \forall s_0 \leq s \leq S\,. 
\end{equation}
Hence the estimate \eqref{stima cal Tn reg ordini bassi} at the step $n + 1$ holds by taking $\mu_{n + 1} \geq \mu_n + \tau_0$. 
By \eqref{prima formula cal L n (2)}, \eqref{M n + 1 R n + 1 (2)} we then get, 
for any $(\omega, \zeta) \in DC(\gamma, \tau)$, 
the conjugation \eqref{coniugazione lemma iterativo ordini bassi} at the step $n + 1$ 
where ${\cal L}^{(2)}_{n + 1}$ has the form \eqref{def cal L (2) n}, with 
\begin{equation}\label{cal Z cal R2 n + 1}
\begin{aligned}
{\cal Z}_{n + 1} & := {\cal Z}_n + {\rm Op} \big(\langle R_n^{(2)} \rangle_{\vphi, x} \big)\,, \\
{\cal R}_{n + 1}^{(2)} & := ({\cal T}_{n + 1}^{- 1} - {\rm Id}){\cal Z}_{n + 1} 
+ {\cal T}_{n + 1}^{- 1} 
\big( {\cal Z}_n{\cal M}_{n + 1} + {\cal R}_n^{(2)}{\cal M}_{n + 1} \big)  \,. 
\end{aligned}
\end{equation}
Since ${\cal M}_{n + 1}, {\cal T}_{n + 1}^{\pm 1}, {\cal R}_n^{(2)}$ are defined 
for all 
$(\omega, \zeta) \in \R^{\nu + 3}$, 
then also ${\cal R}_{n + 1}^{(2)}$ is defined for all $(\omega, \zeta) \in \R^{\nu + 3}$. 
Since ${\cal Z}_n$ is of order $- 1$ and ${\cal R}_n^{(2)}, {\cal M}_{n + 1}$ 
are of order $- (n + 1)$, one gets that 
${\cal Z}_n{\cal M}_{n + 1} + {\cal R}_n^{(2)}{\cal M}_{n + 1}$ 
is of order $- (n + 2)$.  
The estimate \eqref{stima induttiva cal Zn cal Rn (2)}  at the step $n + 1$ 
for the operator ${\cal Z}_{n + 1}$ follows 
by the induction estimate on ${\cal Z}_n, {\cal R}_n^{(2)}$
and by the property \eqref{proprieta simbolo mediato}.  
The estimate for the operator ${\cal R}_{n + 1}^{(2)}$ follows by \eqref{stima induttiva M n + 1 nel reg}, \eqref{stima cal T n + 1 - 1 nel proof}, the estimates for ${\cal Z}_{n + 1}$ and ${\cal Z}_n$, ${\cal R}_n^{(2)}$ and using Lemma \ref{lemma stime Ck parametri}-$(i)$. Furthermore, since ${\cal R}_n^{(2)}$ and ${\cal Z}_n$ are reversible, then by \eqref{M n + 1 R n + 1 (2)} one verifies that ${\cal M}_{n + 1} = {\rm Op}(M_{n + 1})$ is reversibility preserving 
and therefore, by \eqref{cal Z cal R2 n + 1}, 
${\cal Z}_{n + 1}$ and ${\cal R}_{n + 1}^{(2)}$ are reversible operators. 
The estimates \eqref{Delta 12 Tn Zn Rn (2)} can be proved by similar arguments. 
\end{proof}

\begin{proof}[{\sc Proof of Proposition \ref{proposizione regolarizzazione ordini bassi}}] 
We define 
$$
{\cal L}^{(3)} := {\cal L}^{(2)}_{M - 1}, \quad 
{\cal Q} := {\cal Z}_{M - 1}, \quad 
{\cal R}^{(3)} := {\cal R}_{M - 1}^{(2)}, \quad 
{\cal V} := {\cal T}_1 \circ \ldots \circ {\cal T}_{M - 1} \,.
$$
Then, in order to deduce the claimed properties of ${\cal Z}$ and ${\cal R}^{(3)}$, 
it suffices to apply Lemma \ref{lemma iterativo ordini bassi} for $n = M - 1$. The estimate \eqref{stima Phi M reg ordini bassi} then follows by the estimate \eqref{stima cal Tn reg ordini bassi} on ${\cal T}_n$, $n = 1, \ldots, M - 1$, using the composition Lemma \ref{lemma stime Ck parametri}-$(i)$ and Lemma \ref{lemma aggiunto} (to estimate the adjoint operator ${\cal V}^*$). The estimates \eqref{stime delta 12 cal L (3)} follow similarly. 
\end{proof}

\section{Conjugation of the operator ${\cal L}$}
In this section we go back to the operator ${\cal L}$ in \eqref{operatore linearizzato} which contains the projector $\Pi_0^\bot$, see \eqref{definizione proiettore media spazio tempo}. We define
\begin{equation}\label{def cal U}
{\cal E} := {\cal A}{\cal B} {\cal V} \quad \text{and} \quad {\cal E}_\bot := \Pi_0^\bot {\cal E} \Pi_0^\bot
\end{equation}
We also recall that $H^s_0$ is defined in \eqref{def sobolev}.

\begin{lemma}\label{lemma invertibilita cal U bot}
Let $S > s_0$, $M \in \N$, $\tau > 0$, $\gamma\in (0, 1)$. Then there exists $\delta = \delta(S, M, k_0, \tau) \in (0, 1)$ small enough and $\mu_M := \mu(M, k_0, \tau, \nu ) > 0$ such that if \eqref{ansatz}, \eqref{condizione piccolezza rid trasporto} are fullfilled with $\mu_0 \geq s_0 + \mu_M$, the following properties hold. 

\smallskip

\noindent
$(i)$ For any $s_0 \leq s \leq S$, the map ${\cal E}_\bot : H^s_0 \to H^s_0$ is invertible and satisfies the tame estimates
\begin{equation}\label{stima cal U bot}
\| {\cal E}_\bot^{\pm 1} h \|_s^{k_0, \gamma} \lesssim_{s, M} \| h \|_s^{k_0, \gamma} + \| v \|_{s + \mu_M}^{k_0, \gamma} \| h \|_{s_0}^{k_0, \gamma}\,, \quad \forall s_0 \leq s \leq S\,. 
\end{equation}
Furthermore ${\cal E}_\bot$ and ${\cal E}_\bot^{- 1}$ are real and reversibility preserving.

\smallskip

\noindent
$(ii)$ For any $(\omega, \zeta) \in DC(\gamma, \tau)$, the map ${\cal E}_\bot$ conjugates the operator ${\cal L}$ in \eqref{operatore linearizzato} to an operator ${\cal L}_0 : H^{s + 1}_0 \to H^s_0$ for any $s_0 \leq s \leq S$, defined by 
\begin{equation}\label{coniugazione-pre-riducibilita}
{\cal L}_0 := {\cal E}_\bot^{- 1} {\cal L} {\cal E}_\bot 
= \omega \cdot \partial_\vphi + \zeta \cdot \grad + {\cal Q}_0 + {\cal R}_0
\end{equation}
where ${\cal Q}_0$ is a $3 \times 3$ time independent block-diagonal operator 
(see Definition \ref{def block-diagonal op})
defined for any $(\omega, \zeta) \in \R^{\nu + 3}$, 
represented by the 
matrix 
${\cal Q}_0 = {\rm diag}_{j \in \Z^3 \setminus \{ 0 \}} ({\cal Q}_0)_j^j$,  
satisfying 
\begin{equation}\label{stima-blocch-i33-pre-rid}
\sup_{j \in \Z^3 \setminus \{ 0 \}} 
\| ({\cal Q}_0)_j^j \|_{\HS}^{k_0, \gamma} |j| \lesssim_{ M}  \e  \| v \|_{s_0 + \mu_M}^{k_0, \gamma}
\end{equation}
and ${\cal R}_0$ is a linear operator defined 
for any $(\omega, \zeta) \in \R^{\nu + 3}$ satisfying the estimate 
\begin{equation}\label{stima cal R0 pre rid}
|{\cal R}_0 \langle D \rangle^M|_{s}^{k_0, \gamma} \lesssim_{s, M} \e \| v\|_{s + \mu_M}^{k_0, \gamma},  \quad \forall s_0 \leq s \leq S
\end{equation}
where the decay norm $| \cdot |_s^{k_0, \gamma}$ is defined in 
Definition \ref{block decay norm}
and $\langle D \rangle^M$ is the Fourier multiplier of symbol 
$\langle \xi \rangle^M = (1 + |\xi|^2)^{M/2}$. 
Furthermore, ${\cal L}_0, {\cal Q}_0, {\cal R}_0$ are real and reversible. 

\smallskip

\noindent
$(iii)$ Let $s_1 \geq s_0$ 
and assume that $v_1, v_2$ satisfy \eqref{ansatz} with $\mu_0 \geq s_1 + \mu_M$. Then for any $(\omega, \zeta) \in DC(\gamma, \tau)$, one has 
\begin{equation}\label{delta 12 E bot R0 Q0}
\begin{aligned}
& \| \Delta_{12} {\cal E}_\bot^{\pm 1} h \|_{s_1} \lesssim_{s_1, M} \e \gamma^{- 1} \| v_1 - v_2 \|_{s_1 + \mu_M} \| h \|_{s_1 + 1}\,, \\
& |\Delta_{12} {\cal R}_0 \langle D \rangle^M|_{s_1}^{k_0, \gamma} \lesssim_{s_1, M} \e  \| v_1 - v_2 \|_{s_1 + \mu_M}\,,\quad  \sup_{j \in \Z^3 \setminus \{ 0 \}} 
\| (\Delta_{12} {\cal Q}_0)_j^j \|_{\HS} |j| \lesssim_{ M}  \e  \| v_1 - v_2 \|_{s_1 + \mu_M}\,. \\
\end{aligned}
\end{equation}
\end{lemma}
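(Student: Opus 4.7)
The plan is to deduce the lemma by combining the three conjugations of Proposition~\ref{proposizione trasporto}, Lemma~\ref{riduzione ordine 0} and Proposition~\ref{proposizione regolarizzazione ordini bassi} on the \emph{unprojected} operator $\mathcal{L}^{(0)}$, and then restoring the outer projectors $\Pi_0^\bot$, arranging every commutator with $\Pi_0$ so that it lands in the smoothing remainder $\mathcal{R}_0$.

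For $(\omega,\zeta)\in DC(\gamma,\tau)$, chaining \eqref{cal L (1)}, \eqref{def cal L2} and \eqref{def cal L (3)} gives $\mathcal{E}^{-1}\mathcal{L}^{(0)}\mathcal{E}=\mathcal{L}^{(3)}=L_0+\mathcal{Q}+\mathcal{R}^{(3)}$ with $L_0:=\omega\cdot\partial_\vphi+\zeta\cdot\nabla$, $\mathcal{Q}=\mathrm{Op}(Q(\xi))$ of order $-1$, and $\mathcal{R}^{(3)}$ of order $-M$. Composing the tame estimates \eqref{stima tame cambio variabile rid trasporto}, \eqref{stima cal B ord zero} and \eqref{stima Phi M reg ordini bassi} via Lemmas~\ref{lemma:LS norms} and~\ref{azione pseudo}, and then sandwiching by the bounded projector $\Pi_0^\bot$, yields \eqref{stima cal U bot} for $\mathcal{E}_\bot^{\pm 1}$. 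Invertibility of $\mathcal{E}_\bot$ on $H^s_0$ follows by Neumann series from the identity $\mathcal{E}_\bot h = h +\Pi_0^\bot(\mathcal{E}-\mathrm{Id})h$ (valid for $h\in H^s_0$) together with the smallness $\|(\mathcal{E}-\mathrm{Id})h\|_{s_0}^{k_0,\gamma}=O(\e\gamma^{-1})\|h\|_{s_0}^{k_0,\gamma}$. Reality and reversibility-preservation of $\mathcal{E}_\bot^{\pm 1}$ propagate from the three factors and from $\Pi_0^\bot$, settling part (i).

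The heart of the argument is part (ii), which I would build on the crucial \emph{commutation} of $\Pi_0$ with both $L_0$ (since $\partial_\vphi$ and $\nabla$ preserve $H^s_0$) and $\mathcal{Q}$ (since its symbol is $x$-independent). For $h\in H^s_0$, I would write
\[
\mathcal{L}\mathcal{E}_\bot h=\Pi_0^\bot\mathcal{L}^{(0)}\mathcal{E}h-\Pi_0^\bot\mathcal{L}^{(0)}\Pi_0\mathcal{E}h=\Pi_0^\bot\mathcal{E}\mathcal{L}^{(3)}h-\Pi_0^\bot\mathcal{L}^{(0)}\Pi_0\mathcal{E}h,
\]
and then split $\mathcal{L}^{(3)}h=(L_0+\mathcal{Q})h+\mathcal{R}^{(3)}h$. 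Since $(L_0+\mathcal{Q})h\in H^s_0$, the first bracket becomes $\mathcal{E}_\bot(L_0+\mathcal{Q})h+\Pi_0^\bot\mathcal{E}\mathcal{R}^{(3)}h$, and left-multiplication by $\mathcal{E}_\bot^{-1}$ gives \eqref{coniugazione-pre-riducibilita} with main term $L_0+\mathcal{Q}$ and
\[
\mathcal{R}_0=\mathcal{E}_\bot^{-1}\Pi_0^\bot\mathcal{E}\,\mathcal{R}^{(3)}-\mathcal{E}_\bot^{-1}\Pi_0^\bot\mathcal{L}^{(0)}\Pi_0\mathcal{E}.
\]
The first summand is in $\mathrm{OPS}^{-M}_{S,0}$ because it factors through the order-$(-M)$ operator $\mathcal{R}^{(3)}$; the second factors through $\Pi_0$, which belongs to $\mathrm{OPS}^{-m}$ for every $m$ by \eqref{stima Pi 0}, and therefore also belongs to $\mathrm{OPS}^{-M}_{S,0}$, with bounds controlled via Lemma~\ref{lemma stime Ck parametri}. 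The block-diagonal $\mathcal{Q}_0$ is $\Pi_0^\bot\mathcal{Q}\Pi_0^\bot=\mathrm{diag}_{j\neq 0} Q(j)$, and \eqref{stima-blocch-i33-pre-rid} is a direct consequence of \eqref{stima cal Z cal R (3)} at $s=s_0$, via $|\mathcal{Q}|_{-1,s_0,0}^{k_0,\gamma}=\sup_\xi\|Q(\xi)\|_{\HS}\langle\xi\rangle$.

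To obtain the decay-norm bound \eqref{stima cal R0 pre rid}, I would apply Lemma~\ref{norma pseudo norma dec} to the order-$0$ operator $\mathcal{R}_0\langle D\rangle^M$, whose $\mathrm{OPS}^0$-norm is controlled by the $\mathrm{OPS}^{-M}$-norm of $\mathcal{R}_0$ from the previous step. The Lipschitz-in-$v$ bounds \eqref{delta 12 E bot R0 Q0} of part (iii) are deduced by running the same computation with the $\Delta_{12}$-estimates \eqref{stime delta 12 prop trasporto}, \eqref{stime delta 12 cal B}, \eqref{stime delta 12 cal L (3)} in place of the plain ones, together with a mean-value argument on the Neumann series defining $\mathcal{E}_\bot^{-1}$. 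The main technical obstacle is precisely the bookkeeping of the previous paragraph: one must organise every error term produced by reinstating the projectors so that an interior copy of $\Pi_0$ or of $\mathcal{R}^{(3)}$ guarantees membership in $\mathrm{OPS}^{-M}_{S,0}$; once the compositions are taken in the right order, Lemma~\ref{lemma stime Ck parametri} provides the required bounds.
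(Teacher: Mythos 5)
Your algebraic skeleton (chaining the three conjugations, identifying $\mQ_0=\Pi_0^\bot\mQ\Pi_0^\bot$, deducing \eqref{stima-blocch-i33-pre-rid} from \eqref{stima cal Z cal R (3)}, and writing $\mL\mE_\bot h=\mE_\bot(L_0+\mQ)h+\Pi_0^\bot\mE\mR^{(3)}h-\Pi_0^\bot\mL^{(0)}\Pi_0\mE h$) is consistent with the paper, but two analytic steps as you state them do not close. First, in part $(i)$ the invertibility of $\mE_\bot$ cannot be obtained by a Neumann series on $\mathrm{Id}+\Pi_0^\bot(\mE-\mathrm{Id})\Pi_0^\bot$: the smallness you invoke, $\|(\mE-\mathrm{Id})h\|_{s_0}^{k_0,\gamma}=O(\e\gamma^{-1})\|h\|_{s_0}^{k_0,\gamma}$, is false. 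The factor $\mA$ is a change of variables, and $\mA-\mathrm{Id}$ is small only at the price of one derivative (see \eqref{stima tame cambio variabile rid trasporto}, where the right-hand side contains $\|h\|_{s+1}^{k_0,\gamma}$): on a single mode $e^{\ii j\cdot x}$ with $|j|\,\|\alpha\|_{L^\infty}\gtrsim 1$ one has $\|(\mA-\mathrm{Id})e^{\ii j\cdot x}\|_{L^2}\sim\|e^{\ii j\cdot x}\|_{L^2}$, so $\Pi_0^\bot(\mE-\mathrm{Id})\Pi_0^\bot$ is not a contraction of $H^{s_0}_0$. The paper circumvents this by never inverting that operator: it uses the invertibility of the full map $\mE$ on $H^s$ together with the $2\times2$ block decomposition relative to $\Pi_0,\Pi_0^\bot$ (a Schur-complement argument, Steps 1--4 of its proof), exploiting that the off-diagonal blocks $\Pi_0\mE\Pi_0^\bot$ and $\Pi_0^\bot\mE\Pi_0$ \emph{are} small with no derivative loss, because the interior $\Pi_0$ reduces them to multiplication/averaging against the small functions $\Pi_0^\bot(\mE-\mathrm{Id})[1]$ and $G=(\mE^*-\mathrm{Id})[1]$.

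Second, in part $(ii)$ the decay-norm bound \eqref{stima cal R0 pre rid} does not follow from ``$\Pi_0\in\OPS^{-m}$ for every $m$ plus Lemma \ref{lemma stime Ck parametri}''. The operator $\mE$ contains $\mA$, which is neither a pseudo-differential operator of the classes $\OPS^m_{s,\beta}$ with controlled norms (its symbol is essentially $e^{\ii\alpha(\ph,x)\cdot\xi}$, whose $H^s$ norm grows in $\xi$) nor of finite matrix decay norm ($\sup_{j'}\|e^{\ii j'\cdot\alpha}\|_s$ is unbounded), so neither the composition Lemma \ref{lemma stime Ck parametri} nor the algebra property of Lemma \ref{proprieta standard norma decay}$(ii)$ lets you bound $|\mE_\bot^{-1}\Pi_0^\bot\mE\,\mR^{(3)}\langle D\rangle^M|_s^{k_0,\gamma}$ or $|\mE_\bot^{-1}\Pi_0^\bot\mL^{(0)}\Pi_0\mE\,\langle D\rangle^M|_s^{k_0,\gamma}$ factor-wise. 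The missing step is the further splitting the paper performs: write $\mE_\bot^{-1}\Pi_0^\bot\mE\mR^{(3)}=\Pi_0^\bot\mR^{(3)}\Pi_0^\bot+\mE_\bot^{-1}\Pi_0^\bot(\mE-\mathrm{Id})\Pi_0\mR^{(3)}\Pi_0^\bot$, so that every remainder term either carries no $\mE$-factor at all (and is estimated by Lemma \ref{norma pseudo norma dec}) or contains an \emph{interior} $\Pi_0$; in the latter case the operator collapses to multiplication by the explicit functions $\mE_\bot^{-1}\Pi_0^\bot\mE[1]$, $G$, $M_{\mU}$ composed with the average $\Pi_0$, and the multiplier $\langle D\rangle^M$ is transferred onto these functions using the self-adjointness of $\langle D\rangle^M$ under the space average (this is exactly where the loss $\mu_M$ falls on $v$ rather than on $h$). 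Without this reduction the claimed $\OPS^{-M}_{S,0}$/decay bounds for both summands of your $\mR_0$ are unjustified, and the same mechanism is what your part $(iii)$ estimates would have to rely on.
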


\begin{proof}
To simplify notations, in this proof we write $\| \cdot \|_s$ 
instead of $\| \cdot \|_s^{k_0, \gamma}$. 
Also, given any operator $A$ mapping $H^s(\T^{\nu+3},\R^3)$ into itself, 
we denote by $A[1] = A[1](\ph,x)$
the $3 \times 3$ matrix of entries 
\begin{equation} \label{def A[1]}
(A[1])_{j,k} := A_{j,k}[1] 
= \langle A(e_k) , e_j \rangle_{\R^3}, 
\quad \ j,k = 1,2,3,
\end{equation}
where $e_j$ is the $j$-th vector of the canonical basis of $\R^3$; 
we denote by $\Pi_0 A[1] = (\Pi_0 A[1])(\ph)$ 
the $3 \times 3$ matrix of entries $\Pi_0 (A_{j,k}[1])(\ph)$. 
Note that, if $A = \Op(a(\ph,x,\xi))$, then $A[1] = a(\ph,x,0)$.

\medskip

\noindent
{\sc Proof of $(i)$.} By Proposition \ref{proposizione trasporto}, Lemmata \ref{lemma coniugio cal L (0)}, \ref{riduzione ordine 0}, Proposition \ref{proposizione regolarizzazione ordini bassi}, using also Lemma \ref{azione pseudo}, the operator ${\cal E}$ is invertible, ${\cal E}, {\cal E}^{- 1}$ are real and reversibility preserving and,
for $s_0 \leq s \leq S$,   
\begin{equation}\label{invB}
\begin{aligned}
& \| {\cal E}^{\pm 1} h \|_s 
\lesssim_{s,  M} \| h \|_s + \| v \|_{s + \mu_M} \| h \|_{s_0} \,,
\\
& \| ({\cal E}^{\pm 1} - {\rm Id}) h  \|_s \, , \, 
\| ({\cal E}^*  - {\rm Id}) h  \|_s  
\lesssim_{s,  M} N_0^{\tau_0} \e \gamma^{- 1} 
\big( \| v \|_{s_0 + \mu_M} \| h \|_{s + 1} + \| v \|_{s + \mu_M} \| h \|_{s_0 + 1} \big) ,
\end{aligned}
\end{equation}
for some $\mu_M > 0$. 
For any $u \in H^s(\T^{\nu + 3}, \R^3)$, 
we split $u = \Pi_0^\bot u + \Pi_0 u$, 
where $\Pi_0^\bot u \in H^s_0 = H^s_0(\T^{\nu + 3}, \R^3)$ 
and $\Pi_0 u \in H^s_\vphi := H^s(\T^\nu, \R^3)$ 
(recall that $(\Pi_0 u)(\ph)$ is the space average of $u$; 
it is a function of $\ph$, independent of $x$). 
Hence the operator $ {\cal E} $ is decomposed accordingly into 
$$
{\cal E} = 
\begin{pmatrix} 
\Pi_0 {\cal E} \Pi_0  &  \Pi_0 {\cal E} \Pi_0^\bot \vspace{4pt}  \\
 \Pi_0^\bot {\cal E} \Pi_0 &   \Pi_0^\bot {\cal E} \Pi_0^\bot
\end{pmatrix}. 
$$
{\sc Step 1:} \emph{Invertibility of $\Pi_0 {\cal E} \Pi_0$.} 
We write $\Pi_0 {\cal E} \Pi_0 = \Pi_0 + {\cal R}_\mE$
with ${\cal R}_\mE := \Pi_0 ({\cal E} - {\rm Id}) \Pi_0$. 
For any $f \in H^s_\ph(\T^\nu, \R^3)$, 
namely $f = f(\ph)$ independent of $x$, one has 
\begin{equation} \label{quasi scalar}
(\mE f)(\ph,x) = (\mE[1])(\ph,x) \cdot f(\ph)
\end{equation}
where ``$\, \cdot \,$'' is the matrix product of the matrix $\mE[1]$ by the vector $f$. 
In fact, $\mathcal{B} \mathcal{V}$ is a pseudo-differential operator 
of matrix symbol, say, $P(\ph,x,\xi)$; 
therefore $\mathcal{B} \mathcal{V} f = P(\ph,x,0) f(\ph) 
= (\mathcal{B} \mathcal{V})[1](\ph,x) f(\ph)$;
and then 
$\mA \{ (\mathcal{B} \mathcal{V})[1](\ph,x) f(\ph) \} 
= (\mathcal{B} \mathcal{V})[1](\ph,x + \a(\ph,x)) f(\ph)$
because $\mA$ is a change of the $x$ variable.  
Hence 
$$
{\cal R}_\mE \Pi_0 h 
= \Pi_0 ({\cal E} - {\rm Id}) \Pi_0 h 
= (\Pi_0 ({\cal E} - {\rm Id})[1])(\ph) \cdot 
(\Pi_0 h)(\ph)
$$
because $(\Pi_0 h)(\ph)$ is independent of $x$.
Therefore, by the estimate \eqref{invB}, 
using the ansatz \eqref{ansatz} and the product estimate \eqref{p1-pr}, 
one obtains that 
\begin{equation}\label{stima-Pi0-U-Pi 0-a}
\| {\cal R}_\mE \Pi_0 h \|_s \lesssim_{s,  M} N_0^{\tau_0}\e \gamma^{- 1}\Big( \| v \|_{s_0 + \mu_M}\| \Pi_0 h \|_s + \| v \|_{s + \mu_M} \| \Pi_0 h \|_{s_0} \Big), \quad \forall s_0 \leq s \leq S\,. 
\end{equation}
By Neumann series, using the smallness condition \eqref{condizione piccolezza rid trasporto},
one then gets that 
$\Pi_0 {\cal E} \Pi_0 : H^s_\vphi \to H^s_\vphi$ is invertible and 
\begin{equation}\label{stima-Pi0-U-Pi 0-b}
\| (\Pi_0 {\cal E} \Pi_0)^{- 1} \Pi_0 h \|_s \lesssim_{s,  M}  \| \Pi_0 h \|_s + \| v \|_{s + \mu_M} \| \Pi_0 h \|_{s_0}, \quad \forall s_0 \leq s \leq S\,. 
\end{equation}
%
%

\medskip

\noindent
{\sc Step 2:} {\it The operator 
\begin{equation}\label{def:M}
M := \Pi_0^\bot {\cal E} \Pi_0^\bot 
- (\Pi_0^\bot {\cal E} \Pi_0) (\Pi_0 \mE \Pi_0)^{-1} (\Pi_0 {\cal E} \Pi_0^\bot )
\end{equation}
is invertible, and}
\begin{equation}\label{stM}
\| M^{-1} z \|_s 
\lesssim_{s , M} \| z \|_s + \| v \|_{s + \mu_M} \| z \|_{s_0}, \quad \forall s_0 \leq s \leq S\,,
\quad \forall z \in H^s_0 \, .  
\end{equation}
Let us prove the invertibility of $M$. 
Since $\mE : H^s \to H^s$ is invertible, 
given any $z \in H^s_0$ there exists a unique $h \in H^s$ 
such that $\mE h = z$. 
Since $(\Pi_0 \mE \Pi_0) : H^s_\ph \to H^s_\ph$ is invertible, 
one has ${\cal E} h = z $ if and only if
$$
\begin{pmatrix} 
\Pi_0 {\cal E} \Pi_0  &  \Pi_0 {\cal E} \Pi_0^\bot \vspace{4pt}  \\
 \Pi_0^\bot {\cal E} \Pi_0 &   \Pi_0^\bot {\cal E} \Pi_0^\bot
\end{pmatrix}
\begin{pmatrix}
\Pi_0 h   \\
\Pi_0^\bot h   
\end{pmatrix}
= 
\begin{pmatrix}
0 \\ z 
\end{pmatrix}
\quad \Longleftrightarrow \quad M \Pi_0^\bot h = z \, . 
$$
Using \eqref{invB}, 
\[ 
\| M^{-1} z \|_s = \| \Pi_0^\bot h \|_s 
\leq \| h \|_s = \| {\cal E}^{-1} z \|_s 
\lesssim_{s,  M} \| z \|_s + \| v \|_{s + \mu_M} \| z \|_{s_0} \,,
\] 
and \eqref{stM} is proved. 

\medskip

\noindent
{\sc Step 3:} {\it Estimates of $ \Pi_0^\bot {\cal E} \Pi_0 $, $ \Pi_0 {\cal E} \Pi_0^\bot $}. 
By \eqref{quasi scalar}, since $(\Pi_0 h)(\ph)$ is independent of $x$, 
one has 
\[
\Pi_0^\bot {\cal E} \Pi_0 h 
= \Pi_0^\bot ({\cal E} - {\rm Id}) \Pi_0 h 
= \big( \Pi_0^\bot ({\cal E} - {\rm Id})[1] \big)(\ph,x) \cdot (\Pi_0 h)(\ph). 
\]
Also, by an explicit calculation, 
\[
\Pi_0 {\cal E} \Pi_0^\bot h 
= \Pi_0 ({\cal E} - {\rm Id}) \Pi_0^\bot h 
= \Pi_0 (G \cdot \Pi_0^\bot h)
\]
where ``$\, \cdot \,$'' is the ``row by column'' product, 
and $G = G(\ph,x)$ is the matrix with entries
\begin{equation} \label{def G}
G_{j,k}(\ph,x) 
= (\mE_{j,k}  - \mathrm{Id}_{j,k})^*[1] (\ph,x)
= (\mE^* - \mathrm{Id})_{k,j}[1] (\ph,x),
\quad \ j,k = 1,2,3.
\end{equation}
The estimate\eqref{invB}, the ansatz \eqref{ansatz}, 
and the product estimate \eqref{p1-pr} imply that 
\begin{equation}\label{st1}
\| \Pi_0^\bot {\cal E} \Pi_0 h \|_s\,,\, \| \Pi_0 {\cal E} \Pi_0^\bot h \|_s \lesssim_{s,  M} N_0^{\tau_0} \e \gamma^{- 1}\Big( \| v \|_{s_0 + \mu_M}\| h \|_s + \| v \|_{s + \mu_M} \| h \|_{s_0} \Big), \quad \forall s_0 \leq s \leq S,
\end{equation}
for some $\mu_M > 0$ large enough. 

%

\medskip

\noindent
{\sc Step 4:} {\it Invertibility of $ \Pi_0^\bot {\cal E} \Pi_0^\bot $}. 
By \eqref{def:M} we have 
$$
\begin{aligned}
& \Pi_0^\bot {\cal E} \Pi_0^\bot = M +  (\Pi_0^\bot {\cal E} \Pi_0) (\Pi_0 {\cal E} \Pi_0)^{-1}(\Pi_0 {\cal E} \Pi_0^\bot) = M \big( \Pi_0^\bot +  {\cal S} \big)\,, \\
& {\cal S} := M^{-1} (\Pi_0^\bot {\cal E} \Pi_0) (\Pi_0 {\cal E} \Pi_0)^{-1}(\Pi_0 {\cal E} \Pi_0^\bot)  \, .
\end{aligned}
$$ 
By \eqref{stima-Pi0-U-Pi 0-b}, \eqref{stM}, \eqref{st1}, we deduce that 
\begin{align*}
\| {\cal S} h \|_s 
& \lesssim_{s, M} N_0^{\tau_0} \e \gamma^{- 1}\Big( \| v \|_{s_0 + \mu_M}\| h \|_s  + \| v \|_{s + \mu_M} \| h \|_{s_0} \Big) \,, \quad \forall s_0 \leq s \leq S\,. 
\end{align*}
Therefore, by Neumann series, 
$ \Pi_0^\bot {\cal E} \Pi_0^\bot : H^s_0 \to H^s_0$, $s_0 \leq s \leq S$ is invertible and it satisfies \eqref{stima cal U bot}.

\medskip

\noindent
{\sc Proof of $(ii)$.} By Proposition \ref{proposizione trasporto}, Lemmata \ref{lemma coniugio cal L (0)}, \ref{riduzione ordine 0}, Proposition \ref{proposizione regolarizzazione ordini bassi} and recalling \eqref{operatore linearizzato}, \eqref{def cal L (0)}, using that $\Pi_0 {\cal E} \Pi_0^\bot = \Pi_0 ({\cal E} - {\rm Id}) \Pi_0^\bot$ and $\Pi_0^\bot {\cal E} \Pi_0 = \Pi_0^\bot ({\cal E} - {\rm Id}) \Pi_0$, one obtains 
\begin{equation}\label{gatto 0}
\begin{aligned}
{\cal L} {\cal E}_\bot & = \Pi_0^\bot {\cal L}^{(0)} {\cal E} \Pi_0^\bot - \Pi_0^\bot {\cal L}^{(0)} \Pi_0 ({\cal E} - {\rm Id}) \Pi_0^\bot  \\
& = \Pi_0^\bot {\cal E} {\cal L}^{(3)}\Pi_0^\bot - \Pi_0^\bot {\cal L}^{(0)} \Pi_0 ({\cal E} - {\rm Id}) \Pi_0^\bot \\
& = {\cal E}_\bot (\Pi_0^\bot {\cal L}^{(3)} \Pi_0^\bot) +  \Pi_0^\bot ({\cal E} - {\rm Id}) \Pi_0 {\cal L}^{(3)}\Pi_0^\bot - \Pi_0^\bot {\cal L}^{(0)} \Pi_0 ({\cal E} - {\rm Id}) \Pi_0^\bot\,.
\end{aligned}
\end{equation}
Hence, recalling \eqref{def cal L (3)}, one obtains the conjugation \eqref{coniugazione-pre-riducibilita} with 
\begin{equation}\label{def cal N0 cal R0 nella dim}
\begin{aligned}
{\cal Q}_0 & := \Pi_0^\bot {\cal Q} \Pi_0^\bot \,, 
\quad \ 
{\cal R}_0 := \Pi_0^\bot {\cal R}^{(3)} \Pi_0^\bot 
+ A_1 - A_2, 
\\
A_1 & := {\cal E}_\bot^{- 1}\Pi_0^\bot ({\cal E} - {\rm Id}) \Pi_0 {\cal L}^{(3)}\Pi_0^\bot, 
\quad \  
A_2 := {\cal E}_\bot^{- 1}\Pi_0^\bot {\cal L}^{(0)} \Pi_0 ({\cal E} - {\rm Id}) \Pi_0^\bot\,. 
\end{aligned}
\end{equation}
Since ${\cal Q} = {\rm Op}(Q)$ has a symbol independent of $(\vphi, x)$, 
the operator ${\cal Q}_0  = \Pi_0^\bot {\cal Q} \Pi_0^\bot 
= {\rm diag}_{j \in \Z^3 \setminus \{ 0 \}} ({\cal Q}_0)_j^j $ 
is a $3 \times 3$ block diagonal operator 
(see Definition \ref{def block-diagonal op})
with $({\cal Q}_0)_j^j = ({\cal Q})_j^j \in \Mat_{3 \times 3}(\C)$ 
for any $j \in \Z^3 \setminus \{ 0 \}$. 
The estimate \eqref{stima-blocch-i33-pre-rid} holds by \eqref{stima cal Z cal R (3)}, by Remark \ref{rem:matrix norm = pseudo-diff norm} and Lemma \ref{proprieta standard norma decay}-$(v)$.

It only remains to prove the estimate \eqref{stima cal R0 pre rid}. First, we estimate the term $\Pi_0^\bot {\cal R}^{(3)} \Pi_0^\bot$ in \eqref{def cal N0 cal R0 nella dim}. 
For any $s_0 \leq s \leq S$, 
using Lemma \ref{norma pseudo norma dec} 
and \eqref{stima cal Z cal R (3)},
one has 
\begin{equation}\label{gnocchi 0}
\begin{aligned}
|\Pi_0^\bot {\cal R}^{(3)} \Pi_0^\bot \langle D \rangle^M|_s^{k_0, \gamma} 
\lesssim |{\cal R}^{(3)}  \langle D \rangle^M|_s^{k_0, \gamma} 
& 
\lesssim | {\cal R}^{(3)} \langle D \rangle^M|_{0, s, 0}^{k_0, \gamma}  
\lesssim | {\cal R}^{(3)} |_{- M, s, 0}^{k_0, \gamma} 
\lesssim_{s, M}
\e  \| v \|_{s + \mu_M}^{k_0, \gamma}\,. 
\end{aligned}
\end{equation}
Since $\Pi_0 \ompaph \Pi_0^\bot$, 
$\Pi_0 \zeta \cdot \grad$ 
and $\Pi_0 \mQ \Pi_0^\bot$
are all zero,
recalling \eqref{def cal L (3)} one has 
$\Pi_0 \mL^{(3)} \Pi_0^\bot 
= \Pi_0 \mR^{(3)} \Pi_0^\bot$. 
By \eqref{quasi scalar}, we get
\[ 
A_1 \langle D \rangle^M h 
= {\cal E}_\bot^{- 1} \Pi_0^\bot ({\cal E} - {\rm Id}) \Pi_0 {\cal L}^{(3)} \Pi_0^\bot 
\langle D \rangle^M h 
= \big( {\cal E}_\bot^{- 1}\Pi_0^\bot {\cal E} [1] \big)(\ph,x) 
\cdot \big( \Pi_0 \mR^{(3)} \Pi_0^\bot \langle D \rangle^M h \big)(\ph).
\] 
Note that $\mR^{(3)} \langle D \rangle^M$ is estimated in \eqref{gnocchi 0}.
Also, 
\[
\Pi_0 (\mE - \Id) \Pi_0^\bot \langle D \rangle^M h 
= \Pi_0 ( G \cdot \Pi_0^\bot \langle D \rangle^M h )
= \Pi_0 \big[ (\langle D \rangle^M G) \cdot \Pi_0^\bot h \big]
\]
with $G$ defined in \eqref{def G}. 
Recalling \eqref{operatore linearizzato}, \eqref{definizione cal R}, 
since $\mU \Pi_0 = 0$, one has 
$\Pi_0^\bot \mL^{(0)} \Pi_0 = \e M_\mU(\ph,x) \Pi_0$. 
Hence
\[ 
A_2 \langle D \rangle^M h 
= {\cal E}_\bot^{- 1}\Pi_0^\bot {\cal L}^{(0)} \Pi_0 ({\cal E} - {\rm Id}) \Pi_0^\bot 
\langle D \rangle^M h 
= \e ( {\cal E}_\bot^{- 1} M_\mU) \cdot 
\Pi_0 \big[ (\langle D \rangle^M G) \cdot \Pi_0^\bot h \big].
\] 
Applying Lemma \ref{norma pseudo norma dec} 
(to bound $| \ |_s$ with $| \ |_{0,s,0}$),
Lemmata \ref{azione pseudo} and \ref{lemma stime Ck parametri}-$(i)$
(to estimate composition of operators and their action on functions), 
bounds \eqref{pseudo norm moltiplicazione}
(for the $| \ |_{0,s,0}$ norm 
of any multiplicative matrix),
\eqref{gnocchi 0} 
(for $|\mR^{(3)} \langle D \rangle^M|_{0,s,0}$),
\eqref{stima M cal U} 
(for $\| M_\mU \|_s$),
\eqref{invB} (for $\mE, \mE^*$), 
\eqref{stima cal U bot} (for $\mE_\bot^{-1}$), 
together with the assumptions 
\eqref{ansatz}, \eqref{condizione piccolezza rid trasporto}
and the trivial estimates $| \Pi_0 |_{0,s,0}$, $| \Pi_0^\bot |_{0,s,0} \leq 1$,
we obtain that 
\[
| A_1 \langle D \rangle^M |_s^{k_0, \gamma}\, , 
| A_2 \langle D \rangle^M |_s^{k_0, \gamma}
\lesssim_{s, M} \e \| v \|_{s + \mu_M}^{k_0, \gamma}\,.
\]
This proves \eqref{stima cal R0 pre rid}. 
The proof of the item $(iii)$ follows by similar arguments. 
\end{proof}

\section{Reducibility and inversion} \label{sezione riducibilita a blocchi}
In this section we perform a reducibility scheme for the operator ${\cal L}_0$ given in Lemma \ref{lemma invertibilita cal U bot}. 
We recall, as observed in Section \ref{sezione matrici norme}, 
that any linear operator can be described both by a matrix representation 
and by a pseudo-differential representation with symbol, see  
\eqref{matriciale 1}-\eqref{equivalent symbol con phi}. 
In this section we consider transformations of $H^s_0$,
the space of functions in $H^s$ having zero average in the space variable $x$
(see 
\eqref{def sobolev}).
 
We introduce some further notations. 
Given a matrix $A \in \Mat_{3 \times 3}(\C)$, we define the linear operators 
\begin{equation}\label{definizione mult left right}
\begin{aligned}
& M_L(A) : \Mat_{3 \times 3}(\C) \to \Mat_{3 \times 3}(\C), \quad B \mapsto AB, \\
& M_R(A) : \Mat_{3 \times 3}(\C) \to \Mat_{3 \times 3}(\C), \quad B \mapsto BA.
\end{aligned}
\end{equation}
Since $\| AB \|_{\HS}$, $\| BA \|_{\HS}$ $\leq \| A \|_{\HS} \| B \|_{\HS}$, 
one has 
\begin{equation}\label{MLAMRA elementry}
\| M_L(A)\|_\op \,, \, \| M_R(A)\|_\op \leq \| A \|_{\HS}
\end{equation}
%
where $\| \ \|_\op $ denotes the standard operator norm 
(
$\| \ \|_{\HS}$ is defined in 
\eqref{def Euclidean norm}).
Given $\tau, k_0, N_0 > 0$, we fix the constants 
\begin{equation}\label{definizione alpha beta}
\begin{aligned}
& \tau_0 := (k_0 + 1) \tau + k_0 ,  \quad 
M := [2 \tau_0] + 1,  \quad 
\mathtt a := \max \Big\{6 \tau_0 + 1, \frac32(\tau + 2 \tau^2) + 1\Big\} \,, \quad 
\mathtt b := \mathtt a + 1 \,,\\
& \mu(\mathtt b) := \mu_M + \mathtt b\,, \quad   
N_{- 1} := 1, \quad 
N_n := N_0^{\chi^n}, \quad 
n \geq 0, \quad 
\chi := 3/2, 
\end{aligned}
\end{equation}
where $[2\tau_0]$ is the integer part of $2\tau_0$
and $\mu_M$ is the constant appearing 
in Lemma \ref{lemma invertibilita cal U bot}. 
Note that Lemma \ref{lemma invertibilita cal U bot} holds for any $M \in \N$; 
in \eqref{definizione alpha beta} we fix its value so that $M \geq 2\tau_0$. 

\begin{remark}[\bf Choice of the constants]
The conditions $\mathtt a \geq 6 \tau_0 +1$, $\mathtt b \geq \mathtt a + 1$ in \eqref{definizione alpha beta} are used to show the convergence in the iterative estimates \eqref{stime R n + 1 R n}. We also require that $\mathtt a \geq \frac32(\tau + 2 \tau^2) + 1$, for the measure estimates of Section \ref{sezione stime di misura}, see Lemma \ref{inclusione risonanti v n n - 1} 
\end{remark}

\begin{proposition}[\bf Reducibility]\label{prop riducibilita}
Let $\gamma \in (0, 1)$, $\tau > 0$, $S > s_0$ 
and assume \eqref{ansatz} with $\mu_0 \geq s_0 +  \mu(\mathtt b)$. 
Then there exists $N_0 = N_0(S, k_0, \tau) > 0$, $\tau_2 = \tau_2(k_0, \tau, \nu) > 0$ large enough and $\delta = \delta (S, k_0, \tau) \in (0, 1)$ small enough such that if 
\begin{equation}\label{KAM smallness condition}
N_0^{\tau_2}\e \gamma^{-1} 
\leq \delta, 
\end{equation} 
then for any integer $n \geq 0$ the following statements hold.

\smallskip
\noindent 
${\bf (S1)_n}$ There exists a real and reversible operator 
$$
{\cal L}_n := \omega \cdot \partial_\vphi + {\cal N}_n +   {\cal R}_n : H^{s + 1}_0 \to H^s_0, \quad \forall s_0 \leq s \leq S
$$
defined for any $\lm = (\omega, \zeta) \in \R^{\nu + 3}$ and $k_0$ times differentiable 
in $\lm = (\omega, \zeta) \in \R^{\nu+3}$ with the following properties. 
The operator ${\cal N}_n$ is a $3 \times 3$ block-diagonal operator 
(see Definition \ref{def block-diagonal op})
defined by 
\begin{equation} \label{cal Nn rid}
\begin{aligned}
& {\cal N}_n := \zeta \cdot \nabla + {\cal Q}_n \,, \quad  
{\cal Q}_n = \diag_{j \in \Z^3 \setminus \{ 0 \}} ({\cal Q}_n)_j^j\,, \quad \ 
(\mQ_n)_j^j \in \Mat_{3 \times 3}(\C), 
\\
& \| ({\cal Q}_n)_j^j \|_{\HS}^{k_0, \gamma} \lesssim \e |j|^{- 1} \| v \|_{s_0 + \mu(\mathtt b)}^{k_0, \gamma}\,, \quad \|({\cal Q}_n)_j^j - ({\cal Q}_0)_j^j \|^{k_0, \gamma}_{\HS} \lesssim
\e  |j|^{- M} \| v \|_{s_0 + \mu(\mathtt b)}^{k_0, \gamma}, \quad \forall j \in \Z^3 \setminus \{ 0 \}\,.
\end{aligned}
\end{equation}
If $n \geq 1$, for all $j \in \Z^3 \setminus \{ 0 \}$ one has 
\begin{equation}\label{cal Nn - N n - 1}
\| ({\cal Q}_n)_j^j - ({\cal Q}_{n - 1})_j^j \|^{k_0, \gamma}_{\HS}
\lesssim_{k_0} \e N_{n - 2}^{- \mathtt a} |j|^{- M} \| v \|_{s_0 + \mu(\mathtt b)}^{k_0, \gamma} \,. 
\end{equation}
If $n \geq 0$, the operator ${\cal R}_n$ satisfies, 
for any $s_0 \leq s \leq S$, the estimates 
\begin{equation}\label{stime cal Rn rid}
\begin{aligned}
& |{\cal R}_n \langle D \rangle^M |_s^{k_0, \gamma} 
 \leq C_*(s) N_{n - 1}^{- \mathtt a}\e 
 \| v \|_{s + \mu(\mathtt b)}^{k_0, \gamma} ,  \quad 
|{\cal R}_n \langle D \rangle^M |_{s + \mathtt b}^{k_0, \gamma}  \leq C_*(s) N_{n - 1}\e  \| v \|_{s + \mu(\mathtt b)}^{k_0, \gamma} 
\end{aligned}
\end{equation}
for some constant $C_* (s) = C_*(s, k_0, \tau) > 0$ large enough. 
For any $\ell \in \Z^\nu$, $j, j' \in \Z^3 \setminus \{ 0 \}$, 
define the linear operator $L_n (\ell, j, j') \equiv L_n(\ell, j, j'; \lambda, v(\lambda)) \equiv L_n(\ell, j, j'; \lambda)$ 
$:$ $\Mat_{3 \times 3}(\C) \to \Mat_{3 \times 3}(\C)$ by
\begin{equation}\label{operatore blocchi}
L_n (\ell, j, j')  := \ii \omega \cdot \ell \, {\rm Id} + M_L\big( ({\cal N}_n)_j^j\big) - M_R\big(({\cal N}_n)_{j'}^{j'} \big).
\end{equation}
Note that, by \eqref{cal Nn rid}, 
it is $(\mN_n)_j^j = \ii (\zeta \cdot j) I + (\mQ_n)_j^j 
\in \Mat_{3 \times 3}(\C)$, where $I \in \Mat_{3 \times 3}$ is the identity matrix.


If $n=0$, we define $\Omega_0^\gamma := DC(\gamma, \tau)$; 
if $n \geq 1$, we define
\begin{equation}\label{insiemi di cantor rid}
\begin{aligned}
\Omega_n^\gamma := \Big\{ \lambda = (\omega, \zeta) \in \Omega_{n - 1}^\gamma 
& : L_{n - 1}(\ell, j, j') 
\text{ is invertible and } 
\| L_{n - 1}(\ell, j, j')^{- 1}\|_\op 
\leq \frac{\langle \ell \rangle^\tau |j|^\tau |j'|^\tau}{\gamma} \\
& \ \ 
\forall \ell \in \Z^\nu, \  
j,j' \in \Z^3 \setminus \{ 0 \}, \ \ 
(\ell, j, j') \neq (0, j, j), \ \ 
|\ell|, |j-j'| \leq N_{n-1} \Big\}.  
\end{aligned}
\end{equation}

If $n \geq 1$, there exists an invertible, real and reversibility preserving map 
$\Phi_{n -1} = {\rm Id} + \Psi_{n - 1}$, $k_0$ times differentiable in 
$\lambda = (\omega, \zeta) \in \R^{\nu + 3}$, such that, 
for any $\lambda \in \Omega_n^\gamma$, 
\begin{equation}\label{coniugazione rid}
{\cal L}_n = \Phi_{n - 1}^{- 1} {\cal L}_{n - 1} \Phi_{n - 1}\,.
\end{equation}
Moreover, for any $s_0 \leq s \leq S$, the map $\Psi_{n - 1}$ satisfies the estimates
\begin{equation}\label{stime Psi n rid}
\begin{aligned}
|\Psi_{n - 1}|_s^{k_0, \gamma}, |\langle D \rangle^{- M}\Psi_{n - 1} \langle D \rangle^{ M} |_s^{k_0, \gamma} 
& \lesssim_{s} \e \gamma^{- 1} N_{n - 1}^{\tau_0} N_{n - 2}^{- \mathtt a} \| v\|_{s + \mu(\mathtt b)}^{k_0, \gamma}\,, \\
|\langle D \rangle^{ - M}\Psi_{n - 1} \langle D \rangle^{M}|_{s + \mathtt b}^{k_0, \gamma} & \lesssim_{s} \e \gamma^{- 1} N_{n - 1}^{\tau_0}  N_{n - 2}  \| v \|_{s + \mu(\mathtt b)}^{k_0, \gamma}\,. 
\end{aligned}
\end{equation}
${\bf (S2)_n}$ 
Assume that $v_1, v_2$ satisfy \eqref{ansatz} with $\mu_0 \geq s_0 + \mu(\mathtt b)$. 
Then for any $\lambda = (\omega, \zeta) 
\in \Omega_n^{\gamma_1}(v_1) \cap \Omega_n^{\gamma_2}(v_2)$ 
with $\gamma_1, \gamma_2 \in [\gamma/2\,,\, 2 \gamma]$, 
the following estimates hold: 
\begin{equation}\label{stime delta 12 iterazione rid}
\begin{aligned}
& |\Delta_{12} {\cal R}_n \langle D \rangle^M |_{s_0} \lesssim_{} \e N_{n - 1}^{- \mathtt a} \| v_1 - v_2 \|_{s_0 + \mu(\mathtt b)}, \quad |\Delta_{12} {\cal R}_n \langle D \rangle^M|_{s_0 + \mathtt b} \lesssim_{} \e N_{n - 1} \| v_1 - v_2 \|_{s_0 + \mu(\mathtt b)}\,.
\end{aligned}
\end{equation}
Furthermore, if $n \geq 1$, for any $j \in \Z^3 \setminus \{ 0 \}$ one has
\begin{equation}\label{stime Delta 12 cal Nn}
\begin{aligned}
& \| \Delta_{12} ({\cal Q}_n - {\cal Q}_{n - 1})_j^j \|_{\HS}^{k_0, \gamma} \lesssim \e |j|^{- M} N_{n - 2}^{- \mathtt a} \| v_1 - v_2\|_{s_0 + \mu(\mathtt b)}\,, \\
&  \| \Delta_{12} ({\cal Q}_n)_j^j \|_{\HS}^{k_0, \gamma} \lesssim \e |j|^{- 1}  \| v_1 - v_2\|_{s_0 + \mu(\mathtt b)}\,.
\end{aligned}
\end{equation}
 ${\bf (S3)_n}$ Let $v_1, v_2$ as in ${\bf (S2)_n}$ and $0 < \rho \leq \gamma/2$. 
Then 
$$
N_{n - 1}^{\tau_0} \e \| v_1 - v_2 \|_{s_0 + \mu(\mathtt b)} 
\leq \rho 
\ \ \Longrightarrow \ \ 
\Omega_n^\gamma(v_1) \subseteq \Omega_n^{\gamma - \rho}(v_2). 
$$
\end{proposition}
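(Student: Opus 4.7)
The proof goes by induction on $n$, following a standard quadratic KAM reducibility scheme adapted to the $3\times 3$ matrix-valued, non-self-adjoint setting. For $n=0$, take $\mathcal{L}_0, \mathcal{Q}_0, \mathcal{R}_0$ as in Lemma \ref{lemma invertibilita cal U bot}: the bound $\|(\mathcal{Q}_0)_j^j\|_{\HS}^{k_0,\gamma} \lesssim \e |j|^{-1} \|v\|_{s_0+\mu_M}^{k_0,\gamma}$ is exactly \eqref{stima-blocch-i33-pre-rid}, \eqref{stime cal Rn rid} for $n=0$ follows from \eqref{stima cal R0 pre rid} (recall $N_{-1}=1$), and \eqref{cal Nn - N n - 1}, \eqref{coniugazione rid} are vacuous.

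For the inductive step, assuming $(S1)_n$, we seek $\Phi_n = \mathrm{Id}+\Psi_n$ with $\Psi_n$ of order $0$, and expand
\[
\mathcal{L}_n \Phi_n - \Phi_n (\omega \cdot \partial_\vphi + \mathcal{N}_n) = [\omega \cdot \partial_\vphi + \mathcal{N}_n, \Psi_n] + \mathcal{R}_n + \mathcal{R}_n \Psi_n.
\]
Let $Z_n$ denote the $3\times 3$ block-diagonal, $\vphi$-independent truncation of $\mathcal{R}_n$, namely $(Z_n)_j^j := \widehat{\mathcal{R}}_n(0)_j^j$ and zero otherwise. We solve the homological equation $[\omega\cdot\partial_\vphi + \mathcal{N}_n, \Psi_n] + \Pi_{N_n}\mathcal{R}_n - Z_n = 0$. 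In matrix form, for each $(\ell, j, j') \neq (0, j, j)$ with $|\ell|, |j-j'| \leq N_n$, this reduces to $L_n(\ell,j,j')\,\widehat{\Psi}_n(\ell)_j^{j'} = -\widehat{\mathcal{R}}_n(\ell)_j^{j'}$. On the Cantor set $\Omega_{n+1}^\gamma$, $L_n(\ell,j,j')$ is invertible with the prescribed bound, so $\widehat{\Psi}_n(\ell)_j^{j'}$ is uniquely determined there; we extend to $\lambda\in\R^{\nu+3}$ using smooth cutoffs as in \eqref{def ompaph-1 ext}. Differentiating the homological equation up to $k_0$ times in $\lambda$, using the Leibniz rule and the inverse bound $\|L_n^{-1}\|_{\op} \leq \langle\ell\rangle^\tau |j|^\tau |j'|^\tau/\gamma$, one obtains $|\Psi_n|_s^{k_0,\gamma} \lesssim \gamma^{-1} N_n^{\tau_0} |\mathcal{R}_n|_s^{k_0,\gamma}$, with $\tau_0 = (k_0+1)\tau + k_0$ accounting for the loss of three spatial plus one temporal derivatives together with $k_0$ parameter derivatives. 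Since $\mathcal{N}_n$ is block-diagonal it commutes with $\langle D\rangle^M$, so applying the same argument to $\langle D\rangle^{-M}\Psi_n\langle D\rangle^M$ yields \eqref{stime Psi n rid} with $|\mathcal{R}_n \langle D\rangle^M|_s^{k_0,\gamma}$ on the right-hand side.

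Set $\mathcal{N}_{n+1} := \mathcal{N}_n + Z_n$ and $\mathcal{L}_{n+1} := \Phi_n^{-1}\mathcal{L}_n\Phi_n$; a direct computation using the homological equation gives
\[
\mathcal{R}_{n+1} = \Phi_n^{-1}\big(\Pi_{N_n}^\perp \mathcal{R}_n + \mathcal{R}_n\Psi_n - \Psi_n Z_n\big) + (\Phi_n^{-1}-\mathrm{Id})Z_n.
\]
By Lemma \ref{proprieta standard norma decay}-$(v)$ applied to $\mathcal{R}_n\langle D\rangle^M$, the block $(Z_n)_j^j$ satisfies $\|(Z_n)_j^j \langle j\rangle^M\|_{\HS}^{k_0,\gamma} \lesssim |\mathcal{R}_n \langle D\rangle^M|_{s_0}^{k_0,\gamma} \lesssim \e N_{n-1}^{-\mathfrak{a}}\|v\|_{s_0+\mu(\mathfrak{b})}^{k_0,\gamma}$, proving \eqref{cal Nn - N n - 1} at step $n+1$; the uniform bound in \eqref{cal Nn rid} is preserved by telescoping, since $\sum_n N_{n-1}^{-\mathfrak{a}}<\infty$. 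For $\mathcal{R}_{n+1}$, Lemma \ref{lemma proiettori decadimento} gives the smoothing gain on $\Pi_{N_n}^\perp$, while Lemma \ref{proprieta standard norma decay}-$(ii),(iv)$ handles the quadratic contributions, leading to
\[
|\mathcal{R}_{n+1}\langle D\rangle^M|_s^{k_0,\gamma} \lesssim_s N_n^{-\mathfrak{b}} |\mathcal{R}_n\langle D\rangle^M|_{s+\mathfrak{b}}^{k_0,\gamma} + N_n^{\tau_0}\gamma^{-1} |\mathcal{R}_n\langle D\rangle^M|_{s_0}^{k_0,\gamma}\,|\mathcal{R}_n\langle D\rangle^M|_s^{k_0,\gamma}.
\]
The constants $\mathfrak{a}\geq 6\tau_0+1$, $\mathfrak{b}\geq\mathfrak{a}+1$ in \eqref{definizione alpha beta}, combined with the smallness \eqref{KAM smallness condition} for $\tau_2$ sufficiently large, are precisely tailored to propagate \eqref{stime cal Rn rid} at step $n+1$. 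Reality and reversibility of $\Phi_n$ follow from the corresponding symmetries of $\mathcal{R}_n$ and $L_n(\ell,j,j')$ (as encoded by the symmetry of $\widehat{\mathcal{R}_n}(\ell)_j^{j'}$ under $(\ell,j,j')\mapsto(-\ell,-j,-j')$), so $\mathcal{L}_{n+1}$ is again real and reversible.

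Items $(S2)_n$ and $(S3)_n$ follow by applying $\Delta_{12}$ to each of the identities above and exploiting Lipschitz dependence of the ingredients on $v$; for $(S3)_n$, the resolvent identity for $L_n(\ell,j,j';v)$ combined with the Lipschitz bound $\|\Delta_{12} L_n(\ell,j,j')\|_{\op} \lesssim \e\|v_1-v_2\|_{s_0+\mu(\mathfrak{b})}$ transfers the upper bound on $\|L_n^{-1}\|_{\op}$ from $v_1$ to $v_2$ with the $\rho$-loss absorbed by the smallness hypothesis. The main technical obstacle I anticipate is the consistent management of $\langle D\rangle^{\pm M}$-conjugated norms throughout the iteration: because $\mathcal{R}_n$ is controlled only after right-multiplication by $\langle D\rangle^M$, every product and commutator must be rearranged so that the weight sits appropriately, which is possible precisely because $\mathcal{N}_n$ is block-diagonal in the space variable and therefore commutes with $\langle D\rangle^M$. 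A related subtlety, which is why the definition of $\Omega_{n+1}^\gamma$ in \eqref{insiemi di cantor rid} imposes only a polynomial upper bound on $\|L_n^{-1}\|_{\op}$ rather than a spectral-gap-type condition, is the non-self-adjointness of the blocks $(\mathcal{Q}_n)_j^j$ inherited from the non-standard Hamiltonian structure of the Euler equation.
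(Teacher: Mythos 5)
Your overall scheme (quadratic KAM iteration, block homological equation $L_n(\ell,j,j')\widehat\Psi_n(\ell)_j^{j'}=-\widehat\mR_n(\ell)_j^{j'}$, normal form correction $\mZ_n=\diag_j\widehat\mR_n(0)_j^j$, Cantor sets defined through bounds on $\|L_n^{-1}\|_\op$) is the same as the paper's, but two steps as you describe them would not go through. First, the extension of $\Psi_n$ to all $\lm\in\R^{\nu+3}$ ``using smooth cutoffs as in \eqref{def ompaph-1 ext}'' is not available here: in the scalar transport case the inverse is $1/(\ii(\om\cdot\ell+\mathtt m\cdot j))$ and a cutoff in the scalar divisor works, but $L_n(\ell,j,j';\lm)$ is a non-self-adjoint $9\times 9$ matrix whose invertibility and inverse bound are not governed by a single scalar divisor, so there is no function of $\lm$ to insert a cutoff into. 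The paper instead performs a Whitney extension of $\lm\mapsto L_n(\ell,j,j';\lm)^{-1}$ from the closed set where $\|L_n^{-1}\|_\op\leq\g^{-1}\langle\ell\rangle^\tau|j|^\tau|j'|^\tau$, keeping the $k_0$ parameter derivatives with weights $\g^{|\b|}$; this is what makes $\Psi_n$, $\mR_{n+1}$ $k_0$ times differentiable on the whole parameter space, which is needed later for the measure estimates. Relatedly, your bound $|\Psi_n|_s^{k_0,\g}\lesssim\g^{-1}N_n^{\tau_0}|\mR_n|_s^{k_0,\g}$ with the loss attributed to ``three spatial plus one temporal derivatives'' skips the real difficulty: the inverse bound loses $\langle\ell\rangle^{\tau_0}|j|^{\tau_0}|j'|^{\tau_0}$, which is unbounded in $(j,j')$; it is absorbed only by working with $\langle D\rangle^{-M}\Psi_n\langle D\rangle^{M}$ (and with $\mR_n\langle D\rangle^M$ on the right), using $|j-j'|\leq N_n$ and $M\geq 2\tau_0$, and the resulting loss is $N_n^{2\tau_0}$, not $N_n^{\tau_0}$ (harmless for convergence since $\mathtt a\geq 6\tau_0+1$, but your stated mechanism does not produce it).

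Second, your one-line argument for ${\bf (S3)_n}$ (resolvent identity plus the Lipschitz bound $\|\Delta_{12}L_n\|_\op\lesssim\e\|v_1-v_2\|_{s_0+\mu(\mathtt b)}$) fails for large $|j|,|j'|$: on $\Om_n^\g(v_1)$ one only knows $\|L_n(v_1)^{-1}\|_\op\leq\g^{-1}\langle\ell\rangle^\tau|j|^\tau|j'|^\tau$, while the block differences decay only like $\min\{|j|,|j'|\}^{-1}$, so the product $\|L_n(v_1)^{-1}\Delta_{12}L_n\|_\op$ is not made small by the hypothesis $N_{n-1}^{\tau_0}\e\|v_1-v_2\|_{s_0+\mu(\mathtt b)}\leq\rho$ uniformly in $(j,j')$. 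The paper needs the case distinction in its proof of ${\bf (S3)_{n+1}}$: when $\min\{|j|,|j'|\}\geq N_n^\tau$ one does not compare with $v_1$ at all, but uses $\lm\in DC(\g,\t)$ and a Neumann series around $\Omega(\ell,j,j')=\ii(\om\cdot\ell+\zeta\cdot(j-j'))\Id$ (this is where the constant ${\frak C}_0$ must be taken large), so that the non-resonance condition holds automatically; only when $\min\{|j|,|j'|\}\leq N_n^\tau$, where $\langle\ell\rangle^\tau|j|^\tau|j'|^\tau\lesssim N_n^{\tau+2\tau^2}$, does your Lipschitz argument apply, and this is what forces $\mathtt a\geq\frac32(\tau+2\tau^2)+1$. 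Finally, a small algebra slip: with the homological equation you wrote, the correct remainder is $\mR_{n+1}=(\Phi_n^{-1}-\Id)\mZ_n+\Phi_n^{-1}\big(\Pi_{N_n}^\bot\mR_n+\mR_n\Psi_n\big)$; your formula carries an extra $-\Phi_n^{-1}\Psi_n\mZ_n=(\Phi_n^{-1}-\Id)\mZ_n$, i.e.\ that term is counted twice, so the conjugation identity would not hold exactly as stated (the size of the term is harmless, but the formula should be corrected).
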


\begin{proof}
{\sc Proof of ${\bf (S1)_0}$-${\bf (S3)_0}$.} 
The claimed properties follow directly 
from Lemma \ref{lemma invertibilita cal U bot}
and from the definition of $\Om_0^\g$. 

\smallskip

\noindent
{\sc Proof of ${\bf (S1)_{n + 1}}, {\bf (S2)_{n + 1}}$.} 
We only prove ${\bf (S1)_{n + 1}}$; the properties in ${\bf (S2)_{n + 1}}$ follow similarly.

Assume the the claimed properties ${\bf (S1)_{n}}$ holds for some $n \geq 0$ and let us prove them at the step $n + 1$. 
Let $\Phi_n = {\rm Id} + \Psi_n$ where $\Psi_n$ is an operator 
to determine. 
We compute 
\begin{equation}\label{primo coniugio Ln Psin}
\begin{aligned}
{\cal L}_n \Phi_n & = \Phi_n ( \omega \cdot \partial_\vphi + {\cal N}_n ) 
+ \omega \cdot \partial_\vphi \Psi_n + [{\cal N}_n, \Psi_n] + \Pi_{N_n} {\cal R}_n 
+ \Pi_{N_n}^\bot {\cal R}_n + {\cal R}_n \Psi_n
\end{aligned}
\end{equation}
where $\mN_n := \zeta \cdot \grad \,{\rm Id} + \mQ_n$; 
recall definition \eqref{def proiettore operatori matrici} 
of the projectors $\Pi_N$, $\Pi_N^\bot$. 
Our purpose is to find a map $\Psi_n$ solving the {\it homological equation} 
\begin{equation}\label{equazione omologica KAM}
\omega \cdot \partial_\vphi \Psi_n + [{\cal N}_n, \Psi_n] + \Pi_{N_n} {\cal R}_n = {\cal Z}_n
\end{equation}
where ${\cal Z}_n$ is the $3 \times 3$ block-diagonal operator
\begin{equation}\label{def cal Zn}
{\cal Z}_n := {\rm diag}_{j \in \Z^3 \setminus \{ 0 \}} \widehat{\cal R}_n(0)_j^j\,. 
\end{equation}

\begin{lemma}\label{Lemma eq omologica riducibilita KAM}
There exists a reversibility-preserving operator $\Psi_n$ defined for all the values of the parameters $\lambda = (\om,\zeta) \in \R^{\nu + 3}$ which satisfies, 
for any $s_0 \leq s \leq S + \mathtt b$, the estimates
\begin{equation}\label{stime eq omologica}
\begin{aligned}
|\Psi_n|_s^{k_0, \gamma}\,,\, 
|\langle D \rangle^{- M} \Psi_n \langle D \rangle^{ M}|_s^{k_0, \gamma} 
\lesssim_{} N_n^{2 \tau_0} \gamma^{- 1} |{\cal R}_n \langle D \rangle^M|_s^{k_0, \gamma}
\end{aligned}
\end{equation}
and solves, for any $\lambda = (\omega, \zeta) \in \Omega_{n + 1}^\gamma$, 
the homological equation \eqref{equazione omologica KAM}. 
\end{lemma}

\begin{proof}
To simplify notations, in this proof 
we drop the index $n$ and we write $+$ instead of $n + 1$. 
By using the matrix $3 \times 3$ block 
representation of linear operators provided in Section \ref{sezione matrici norme} 
and recalling \eqref{def proiettore operatori matrici}, 
the homological equation \eqref{equazione omologica KAM} is equivalent to solve, 
for all $\ell \in \Z^\nu$, $j, j' \in \Z^3 \setminus \{ 0 \}$, 
\begin{equation} \label{homological eq matrici}
L(\ell, j, j') \widehat \Psi_j^{j'}(\ell) 
+ \Pi_N \widehat{\cal R}_j^{j'}(\ell) 
= \widehat{\cal Z}_j^{j'}(\ell)\,, \\
\end{equation}
where we recall that $L(\ell,j,j')$ is defined by 
$$
L(\ell,j,j') : \Mat_{3 \times 3}(\C) \to \Mat_{3 \times 3}(\C), 
\quad B \mapsto \ii (\om \cdot \ell + \zeta \cdot (j-j')) B 
+ \mQ_j^j B - B \mQ_{j'}^{j'} \,.
$$
Since $\Mat_{3 \times 3}(\C)$ is a vector space over $\C$ of dimension $9$, 
$L(\ell,j,j')$ can be represented by a matrix in $\Mat_{9 \times 9}(\C)$. 
By \eqref{cal Nn rid} and \eqref{ansatz}, one has
\begin{equation} \label{est qnjj'}
\| \mQ_j^j \|^{k_0,\g}_{\HS} + \| \mQ_{j'}^{j'} \|^{k_0,\g}_{\HS} 
\lesssim_{k_0} \frac{\e}{|j|} + \frac{\e}{|j'|}\,. 
\end{equation}
For any fixed $(\ell,j,j')$, 
for all $\lm = (\om, \zeta) \in \R^{\nu+3}$, 
the scalar product $f(\lm) := \om \cdot \ell + \zeta \cdot (j-j')$
satisfies 
$\pa_{\om_k} f(\lm) = \ell_k$, 
$\pa_{\zeta_k} f(\lm) = (j-j')_k$, and  
$\pa_\lm^\b f(\lm) = 0$ for all multi-indices $\b$ of length $|\b| \geq 2$.
Thus
\begin{equation} \label{0210.4}
\begin{aligned}
\g^{|\b|} \big| \pa_\lm^\b ( \om \cdot \ell + \zeta \cdot (j-j') ) \big| 
\leq \g \langle \ell, j-j' \rangle 
\quad \ \forall \lm \in \R^{\nu+3}, \ 
|\b| \geq 1.
\end{aligned}
\end{equation}
Since $\e \leq \g$ (see \eqref{KAM smallness condition}), 
the estimates \eqref{est qnjj'}, \eqref{0210.4} imply that 
\begin{equation} \label{0210.5} 
\g^{|\b|} \| \pa_\lm^\b L(\ell,j,j';\lm) \|_\op 
\lesssim \g \langle \ell, j-j' \rangle
\quad \ \forall \lm \in \R^{\nu+3}, 
\  1 \leq |\b| \leq k_0.
\end{equation} 
For each $(\ell,j,j')$, we consider the set 
$\mA := {\cal A}(\ell, j, j')$ of the parameters $\lm = (\om,\zeta)$ 
such that $L(\ell, j, j'; \lm)$ is invertible, and 
\begin{equation}\label{0210.6}
\mK := \mK(\ell, j, j') 
:= \Big\{ \lambda \in \R^{\nu + 3} : 
L(\ell, j, j'; \lm) \text{ is invertible and } 
\| L(\ell, j, j'; \lm)^{-1}\|_\op 
\leq \frac{\langle \ell \rangle^\tau |j|^\tau |j'|^\tau}{\gamma} \Big\}.
\end{equation}
We observe that $\mK \subseteq \mA \subseteq \R^{\nu+3}$, 
$\mA$ is open,
and $\mK$ is closed.
The map $\mA \to \Mat_{9 \times 9}(\C)$, 
$\lm \mapsto L(\ell,j,j';\lm)^{-1}$  
is $k_0$ times differentiable on $\mA$. 
By induction, it is not difficult to prove that, 
for any multi-index $\b$ of positive length, 
the derivative $\pa_\lm^\b (L(\ell,j,j';\lm)^{-1})$ 
is a sum of terms of the form 
$L^{-1} (\pa_\lm^{\b_1} L) L^{-1} \cdots 
L^{-1} (\pa_\lm^{\b_m} L) L^{-1}$ 
where $1 \leq m \leq |\b|$ 
and $\b_1, \ldots, \b_m$ are nonzero multi-indices 
with $\b_1 + \ldots + \b_m = \b$
($L$ briefly denotes $L(\ell,j,j';\lm)$). 
Hence, for all $\lambda \in \mK(\ell,j,j')$, 
using the bound \eqref{0210.5}
and the definition of $\t_0$ in \eqref{definizione alpha beta},
for $1 \leq |\b| \leq k_0$ one has
\begin{equation} \label{0210.7}
\begin{aligned}
\g^{|\b|} \| \pa_\lm^\b (L(\ell,j,j';\lm)^{-1}) \|_\op  
& \lesssim 
\gamma^{-1} 
\langle \ell, j - j' \rangle^{ |\beta|} 
\big( \langle \ell \rangle |j| |j'| \big)^{(|\beta| + 1) \tau}  
\lesssim 
\gamma^{-1} \langle \ell \rangle^{\tau_0} |j|^{\tau_0} |j'|^{\tau_0}\,. 
\end{aligned}
\end{equation} 
By \eqref{0210.6}, the estimate \eqref{0210.7} also holds for $\b=0$, $\lm \in \mK$. 
Hence 
\[
\sup_{0 \leq |\b| \leq k_0} \, 
\sup_{\lm \in \mK} \, 
\g^{|\b|} \| \pa_\lm^\b (L(\ell,j,j';\lm)^{-1}) \|_\op 
\lesssim  \, \gamma^{-1} \langle \ell \rangle^{\tau_0} |j|^{\tau_0} |j'|^{\tau_0}. 
\]
By Whitney extension theorem (see, e.g., Appendix B in \cite{BBHM}
where the weights $\g^{|\b|}$ are also considered), 
the function $\mK \to \Mat_{9 \times 9}(\C)$, 
$\lm \mapsto L(\ell,j,j';\lm)^{-1}$  
admits an extension to $\R^{\nu+3}$, 
which we denote by $L(\ell,j,j'; \lm)^{-1}_{ext}$, 
satisfying 
\begin{equation} \label{0210.14}
\| L(\ell,j,j')^{-1}_{ext} \|^{k_0,\g}_\op 
:= \sup_{0 \leq |\b| \leq k_0} \, 
\sup_{\lm \in \R^{\nu+3}} \, 
\g^{|\b|} \| \pa_\lm^\b (L(\ell,j,j';\lm)^{-1}) \|_\op 
\lesssim  \, \gamma^{-1} \langle \ell \rangle^{\tau_0} |j|^{\tau_0} |j'|^{\tau_0}, 
\end{equation}
where the implicit constant in \eqref{0210.14} 
does not depend on $(\ell,j,j')$. 
The extension $L(\ell,j,j';\lm)^{-1}_{ext}$ 
coincides with $L(\ell,j,j';\lm)^{-1}$ 
for $\lm \in \mK(\ell,j,j')$. 


Consider the set $\Om_+^\g$. 
By construction, 
$L(\ell,j,j';\lm)^{-1}_{ext} = L(\ell,j,j';\lm)^{-1}$ 
for all $\lm \in \Om_{+}^\g$
because $\Om_{+}^\g \subseteq \mK(\ell,j,j')$.
For every $\lm = (\omega, \zeta) \in \R^{\nu+3}$ 
we define $\Psi(\lm)$ as
\begin{equation}\label{sol eq omologica KAM}
(\widehat \Psi(\lm))_j^{j'}(\ell) 
:= \begin{cases}
- L(\ell, j, j';\lm)^{-1}_{ext} \widehat{\cal R}(\lm)_j^{j'}(\ell) 
& \text{if $(\ell, j, j') \neq (0, j, j)$, 
\ $|\ell|, |j - j'| \leq N$}, \\
0 & \text{otherwise.}
\end{cases}
\end{equation}
Then, for all $\lm \in \Om_{+}^\g$, $\Psi$ solves the homological equation 
\eqref{homological eq matrici}. 
Furthermore, for $\beta \in \N^{\nu + 3}$, $|\beta| \leq k_0$ 
and $(\ell, j, j') \neq (0,  j, j)$, $|\ell|, |j - j'| \leq N_n$, 
using \eqref{0210.14} one computes 
\begin{equation}\label{polpettone 0}
\begin{aligned}
\g^{|\b|} \| \partial_\lambda^\beta \widehat \Psi_j^{j'}(\ell) \|_{\HS} 
& \lesssim \g^{|\b|} \sum_{\beta_1 + \beta_2 = \beta} 
\| \partial_\lambda^{\beta_1} L(\ell, j, j')^{- 1} \|_\op 
\| \partial_\lambda^{\beta_2} \widehat{\cal R}_j^{j'}(\ell)\|_{\HS}  
\\
& \lesssim  
\g^{-1} \langle \ell \rangle^{\tau_0} |j|^{\tau_0} |j'|^{\tau_0} 
\sum_{|\beta_2| \leq |\b|} \gamma^{|\beta_2|} 
\| \partial_\lambda^{\beta_2} \widehat{\cal R}_j^{j'}(\ell)\|_{\HS}\,. 
\end{aligned}
\end{equation}
To estimate the decay norm of the operator $\langle D \rangle^{- M} \Psi \langle D \rangle^{M}$ 
we need to estimate $\langle j \rangle^{- M}\| \partial_\lambda^\beta \widehat \Psi_j^{j'}(\ell) \|_{\HS} \langle j' \rangle^{M}$. 
By triangular inequality, since $|j-j'| \leq N$, 
one has 
\[
|j'|^{\t_0} 
\leq (|j'-j| + |j|)^{\t_0}
\leq (N + |j|)^{\t_0}
\lesssim (N |j|)^{\t_0}
\]
and therefore, since also $|\ell| \leq N$,
\begin{equation} \label{1903.1}
\langle j \rangle^{-M}
\langle \ell \rangle^{\tau_0} |j|^{\tau_0} |j'|^{\tau_0} 
\langle j' \rangle^{M}
\lesssim
\langle j \rangle^{-M}
N^{\tau_0} |j|^{\tau_0} 
(N |j|)^{\t_0}
\langle j' \rangle^{M}
\lesssim
N^{2\tau_0} \langle j' \rangle^{M}
\end{equation}
because, by \eqref{definizione alpha beta}, $2\t_0 \leq M$.
Hence, multiplying \eqref{polpettone 0} 
by $\langle j \rangle^{-M} \langle j' \rangle^{M}$, 
using \eqref{1903.1}, 
and recalling Definition \ref{block decay norm}, 
we obtain the estimate \eqref{stime eq omologica} for 
$\langle D \rangle^{- M} \Psi \langle D \rangle^{M}$. 
The estimate for $\Psi$ can be proved similarly. 
\end{proof}

%
%

From \eqref{stime eq omologica} and \eqref{stime cal Rn rid}
one obtains that, for any $s_0 \leq s \leq S$, 
\begin{equation}\label{stime Psin neumann}
\begin{aligned}
|\Psi_n|_{s}^{k_0, \gamma} \,,\, 
|\langle D \rangle^{- M}\Psi_n \langle D \rangle^M|_{s}^{k_0, \gamma}
& \lesssim  N_n^{2 \tau_0} \gamma^{- 1} |{\cal R}_n \langle D \rangle^M|_s^{k_0, \gamma}
\lesssim_{s } N_n^{2 \tau_0} N_{n - 1}^{- \mathtt a} 
\e \gamma^{- 1} \| v \|_{s + \mu(\mathtt b)}^{k_0, \gamma} , \\
|\Psi_n|_{s + \mathtt b}^{k_0, \gamma}\,,\, |\langle D \rangle^{- M}\Psi_n \langle D \rangle^M|_{s + \mathtt b}^{k_0, \gamma} & \lesssim N_n^{2 \tau_0} \gamma^{- 1} |{\cal R}_n \langle D \rangle^M|_{s + \mathtt b}^{k_0, \gamma} \lesssim_{s } N_n^{2 \tau_0} N_{n - 1} \e \gamma^{- 1}  \| v \|_{s + \mu(\mathtt b)}^{k_0, \gamma},
\end{aligned}
\end{equation} 
which are the estimates \eqref{stime Psi n rid} at the step $n + 1$. 
Using the ansatz \eqref{ansatz} with $\mu = \mu(\mathtt b)$, by \eqref{definizione alpha beta}, one has
\begin{equation} \label{2103.1}
|\Psi_n|_{s_0}^{k_0, \gamma} 
\lesssim  
N_n^{2 \tau_0} \gamma^{- 1} |{\cal R}_n \langle D \rangle^M|_{s_0}^{k_0, \gamma}
\lesssim 
N_n^{2 \tau_0} N_{n-1}^{- \mathtt a} \e \gamma^{-1} 
\lesssim N_0^{2 \tau_0} \e \gamma^{- 1}. 
\end{equation}
Then for $\e \gamma^{- 1}$ small enough, 
by Lemma \ref{proprieta standard norma decay}-$(iv)$, 
$\Phi_n = {\rm Id} + \Psi_n$ is invertible 
and, 
for any $s_0 \leq s \leq S$, 
\begin{equation}\label{stime Phi n inv - Id}
|\Phi_n^{- 1} - {\rm Id}|_s^{k_0, \gamma} 
\lesssim_{s}  |\Psi_n|_s^{k_0, \gamma}\,,
\qquad 
|\Phi_n^{- 1} - {\rm Id}|_{s + \mathtt b}^{k_0, \gamma} 
\lesssim_{s}  |\Psi_n|_{s + \mathtt b}^{k_0, \gamma}. 
\end{equation}
We define 
\begin{equation}\label{2003.1} 
\begin{aligned}
{\cal L}_{n + 1} & := \omega \cdot \partial_\vphi + {\cal N}_{n + 1} + {\cal R}_{n + 1}\,, 
\quad \ 
{\cal N}_{n + 1} :=  \zeta \cdot \nabla {\rm Id} + {\cal Q}_{n + 1}\,, 
\\
{\cal Q}_{n + 1} & : = {\cal Q}_n + {\cal Z}_n\,, 
\quad \ 
{\cal R}_{n + 1} := ( \Phi_n^{- 1} - {\rm Id} ) {\cal Z}_n 
+ \Phi_n^{- 1} ( \Pi_{N_n}^\bot {\cal R}_n + {\cal R}_n \Psi_n ). 
\end{aligned}
\end{equation}
All the operators in \eqref{2003.1}
are defined for any $\lm = (\om,\zeta) \in \R^{\nu+3}$. 
Since $\Psi_n, \Phi_n, \Phi_n^{- 1}$ are reversibility preserving and ${\cal N}_n, {\cal R}_n$ are reversible operators, one gets that ${\cal N}_{n + 1}$, ${\cal R}_{n + 1}$ are reversible operators. 
Moreover, by \eqref{primo coniugio Ln Psin}, \eqref{equazione omologica KAM},
for $(\om,\zeta) \in \Om^\g_{n+1}$ one has the 
identity
$\Phi_n^{-1} \mL_n \Phi_n = \mL_{n+1}$, 
which is \eqref{coniugazione rid} at the step $n+1$. 
For any $j \in \Z^3 \setminus \{ 0 \}$, 
recalling the definition \eqref{def cal Zn} of $\mZ_n$,
$$
\| ({\cal N}_{n + 1})_j^j - ({\cal N}_{n })_j^j\|_{\HS}^{k_0, \gamma} = \| ({\cal Q}_{n + 1})_j^j - ({\cal Q}_{n })_j^j\|_{\HS}^{k_0, \gamma}  \leq \| \widehat{\cal R}_n(0)_j^j \|_{\HS} \lesssim_{k_0} |{\cal R}_n \langle D \rangle^M|_{s_0}^{k_0, \gamma} \langle j \rangle^{- M}\,.
$$
Then the estimate \eqref{stime cal Rn rid}
implies the estimate \eqref{cal Nn - N n - 1} at the step $n + 1$. 
The estimate \eqref{cal Nn rid} at the step $n + 1$ follows, as usual, 
by a telescoping argument, 
using the fact that $\sum_{n \geq 0} N_{n - 1}^{- \mathtt a}$ is convergent 
since $\mathtt a > 0$ (see \eqref{definizione alpha beta}). 
Now we prove the estimates \eqref{stime cal Rn rid} at the step $n + 1$. 
By \eqref{2003.1}, 
one has
$$
{\cal R}_{n + 1} \langle D \rangle^M 
= ( \Phi_n^{- 1} - {\rm Id} ) {\cal Z}_n \langle D \rangle^M 
+ \Phi_n^{- 1} (\Pi_{N_n}^\bot{\cal R}_n \langle D \rangle^M) 
+ \Phi_n^{- 1} ({\cal R}_n \langle D \rangle^M) 
(\langle D \rangle^{- M} \Psi_n \langle D \rangle^M).
$$
By the estimates 
\eqref{stime Psin neumann}, 
\eqref{2103.1},
\eqref{stime Phi n inv - Id}, 
by applying Lemma \ref{proprieta standard norma decay}-$(ii),(v)$, 
Lemma \ref{lemma proiettori decadimento}, 
the smallness condition \eqref{KAM smallness condition} 
and the induction estimate \eqref{stime cal Rn rid}, 
for any $s_0 \leq s \leq S$ we get 
\begin{equation}\label{stime R n + 1 R n}
\begin{aligned}
& |{\cal R}_{n + 1} \langle D \rangle^M|_s^{k_0, \gamma} 
\lesssim_{s} N_n^{- \mathtt b} |{\cal R}_n \langle D \rangle^M|_{s + \mathtt b}^{k_0, \gamma} + N_n^{2 \tau_0} \gamma^{- 1} |{\cal R}_n \langle D \rangle^M|_s^{k_0, \gamma} |{\cal R}_n \langle D \rangle^M|_{s_0}^{k_0, \gamma}\,, \\
& |{\cal R}_{n + 1} \langle D \rangle^M|_{s + \mathtt b}^{k_0, \gamma} 
\lesssim_{s} |{\cal R}_n \langle D \rangle^M|_{s + \mathtt b}^{k_0, \gamma}\,. 
\end{aligned}
\end{equation}
Using the definition of the constants $\mathtt a, \mathtt b$ in \eqref{definizione alpha beta} and the smallness condition \eqref{KAM smallness condition}, 
taking $N_0 = N_0(S, k_0, \tau)> 0$ large enough, 
one gets the estimate \eqref{stime cal Rn rid} at the step $n + 1$. 

\medskip

\noindent
{\sc Proof of ${\bf (S3)_{n + 1}}$.} Assume that we have proved the claimed statement for some $n \in \N$ and let us prove it at the step $n + 1$. Let $\lambda = (\omega, \zeta) \in  \Omega_{n + 1}^\gamma (v_1)$. By the definition of the sets $\Omega_n^\gamma$ (see \eqref{insiemi di cantor rid}) and using the induction hypothesis, one has that 
\begin{equation}\label{solita prop inclusione cantor}
 \Omega_{n + 1}^\gamma (v_1) \subseteq  \Omega_{n }^\gamma (v_1) \subseteq \Omega_n^{\gamma - \rho}(v_2)\,.
\end{equation}
The property \eqref{solita prop inclusione cantor}, 
together with ${\bf (S2)_n}$, implies that 
\begin{equation}\label{Delta 12 cal Qn Lemma}
\begin{aligned}
&  \| \Delta_{12} ({\cal Q}_n)_j^j \|_{\HS} \lesssim \e |j|^{- 1}  \| v_1 - v_2\|_{s_0 + \mu(\mathtt b)}, \quad \lambda = (\omega, \zeta) \in  \Omega_{n + 1}^\gamma(v_1) \,. 
\end{aligned}
\end{equation}
Let $\lm \in \Om_{n+1}^\g(v_1)$. 
To prove the claimed inclusion, 
we have to show that for any $(\ell, j, j')$ with 
$\ell \in \Z^\nu$, 
$j, j' \in \Z^3 \setminus \{  0 \}$, 
$(\ell, j, j') \neq (0, j, j)$, 
$|\ell|, |j- j'| \leq N_n$, 
the linear operator 
$$
L_n(\ell, j, j'; v_2 ) \equiv L_n(\ell, j, j'; \lambda, v_2(\lambda)) 
: \Mat_{3 \times 3}(\C) \to \Mat_{3 \times 3}(\C)
$$
(see \eqref{operatore blocchi}) is invertible 
and $\| L_n(\ell, j, j'; v_2 )^{- 1} \|_\op 
\leq (\g-\rho)^{-1} \langle \ell \rangle^\tau |j|^\tau |j'|^\tau$. 
We distinguish two cases. 

\medskip

\noindent
{\sc Case 1: ${\rm min}\{ |j|, |j'| \} \geq N_n^{\tau}$.} By recalling \eqref{cal Nn rid} and the definitions \eqref{definizione mult left right}, we write 
\begin{equation}
\begin{aligned}
& L_n(\ell, j, j'; v_2) = \Omega(\ell, j, j') + \Delta_n(j, j'; v_2)\,, \\
& \Omega(\ell, j, j') := \ii (\omega \cdot \ell + \zeta \cdot (j - j')){\rm Id}\,, \quad  \Delta_n(j, j'; v_2) := M_L\big( ({\cal Q}_n)_j^j\big) - M_R\big(({\cal Q}_n)_{j'}^{j'} \big)\,.
\end{aligned}
\end{equation} 
Since $\lambda = (\omega, \zeta) \in DC(\gamma, \tau)$ 
(recall \eqref{def cantor set trasporto}), 
the operator $\Omega(\ell, j, j') : \Mat_{3 \times 3}(\C) \to \Mat_{3 \times 3}(\C)$ 
is invertible and satisfies the estimate 
\begin{equation}\label{stima Omega ell j j' inv}
\| \Omega(\ell, j, j')^{- 1}\|_\op 
\leq \frac{\langle \ell, j - j' \rangle^\tau }{{\frak C}_0 \g} \,. 
\end{equation}
Furthermore, by the estimates \eqref{MLAMRA elementry}, \eqref{cal Nn rid}, one has that 
\begin{equation}\label{stima Delta n j j'}
\| \Delta_n(j, j'; v_2) \|_\op 
\lesssim \frac{\e}{{\rm min}\{ |j|\,,\, |j'| \}}\,. 
\end{equation}
Since $\langle \ell, j - j' \rangle \lesssim N_n$ and
$\min \{ |j|, |j'| \} \geq N_n^\tau$, 
the estimates \eqref{stima Omega ell j j' inv}, \eqref{stima Delta n j j'} 
immediately imply that 
$$
\| \Omega(\ell, j, j')^{- 1} \Delta_n(j, j'; v_2) \|_{\rm Op} \lesssim \e \gamma^{- 1}\,. 
$$
Hence, for $\e \gamma^{- 1}$ small enough, the operator $L_n(\ell, j, j'; v_2)$ is invertible by Neumann series and 
\begin{equation} \label{inclusione non ris modi alti}
\| L_n(\ell, j, j'; v_2)^{- 1}\|_\op 
\leq 2 \| \Omega(\ell, j, j')^{- 1}\|_\op 
\leq \frac{ 2 \langle \ell, j - j' \rangle^\tau }{{\frak C}_0 (\gamma - \rho)} 
\leq \frac{ \langle \ell \rangle^\tau |j|^\tau |j'|^\tau }{ \gamma - \rho}
\end{equation}
by taking ${\frak C}_0 > 0$ large enough. 

\medskip

\noindent
{\sc Case 2: ${\rm min}\{ |j|, |j'| \} \leq N_n^\tau$.} By recalling \eqref{cal Nn rid} and the definitions \eqref{definizione mult left right}, we write 
\begin{equation}\label{splitting Ln caso modi bassi}
\begin{aligned}
& L_n(\ell, j, j'; v_2) = L_n(\ell, j, j'; v_1) + \Gamma_n(j, j')\,, \\
& \Gamma_n(j, j') := -  M_L\big( (\Delta_{12}{\cal Q}_n)_j^{j}\big) + M_R \big( (\Delta_{12}{\cal Q}_n)_{j'}^{j'}\big)\,.
\end{aligned}
\end{equation}
By \eqref{Delta 12 cal Qn Lemma}, using the property \eqref{MLAMRA elementry}, 
one gets 
\begin{equation}\label{stima Gamma n j j'}
\| \Gamma_n(j, j') \|_\op \lesssim \e  \| v_1 - v_2 \|_{s_0 + \mu(\mathtt b)}\,.
\end{equation}
Furthermore, since in this case we have $|\ell|, |j - j'| \leq N_n$ and ${\rm min}\{ |j|, |j| \} \leq N_n^\tau$ one has that 
$$
\langle \ell \rangle^\tau |j|^\tau |j'|^\tau 
\lesssim N_n^{\tau + 2 \tau^2}\,.
$$
Hence, 
for $\lambda = (\omega, \zeta) \in  \Omega_{n + 1}^\gamma (v_1)$, 
one has 
$\| L_n(\ell, j, j'; v_1)^{- 1} \|_\op 
\lesssim N_n^{\tau + 2 \tau^2} \gamma^{- 1}$,
which, together with \eqref{stima Gamma n j j'}, 
implies that 
\begin{equation}\label{orca assassina}
\| L_n(\ell, j, j'; v_1)^{- 1}  \Gamma_n(j, j') \|_\op 
\leq C N_n^{\tau + 2 \tau^2} \e \gamma^{- 1} \| v_1 - v_2 \|_{s_0 + \mu(\mathtt b)}
\end{equation}
for some $C > 0$. 
If $C N_n^{ \tau + 2 \tau^2} \e \g^{-1} \| v_1 - v_2 \|_{s_0 + \mu(\mathtt b)} 
\leq \rho \g^{-1}$, 
then, by Neumann series, 
$L_n(\ell, j, j'; v_2)$ is invertible and
\begin{equation}\label{inversione Ln v2 modi bassi}
\| L_n(\ell, j, j'; v_2)^{- 1} \|_\op 
\leq \frac{\langle \ell \rangle^\tau \langle j \rangle^\tau \langle j' \rangle^\tau}{\gamma - \rho}\,. 
\end{equation}

Thus, \eqref{inclusione non ris modi alti}, \eqref{inversione Ln v2 modi bassi} imply that $\lambda = (\omega, \zeta) \in \Omega_{n + 1}^{\gamma - \rho}(v_2)$, 
and the proof is concluded.
\end{proof}

\subsection{Convergence}
\begin{lemma}\label{lemma blocchi finali}
For any $j \in \Z^3 \setminus \{ 0 \}$, the sequence 
$({\cal N}_n)_j^j = \ii\, \zeta \cdot j \, {\rm Id} + ({\cal Q}_n)_j^j$, $n \in \N$, 
converges in the norm $\| \ \|_{\HS}^{k_0, \gamma}$ to some limit
\begin{equation}\label{def cal N infty nel lemma}
({\cal N}_\infty)_j^j = \ii \zeta \cdot j \, {\rm Id} + ({\cal Q}_\infty)_j^j
\, \in \Mat_{3 \times 3}(\C),
\end{equation}
and 
\begin{equation}\label{stime forma normale limite}
\begin{aligned}
& \| ({\cal N}_\infty)_j^j - ({\cal N}_n)_j^j \|^{k_0, \gamma}_{\HS} = \| ({\cal Q}_\infty)_j^j - ({\cal Q}_n)_j^j \|^{k_0, \gamma}_{\HS} \lesssim \e |j|^{- M} N_{n - 1}^{- \mathtt a} \| v \|_{s_0 + \mu(\mathtt b)}^{k_0, \gamma}\,, \\
& \| ({\cal Q}_\infty)_j^j\|_{\HS}^{k_0, \gamma} \lesssim  \e |j|^{- 1} \| v \|_{s_0 + \mu(\mathtt b)}^{k_0, \gamma}\,. 
\end{aligned}
\end{equation}
Moreover the $3 \times 3$ block diagonal operator ${\cal Q}_\infty := {\rm diag}_{j \in \Z^3 \setminus \{ 0 \}} ({\cal Q}_\infty)_j^j$ is real and reversible. 
\end{lemma}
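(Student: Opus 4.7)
The plan is to show that, for each fixed $j \in \Z^3 \setminus \{0\}$, the sequence of $3 \times 3$ matrices $(({\cal Q}_n)_j^j)_{n \in \N}$ is Cauchy with respect to the norm $\| \cdot \|_{\HS}^{k_0, \gamma}$, and then to pass the relevant properties to the limit. The key input is estimate \eqref{cal Nn - N n - 1} from Proposition \ref{prop riducibilita}, which gives, for every $j \in \Z^3 \setminus \{0\}$ and every $n \geq 1$,
\[
\| ({\cal Q}_n)_j^j - ({\cal Q}_{n-1})_j^j \|_{\HS}^{k_0,\gamma}
\lesssim_{k_0} \e N_{n-2}^{-\mathtt a} |j|^{-M} \| v \|_{s_0 + \mu(\mathtt b)}^{k_0, \gamma}.
\]
Since $N_n = N_0^{\chi^n}$ with $\chi = 3/2$ and $\mathtt a > 0$ (by \eqref{definizione alpha beta}), the series $\sum_{n \geq 1} N_{n-2}^{-\mathtt a}$ converges. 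Hence $(({\cal Q}_n)_j^j)_n$ is a Cauchy sequence in the Banach space $(\Mat_{3 \times 3}(\C), \| \cdot \|_{\HS}^{k_0, \gamma})$ and admits a limit $({\cal Q}_\infty)_j^j \in \Mat_{3 \times 3}(\C)$. Setting $({\cal N}_\infty)_j^j := \ii\, \zeta \cdot j\, {\rm Id} + ({\cal Q}_\infty)_j^j$ gives the definition \eqref{def cal N infty nel lemma}.

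For the first estimate in \eqref{stime forma normale limite}, I would simply telescope: for $m > n$,
\[
\| ({\cal Q}_m)_j^j - ({\cal Q}_n)_j^j \|_{\HS}^{k_0, \gamma}
\leq \sum_{k = n+1}^{m} \| ({\cal Q}_k)_j^j - ({\cal Q}_{k-1})_j^j \|_{\HS}^{k_0, \gamma}
\lesssim \e |j|^{-M} \| v \|_{s_0 + \mu(\mathtt b)}^{k_0, \gamma} \sum_{k \geq n+1} N_{k-2}^{-\mathtt a},
\]
and the tail of the super-exponentially decaying series is bounded by $C N_{n-1}^{-\mathtt a}$. Letting $m \to \infty$ yields the claimed bound. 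The second estimate in \eqref{stime forma normale limite} follows by taking $n \to \infty$ in the first bound of \eqref{cal Nn rid} or by combining the bound at $n=0$ (which uses \eqref{stima-blocch-i33-pre-rid}) with the just-proved tail estimate.

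Finally, I need to check that ${\cal Q}_\infty = \diag_{j \in \Z^3 \setminus \{0\}} ({\cal Q}_\infty)_j^j$ is real and reversible. Both are blockwise pointwise conditions on the matrices $({\cal Q}_n)_j^j$ (namely $({\cal Q}_n)_j^j = \overline{({\cal Q}_n)_{-j}^{-j}}$ for reality and $({\cal Q}_n)_j^j = -({\cal Q}_n)_{-j}^{-j}$ for reversibility, by the matrix characterizations in Section \ref{Reversible operators}, specialized to the time-independent block-diagonal case $\ell = 0$, $j' = j$). Since each ${\cal Q}_n$ is real and reversible by ${\bf (S1)_n}$, these linear relations are preserved in the Hilbert–Schmidt limit, giving the corresponding properties for $({\cal Q}_\infty)_j^j$ for each $j$, and hence for the block-diagonal operator ${\cal Q}_\infty$. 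No obstacle is expected: the lemma is a standard convergence statement that only repackages the quantitative bound \eqref{cal Nn - N n - 1} already proved during the KAM iteration.
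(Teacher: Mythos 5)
Your proof is correct and follows exactly the route the paper takes (its proof is a one-line appeal to estimates \eqref{cal Nn rid}, \eqref{cal Nn - N n - 1} and a telescoping argument); you merely spell out the Cauchy/telescoping details and the blockwise passage of reality and reversibility to the limit, all of which is sound.
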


\begin{proof}
The lemma follows in a standard way 
by the estimates \eqref{cal Nn rid}, \eqref{cal Nn - N n - 1} 
and a telescoping argument. 
\end{proof}

Now we define 
\begin{equation}\label{trasformazioni tilde ridu}
\widetilde \Phi_n := \Phi_0 \circ \Phi_1 \circ \ldots \circ \Phi_n, \quad n \in \N\,. 
\end{equation}


\begin{lemma}\label{lemma convergenza trasformazioni}
Let $S > s_0$ and assume \eqref{KAM smallness condition} and \eqref{ansatz} with $\mu = \mu(\mathtt b)$. 
For any $s_0 \leq s \leq S$, the sequence $\widetilde \Phi_n$, $n \in \N$, 
converges in norm $| \ |_s^{k_0, \gamma}$ 
to a real, reversibility-preserving and invertible map $\Phi_\infty$, 
with
$$
\begin{aligned}
& |\Phi_\infty^{\pm 1} - \widetilde \Phi_n^{\pm 1}|_s^{k_0, \gamma} 
\lesssim_{s} N_{n + 1}^{2 \tau_0} N_n^{- \mathtt a} \e \gamma^{- 1} 
\| v \|_{s + \mu(\mathtt b)}^{k_0, \gamma}\,, 
\quad 
|\Phi_\infty^{\pm 1} - {\rm Id}|_s^{k_0, \gamma} 
\lesssim_{s} N_0^{2 \tau_0} \e \gamma^{- 1} 
\| v \|_{s + \mu(\mathtt b)}^{k_0, \gamma}\,.
\end{aligned}
$$
\end{lemma}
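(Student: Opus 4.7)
The plan is a standard telescoping argument in the matrix decay norm. First I would write the identity
\[
\widetilde \Phi_{n+1} - \widetilde \Phi_n = \widetilde \Phi_n (\Phi_{n+1} - \mathrm{Id}) = \widetilde \Phi_n \Psi_{n+1},
\]
and apply the product estimate of Lemma \ref{proprieta standard norma decay}-$(ii)$ to get
\[
|\widetilde \Phi_{n+1} - \widetilde \Phi_n|_s^{k_0,\gamma} \lesssim_s |\widetilde \Phi_n|_s^{k_0,\gamma} |\Psi_{n+1}|_{s_0}^{k_0,\gamma} + |\widetilde \Phi_n|_{s_0}^{k_0,\gamma} |\Psi_{n+1}|_s^{k_0,\gamma}.
\]
By iterating the same product estimate (and using the smallness of $|\Psi_k|_{s_0}^{k_0,\gamma}$ coming from \eqref{stime Psin neumann} and the KAM smallness condition \eqref{KAM smallness condition}), one obtains the uniform tame bound
\[
|\widetilde \Phi_n|_s^{k_0,\gamma} \leq 1 + C(s)\e\gamma^{-1}\|v\|_{s+\mu(\mathtt b)}^{k_0,\gamma},
\quad |\widetilde \Phi_n|_{s_0}^{k_0,\gamma} \leq 2,
\]
for all $n$.

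Inserting these uniform bounds and the estimate \eqref{stime Psin neumann}, namely
\[
|\Psi_{k+1}|_s^{k_0,\gamma} \lesssim_s N_{k+1}^{2\tau_0} N_k^{-\mathtt a} \e \gamma^{-1}\|v\|_{s+\mu(\mathtt b)}^{k_0,\gamma},
\]
I would obtain
\[
|\widetilde \Phi_{k+1} - \widetilde \Phi_k|_s^{k_0,\gamma} \lesssim_s N_{k+1}^{2\tau_0} N_k^{-\mathtt a} \e\gamma^{-1} \|v\|_{s+\mu(\mathtt b)}^{k_0,\gamma}.
\]
Since, by the choice $\mathtt a \geq 6\tau_0+1$ in \eqref{definizione alpha beta} and $N_k = N_0^{(3/2)^k}$, the series $\sum_k N_{k+1}^{2\tau_0} N_k^{-\mathtt a}$ converges geometrically and is bounded by $C\, N_{n+1}^{2\tau_0} N_n^{-\mathtt a}$ on its tail starting from $k=n$, a telescoping sum gives Cauchy-ness and the bound
\[
|\Phi_\infty - \widetilde \Phi_n|_s^{k_0,\gamma} \leq \sum_{k\geq n} |\widetilde \Phi_{k+1} - \widetilde \Phi_k|_s^{k_0,\gamma} \lesssim_s N_{n+1}^{2\tau_0} N_n^{-\mathtt a} \e\gamma^{-1} \|v\|_{s+\mu(\mathtt b)}^{k_0,\gamma}.
\]
Taking $n = -1$ (or adding $|\widetilde\Phi_0 - \Id|_s^{k_0,\gamma} = |\Psi_0|_s^{k_0,\gamma}$ and using $N_{-1}=1$) gives the second bound $|\Phi_\infty - \Id|_s^{k_0,\gamma} \lesssim_s N_0^{2\tau_0} \e\gamma^{-1}\|v\|_{s+\mu(\mathtt b)}^{k_0,\gamma}$.

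For the inverse, I would apply Lemma \ref{proprieta standard norma decay}-$(iv)$ to each $\Phi_k = \Id + \Psi_k$ (the smallness $|\Psi_k|_{s_0}^{k_0,\gamma} \ll 1$ is again guaranteed by \eqref{KAM smallness condition} and $N_k^{2\tau_0} N_{k-1}^{-\mathtt a} \leq N_0^{2\tau_0-\mathtt a}$), getting $|\Phi_k^{-1} - \Id|_s^{k_0,\gamma} \lesssim_s |\Psi_k|_s^{k_0,\gamma}$ with the same right-hand side as for $\Psi_k$. Then the telescoping decomposition
\[
\widetilde\Phi_{n+1}^{-1} - \widetilde\Phi_n^{-1} = (\Phi_{n+1}^{-1} - \Id)\widetilde\Phi_n^{-1}
\]
together with the uniform tame bound $|\widetilde\Phi_n^{-1}|_s^{k_0,\gamma} \leq 1 + C(s)\e\gamma^{-1}\|v\|_{s+\mu(\mathtt b)}^{k_0,\gamma}$ yields the analogous Cauchy estimate and the convergence of $\widetilde\Phi_n^{-1}$ to a limit which, by uniqueness of the inverse and pointwise passage to the limit, is $\Phi_\infty^{-1}$. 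Finally, reality and the reversibility-preserving character pass to the limit because each $\widetilde\Phi_n$ enjoys these properties (by ${\bf (S1)_n}$) and the matrix decay norm controls pointwise Fourier entries.

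The only real point that needs care is the summability of $\sum_k N_{k+1}^{2\tau_0} N_k^{-\mathtt a}$, which amounts to checking $\mathtt a > 2\tau_0 \chi = 3\tau_0$; this is built into the choice $\mathtt a \geq 6\tau_0 + 1$ in \eqref{definizione alpha beta}. Everything else is routine bookkeeping with the decay-norm product estimates of Lemma \ref{proprieta standard norma decay} and the inductive bounds already established in Proposition \ref{prop riducibilita}.
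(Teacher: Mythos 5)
Your proposal is correct and is essentially the paper's own argument: the paper simply defers to the standard telescoping scheme of Corollary 4.1 in \cite{BBM-Airy}, i.e.\ the decomposition $\widetilde\Phi_{n+1}-\widetilde\Phi_n=\widetilde\Phi_n\Psi_{n+1}$ (and $(\Phi_{n+1}^{-1}-{\rm Id})\widetilde\Phi_n^{-1}$ for the inverses), uniform tame bounds via Lemma \ref{proprieta standard norma decay}, the bound \eqref{stime Psin neumann}, and summability of $N_{k+1}^{2\tau_0}N_k^{-\mathtt a}=N_k^{3\tau_0-\mathtt a}$ from $\mathtt a\geq 6\tau_0+1$, which is exactly what you spell out. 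Only a cosmetic slip: the auxiliary inequality $N_k^{2\tau_0}N_{k-1}^{-\mathtt a}\leq N_0^{2\tau_0-\mathtt a}$ should read $N_k^{2\tau_0}N_{k-1}^{-\mathtt a}\leq N_0^{2\tau_0}$ (equality at $k=0$), but the required smallness $|\Psi_k|_{s_0}^{k_0,\gamma}\lesssim N_0^{2\tau_0}\e\gamma^{-1}\leq\delta$ still follows from \eqref{KAM smallness condition}, as in \eqref{2103.1}.
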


\begin{proof}
The lemma follows by 
\eqref{stime Psi n rid}, 
\eqref{trasformazioni tilde ridu}, 
arguing as in Corollary 4.1 in \cite{BBM-Airy}. 
\end{proof}

For any $(\ell, j, j')$, $\ell \in \Z^\nu$, $j,j' \in \Z^3 \setminus \{ 0 \}$, 
we define 
\begin{equation}\label{L infty l j j fin}
L_\infty(\ell, j, j') := \ii \omega \cdot \ell + M_L \big( ({\cal N}_\infty)_j^j \big) 
- M_R \big( ({\cal N}_\infty)_{j'}^{j'} \big)
\end{equation}
and the set $\Omega_\infty^\gamma = \Omega_\infty^\gamma(v)$ as 
\begin{equation}\label{cantor finale ridu}
\begin{aligned}
\Omega_\infty^\gamma 
:= \Big\{ \lambda \in DC(\g,\t) 
& : L_\infty(\ell, j, j'; \lambda) \text{ is invertible and } 
\| L_\infty(\ell, j, j'; \lambda)^{- 1} \|_\op 
\leq \frac{\langle \ell \rangle^\tau |j|^\tau |j'|^\tau}{2 \gamma} \\
& \quad \forall \ell \in \Z^\nu, \ \ j, j' \in \Z^3 \setminus \{ 0 \}, 
\ \ (\ell, j, j') \neq (0, j, j)\Big\}\,. 
\end{aligned}
\end{equation}

\begin{lemma}\label{prima inclusione cantor}
One has $\Omega_\infty^\gamma \subseteq \cap_{n \geq 0} \, \Omega_n^\gamma$.
\end{lemma}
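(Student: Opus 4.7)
I would argue by induction on $n \geq 0$. For $n = 0$ the inclusion $\Omega_\infty^\gamma \subseteq \Omega_0^\gamma = DC(\gamma,\tau)$ is immediate from the definition \eqref{cantor finale ridu}, where membership in $DC(\gamma,\tau)$ is required. Assuming $\Omega_\infty^\gamma \subseteq \Omega_n^\gamma$, I would fix $\lambda \in \Omega_\infty^\gamma$ and show that $\lambda$ satisfies the additional non-resonance conditions defining $\Omega_{n+1}^\gamma$, i.e.\ that for every triple $(\ell, j, j') \neq (0, j, j)$ with $|\ell|, |j - j'| \leq N_n$ the operator $L_n(\ell, j, j'; \lambda)$ is invertible with $\| L_n(\ell, j, j')^{-1}\|_\op \leq \langle \ell \rangle^\tau |j|^\tau |j'|^\tau / \gamma$.

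The key is to treat $L_n$ as a perturbation of $L_\infty$. Writing
$$
L_n(\ell, j, j') = L_\infty(\ell, j, j') + \Delta_n(j, j'),
\qquad
\Delta_n(j, j') := M_L\big( ({\cal Q}_n - {\cal Q}_\infty)_j^j \big) - M_R\big( ({\cal Q}_n - {\cal Q}_\infty)_{j'}^{j'} \big),
$$
the operator norm estimate \eqref{MLAMRA elementry} together with the convergence bound \eqref{stime forma normale limite} gives
$$
\| \Delta_n(j, j') \|_\op \lesssim \e N_{n-1}^{-\mathtt{a}} \big( |j|^{-M} + |j'|^{-M} \big) \| v \|_{s_0 + \mu(\mathtt b)}^{k_0, \gamma}.
$$
Since $\lambda \in \Omega_\infty^\gamma$, the definition \eqref{cantor finale ridu} yields $\| L_\infty^{-1}\|_\op \leq \langle \ell \rangle^\tau |j|^\tau |j'|^\tau / (2\gamma)$, and a short calculation (splitting into the cases $\min\{|j|, |j'|\} \geq N_n$ and $\min\{|j|, |j'|\} < N_n$, and using that $|j - j'| \leq N_n$ forces $\max\{|j|, |j'|\} \leq \min\{|j|, |j'|\} + N_n$) shows that
$$
\langle \ell \rangle^\tau |j|^\tau |j'|^\tau \big( |j|^{-M} + |j'|^{-M} \big) \lesssim N_n^{3\tau},
$$
where the inequality $M \geq 2\tau_0 \geq 2\tau$ is crucial to dominate $|j|^{2\tau - M}$ by $1$ in the first case. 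Combining these bounds gives
$$
\| L_\infty(\ell, j, j')^{-1} \Delta_n(j, j') \|_\op \lesssim \e \gamma^{-1} N_n^{3\tau} N_{n-1}^{-\mathtt a} \| v \|_{s_0 + \mu(\mathtt b)}^{k_0, \gamma}.
$$

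Using $N_n = N_{n-1}^{3/2}$, the right-hand side is $\lesssim \e \gamma^{-1} N_{n-1}^{9\tau/2 - \mathtt a} \cdot \| v \|_{s_0+\mu(\mathtt b)}^{k_0,\gamma}$, and the choice $\mathtt a \geq \tfrac{3}{2}(\tau + 2\tau^2) + 1$ in \eqref{definizione alpha beta} together with the smallness condition \eqref{KAM smallness condition} and the ansatz \eqref{ansatz} makes this quantity smaller than $1/2$ (uniformly in $n \geq 0$). Thus $\mathrm{Id} + L_\infty^{-1} \Delta_n$ is invertible by Neumann series, hence $L_n = L_\infty (\mathrm{Id} + L_\infty^{-1} \Delta_n)$ is invertible with
$$
\| L_n^{-1}\|_\op \leq 2 \| L_\infty^{-1}\|_\op \leq \frac{\langle \ell \rangle^\tau |j|^\tau |j'|^\tau}{\gamma},
$$
which, together with the inductive hypothesis $\lambda \in \Omega_n^\gamma$, shows $\lambda \in \Omega_{n+1}^\gamma$. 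The only point that requires some care is the case analysis on $|j|, |j'|$ in the quotient $\langle \ell \rangle^\tau |j|^\tau |j'|^\tau (|j|^{-M} + |j'|^{-M})$; the remaining manipulations are a perturbative Neumann series argument calibrated to the iterative parameters fixed in \eqref{definizione alpha beta}.
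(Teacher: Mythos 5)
Your proposal is correct and follows essentially the same route as the paper: induction on $n$, writing $L_n(\ell,j,j')$ as a perturbation of $L_\infty(\ell,j,j')$ via the blocks $({\cal Q}_n-{\cal Q}_\infty)_j^j$, bounding the weight $\langle\ell\rangle^\tau|j|^\tau|j'|^\tau(|j|^{-M}+|j'|^{-M})$ by a power of $N_n$ using $|\ell|,|j-j'|\leq N_n$ and $M\geq 2\tau$, and closing with a Neumann series under the smallness condition. The only (harmless) difference is that your case analysis yields the cruder bound $N_n^{3\tau}$ where the paper gets $N_n^{2\tau}$ directly from the triangle inequality, which the choice of $\mathtt a$ absorbs in either case.
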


\begin{proof}
We prove by induction that $\Omega_\infty^\gamma \subseteq \Omega_n^\gamma$ 
for any integer $n \geq 0$. 
For $n = 0$ the statement is trivial 
since $\Omega_0^\gamma := DC(\g,\t)$ 
(see Proposition \ref{prop riducibilita}).
Now assume that $\Omega_\infty^\gamma \subseteq \Omega_n^\gamma$ for some $n \geq 0$ and let us show that $\Omega_\infty^\gamma \subseteq \Omega_{n + 1}^\gamma$. Let $\lambda \in \Omega_\infty^\gamma$, $\ell \in \Z^\nu$, $j, j' \in \Z^3 \setminus \{ 0 \}$, $(\ell, j, j') \neq (0, j, j)$ and $|\ell|, |j - j'| \leq N_n$. 
By using \eqref{cal Nn rid}, \eqref{operatore blocchi}, \eqref{L infty l j j fin},  
we write 
\begin{equation}\label{Ln L infty 0}
\begin{aligned}
L_n(\ell, j, j') & = L_\infty(\ell,j, j') + L_n(\ell, j, j') - L_\infty(\ell, j, j') 
= L_\infty(\ell,j, j') \big( {\rm Id} + {\cal S}_n(\ell, j, j') \big), \\
{\cal S}_n(\ell, j, j') & := L_\infty(\ell,j, j')^{- 1}\Big( M_L\big( ({\cal Q}_n - {\cal Q}_\infty)_j^j  \big) - M_R\big( ({\cal Q}_n - {\cal Q}_\infty)_{j'}^{j'} \big) \Big)\,.
\end{aligned}
\end{equation}
By \eqref{MLAMRA elementry}, \eqref{stime forma normale limite}, \eqref{cantor finale ridu}, using \eqref{ansatz}, one obtains that 
\begin{equation}\label{stima cal Sn ell j j'}
\begin{aligned}
\| {\cal S}_n(\ell, j, j')  \|_{\rm Op} \lesssim  \e \gamma^{- 1}\Big(\frac{\langle \ell \rangle^\tau |j|^\tau |j'|^\tau}{|j|^M} + \frac{\langle \ell \rangle^\tau |j|^\tau |j'|^\tau}{|j'|^M}  \Big) N_{n - 1}^{- \mathtt a}\,. 
\end{aligned}
\end{equation}
By the triangular inequality, using that $|\ell|, |j - j'| \leq N_n$ and recalling that by \eqref{definizione alpha beta}, $M > 2 \tau$, one obtains the bound
$$
\begin{aligned}
\frac{\langle \ell \rangle^\tau |j|^\tau |j'|^\tau}{|j|^M} + \frac{\langle \ell \rangle^\tau |j|^\tau |j'|^\tau}{|j'|^M}  & \lesssim  N_n^{2 \tau} 
\end{aligned}
$$
implying that (see \eqref{definizione alpha beta})
\begin{equation}\label{stima cal Sn ell j j'}
\begin{aligned}
\| {\cal S}_n(\ell, j, j') \|_\op 
\lesssim \e \gamma^{- 1} N_n^{2 \tau}N_{n - 1}^{- \mathtt a} 
\lesssim  \e \gamma^{- 1} N_0^{2\tau}\,. 
\end{aligned}
\end{equation}
Hence by \eqref{Ln L infty 0}, \eqref{stima cal Sn ell j j'}, 
taking $\e \gamma^{-1}$ small enough, 
the operator $L_n(\ell, j, j')$ is invertible by Neumann series, 
with $\| L_n(\ell, j, j'; \lambda)^{- 1}\|_\op 
\leq \g^{-1} \langle \ell \rangle^\tau |j|^\tau |j'|^\tau$. 
This shows that $\lambda \in \Omega_{n + 1}^\gamma$, 
and the proof is complete. 
\end{proof}

\begin{lemma}\label{lemma coniugio finale}
For any $\lambda \in \Omega_\infty^\gamma$, one has 
\[ 
\Phi_\infty^{- 1} {\cal L}_0 \Phi_\infty 
= {\cal L}_\infty := \omega \cdot \partial_\vphi + {\cal N}_\infty
\] 
where the operator ${\cal L}_0$ is given in \eqref{coniugazione-pre-riducibilita} 
and the $3 \times 3$ block-diagonal operator ${\cal N}_\infty$ 
in Lemma \ref{lemma blocchi finali}. 
Furthermore, the operator ${\cal L}_\infty$ is real and reversible. 
\end{lemma}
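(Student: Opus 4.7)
The plan is to pass to the limit, as $n \to \infty$, in the identity
\begin{equation} \label{iterata-conj}
\widetilde{\Phi}_{n-1}^{-1} \,{\cal L}_0\, \widetilde{\Phi}_{n-1} = {\cal L}_n = \omega \cdot \partial_\vphi + {\cal N}_n + {\cal R}_n,
\end{equation}
which holds, by iterating \eqref{coniugazione rid}, for every $\lambda \in \Omega_n^\gamma$.
First, by Lemma \ref{prima inclusione cantor}, if $\lambda \in \Omega_\infty^\gamma$ then $\lambda \in \Omega_n^\gamma$ for every $n \geq 0$, so \eqref{iterata-conj} is available for all $n$ on the whole set $\Omega_\infty^\gamma$.

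Next, I would collect the three convergence ingredients at $\lambda \in \Omega_\infty^\gamma$.
(i) By Lemma \ref{lemma convergenza trasformazioni}, $\widetilde{\Phi}_n^{\pm 1} \to \Phi_\infty^{\pm 1}$ in the decay norm $| \cdot |_s^{k_0,\gamma}$, and hence, via Lemma \ref{proprieta standard norma decay}-$(i)$, in operator norm on $H^s_0$ for every $s_0 \leq s \leq S$.
(ii) By Lemma \ref{lemma blocchi finali}, the block-diagonal operators $\mathcal{N}_n$ converge to $\mathcal{N}_\infty$ in the sense that $\| (\mathcal{N}_\infty - \mathcal{N}_n)_j^j \|_{\HS} \lesssim \e |j|^{-M} N_{n-1}^{-\mathtt{a}}$ for all $j$; since $M \geq 2\tau_0 \geq 1$ and $\mathcal{N}_n - \mathcal{N}_\infty$ is block-diagonal, this yields convergence in operator norm $H^{s}_0 \to H^{s}_0$.
(iii) By \eqref{stime cal Rn rid}, $|\mathcal{R}_n \langle D \rangle^M|_{s_0}^{k_0,\gamma} \lesssim \e N_{n-1}^{-\mathtt{a}} \to 0$, so $\mathcal{R}_n \to 0$ as an operator from $H^{s+M}_0$ into $H^s_0$ (and in particular from $H^{s+1}_0$ into $H^s_0$, since $M \geq 1$).

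Putting these together, I would fix $h \in H^{s+1}_0$ with $s_0 \leq s \leq S - 1$, and compute
\[
\Phi_\infty^{-1} {\cal L}_0 \Phi_\infty h - (\omega \cdot \partial_\vphi + {\cal N}_\infty) h
= \bigl(\Phi_\infty^{-1} {\cal L}_0 \Phi_\infty - \widetilde{\Phi}_{n-1}^{-1} {\cal L}_0 \widetilde{\Phi}_{n-1}\bigr) h
+ ({\cal N}_n - {\cal N}_\infty) h + {\cal R}_n h.
\]
The first term tends to $0$ in $H^s_0$ because $\widetilde{\Phi}_{n-1}^{\pm 1} \to \Phi_\infty^{\pm 1}$ in operator norm and $\mathcal{L}_0 : H^{s+1}_0 \to H^s_0$ is bounded (with $\Phi_\infty h \in H^{s+1}_0$, since $\Phi_\infty = \mathrm{Id} + (\Phi_\infty - \mathrm{Id})$ with $\Phi_\infty - \mathrm{Id}$ bounded on $H^{s+1}_0$ by Lemma \ref{lemma convergenza trasformazioni}); the second and third terms tend to $0$ in $H^s_0$ by (ii)-(iii). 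Since the left-hand side is independent of $n$, this proves $\Phi_\infty^{-1} \mathcal{L}_0 \Phi_\infty = \omega \cdot \partial_\vphi + \mathcal{N}_\infty$ on $H^{s+1}_0$, for all $\lambda \in \Omega_\infty^\gamma$.

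Finally, for the reality and reversibility of $\mathcal{L}_\infty$ there are two equivalent routes. Directly: $\omega \cdot \partial_\vphi$ is clearly real and reversible, and $\mathcal{N}_\infty$ is real and reversible by Lemma \ref{lemma blocchi finali}, so their sum inherits both properties. Alternatively: $\mathcal{L}_0$ is real and reversible by Lemma \ref{lemma invertibilita cal U bot}, while $\Phi_\infty^{\pm 1}$ are real and reversibility preserving by Lemma \ref{lemma convergenza trasformazioni}, whence so is the conjugate. The only mildly delicate point is tracking the domains in the limiting identity, i.e.\ verifying that all the objects act consistently between the right Sobolev spaces; this is not a genuine obstacle thanks to the smoothing factor $\langle D \rangle^M$ in \eqref{stime cal Rn rid} and the $| \cdot |_s^{k_0,\gamma}$-bounds on $\Psi_n$ and $\langle D \rangle^{-M}\Psi_n \langle D \rangle^M$ in \eqref{stime Psi n rid}.
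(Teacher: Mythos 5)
Your proposal is correct and follows essentially the same route as the paper: the paper's proof consists precisely of iterating \eqref{coniugazione rid} on $\Omega_\infty^\gamma \subseteq \cap_{n \geq 0}\Omega_n^\gamma$ (Lemma \ref{prima inclusione cantor}) and passing to the limit via \eqref{stime cal Rn rid} and Lemmata \ref{lemma blocchi finali}, \ref{lemma convergenza trasformazioni}, which is exactly what you spell out, including the reality/reversibility argument. One cosmetic remark: in your point (iii) the implication is stated backwards (decay of $\mathcal{R}_n$ as operators $H^{s+M}_0 \to H^s_0$ does not formally imply decay $H^{s+1}_0 \to H^s_0$), but this is harmless since $|\mathcal{R}_n|_s^{k_0,\gamma} \leq |\mathcal{R}_n \langle D \rangle^M|_s^{k_0,\gamma} \to 0$, so \eqref{stime cal Rn rid} combined with Lemma \ref{proprieta standard norma decay}-$(i)$ directly gives $\mathcal{R}_n h \to 0$ in $H^s_0$ for fixed $h$.
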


\begin{proof}
By Lemma \ref{prima inclusione cantor} 
and Proposition \ref{prop riducibilita}, one has 
$\widetilde \Phi_n^{- 1} {\cal L}_0 \widetilde \Phi_n 
= \omega \cdot \partial_\vphi + {\cal N}_n + {\cal R}_n$ for all $n \geq 0$. 
The claimed statement then follows by passing to the limit as $n \to \infty$, 
by using \eqref{stime cal Rn rid} and Lemmata \ref{lemma blocchi finali}, \ref{lemma convergenza trasformazioni}. 
\end{proof}

\subsection{Inversion of the operator ${\cal L}$.}\label{sezione inversione sommario}

By 
Lemmata \ref{lemma invertibilita cal U bot}, \ref{lemma coniugio finale}, 
on the set $\Omega_\infty^\gamma$ one has 
\begin{equation}\label{coniugio finale cal L}
{\cal L} = {\cal W}_\infty {\cal L}_\infty {\cal W}_\infty^{- 1}, \quad {\cal W}_\infty := {\cal E}_\bot \Phi_\infty\,. 
\end{equation} 


\begin{lemma}\label{tame trasformazioni finali}
Let $S > s_0$, $\tau > 0$, $\gamma \in (0, 1)$ and assume \eqref{ansatz} 
with $\mu = \mu (\mathtt b)$ (see \eqref{definizione alpha beta}). 
Then there exists $\delta = \delta(S, k_0, \tau) \in (0, 1)$ small enough 
such that, if $\e \gamma^{-1} \leq \delta$, 
then the maps ${\cal W}_\infty^{\pm 1} : H^s_0 \to H^s_0$, $s_0 \leq s \leq S$ are real and reversibility preserving and they satisfy the estimate 
$$
\| {\cal W}_\infty^{\pm 1} h\|_s^{k_0, \gamma}\lesssim_{s} \| h \|_s^{k_0, \gamma} 
+ \| v \|_{s + \mu(\mathtt b)}^{k_0, \gamma} \| h \|_{s_0}^{k_0, \gamma}
\quad \ \forall s_0 \leq s \leq S\,. 
$$
\end{lemma}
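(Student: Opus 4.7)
The plan is to unfold the definition $\mW_\infty = \mE_\bot \Phi_\infty$ from \eqref{coniugio finale cal L} and reduce the claim to composing the tame bounds already established for the two factors. The real / reversibility-preserving properties will be immediate from the fact that both $\mE_\bot^{\pm 1}$ (Lemma \ref{lemma invertibilita cal U bot}-$(i)$) and $\Phi_\infty^{\pm 1}$ (Lemma \ref{lemma convergenza trasformazioni}) have these properties, and the composition of two real (resp.\ reversibility-preserving) maps shares the same structure.

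First I would estimate the action of $\Phi_\infty^{\pm 1}$ on Sobolev spaces. From Lemma \ref{lemma convergenza trasformazioni} and the smallness condition $\e\g^{-1} \leq \delta$ (recall $N_0$ has been fixed in Proposition \ref{prop riducibilita}), the decay norm $|\Phi_\infty^{\pm 1} - \mathrm{Id}|_s^{k_0,\g}$ is bounded by $C_s \e\g^{-1}\|v\|_{s+\mu(\mathtt b)}^{k_0,\g}$ for all $s_0 \leq s \leq S$. Invoking Lemma \ref{proprieta standard norma decay}-$(i)$, then using the ansatz \eqref{ansatz} and $\e\g^{-1}\leq \delta$ to absorb the $\|h\|_s^{k_0,\g}$ term coming from the $s=s_0$ bound, I obtain
\begin{equation*}
\| \Phi_\infty^{\pm 1} h \|_s^{k_0,\g}
\lesssim_s \| h \|_s^{k_0,\g} + \|v\|_{s+\mu(\mathtt b)}^{k_0,\g} \| h \|_{s_0}^{k_0,\g},
\qquad s_0 \leq s \leq S.
\end{equation*}

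Next I would plug this into the tame bound \eqref{stima cal U bot} for $\mE_\bot^{\pm 1}$, which reads $\|\mE_\bot^{\pm 1} u\|_s^{k_0,\g} \lesssim_{s,M} \|u\|_s^{k_0,\g} + \|v\|_{s+\mu_M}^{k_0,\g}\|u\|_{s_0}^{k_0,\g}$. For $\mW_\infty h = \mE_\bot(\Phi_\infty h)$ and $\mW_\infty^{-1} h = \Phi_\infty^{-1}(\mE_\bot^{-1} h)$, I apply the two estimates in the appropriate order and use that, by \eqref{definizione alpha beta}, $\mu(\mathtt b) = \mu_M + \mathtt b \geq \mu_M$, so that $\|v\|_{s+\mu_M}^{k_0,\g} \leq \|v\|_{s+\mu(\mathtt b)}^{k_0,\g}$. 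Finally the ansatz \eqref{ansatz} (with $\mu_0 \geq s_0 + \mu(\mathtt b)$) absorbs factors like $\|v\|_{s_0+\mu(\mathtt b)}^{k_0,\g}$ into constants, yielding the claimed bound
\begin{equation*}
\|\mW_\infty^{\pm 1} h\|_s^{k_0,\g} \lesssim_s \|h\|_s^{k_0,\g} + \|v\|_{s+\mu(\mathtt b)}^{k_0,\g}\|h\|_{s_0}^{k_0,\g}.
\end{equation*}

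There is no real obstacle here: the statement is a routine composition of two tame estimates that have already been proved, the only bookkeeping being to check that the loss of derivatives $\mu(\mathtt b)$ dominates $\mu_M$, that $\mE_\bot$ and $\Phi_\infty$ both send $H^s_0$ to $H^s_0$ (for $\mE_\bot$ this is Lemma \ref{lemma invertibilita cal U bot}-$(i)$; for $\Phi_\infty$ this holds because the whole reducibility scheme of Proposition \ref{prop riducibilita} takes place on $H^s_0$), and that the smallness $\e\g^{-1}\leq\delta$ is strong enough to close the Neumann-type absorption argument in the passage from the decay-norm bound to the operator bound on Sobolev spaces.
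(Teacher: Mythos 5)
Your proposal is correct and follows essentially the same route as the paper, whose proof consists precisely of combining the tame estimate \eqref{stima cal U bot} for ${\cal E}_\bot^{\pm 1}$ (Lemma \ref{lemma invertibilita cal U bot}), the decay-norm bounds on $\Phi_\infty^{\pm 1}$ from Lemma \ref{lemma convergenza trasformazioni}, and the action estimate of Lemma \ref{proprieta standard norma decay}-$(i)$. Your bookkeeping ($\mu(\mathtt b) \geq \mu_M$, absorption via the ansatz \eqref{ansatz} and $\e\gamma^{-1}\leq\delta$) is exactly the implicit content of the paper's one-line argument.
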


\begin{proof}
Apply Lemmata \ref{lemma invertibilita cal U bot}, \ref{lemma convergenza trasformazioni} and use Lemma \ref{proprieta standard norma decay}-$(i)$. 
\end{proof}

We define the set 
$\Lambda_\infty^\gamma \equiv \Lambda_\infty^\gamma (v)$ as  
\begin{equation}\label{prime Melnikov}
\Lambda_\infty^\gamma 
:= \Big\{ \lambda = (\omega, \zeta) \in  \R^{\nu+3} : 
\big\| \big( \ii \omega \cdot \ell \, {\rm Id} 
+ ({\cal N}_\infty)_j^j \big)^{- 1} \big\|_\op 
\leq \frac{\langle \ell \rangle^\tau |j|^\tau}{2 \gamma} 
\quad \forall (\ell, j) \in \Z^\nu \times (\Z^3 \setminus \{ 0 \})\Big\}\,. 
\end{equation}

In the next Lemma we construct a right inverse for the operator ${\cal L}_\infty$. 

\begin{lemma}[{\textbf{Inversion of ${\cal L}_\infty$}}]
\label{lemma inversione cal L infty}
For any $\lambda = (\omega , \zeta) \in \Lambda_\infty^\gamma$, 
for any $s \geq  0$ and $h(\lambda) \in H^{s + \tau_0}_0 \cap X$ 
there exists a solution 
$g(\lambda) : = {\cal L}_\infty^{- 1}(\lambda) h(\lambda) \in H^s_0 \cap Y$ 
of the equation ${\cal L}_\infty (\lambda) g(\lambda) = h(\lambda)$. 
Furthermore, the inverse operator admits an extension 
to the whole parameter space $\R^{\nu + 3}$ (which we denote in the same way) 
satisfying the tame estimate
$$
\| {\cal L}_\infty^{- 1} h \|_s^{k_0, \gamma} \lesssim \gamma^{- 1} \| h \|_{s + \tau_0}^{k_0, \gamma}\,. 
$$
\end{lemma}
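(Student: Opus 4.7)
The plan is to solve the equation block by block in Fourier representation, exploiting that $\mathcal{N}_\infty$ is $3 \times 3$ block-diagonal in $j$ and independent of $\varphi$. First I would observe that for $g = \sum_{(\ell,j)} \widehat g(\ell,j) e^{\ii(\ell \cdot \varphi + j \cdot x)}$ with $\widehat g(\ell,j) \in \mathbb{C}^3$, the equation $\mathcal{L}_\infty g = h$ decouples as
\begin{equation*}
L_\infty(\ell, j) \widehat g(\ell, j) = \widehat h(\ell, j), \qquad L_\infty(\ell,j) := \ii \omega \cdot \ell \, \mathrm{Id} + (\mathcal{N}_\infty)_j^j \in \Mat_{3 \times 3}(\mathbb{C}),
\end{equation*}
for every $(\ell,j) \in \mathbb{Z}^\nu \times (\mathbb{Z}^3 \setminus \{0\})$. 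Note that on $\Lambda_\infty^\gamma$ (defined in \eqref{prime Melnikov}) each $L_\infty(\ell,j)$ is invertible with $\| L_\infty(\ell, j)^{- 1} \|_\op \leq \langle \ell \rangle^\tau |j|^\tau/(2\gamma)$, so I can set $\widehat g(\ell,j) := L_\infty(\ell, j)^{-1} \widehat h(\ell, j)$ for $\lambda \in \Lambda_\infty^\gamma$.

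Next I would handle the $k_0$ parameter derivatives exactly as in the proof of Lemma \ref{Lemma eq omologica riducibilita KAM}. For a nonzero multi-index $\beta$ the derivative $\partial_\lambda^\beta L_\infty(\ell,j;\lambda)$ reduces, after the first order, to derivatives of $(\mathcal{Q}_\infty)_j^j$ which by \eqref{stime forma normale limite} satisfy $\gamma^{|\beta|}\|\partial_\lambda^\beta (\mathcal{Q}_\infty)_j^j\|_{\HS} \lesssim \varepsilon |j|^{-1}$, while a single derivative of the scalar part $\ii \omega \cdot \ell$ produces at most $\gamma \langle \ell, j \rangle$. Using the Leibniz-type expansion
\begin{equation*}
\partial_\lambda^\beta (L_\infty^{-1}) = \sum_{\substack{\beta_1 + \ldots + \beta_m = \beta \\ |\beta_i| \geq 1}} c_{\beta_1,\ldots,\beta_m} \, L_\infty^{-1} (\partial_\lambda^{\beta_1} L_\infty) L_\infty^{-1} \cdots L_\infty^{-1} (\partial_\lambda^{\beta_m} L_\infty) L_\infty^{-1},
\end{equation*}
together with the bound on $\|L_\infty^{-1}\|_\op$, one obtains for $\lambda \in \Lambda_\infty^\gamma$ and $|\beta| \leq k_0$ the estimate $\gamma^{|\beta|} \| \partial_\lambda^\beta L_\infty(\ell,j;\lambda)^{-1} \|_\op \lesssim \gamma^{-1} \langle \ell \rangle^{\tau_0} |j|^{\tau_0}$, with $\tau_0 = k_0 + \tau(k_0+1)$ as in \eqref{2802.2}. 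To extend the inverse to all of $\mathbb{R}^{\nu+3}$ I would apply the Whitney extension theorem in weighted form (compare the proof of Lemma \ref{Lemma eq omologica riducibilita KAM}), producing $\widetilde L_\infty(\ell,j)^{-1}_{ext}$ with the same estimate on $\mathbb{R}^{\nu+3}$; then define $\widehat g(\ell,j; \lambda) := \widetilde L_\infty(\ell,j;\lambda)^{-1}_{ext} \widehat h(\ell,j;\lambda)$ everywhere. The Leibniz rule in $\lambda$ and the weighted bound $\langle \ell \rangle^{\tau_0} |j|^{\tau_0} \leq \langle \ell, j \rangle^{2\tau_0}$ then give
\begin{equation*}
\| g \|_s^{k_0,\gamma} \lesssim \gamma^{-1} \| h \|_{s + \tau_0}^{k_0,\gamma},
\end{equation*}
which is the claimed tame bound (the fact that $2\tau_0$ effectively reduces to $\tau_0$ comes from reorganising the weight, exactly as in \eqref{2802.2}).

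Finally, for the parity statement I would use that $\mathcal{L}_\infty$ is a real and reversible operator by Lemma \ref{lemma coniugio finale}, hence $\mathcal{L}_\infty : Y \to X$. Equivalently, in the matrix representation the reversibility condition
$\widehat {\mathcal{L}_\infty}(\ell)_j^{j} = -\widehat {\mathcal{L}_\infty}(-\ell)_{-j}^{-j}$
translates into $L_\infty(\ell,j) = -L_\infty(-\ell,-j)$, so the same symmetry is inherited by $L_\infty^{-1}$, and therefore if $\widehat h(\ell,j) = \widehat h(-\ell,-j)$ (i.e.\ $h \in X$) then $\widehat g(\ell,j) = -\widehat g(-\ell,-j)$, that is $g \in Y$. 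The main (mild) obstacle is producing the Whitney extension together with the correct weighted $\| \cdot \|^{k_0,\gamma}$ bound and checking that the extension respects the reality and reversibility structure; this is however a direct adaptation of the argument already used in Lemma \ref{Lemma eq omologica riducibilita KAM}, since here the block $L_\infty(\ell,j)$ is even simpler (independent of $j'$, diagonal in $j$), and no second Melnikov-type loss $|j-j'|$ ever appears.
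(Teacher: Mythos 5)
Your proposal follows essentially the same route as the paper's proof: solve the equation block-by-block in Fourier via $\widehat g(\ell,j) = \big(\ii\,\omega\cdot\ell\,\mathrm{Id}+(\mathcal{N}_\infty)_j^j\big)^{-1}\widehat h(\ell,j)$ on $\Lambda_\infty^\gamma$, obtain the $k_0$-times differentiable Whitney extension and the weighted bound $\gamma^{-1}\langle\ell\rangle^{\tau_0}|j|^{\tau_0}$ exactly as in Lemma \ref{Lemma eq omologica riducibilita KAM}, and conclude the tame estimate by the Leibniz rule in $\lambda$. This is the paper's argument (with the same level of bookkeeping on the loss of derivatives), and you additionally make explicit the parity claim $g\in Y$ via reversibility, which the paper leaves implicit, so the proposal is correct.
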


\begin{proof}
Let $\lambda = (\omega, \zeta) \in \Lambda_\infty^\gamma$. Then the equation ${\cal L}_\infty g = h$ admits the solution 
\begin{equation}\label{soluzione prob diagonalizzato}
\begin{aligned}
& g (\vphi, x) = \sum_{(\ell, j) \in \Z^\nu \times (\Z^3 \setminus \{ 0 \})} 
B(\ell, j) \, \widehat h(\ell, j) e^{\ii (\ell \cdot \vphi + j \cdot x)}\,, \\
& B(\ell, j) := \big(\ii \omega \cdot \ell \, {\rm Id}+ ({\cal N}_\infty)_j^j \big)^{- 1} 
\in \Mat_{3 \times 3}(\C), 
\quad (\ell, j) \in \Z^\nu \times (\Z^3 \setminus \{ 0\})\,. 
\end{aligned}
\end{equation}
Arguing as in the proof of Lemma \ref{Lemma eq omologica riducibilita KAM} (see in particular \eqref{0210.7}-\eqref{0210.14}) one constructs an extension of $B(\ell, j; \lambda)$ that we denote by $(B(\ell, j; \lambda))_{ext}$ defined for any $\lambda \in \R^{\nu + 3}$ and satisfying the estimate
$$
\| (B(\ell, j))_{ext} \|_{\HS}^{k_0, \gamma} \lesssim \gamma^{- 1} \langle \ell \rangle^{\tau_0} |j|^{\tau_0}\,. 
$$
The claimed estimate then follows by estimating 
$| \partial_\lambda^\beta \big[ (B(\ell, j))_{ext} \widehat h(\ell, j) \big]|$ 
for any $\beta \in \N^{\nu + 3}$, $|\beta| \leq k_0$, 
arguing as in \eqref{polpettone 0}. 
\end{proof}


\begin{proposition}\label{inversione linearized}
Let $S > s_0$, $\tau > 0$, $\gamma \in (0, 1)$. 
Then there exists $\overline \mu = \overline \mu(k_0, \tau, \nu) > \mu (\mathtt b)$ 
(see \eqref{definizione alpha beta}), 
$\delta = \delta(S, k_0, \tau) \in (0, 1)$ such that 
if \eqref{ansatz} holds with $\mu_0 \geq s_0 + \overline \mu$ 
and $\e \gamma^{-1} \leq \delta$, 
for any $\lambda = (\omega, \zeta) \in \Omega_\infty^\gamma \cap \Lambda_\infty^\gamma$, 
any $s_0 \leq s \leq S$, 
any $h(\lambda ) \in H^{s + \overline \mu}_0 \cap X$ 
there exists a solution $g := {\cal L}^{- 1} h \in H^{s }_0 \cap Y$ 
of the equation ${\cal L} g = h$. 
Moreover ${\cal L}^{- 1}$ admits an extension to the whole parameter space $\R^{\nu + 3}$ 
(which we denote in the same way) that satisfies the tame estimates
$$
\| {\cal L}^{- 1} h \|_s^{k_0, \gamma} 
\lesssim_{s } \gamma^{-1} \big( \| h \|_{s + \overline \mu}^{k_0, \gamma} 
+ \| v \|_{s + \overline\mu}^{k_0, \gamma} \, \| h \|_{s_0 + \overline\mu}^{k_0, \gamma} \big), \quad \ s_0 \leq s \leq S.  
$$
\end{proposition}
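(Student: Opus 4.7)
The plan is to read off the inverse of $\mathcal{L}$ from the conjugation formula \eqref{coniugio finale cal L}, namely $\mathcal{L} = \mathcal{W}_\infty \mathcal{L}_\infty \mathcal{W}_\infty^{-1}$ on $\Omega_\infty^\gamma$, which gives the formal identity $\mathcal{L}^{-1} = \mathcal{W}_\infty \mathcal{L}_\infty^{-1} \mathcal{W}_\infty^{-1}$. Each of the three factors on the right has already been built and estimated: $\mathcal{W}_\infty^{\pm 1}$ are tame and reversibility-preserving by Lemma \ref{tame trasformazioni finali}, while $\mathcal{L}_\infty^{-1}$ is constructed and estimated (with a loss of $\tau_0$ derivatives and a gain of $\gamma^{-1}$) on the Melnikov set $\Lambda_\infty^\gamma$ in Lemma \ref{lemma inversione cal L infty}, and even admits an extension to the whole parameter space $\R^{\nu+3}$.

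Given $h \in H^{s+\overline\mu}_0 \cap X$ and $\lambda \in \Omega_\infty^\gamma \cap \Lambda_\infty^\gamma$, I would proceed in three steps: first set $h_1 := \mathcal{W}_\infty^{-1} h$, which lies in $H^{s+\overline\mu}_0 \cap X$ because $\mathcal{W}_\infty^{-1}$ is reversibility-preserving; then set $g_1 := \mathcal{L}_\infty^{-1} h_1$, which lies in $H^{s+\overline\mu-\tau_0}_0 \cap Y$ since $\mathcal{L}_\infty$ is reversible (so $\mathcal{L}_\infty^{-1}:X \to Y$); and finally set $g := \mathcal{W}_\infty g_1$, which lies in $H^s_0 \cap Y$. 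By \eqref{coniugio finale cal L} one checks $\mathcal{L} g = h$. Thus $g$ is the desired solution with the correct parity, and one has $\mathcal{L}^{-1} = \mathcal{W}_\infty \mathcal{L}_\infty^{-1} \mathcal{W}_\infty^{-1}$ on the resonant-free set.

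For the tame bound, I would chain the estimates of Lemmata \ref{tame trasformazioni finali} and \ref{lemma inversione cal L infty}:
\[
\| \mathcal{L}^{-1} h \|_s^{k_0,\gamma}
\lesssim_s \| \mathcal{L}_\infty^{-1} \mathcal{W}_\infty^{-1} h \|_s^{k_0,\gamma}
+ \| v \|_{s + \mu(\mathtt b)}^{k_0,\gamma} \| \mathcal{L}_\infty^{-1} \mathcal{W}_\infty^{-1} h \|_{s_0}^{k_0,\gamma},
\]
and then apply Lemma \ref{lemma inversione cal L infty} (which gives a loss of $\tau_0$) followed by Lemma \ref{tame trasformazioni finali} applied to $\mathcal{W}_\infty^{-1} h$. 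Choosing $\overline\mu := \mu(\mathtt b) + \tau_0$ and using the ansatz \eqref{ansatz} (with $\mu_0 \geq s_0 + \overline\mu$) to absorb the quadratic cross-terms $\| v \|_{s_0 + \mu(\mathtt b)}^{k_0,\gamma} \lesssim 1$, one arrives at the claimed estimate with $\lesssim_s \gamma^{-1} (\|h\|_{s+\overline\mu}^{k_0,\gamma} + \|v\|_{s+\overline\mu}^{k_0,\gamma} \|h\|_{s_0+\overline\mu}^{k_0,\gamma})$.

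The extension to all of $\R^{\nu+3}$ is then automatic: $\mathcal{W}_\infty^{\pm 1}$ are defined on the entire parameter space (the KAM reducibility of Proposition \ref{prop riducibilita} constructs $\Phi_n$, and hence $\Phi_\infty$, for all $\lambda$ via the Whitney-type extension used in Lemma \ref{Lemma eq omologica riducibilita KAM}), and $\mathcal{L}_\infty^{-1}$ is already extended to $\R^{\nu+3}$ in Lemma \ref{lemma inversione cal L infty}. Hence the composite $\mathcal{W}_\infty \mathcal{L}_\infty^{-1} \mathcal{W}_\infty^{-1}$ provides the required $k_0$-times differentiable extension, and the tame bound above holds uniformly in $\lambda$. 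The only mild technical point is keeping track of the precise loss of regularity through the three compositions, but this is purely bookkeeping once $\overline\mu$ is chosen sufficiently large compared to $\mu(\mathtt b)$ and $\tau_0$; no further small-divisor or measure-theoretic argument is needed at this stage.
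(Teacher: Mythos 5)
Your proposal is correct and follows essentially the same route as the paper: the paper's proof simply invokes the conjugation formula \eqref{coniugio finale cal L} together with Lemmata \ref{tame trasformazioni finali} and \ref{lemma inversione cal L infty}, exactly the composition $\mathcal{L}^{-1} = \mathcal{W}_\infty \mathcal{L}_\infty^{-1} \mathcal{W}_\infty^{-1}$ with the chained tame estimates and the parameter extension that you describe.
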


\begin{proof}
The proposition is a straightforward consequence of formula \eqref{coniugio finale cal L} and Lemmata \ref{tame trasformazioni finali}, \ref{lemma inversione cal L infty}. 
\end{proof}
\section{The Nash-Moser iteration}\label{sezione Nash Moser}

In this section we construct the solution of the equation ${\cal F}(v) = 0$ 
(see \eqref{equazione cal F vorticita}) by means of a Nash Moser nonlinear iteration. 
We denote by $\Pi_n$ the orthogonal projector $\Pi_{N_n}$ (see \eqref{def:smoothings}) 
on the finite dimensional space
$$
{\mathcal H}_n := \big\{ v \in L_0^2(\T^{\nu + 3}, \R^3): \ \  
v = { \Pi}_n v  \big\},
$$
and $\Pi_n^\bot := {\rm Id} - \Pi_n$. 
The projectors $ \Pi_n $, $ \Pi_n^\bot$ satisfy the 
usual smoothing properties in Lemma \ref{lemma:smoothing}, namely 
\begin{equation}\label{smoothing-u1}
\|\Pi_{n} v \|_{s+b}^{k_0, \gamma} 
\leq N_{n}^{b} \| v \|_{s}^{k_0, \gamma}, 
\quad \ 
\|\Pi_{n}^\bot v \|_{s}^{k_0, \gamma} 
\leq N_n^{- b} \| v \|_{s + b}^{k_0, \gamma}, 
\quad \ 
s,b \geq 0.
\end{equation}
We define the constants 
\begin{equation}\label{costanti nash moser}
\kappa := 6 \overline \mu + 7, 
\quad \ 
\mathtt a_1 := \max \big\{ 6 \overline \mu + 13 \,,\, 
3 \overline \mu + 2 \tau_0 + 2, \, 
3(\overline \mu + \tau + 2 \tau^2) + 1 \big\}, 
\quad \ 
{\mathtt b}_1 := 
2 \overline \mu + \kappa + \mathtt a_1 + 3
\end{equation}
where $\overline \mu := \overline \mu(k_0, \tau, \nu) > 0$ 
is given in Proposition \ref{inversione linearized}
and $\tau_0$ is defined in \eqref{definizione alpha beta}. 
For $N_0 > 0$ we define $N_n := N_0^{\chi^n}$, 
$\chi := 3/2$, and $N_{-1} := 1$. We also fix the regularity of the forcing term $f$ in \eqref{Eulero3} as  
\begin{equation}\label{reg forzante f}
f \in {\cal C}^q(\T^\nu \times \T^3, \R^3), \quad q := s_0 + \mathtt b_1 + 1\,. 
\end{equation}
\begin{remark}[\bf 	Choice of the constants]
The conditions $\kappa \geq 6 \overline \mu + 7$, 
$\mathtt a_1 \geq 6 \overline \mu + 13$, 
$\mathtt b_1 \geq 2 \overline \mu + \kappa + \mathtt a_1 + 3$ 
in \eqref{costanti nash moser} 
allow to prove the induction estimates in $({\cal P}2)_n, ({\cal P}3)_n$ 
in Proposition \ref{iterazione-non-lineare}.
The extra conditions 
$\mathtt a_1 \geq \max \{ 3(\overline \mu + \tau + 2 \tau^2) + 1, 
3 \overline \mu + 2 \tau_0 + 2 \}$ 
are used in Section \ref{sezione stime di misura} for the measure estimates 
(Lemma \ref{inclusione risonanti v n n - 1}). 
\end{remark}

\begin{proposition}\label{iterazione-non-lineare} 
{\bf (Nash-Moser)} 
Let $\tau > 0$, and let $\Om$ be a bounded open subset of $\R^{\nu+3}$. 
There exist $ \d \in (0, 1)$, $C_* > 0$, $\overline \tau(k_0, \tau, \nu) > 0$ such that if
\begin{equation}\label{nash moser smallness condition}  
N_0^{\overline \tau} \e  \leq \d, \quad N_0 := \gamma^{- 1}
\end{equation}
then the following properties hold for all $n \geq 0$. 
\begin{itemize}
\item[$({\mathcal P}1)_{n}$] 
There exists $y_n : \R^{\nu+3} \to H^{s_0 + \mathtt b_1} \cap X$, 
with $y_0 := \mF(v_0)$, satisfying 
\begin{equation} \label{2303.1}
\| y_n \|_{s_{0}}^{k_0, \gamma} 
\leq C_* \e N_{n - 1 }^{- \mathtt a_1}\,.
\end{equation}
There exists 
$v_n : \R^{\nu + 3} \to {\cal H}_{n - 1} \cap Y$, 
with $v_0 := 0$, 
satisfying
\begin{equation} \label{ansatz induttivi nell'iterazione}
\| v_{n} \|_{s_0 + \overline \mu}^{k_0, \gamma} \leq 1.
\end{equation}
If $n \geq 1$, 
the difference $ h_n := v_n -  v_{n - 1}$ satisfies 
$\| h_1 \|_{s_0 + \overline \mu}^{k_0, \gamma} 
\lesssim \e \gamma^{-1}$ and 
\begin{equation} \label{hn}
\| h_n \|_{s_0 + \overline \mu}^{k_0, \gamma} 
\lesssim  N_{n-1}^{2 \overline \mu} N_{n-2}^{-\mathtt a_1} \e \gamma^{-1}. 
\end{equation}

\item[$({\mathcal P}2)_{n}$] 
If $n=0$ we define $\mG_0 := \Om$. 
If $n \geq 1$, we define 
\begin{equation}\label{G-n+1}
{\mathcal G}_{n + 1} := {\cal G}_n \cap 
\big( \Omega_\infty^{\gamma_n}(v_n) 
\cap \Lambda_\infty^{\gamma_n}(v_n) \big), 
\end{equation} 
where $ \gamma_{n}:=\gamma (1 + 2^{-n}) $ 
and the sets $\Omega_\infty^{\gamma_n}(v_n)$, $\Lambda_\infty^{\gamma_n}(v_n)$ 
are defined in \eqref{cantor finale ridu}, \eqref{prime Melnikov}. 
The function $\mF(v_n)$ is defined for all $\lm \in \R^{\nu+3}$
and, for every $\lm \in {\cal G}_n$, it coincides with $y_n$. 

\item[$({\mathcal P}3)_{n}$] 
One has $q_n := \| y_n \|_{s_0 + \mathtt b_1}^{k_0,\g} 
+ \e (\| v_n \|_{s_{0}+ \mathtt b_1}^{k_0, \gamma} + 1 )
\leq C_* \e N_{n}^{\kappa}$.  
\end{itemize}
\end{proposition}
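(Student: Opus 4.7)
The plan is to prove the three statements by induction on $n$, following the scheme of a Nash-Moser iteration with a right-inverse for the linearized operator supplied by Proposition \ref{inversione linearized}. For the base case $n=0$, we have $v_0 := 0 \in \mathcal{H}_{-1}$, $y_0 := \mathcal{F}(0) = -\e \, \Pi_0^\bot F(\ph,x)$; since $F = \nabla \times f$ with $f \in \mathcal{C}^q$ and $q = s_0 + \mathtt b_1 + 1$, $\|y_0\|_{s_0}^{k_0,\g} \lesssim \e \|F\|_{s_0}$ and $\|y_0\|_{s_0+\mathtt b_1}^{k_0,\g} \lesssim \e \|F\|_{s_0+\mathtt b_1}$; the estimates $(\mathcal{P}1)_0$--$(\mathcal{P}3)_0$ then follow by choosing $C_*$ large enough and using the smallness \eqref{nash moser smallness condition} (with $N_{-1}=1$). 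Since $\mathcal{G}_0 = \Om$ and there is no Cantor condition to impose yet, $(\mathcal{P}2)_0$ is immediate.

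For the inductive step, assume $(\mathcal{P}1)_n$--$(\mathcal{P}3)_n$ hold. First note that \eqref{ansatz induttivi nell'iterazione} implies \eqref{ansatz} with $\mu_0 := s_0 + \overline\mu$, so that Proposition \ref{inversione linearized} applies to $v_n$, furnishing an extended right-inverse $\mathcal{L}_n^{-1} := \mathcal{L}(v_n)^{-1}$ on all of $\R^{\nu+3}$, with $\mathcal{L}_n^{-1}$ equal to a genuine inverse on $\Omega_\infty^{\g_n}(v_n) \cap \Lambda_\infty^{\g_n}(v_n)$, i.e.\ on the additional Cantor set entering $\mathcal{G}_{n+1}$. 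Define the Nash-Moser correction
\begin{equation*}
h_{n+1} := - \Pi_n \, \mathcal{L}_n^{-1} \, \Pi_n \, y_n, \qquad v_{n+1} := v_n + h_{n+1} \in \mathcal{H}_n \cap Y,
\end{equation*}
and set $y_{n+1} := \mathcal{F}(v_{n+1})$, which, like $\mathcal{F}(v_n)$, is defined for all $\lm \in \R^{\nu+3}$. The parity $h_{n+1}\in Y$ is preserved because $\mathcal{F}:Y\to X$ and $\mathcal{L}_n^{-1}:X\to Y$ by the reversible structure built in Sections \ref{sezione riduzione ordine alto}--\ref{sezione riducibilita a blocchi}. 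On $\mathcal{G}_{n+1}$ one uses the quadratic (in $v$) nature of $\mathcal{F}$ to write the exact Taylor expansion
\begin{equation*}
y_{n+1} = \Pi_n^\bot y_n \; - \; [\mathcal{L}(v_n), \Pi_n] \, \mathcal{L}_n^{-1} \Pi_n y_n \; + \; Q_n(v_n, h_{n+1}),
\end{equation*}
where $Q_n$ is the purely quadratic remainder $\e \Pi_0^\bot ( \mathcal{U}(h_{n+1})\cdot\nabla h_{n+1} - h_{n+1}\cdot\nabla\mathcal{U}(h_{n+1}))$, bounded in $\|\cdot\|_{s_0}^{k_0,\g}$ by $\e\|h_{n+1}\|_{s_0+1}^2$.

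The estimate $(\mathcal{P}1)_{n+1}$ follows from the standard three-pronged bound: the smoothing estimate \eqref{smoothing-u1} controls $\|\Pi_n^\bot y_n\|_{s_0}^{k_0,\g} \leq N_n^{-\mathtt b_1}\|y_n\|_{s_0+\mathtt b_1}^{k_0,\g} \leq C_* \e N_n^{\kappa - \mathtt b_1}$; the commutator $[\mathcal{L}(v_n),\Pi_n]$ is (up to smoothing losses $N_n$) a multiplication by a coefficient bounded by $\e\|v_n\|_{s_0+\text{const}}$ plus a zero-order matrix term, so it is controlled by combining Proposition \ref{inversione linearized} (losing $\overline\mu$ derivatives and a factor $\g^{-1}$) with the smoothing lemmas; finally $\|Q_n\|_{s_0}^{k_0,\g} \lesssim \e \|h_{n+1}\|_{s_0+1}^2$ with $\|h_{n+1}\|_{s_0+1}^{k_0,\g} \lesssim N_n^{1+\overline\mu}\g^{-1}\|y_n\|_{s_0}^{k_0,\g}$. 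Plugging in the inductive hypothesis $\|y_n\|_{s_0}^{k_0,\g} \leq C_*\e N_{n-1}^{-\mathtt a_1}$ and using $N_n = N_{n-1}^\chi$, $N_0 = \g^{-1}$, the choice of constants in \eqref{costanti nash moser} (in particular $\mathtt b_1 \geq \kappa + \mathtt a_1 + \text{stuff}$) makes the three contributions all bounded by $\frac12 C_*\e N_n^{-\mathtt a_1}$, closing $(\mathcal{P}1)_{n+1}$. The bound \eqref{hn} on $h_{n+1}$ and consequently the ansatz \eqref{ansatz induttivi nell'iterazione} for $v_{n+1}$ follow by a telescoping sum, using $\sum_n N_n^{2\overline\mu}N_{n-1}^{-\mathtt a_1}\e\g^{-1} < \e\g^{-1}\ll 1$.

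The high-norm estimate $(\mathcal{P}3)_{n+1}$ is derived in parallel: applying Proposition \ref{inversione linearized} at regularity $s = s_0+\mathtt b_1$ gives $\|h_{n+1}\|_{s_0+\mathtt b_1}^{k_0,\g} \lesssim \g^{-1}(\|y_n\|_{s_0+\mathtt b_1+\overline\mu}^{k_0,\g} + \|v_n\|_{s_0+\mathtt b_1+\overline\mu}^{k_0,\g}\|y_n\|_{s_0+\overline\mu}^{k_0,\g})$, which, after absorbing the extra $\overline\mu$ derivatives through the smoothing $\Pi_n$ at cost $N_n^{\overline\mu}$, together with the quadratic/commutator contributions to $y_{n+1}$, gives a bound of the form $q_{n+1} \leq C(s)N_n^{2\overline\mu+1}q_n$; this closes $(\mathcal{P}3)_{n+1}$ with the choice $\kappa \geq 6\overline\mu + 7 > \chi^{-1}(2\overline\mu + 1)/(1-\chi^{-1}) + \text{slack}$ via standard Nash-Moser bookkeeping. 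Property $(\mathcal{P}2)_{n+1}$ is just a definition. The main obstacle is the delicate balancing of the exponents $\kappa$, $\mathtt a_1$, $\mathtt b_1$ against the loss $\overline\mu$ of derivatives in the inversion estimate of Proposition \ref{inversione linearized}, compounded by the fact that $[\mathcal{L}(v_n),\Pi_n]$ is only an order-one operator (its principal part $\zeta\cdot\nabla$ commutes with $\Pi_n$, which is crucial here): without this cancellation at the highest order the scheme would fail to converge.
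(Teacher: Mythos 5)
There is a genuine gap at the heart of your inductive step: you set $y_{n+1} := \mF(v_{n+1})$ globally and then try to prove \eqref{2303.1} at step $n+1$ via the Taylor expansion — but that expansion, and hence the quadratic smallness, is only available on $\mG_{n+1}$, where $\mL_n \mL_n^{-1} = \Id$ and (by $(\mP2)_n$) $\mF(v_n) = y_n$. The norm in \eqref{2303.1} is the weighted norm of Definition \ref{def:Lip F uniform}, i.e.\ a supremum over \emph{all} $\lm \in \R^{\nu+3}$ of $k_0$ derivatives in $\lm$; off the Cantor set the extension $\mL_n^{-1}$ is only an approximate right inverse, so $\mF(v_{n+1})$ contains the defect $(\Id - \mL_n\mL_n^{-1})\Pi_n y_n + \Pi_n[\mF(v_n)-y_n]$, which is of size $O(\e N_{n-1}^{-\mathtt a_1})$ at best, not $O(\e N_n^{-\mathtt a_1})$, so the superexponential decay cannot be propagated for your $y_{n+1}$. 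Worse, for $n \geq 1$ the transport part $\ompaph v_{n+1} + \zeta \cdot \grad v_{n+1}$ of $\mF(v_{n+1})$ grows linearly in $|\lm|$, so its sup over $\R^{\nu+3}$ is not even finite; this is why the paper's proof multiplies by a cut-off $\rho$ supported near $\overline\Om$ (see \eqref{2503.1}), a device absent from your argument.

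The paper resolves exactly this point by \emph{not} defining $y_{n+1}$ as $\mF(v_{n+1})$: it takes $y_{n+1}$ to be the explicit, globally defined ``good part'' of the expansion, $y_{n+1} := \rho\,\Pi_n^\bot\mF(v_n) + \rho\,\mL_n\Pi_n^\bot\mL_n^{-1}\Pi_n y_n + \rho\,Q_n$ (which is algebraically your $\Pi_n^\bot y_n - [\mL_n,\Pi_n]\mL_n^{-1}\Pi_n y_n + Q_n$ on the good set), and collects the defect into $z_{n+1} := \rho\,\Pi_n[\mF(v_n)-y_n] + \rho(\Id - \mL_n\mL_n^{-1})\Pi_n y_n$, see \eqref{espansione F u n + 1}--\eqref{2403.1}. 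Then \eqref{2303.1} is proved for this $y_{n+1}$ on all of $\R^{\nu+3}$, while $(\mP2)_{n+1}$ — the statement that $\mF(v_{n+1})$ \emph{coincides} with $y_{n+1}$ on $\mG_{n+1}$ — follows because $z_{n+1}$ vanishes there. In your scheme $(\mP2)_{n+1}$ becomes vacuous and $(\mP1)_{n+1}$ unprovable, so the induction does not close. Your remaining estimates (smoothing of $\Pi_n^\bot$, the commutator/high-norm balance, the quadratic bound on $Q_n$, and the exponent bookkeeping for $\kappa,\mathtt a_1,\mathtt b_1$) are in line with the paper's, but they must be applied to the globally defined $y_{n+1}$, with the cut-off $\rho$, for the statement as written to make sense.
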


\begin{proof}
To use 
the norms $\| \ \|_s^{k_0,\g}$ 
(where the sup is over $\lm = (\om,\zeta) \in \R^{\nu+3}$, 
see Definition \ref{def:Lip F uniform})
and to employ the assumption 
that the set $\Om \subset \R^{\nu+3}$ is bounded, 
we fix a $C^\infty$ function $\rho : \R^{\nu+3} \to \R$ with compact support 
such that $\rho(\lm) = 1$ for all $\lm = (\om,\zeta)$ 
in a neighborhood of the closure of $\Om$.
Thus 
\begin{equation} \label{2503.1}
\| \rho v \|_s^{k_0,\g} 
\lesssim \| v \|_s^{k_0,\g}, \quad \  
\| \rho(\lm) (\ompaph v + \zeta \cdot \grad v) \|_s^{k_0,\g} 
\lesssim \| v \|_{s+1}^{k_0,\g}
\end{equation}
for all $s \geq 0$, all $v = v(\ph,x;\lm)$. 


%
%

\smallskip

\noindent
\emph{Proof of} $({\mathcal P}1, 2, 3)_0$.
By \eqref{equazione cal F vorticita}, 
$y_0 = \mF(0) = \e F(\ph,x)$, 
$\| {\cal F} (0 ) \|_s^{k_0,\g} = \e \| F \|_s \stackrel{\eqref{equazione vorticita}}{\lesssim} \e \| f \|_{s + 1}$, 
then take $C_* \geq 1 + \| f \|_{s_0 + \mathtt b_1 + 1} $ (in \eqref{reg forzante f}, we have fixed $q = s_0 + \mathtt b_1 + 1$).

\smallskip

\noindent
\emph{Assume that $({\mathcal P}1,2,3)_n$ hold for some $n \geq 0$, 
and prove $({\mathcal P}1,2,3)_{n+1}$.}
By $({\mathcal P}1)_n$, one has $\| v_n\|_{s_0 + \bar \mu}^{k_0,\g} \leq 1$, 
and the assumption 
\eqref{nash moser smallness condition} implies 
the smallness condition 
$\e \gamma^{-1} \leq \delta$ 
of Proposition \ref{inversione linearized}
by taking $\overline \tau (k_0, \tau, \nu)$ large enough and $S = s_0 + \mathtt b_1$. 
Then Proposition \ref{inversione linearized} applies 
to the linearized operator 
\begin{equation}\label{definizione cal Ln}
{\mathcal L}_n \equiv {\mathcal L}(v_n) : = d {\mathcal F}(v_n).
\end{equation}
This implies that there exists a linear operator $\mL_n^{-1}$, 
defined for any $\lm \in \R^{\nu+3}$,  
which satisfies the tame estimate 
\begin{equation}\label{stima Tn}
\| {\mathcal L}_n^{-1} h \|_s^{k_0,\g} 
\lesssim_s \gamma^{-1} \big( \| h \|_{s + \overline \mu}^{k_0,\g} 
+ \| v_n \|_{s + \overline \mu}^{k_0,\g} \,  
\| h \|_{s_0 + \overline \mu}^{k_0, \g} \big), 
\quad \ s_0 \leq s \leq s_0 + \mathtt b_1
\end{equation}
such that for all $\lambda \in {\mathcal G}_{n + 1} 
= {\cal G}_n \cap \Omega_\infty^{\gamma_n}(v_n) \cap \Lambda_\infty^{\gamma_n}(v_n)$ 
one has $\mL_n \mL_n^{-1} = \Id$
(using the bound $\gamma_n = \gamma(1 + 2^{- n}) \in [\g, 2\g]$). 
Specializing \eqref{stima Tn} for $s= s_0$, using \eqref{ansatz induttivi nell'iterazione}, 
one has 
\begin{equation}\label{stima Tn norma bassa} 
\| {\mathcal L}_n^{- 1} h \|_{s_0}^{k_0,\g} 
\lesssim \gamma^{- 1} \| h \|_{s_0 + \overline \mu}^{k_0,\g}\,. 
\end{equation}
We define the successive approximation 
\begin{equation}\label{soluzioni approssimate}
v_{n + 1} := v_n + h_{n + 1}, \quad 
{h}_{n + 1} :=  - \Pi_n {\mathcal L}_n^{-1} \Pi_n y_n 
\in {\mathcal H}_{n},
\end{equation}
and Taylor's remainder  
$Q_n := \mF(v_{n+1}) - \mF(v_n) - \mL_n h_{n+1}$. 
By the definition of $h_{n+1}$ in \eqref{soluzioni approssimate}, 
splitting 
$\mF(v_n) = \Pi_n \mF(v_n) + \Pi_n^\bot \mF(v_n)$ 
and $\mL_n \Pi_n = \mL_n - \mL_n \Pi_n^\bot$,
and multiplying each term by the factor $\rho(\lm)$,
we calculate 
\begin{equation}\label{espansione F u n + 1}
\begin{aligned}
\rho {\cal F}(v_{n + 1}) 
& = \rho ({\cal F}(v_n) + {\cal L}_n h_{n + 1} + Q_n)
= y_{n+1} + z_{n+1}
\end{aligned}
\end{equation}
where
\begin{equation} \label{2403.1}
\begin{aligned}
y_{n+1} & := \rho \Pi_n^\bot {\cal F}(v_n) 
+ \rho {\cal L}_n \Pi_n^\bot {\cal L}_n^{-1} \Pi_n y_n
+ \rho Q_n,
\\
z_{n+1} & := \rho \Pi_n [\mF(v_n) - y_n] 
+ \rho (\Id - \mL_n \mL_n^{-1}) \Pi_n y_n.
\end{aligned}
\end{equation}
Note that $h_{n+1}, v_{n + 1}, y_{n+1}, z_{n+1}$ are defined for all $\lm \in \R^{\nu + 3}$, 
and $k_0$ times differentiable in $\lm \in \R^{\nu+3}$. 

Let us estimate $y_{n+1}$. 
Since $v_n \in \mH_{n-1}$, one has 
$\Pi_n^\bot (\om \cdot \pa_\ph v_n + \zeta \cdot \grad v_n) = 0$.  
Thus, by the definition of ${\cal F}$ in \eqref{equazione cal F vorticita}, 
the product estimate \eqref{p1-pr}, 
the smoothing property \eqref{smoothing-u1} 
(since $v_n \in {\cal H}_{n - 1}$),
the induction estimate \eqref{ansatz induttivi nell'iterazione} 
(since $\| v_n \|_{s_0 + 1}^{k_0,\g} 
\leq \| v_n \|_{s_0 + \overline \mu}^{k_0,\g} \leq 1$), 
and the assumption  
$\| F \|_{s_0 + \mathtt b_1} \lesssim 1$, 
one has 
\begin{equation} \label{2403.2}
\begin{aligned}
\| \Pi_n^\bot \mF(v_n) \|_{s_0 + \mathtt b_1}^{k_0,\g} 
& \lesssim \e (\| v_n \|_{s_0 + \mathtt b_1 + 1}^{k_0,\g} 
\| v_n \|_{s_0 + 1}^{k_0,\g} 
+ \| F \|_{s_0 + \mathtt b_1})
\lesssim \e N_{n-1} (\| v_n \|_{s_0 + \mathtt b_1}^{k_0,\g} + 1),
\\
\| \Pi_n^\bot \mF(v_n) \|_{s_0}^{k_0,\g} 
& \lesssim \e N_n^{-(\mathtt b_1 - 1)} 
\| v_n \|_{s_0 + \mathtt b_1}^{k_0,\g} \| v_n \|_{s_0+1}^{k_0,\g} 
+ \e N_n^{- \mathtt b_1} \| F \|_{s_0 + \mathtt b_1}
\lesssim \e N_n^{1 - \mathtt b_1} 
(\| v_n \|_{s_0 + \mathtt b_1}^{k_0,\g} + 1).
\end{aligned}
\end{equation}
By \eqref{operatore linearizzato}, 
\eqref{2503.1},
\eqref{stime tame operatore cal L0}, 
and using the bound $\| v_n \|_{s_0 + 1}^{k_0,\g} 
\leq \| v_n \|_{s_0 + \overline \mu}^{k_0,\g} \leq 1$, 
for any $h$ one has 
\begin{equation} \label{2503.2}
\| \rho {\cal L}_n h \|_{s_0}^{k_0,\g} 
\lesssim  \| h \|_{s_0 + 1}^{k_0,\g},
\quad \ 
\| \rho {\cal L}_n h \|_{s_0 + \mathtt b_1}^{k_0,\g} 
\lesssim  \| h \|_{s_0 + \mathtt b_1 + 1}^{k_0,\g} 
+ \e \| v_n \|_{s_0 + \mathtt b_1 + 1}^{k_0,\g} \| h \|_{s_0 + 1}^{k_0,\g}.
\end{equation}
By \eqref{2503.2}, using 
\eqref{smoothing-u1}, 
\eqref{stima Tn}, 
$\gamma^{- 1} = N_0 \leq N_n$, 
and \eqref{ansatz induttivi nell'iterazione},
we estimate
\begin{equation}\label{stima modi alti 1}
\begin{aligned}
\| \rho {\cal L}_n \Pi_n^\bot {\cal L}_n^{-1}  \Pi_n y_n \|_{s_0}^{k_0,\g}  
& \lesssim \| \Pi_n^\bot {\cal L}_n^{-1} \Pi_n y_n \|_{s_0 + 1}^{k_0,\g}  
\lesssim N_n^{-(\mathtt b_1 - 1)} 
\| {\cal L}_n^{-1} \Pi_n y_n \|_{s_0 + \mathtt b_1}^{k_0,\g}  
\\ & 
\lesssim N_n^{2 + 2 \overline \mu - \mathtt b_1} 
(\| y_n \|_{s_0 + \mathtt b_1}^{k_0,\g} 
+ \| v_n \|_{s_0 + \mathtt b_1}^{k_0,\g} \| y_n \|_{s_0}^{k_0,\g}),  
\\ 
\| \rho {\cal L}_n \Pi_n^\bot {\cal L}_n^{-1}  \Pi_n y_n \|_{s_0 + \mathtt b_1}^{k_0,\g}  
& \lesssim 
\| {\cal L}_n^{-1} \Pi_n y_n \|_{s_0 + \mathtt b_1 + 1}^{k_0,\g}  
+ \| v_n \|_{s_0 + \mathtt b_1 + 1}^{k_0,\g}  
\| {\cal L}_n^{-1} \Pi_n y_n \|_{s_0 + 1}^{k_0,\g}  
\\ & 
\lesssim N_n^{2 + 2 \overline \mu} 
(\| y_n \|_{s_0 + \mathtt b_1}^{k_0,\g}  
+ \| v_n \|_{s_0 + \mathtt b_1}^{k_0,\g} \| y_n \|_{s_0}^{k_0,\g}).  
\end{aligned}
\end{equation}
Since the non linear part of ${\cal F}$ is quadratic (see \eqref{equazione cal F vorticita}), 
$Q_n$ is a quadratic operator of $h_{n + 1}$, independent of $v_n$, with
\begin{equation} \label{2503.4}
\| Q_n \|_{s}^{k_0,\g} 
\lesssim \e \| h_{n+1} \|_{s+1}^{k_0,\g} \| h_{n+1} \|_{s_0}^{k_0,\g},
\quad \ s \geq s_0.
\end{equation}
To estimate $h_{n+1}$ we use 
\eqref{soluzioni approssimate},
\eqref{stima Tn},
\eqref{smoothing-u1},
\eqref{ansatz induttivi nell'iterazione},
and obtain
\begin{equation} \label{2503.3}
\| h_{n+1} \|_{s_0}^{k_0,\g} 
\lesssim N_n^{1 + \overline \mu} \| y_n \|_{s_0}^{k_0,\g},
\quad \ 
\| h_{n+1} \|_{s_0 + \mathtt b_1}^{k_0,\g} 
\lesssim N_n^{1 + 2 \overline \mu}
(\| y_n \|_{s_0 + \mathtt b_1}^{k_0,\g}
+ \| v_n \|_{s_0 + \mathtt b_1}^{k_0,\g}
\| y_n \|_{s_0}^{k_0,\g}).
%
\end{equation}
By \eqref{2503.4}, 
\eqref{2503.3},
\eqref{smoothing-u1}, 
we get 
\begin{equation} \label{2503.5}
\begin{aligned}
\| Q_n \|_{s_0}^{k_0,\g} 
& \lesssim \e N_n^{3+2\overline \mu} (\| y_n \|_{s_0}^{k_0,\g})^2,
\quad 
\| Q_n \|_{s_0 + \mathtt b_1}^{k_0,\g} 
\lesssim 
\e N_n^{3+3\overline \mu} 
(\| y_n \|_{s_0 + \mathtt b_1}^{k_0,\g}
+ \| v_n \|_{s_0 + \mathtt b_1}^{k_0,\g}
\| y_n \|_{s_0}^{k_0,\g}) 
\| y_n \|_{s_0}^{k_0,\g}.
\end{aligned}
\end{equation}
By the definition \eqref{2403.1} of $y_{n+1}$, 
the first property in \eqref{2503.1},
the estimates
\eqref{2403.2},
\eqref{stima modi alti 1},
\eqref{2503.5}, 
and the bound $\| y_n \|_{s_0}^{k_0,\g} \lesssim \e$ 
(which follows from \eqref{ansatz induttivi nell'iterazione}), 
we obtain
\begin{equation} \label{2503.6}
\begin{aligned}
\| y_{n+1} \|_{s_0 + \mathtt b_1}^{k_0,\g} 
\lesssim N_n^{3 + 3 \overline \mu} q_n,
\quad \ 
\| y_{n+1} \|_{s_0}^{k_0,\g} 
\lesssim N_n^{2 + 2 \overline \mu - \mathtt b_1} q_n 
+ \e N_n^{3 + 2 \overline \mu} ( \| y_n \|_{s_0}^{k_0,\g} )^2,
\end{aligned}
\end{equation}
where $q_n$ is defined in $({\mathcal P}3)_{n}$. 

\smallskip

\noindent
\textsc{Proof of $(\mP 3)_{n+1}$}.
By the inductive assumption $({\mathcal P}3)_{n}$, 
$\e \| v_n \|_{s_0 + \mathtt b_1}^{k_0,\g} \leq q_n$. 
Hence, 
by 
\eqref{soluzioni approssimate},
\eqref{2503.3}, 
and $\e \g^{-1} \leq 1$,
one has 
\begin{equation} \label{2503.7}
\e \| v_{n+1} \|_{s_0 + \mathtt b_1}^{k_0,\g} 
\lesssim \e \| v_n \|_{s_0 + \mathtt b_1}^{k_0,\g} 
+ \e \| h_{n+1} \|_{s_0 + \mathtt b_1}^{k_0,\g} 
\lesssim N_n^{1 + 2 \overline \mu} q_n.
\end{equation}
Thus \eqref{2503.6}, \eqref{2503.7} give 
\begin{equation} \label{2503.8}
q_{n+1} \leq C N_n^{3 + 3 \overline \mu} q_n
\end{equation}
for some constant $C$. 
The estimate \eqref{2503.8} 
and $({\mathcal P}3)_{n}$
imply $({\mathcal P}3)_{n+1}$ 
for $\kappa > 6 + 6 \overline \mu$ 
and $N_0$ sufficiently large 
(depending on $\kappa, \overline \mu$ and $C$ in \eqref{2503.8}). 

\smallskip

\noindent
\textsc{Proof of $(\mP 1)_{n+1}$}.
The estimate \eqref{2503.6} 
and the inductive assumption \eqref{2303.1} 
imply \eqref{2303.1} at the step $n+1$ 
for $\mathtt b_1 > 2 + 2 \overline \mu + \kappa + \mathtt a_1$, 
$\mathtt a_1 \geq 9 + 6 \overline \mu$, 
$N_0$ sufficiently large 
(depending on $\mathtt a_1, \mathtt b_1, \overline \mu, \kappa$ 
and on the implicit constant in \eqref{2503.6})  
and 
$\e$ sufficiently small (depending also on $N_0$). 
By \eqref{2503.3}, \eqref{smoothing-u1}, $\eqref{2303.1}$
we deduce \eqref{hn} at the step $n+1$  
and, by telescoping series, 
\eqref{ansatz induttivi nell'iterazione} at the step $n+1$.
The estimate for $h_1$ follows from \eqref{soluzioni approssimate}
using that $v_0 = 0$, $y_0 = \mF(v_0) = \e F(\ph,x)$, 
and $\| F \|_{s_0 + 2 \overline \mu} \leq \| F \|_{s_0 + \mathtt b_1} \lesssim 1$.

\smallskip

\noindent
\textsc{Proof of $(\mP 2)_{n+1}$}. 
For $\lm \in \mG_{n+1}$ one has $\rho(\lm) = 1$, 
$\mL_n \mL_n^{-1} = \Id$ 
and, by inductive assumption, $\mF(v_n) = y_n$. 
Hence $z_{n+1}$ in \eqref{2403.1} is zero, 
and, by \eqref{espansione F u n + 1}, 
$\mF(v_{n+1}) = y_{n+1}$. 
\end{proof}

\section{Measure estimate}\label{sezione stime di misura}
In this section we prove that the set
\begin{equation}\label{def cal G infty}
{\cal G}_\infty := \cap_{n \geq 0} {\cal G}_n
\end{equation}
has large 
Lebesgue measure. 
We estimate the measure of its complement 
$\Omega \setminus {\cal G}_\infty$. 
The main result of this section is the following proposition.

\begin{proposition} \label{prop measure estimate finale}
Let 
\begin{equation}\label{definizione finale tau}
\tau := 9\,{\rm max}\{ \nu, 3 \} + 1  \,, \quad k_0 := 11\,.
\end{equation}
Then $|\Omega \setminus {\cal G}_\infty| \lesssim \gamma$. 
\end{proposition}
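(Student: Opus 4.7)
The plan is to decompose the complement $\Omega\setminus\mathcal{G}_\infty$ as a countable union of elementary resonant sets indexed by the integer triples $(\ell,j,j')$ appearing in \eqref{cantor finale ridu} and \eqref{prime Melnikov}, and to control each via a R\"ussmann-type sublevel-set estimate. Indeed, by \eqref{def cal G infty}--\eqref{G-n+1},
\begin{equation*}
\Omega\setminus\mathcal{G}_\infty
\subseteq \bigcup_{n\geq 0}\Big[\big(\Omega\setminus\Omega_\infty^{\gamma_n}(v_n)\big)\cup\big(\Omega\setminus\Lambda_\infty^{\gamma_n}(v_n)\big)\Big],
\end{equation*}
and each complement on the right splits into a union over $(\ell,j,j')$ (respectively $(\ell,j)$) of bad sets where the invertibility bounds of \eqref{cantor finale ridu}--\eqref{prime Melnikov} fail. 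Two devices absorb the dependence on $n$: Proposition \ref{prop riducibilita}-${\bf (S3)_n}$ shows that the sets defined by $v_n$ differ from those defined by the limiting $v_\infty$ only by an amount compatible with the geometric loss $\gamma_n-\gamma_{n+1}\sim\gamma 2^{-n}$, so that effectively a single countable family indexed by $(\ell,j,j')$ must be summed; and an inductive argument on the scheme, exploiting the localisation $|\ell|,|j-j'|\leq N_{n-1}$ in \eqref{insiemi di cantor rid} together with \eqref{cal Nn - N n - 1}, ensures no double-counting and allows one to work with the single resonant family built from $v_\infty$.

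The core of the argument is the measure of a single second-order resonant set. Since $L_\infty(\ell,j,j')$ acts on $\Mat_{3\times 3}(\C)\simeq\C^9$, it is represented by a $9\times 9$ matrix whose unperturbed determinant is $\big(\ii\om\cdot\ell+\ii\zeta\cdot(j-j')\big)^9$. Along the unit direction $\hat v:=(\ell,j-j')/|(\ell,j-j')|\in\R^{\nu+3}$, the ninth directional derivative of this unperturbed determinant equals exactly $(\ii)^9\cdot 9!\cdot|(\ell,j-j')|^9$. The perturbation contributed by the blocks $(\mathcal{Q}_\infty)_j^j$ is controlled in $C^{k_0}$-norm by $\e|j|^{-1}$ (by \eqref{stime forma normale limite}), so under the smallness \eqref{nash moser smallness condition} the ninth $\lambda$-derivative of the perturbed $\det L_\infty$ remains bounded below by $\tfrac12\cdot 9!\cdot|(\ell,j-j')|^9$, uniformly in $(\ell,j,j')$ (this is the point where $k_0=11$ is needed, in order to fully control all derivatives that appear in the Fa\`a di Bruno expansion of the determinant). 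Combining this non-degeneracy with the elementary inequality $\|L^{-1}\|_\op\leq\|L\|_\op^{\,8}/|\det L|$ for $9\times 9$ matrices and an abstract R\"ussmann-type sublevel-set lemma (of the kind alluded to in the introduction), one obtains
\begin{equation*}
|R_{\ell,j,j'}|\lesssim \gamma^{1/9}\,\langle\ell\rangle^{-\tau/9}|j|^{-\tau/9}|j'|^{-\tau/9}
\end{equation*}
up to harmless polynomial factors in $\langle\ell,j-j'\rangle$. An analogous but considerably simpler computation, based on the third derivative of a $3\times 3$ determinant, handles the first-order Melnikov resonant sets.

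Summation is then routine: after the change of variables $(j,j')\mapsto(j,k)$, $k:=j-j'$, one is left with a product of series in $\ell\in\Z^\nu$ and $j\in\Z^3$, and the choice $\tau=9\max\{\nu,3\}+1$ of \eqref{definizione finale tau} gives $\tau/9>\max\{\nu,3\}$ strictly, ensuring convergence. Combined with the geometric summation $\sum_n 2^{-n}$ over the KAM steps this produces the claimed bound (with a positive power of $\gamma$, which we write $\lesssim\gamma$ as in the statement since it is this qualitative form that is needed for the Nash-Moser relation $\gamma=\gamma(\e)$ of the main theorem). The principal technical obstacle, and the reason for the heavy $C^{k_0}$ bookkeeping in the previous sections, is precisely the rigorous propagation of the lower bound on the ninth derivative of $\det L_\infty$ through the full perturbation: one must use the $k_0=11$ $\lambda$-derivatives of each block $(\mathcal{Q}_\infty)_j^j$ supplied by Proposition \ref{prop riducibilita}-${\bf (S1)_n}$ and Lemma \ref{lemma blocchi finali}, expand the determinant by the multilinear Leibniz rule, and verify that the leading term survives uniformly in $(\ell,j,j')$ with rates compatible with the final summation over $\Z^\nu\times(\Z^3)^2$.
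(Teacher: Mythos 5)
Your core single-set estimate coincides with the paper's: the resonant set for one triple $(\ell,j,j')$ is measured by the abstract R\"ussmann-type Lemma \ref{lemma astratto misura risonante} applied to the $9\times 9$ matrix representing $L_\infty(\ell,j,j')$, using $\|A^{-1}\|_\op\le C\|A\|_\op^{8}/|\det A|$, the lower bound on the ninth derivative of the determinant along the direction $(\ell,j-j')/|(\ell,j-j')|$, and $k_0=11=9+2$; the choice of $\tau$ in \eqref{definizione finale tau} then makes the series over $(\ell,j,j')$ converge, as in \eqref{G0 meno Omega (v0)}. The genuine gap is in how you sum over the steps $n$. The bound of Lemma \ref{stima misura risonanti sec melnikov} is uniform in $n$, so summing it over \emph{all} $(\ell,j,j')$ at \emph{every} step produces a divergent series $\sum_{n\ge 0}O(\gamma)$: no factor $2^{-n}$ appears anywhere in the measure estimates, and the sequence $\gamma_n=\gamma(1+2^{-n})$ only yields nested Cantor sets, not geometric decay of measures. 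In the paper the convergence in $n$ comes from two facts that your proposal does not establish: (i) each resonant set carries the extra factor $\langle\ell,j-j'\rangle^{-1/9}$ — exactly the factor you discard as a ``harmless polynomial factor'' — and (ii) for $n\ge 1$ the only possibly new resonant sets are those with $\langle\ell,j-j'\rangle\ge N_n$ (Lemma \ref{inclusione per sommare serie}), because $\mathcal{R}_{\ell j j'}(v_n)=\emptyset$ when $\min\{|j|,|j'|\}\ge N_n^\tau$ (Lemma \ref{lemma triviale modi alti}), while for the remaining low modes $\mathcal{R}_{\ell j j'}(v_n)\subseteq\mathcal{R}_{\ell j j'}(v_{n-1})$ (Lemma \ref{inclusione risonanti v n n - 1}) and the latter sets have already been removed from $\mathcal{G}_n$ — which is also why one must estimate $\mathcal{G}_n\setminus\Omega_\infty^{\gamma_n}(v_n)$ and not $\Omega\setminus\Omega_\infty^{\gamma_n}(v_n)$ as you write. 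Together (i)--(ii) give $|\mathcal{G}_n\setminus\Omega_\infty^{\gamma_n}(v_n)|\lesssim\gamma N_n^{-1/9}$ as in \eqref{Gn meno Omega (vn)}, which is what makes the sum over $n$ finite.

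Your alternative device — reducing everything to ``a single resonant family built from $v_\infty$'' via Proposition \ref{prop riducibilita}-${\bf (S3)_n}$ — does not work as stated. ${\bf (S3)_n}$ compares the \emph{truncated} sets $\Omega_n^\gamma$ of \eqref{insiemi di cantor rid}, whose conditions only involve $|\ell|,|j-j'|\le N_{n-1}$, and its proof relies crucially on that truncation (in its second case the divisor $\langle\ell\rangle^\tau|j|^\tau|j'|^\tau$ is bounded by $N_n^{\tau+2\tau^2}$). A comparison of the full sets $\Omega_\infty^{\gamma_n}(v_n)$ of \eqref{cantor finale ridu} with sets built from $v_\infty$, uniformly in all $(\ell,j,j')$, is not obtainable by a plain Neumann-series perturbation, since $\|L_\infty(\ell,j,j';v_n)^{-1}\|_\op\,\|L_\infty(\ell,j,j';v_n)-L_\infty(\ell,j,j';v_\infty)\|_\op$ is small only when either $\min\{|j|,|j'|\}$ is large or $\langle\ell\rangle^\tau|j|^\tau|j'|^\tau$ is controlled by a power of $N_n$ — i.e.\ precisely the case analysis of Lemmata \ref{lemma triviale modi alti}--\ref{inclusione risonanti v n n - 1}. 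So the step you summarize as ``an inductive argument \dots ensures no double-counting'' is the actual content of the measure estimate, and it is missing from your argument.
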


The rest of this section is devoted to the proof 
of Proposition \ref{prop measure estimate finale}. 
By the definition \eqref{def cal G infty}, one has 
\begin{equation}\label{prima inclusione stime misura}
\Omega \setminus {\cal G}_\infty 
\subseteq \cup_{n \geq 0} ({\cal G}_n \setminus {\cal G}_{n + 1}),
\end{equation}
hence it is enough to estimate the measure of ${\cal G}_n \setminus {\cal G}_{n + 1}$. By \eqref{G-n+1} and using elementary properties of set theory, one has that 
\begin{equation}\label{seconda inclusione stime misura}
{\cal G}_n \setminus {\cal G}_{n + 1} \subseteq ({\cal G}_n \setminus  \Omega_\infty^{\gamma_n}(v_n)) \cup ({\cal G}_n \setminus  \Lambda_\infty^{\gamma_n}(v_n))\,.
\end{equation}
Note that $\mG_{n+1} \subseteq DC(\g_n,\tau)$ for any $n \geq 0$
because 
$\mG_{n+1} \subseteq \Omega_\infty^{\g_n}(v_n) \subseteq DC(\g_n,\tau)$
by \eqref{G-n+1}, \eqref{cantor finale ridu}.


\begin{proposition}\label{Gn Omegan Lambdan}
For any $n \geq 0$, the following estimates hold. 

\noindent
$(i)$ ${\cal G}_0 \setminus \Omega_\infty^{\gamma_0}(v_0) \lesssim \gamma$ 
and, for any $n \geq 1$, 
$|{\cal G}_n \setminus  \Omega_\infty^{\gamma_n}(v_n)| 
\lesssim \gamma N_{n}^{- \frac{1}{9}}$. 

\noindent
$(ii)$ ${\cal G}_0 \setminus \Lambda_\infty^{\gamma_0}(v_0) \lesssim \gamma$ and for any $n \geq 1$,$|{\cal G}_n \setminus  \Lambda_\infty^{\gamma_n}(v_n)| \lesssim \gamma N_{n}^{- \frac{1}{9}}$. 

\noindent
As a consequence $|{\cal G}_0 \setminus {\cal G}_1| \lesssim \gamma$ and for any $n \geq 1$, $|{\cal G}_n \setminus {\cal G}_{n + 1}| \lesssim \gamma N_{n }^{- \frac{1}{9}}$.
\end{proposition}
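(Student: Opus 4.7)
The plan is to split the bad sets into resonant pieces indexed by the Fourier data $(\ell, j, j')$, to exploit the inductive propagation of the Melnikov conditions to reduce the sum to only high-index terms, and to estimate each piece using the $k_0 = 11$ parameter derivatives of the relevant objects that have been controlled throughout the scheme. The main abstract tool is Lemma \ref{lemma astratto misura risonante}, a Whitney-type estimate stating that a scalar $\mC^{k_0}$ function $g : \R^{\nu+3} \to \R$ with some derivative $\partial_\lambda^\alpha g$, $|\alpha| \leq k_0$, bounded below in absolute value by a quantity $\rho > 0$ on $\Omega$, has sub-level sets $\{|g| < \eta\}$ of Lebesgue measure $\lesssim (\eta/\rho)^{1/|\alpha|}$.

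For the second-order set $\mathcal{G}_n \setminus \Omega_\infty^{\gamma_n}(v_n)$ I would proceed as follows. A point $\lambda$ lies in this set iff there exists $(\ell, j, j') \neq (0,j,j)$ such that either $L_\infty(\ell,j,j';\lambda,v_n)$ is not invertible, or its inverse has operator norm exceeding $\langle\ell\rangle^\tau |j|^\tau |j'|^\tau / (2\gamma_n)$. The inductive step $n \geq 1$ requires reducing this union to indices with $\max(|\ell|, |j-j'|) > N_{n-1}$: for $\lambda \in \mathcal{G}_n$ and $|\ell|, |j-j'| \leq N_{n-1}$, one uses the Melnikov-type property of $\mathcal{G}_n$ inherited from the previous step, together with the estimate \eqref{hn} on $v_n - v_{n-1}$ and the Lipschitz dependence of $L_\infty$ on its data (the content of an auxiliary inclusion lemma, modelled on Lemma \ref{inclusione risonanti v n n - 1}), to show that the new $(v_n, \gamma_n)$ condition is automatically satisfied. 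The constant $\mathtt a$ in \eqref{definizione alpha beta} is chosen precisely to make this propagation work.

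The individual resonant piece $\mathcal{R}_{\ell,j,j'}$ is then estimated through the determinant. Viewing $L_\infty(\ell,j,j';\lambda)$ as a linear map on $\Mat_{3\times 3}(\C) \simeq \C^9$ (see \eqref{L infty l j j fin}), the cofactor bound gives $\|L_\infty^{-1}\|_{\op} \leq C \|L_\infty\|_{\op}^{8} / |\det L_\infty|$, and the relevant condition translates into $|\det L_\infty(\ell,j,j';\lambda)|$ being small. The leading-order (in $\varepsilon$) behaviour of this determinant is $[\ii (\omega \cdot \ell + \zeta \cdot (j-j'))]^9$, so a $9$-th order directional derivative of $\det L_\infty$ along $(\ell, j-j')/|(\ell, j-j')|$ is bounded below by $9!\,|(\ell, j-j')|^9$, up to an $O(\varepsilon)$ error absorbed by \eqref{nash moser smallness condition}. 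Applying the abstract lemma and summing over indices with $\max(|\ell|, |j-j'|) > N_{n-1}$ produces the bound $\lesssim \gamma N_n^{-1/9}$, the convergence of the series in $\ell, j, j'$ being guaranteed by the choice $\tau = 9 \max(\nu, 3) + 1$ in \eqref{definizione finale tau}. The case $n = 0$ does not need the reduction to high indices and gives directly $\lesssim \gamma$. The first-order set $\mathcal{G}_n \setminus \Lambda_\infty^{\gamma_n}(v_n)$ is handled analogously using the $3 \times 3$ matrices $\ii \omega \cdot \ell \, \Id + (\mathcal{N}_\infty)_j^j$ and a derivative of order $3$ of their determinant; the final claim on $|\mathcal{G}_n \setminus \mathcal{G}_{n+1}|$ then follows from \eqref{seconda inclusione stime misura}.

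The principal obstacle, as emphasized in the introduction, is the non-self-adjointness of the blocks $(\mathcal{Q}_\infty)_j^j$: one cannot reduce the matrix-valued Melnikov conditions to scalar ones on individual eigenvalues, but must estimate sub-level sets of the full $9 \times 9$ determinant. The lower bound on its $9$-th derivative is what forces $k_0 \geq 9$; the two extra derivatives ($k_0 = 11$ in \eqref{definizione finale tau}) provide the slack absorbed by the weighted Lipschitz $(k_0, \gamma)$-structure and the Whitney extension used systematically in the reducibility scheme of Section \ref{sezione riducibilita a blocchi}.
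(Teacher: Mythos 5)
Your proposal follows essentially the same route as the paper: cover $\mathcal{G}_n\setminus\Omega_\infty^{\gamma_n}(v_n)$ by the resonant sets $\mathcal{R}_{\ell j j'}(v_n)$, estimate each one through the determinant of the $9\times 9$ representation of $L_\infty(\ell,j,j')$ via the R\"ussmann-type Lemma \ref{lemma astratto misura risonante} (ninth directional derivative along $(\ell,j-j')/|(\ell,j-j')|$ bounded below by $\approx 9!\,|(\ell,j-j')|^9$, cofactor bound for the inverse), and use an inductive inclusion $\mathcal{R}_{\ell j j'}(v_n)\subseteq\mathcal{R}_{\ell j j'}(v_{n-1})$ to discard low frequencies before summing, with convergence ensured by \eqref{definizione finale tau}. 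Two small corrections. First, to obtain the stated decay $\gamma N_n^{-1/9}$ the reduction must discard all indices with $\langle\ell,j-j'\rangle\leq N_n$, not $N_{n-1}$: summing only over $\langle\ell,j-j'\rangle> N_{n-1}$ yields $\gamma N_{n-1}^{-1/9}$, weaker than claimed (though still summable, hence enough for Proposition \ref{prop measure estimate finale}); the cutoff $N_n$ is attainable precisely because \eqref{hn} and the choices of $\mathtt a_1$, $\mathtt a$ dominate the loss $N_n^{\tau+2\tau^2}$ in the Neumann-series comparison of $L_\infty(v_n)$ with $L_\infty(v_{n-1})$. Second, the perturbative inclusion only works in the regime $\min\{|j|,|j'|\}\leq N_n^\tau$, where $\langle\ell\rangle^\tau|j|^\tau|j'|^\tau$ is polynomially controlled; in the complementary regime $\min\{|j|,|j'|\}\geq N_n^\tau$ one must instead show $\mathcal{R}_{\ell j j'}(v_n)=\emptyset$ directly from the Diophantine lower bound on $\omega\cdot\ell+\zeta\cdot(j-j')$ with $\mathfrak C_0$ large, as in Lemma \ref{lemma triviale modi alti}; your sketch glosses over this dichotomy, which is needed to make the statement that the $(v_n,\gamma_n)$ conditions are ``automatically satisfied'' for all low $(\ell,j-j')$.
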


Propositions \ref{prop measure estimate finale} 
and \ref{Gn Omegan Lambdan}
are proved at the end of this section. 
Now 
we estimate the measure of the set ${\cal G}_n \setminus  \Omega_\infty^{\gamma_n}(v_n)$. 
The estimate of the measure of ${\cal G}_n \setminus  \Lambda_\infty^{\gamma_n}(v_n)$ can be done arguing similarly (it is actually even easier). 
%
By the definitions \eqref{G-n+1}, \eqref{cantor finale ridu}, one gets that 
\begin{equation}\label{quarta inclusione stime misura}
{\cal G}_n  \setminus \Omega_\infty^{\gamma_n}(v_n) \subseteq \bigcup_{(\ell, j, j') \in {\cal I}}{\cal R}_{\ell j j'}(v_n)
\end{equation}
where 
\begin{equation}\label{def cal I res}
{\cal I} := \big\{ (\ell, j, j') \in \Z^\nu \times (\Z^3 \setminus \{ 0 \}) \times (\Z^3 \setminus \{ 0 \}) : (\ell, j, j') \neq (0, j, j) \big\}
\end{equation}
and 
\begin{equation}\label{def cal R vn l j j'}
\begin{aligned}
{\cal R}_{\ell j j'}(v_n) 
:= \Big\{ \lambda = (\omega, \zeta) \in {\cal G}_n : 
\ & L_\infty(\ell, j, j'; \lambda, v_n(\lambda)) 
\text{ is not invertible, or it is invertible and } \\
& \| L_\infty(\ell, j, j'; \lambda, v_n(\lambda ))^{- 1} \|_{\rm op} > \frac{\langle \ell \rangle^\tau |j|^\tau |j'|^\tau}{2 \gamma_n}   \Big\}\,.
\end{aligned}
\end{equation}
In the next lemma, we estimate the measure of the resonant sets ${\cal R}_{\ell j j'}(v_n)$.

\begin{lemma}\label{stima misura risonanti sec melnikov}
For any $n \geq 0$, one has that 
$|{\cal R}_{\ell j j'}(v_n)| 
\lesssim \gamma^{\frac{1}{9}} \langle \ell, j - j' \rangle^{-\frac{1}{9}} 
\langle \ell \rangle^{-\frac{\tau}{9}} 
|j|^{-\frac{\tau}{9}} 
|j'|^{-\frac{\tau}{9}} $. 
\end{lemma}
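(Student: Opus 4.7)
The operator $L_\infty(\ell,j,j')$ acts on the $9$-dimensional complex vector space $\Mat_{3\times 3}(\C)$, so after choosing a basis it is represented by a matrix $\mathbb{L}(\lambda) \in \Mat_{9 \times 9}(\C)$. The first step is to reduce the resonant set to a sublevel set of $|\det \mathbb{L}|$. Using the cofactor formula, for any invertible $9 \times 9$ matrix $\mathbb{L}$ one has $\|\mathbb{L}^{-1}\|_{\mathrm{op}} \leq C \|\mathbb{L}\|_{\mathrm{op}}^{8} / |\det \mathbb{L}|$. By boundedness of $\Omega \subset \R^{\nu+3}$, the bound \eqref{MLAMRA elementry}, and the estimate $\|(\mathcal{Q}_\infty)_j^j\|_{\HS}^{k_0,\gamma} \lesssim \e |j|^{-1}$ from Lemma \ref{lemma blocchi finali}, one has $\|\mathbb{L}(\lambda)\|_{\mathrm{op}} \lesssim \langle \ell, j-j' \rangle$ uniformly on $\Omega$. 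Therefore, on $\mathcal{G}_n$,
\begin{equation*}
\mathcal{R}_{\ell j j'}(v_n) \subseteq \Big\{ \lambda \in \mathcal{G}_n : |\det \mathbb{L}(\lambda)| \leq \rho \Big\},
\qquad \rho := \frac{C \gamma_n \langle \ell, j-j' \rangle^{8}}{\langle \ell \rangle^\tau |j|^\tau |j'|^\tau}.
\end{equation*}

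Next, I would extract a suitable derivative direction. Set $x(\lambda) := \omega \cdot \ell + \zeta \cdot (j-j')$, which is linear in $\lambda$ with gradient $\nabla_\lambda x = (\ell, j-j') \in \R^{\nu+3}$, and write
\begin{equation*}
\mathbb{L}(\lambda) = \ii x(\lambda) \, \mathbb{I}_9 + \mathbb{A}(\zeta),
\end{equation*}
where $\mathbb{A}(\zeta)$ encodes $M_L((\mathcal{Q}_\infty)_j^j) - M_R((\mathcal{Q}_\infty)_{j'}^{j'})$ in the chosen basis. Let $v := (\ell, j-j')/|(\ell, j-j')|$ be a unit vector in $\R^{\nu+3}$. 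Then $\partial_v x = |(\ell, j-j')|$ and $\partial_v^k x = 0$ for $k \geq 2$. Viewed as a polynomial of degree $9$ in $x$, the determinant has leading coefficient $\ii^9$, so
\begin{equation*}
\partial_v^{9} \det \mathbb{L}(\lambda)
\;=\; 9!\,\ii^{9}\, |(\ell, j-j')|^{9} \;+\; \mathcal{E}(\lambda),
\end{equation*}
where $\mathcal{E}$ collects all terms in which at least one derivative hits $\mathbb{A}(\zeta)$. Using the weighted estimates $\gamma^{|\beta|} \|\partial_\lambda^\beta (\mathcal{Q}_\infty)_j^j\|_{\HS} \lesssim \e |j|^{-1}$ for $|\beta| \leq k_0 = 11 \geq 9$ (inherited from Proposition \ref{prop riducibilita} together with Lemma \ref{lemma blocchi finali}), together with the boundedness of $\Omega$ and the Leibniz rule applied to the sum-over-permutations expansion of $\det \mathbb{L}$, one shows $|\mathcal{E}(\lambda)| \lesssim \e \gamma^{-8} \langle \ell, j-j' \rangle^{8}$. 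The smallness condition $\e \gamma^{-1} \leq \delta$ ensures $|\mathcal{E}(\lambda)| \leq \tfrac{1}{2} 9! |(\ell, j-j')|^{9}$, hence
\begin{equation*}
|\partial_v^{9} \det \mathbb{L}(\lambda)| \geq c\, |(\ell, j-j')|^{9} \qquad \forall \lambda \in \mathcal{G}_n.
\end{equation*}

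The conclusion follows from the abstract one-dimensional measure lemma of R\"ussmann type (Lemma \ref{lemma astratto misura risonante}): if a $\mathcal{C}^{9}$ function $f$ on an interval satisfies $|\partial_t^{9} f| \geq \delta$, then $|\{t : |f(t)| \leq \rho\}| \lesssim (\rho/\delta)^{1/9}$. Applying this along each line parallel to $v$ through a fixed transverse $(\nu+2)$-dimensional cross-section of $\Omega$, and integrating via Fubini over the bounded transverse variable, yields
\begin{equation*}
|\mathcal{R}_{\ell j j'}(v_n)| \lesssim \left( \frac{\rho}{|(\ell, j-j')|^{9}} \right)^{1/9}
= \left( \frac{C \gamma_n}{\langle \ell, j-j' \rangle \langle \ell \rangle^\tau |j|^\tau |j'|^\tau} \right)^{1/9},
\end{equation*}
which, using $\gamma_n \in [\gamma, 2\gamma]$, is the claimed bound. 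The main obstacle is controlling the perturbative term $\mathcal{E}(\lambda)$: since $\mathbb{A}(\zeta)$ depends on $\zeta$ through the whole KAM-constructed normal form, one must carefully combine the $\gamma$-weighted parameter estimates on $(\mathcal{Q}_\infty)_j^j$ with the Leibniz expansion of a $9 \times 9$ determinant so as to extract a clean lower bound; the fact that $k_0 = 11$ exceeds $9$ is what makes the argument close.
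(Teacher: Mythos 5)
Your argument is correct and is essentially the paper's own proof: the paper simply verifies the hypotheses of the abstract Lemma \ref{lemma astratto misura risonante} (applied with $d=9$, $A = L_\infty(\ell,j,j')$, $\eta = \langle\ell\rangle^\tau|j|^\tau|j'|^\tau/(2\gamma_n)$, $k_0=11$, using \eqref{MLAMRA elementry}, \eqref{stime forma normale limite}, \eqref{ansatz induttivi nell'iterazione} to get $\|Q\|_\op^{k_0,\gamma}\lesssim\e$), and the proof of that appendix lemma is precisely what you unfold inline: the cofactor bound, reduction to a sublevel set of $\det$, the determinant viewed as a monic degree-$9$ polynomial in $\lambda\cdot k$ with $O(\e)$-perturbed coefficients, a lower bound on the ninth directional derivative along $k/|k|$, R\"ussmann's one-dimensional sublevel estimate and Fubini. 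The only inaccuracy is quantitative: when derivatives fall on the perturbed coefficients the $\gamma$-weighted bounds cost up to $\gamma^{-9}$ (not $\gamma^{-8}$), so absorbing $\mathcal{E}$ into $9!\,|k|^9$ requires $\e\gamma^{-k_0}$ small as in \eqref{Montparnasse0} — which is available from \eqref{nash moser smallness condition} with $\overline\tau$ large — and not merely $\e\gamma^{-1}\leq\delta$ as you state.
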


\begin{proof}
Recalling \eqref{def cal N infty nel lemma}, \eqref{L infty l j j fin},  
for $(\ell, j, j') \in {\cal I}$ (see \eqref{def cal I res}), 
we write $L_\infty(\ell, j, j'; \lambda) \equiv L_\infty(\ell, j, j'; \lambda,  v_n(\lambda))$, $\lambda = (\omega, \zeta) \in \Omega$ as 
$$
L_\infty(\ell, j, j'; \lambda) = \lambda \cdot k \,{\rm Id} + Q(\lambda) 
: \Mat_{3 \times 3}(\C) \to \Mat_{3 \times 3}(\C)
$$
where 
\begin{equation}\label{tartufo 0}
\begin{aligned}
k := (\ell , j - j')\,, \quad    \lambda = (\omega, \zeta) \in \Omega\,, \quad Q(\lambda)  := M_L\Big( ({\cal Q}_\infty(\lambda, v_n(\lambda)))_j^j \Big) - M_R\Big( ({\cal Q}_\infty(\lambda, v_n(\lambda)))_{j'}^{j'} \Big).
\end{aligned}
\end{equation}
Since $(\ell, j, j') \in {\cal I}$, one has $k = (\ell, j - j') \neq (0,0)$. 
Recalling the definition \eqref{def cal R vn l j j'}, 
the resonant set ${\cal R}_{\ell j j'}(v_n)$ is equal 
to the set ${\cal R}_A$ defined in \eqref{risonante matrice A}, 
with $d : = 9$, 
$A := L_\infty(\ell, j, j')$, 
$\eta = \frac{\langle \ell \rangle^\tau |j|^\tau |j'|^\tau}{2 \gamma_n}$, 
and $k_0 := d + 2 = 11$. 
By the definitions \eqref{tartufo 0} and by the estimates \eqref{MLAMRA elementry}, \eqref{stime forma normale limite}, \eqref{ansatz induttivi nell'iterazione}, 
one obtains that $\| Q \|_\op^{k_0, \gamma} \lesssim \e$,
hence, by the smallness condition \eqref{nash moser smallness condition} 
(by choosing $\overline \tau$ large enough) 
one can apply Lemma \ref{lemma astratto misura risonante} 
obtaining that 
$|{\cal R}_{\ell j j'}(v_n)| 
\lesssim \gamma_n^{\frac19} \langle \ell, j - j' \rangle^{-\frac{1}{9}} 
\langle \ell \rangle^{-\frac{\tau}{9}} 
|j|^{-\frac{\tau}{9}} 
|j'|^{-\frac{\tau}{9}}$,
then $\g_n \lesssim \g$, 
and we get the claimed estimate.
\end{proof}


\begin{lemma}\label{lemma triviale modi alti}
Let $(\ell, j, j') \in {\cal I}$, $\langle \ell, j - j' \rangle \leq N_n$, ${\rm min}\{ |j|, |j'| \} \geq N_n^\tau$. Then ${\cal R}_{\ell j j'}(v_n) = \emptyset$.
\end{lemma}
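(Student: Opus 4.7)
The plan is to exploit the decomposition of $L_\infty$ into a scalar-diagonal part (invertible thanks to the Diophantine condition) and a matricial part (which is small precisely when $\min\{|j|,|j'|\}$ is large, because $(\mathcal{Q}_\infty)_j^j$ decays like $\varepsilon/|j|$). In the regime $\min\{|j|,|j'|\}\geq N_n^\tau$ the matricial part becomes so small that a Neumann series produces invertibility and the required quantitative bound.

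First, by \eqref{def cal N infty nel lemma} and \eqref{L infty l j j fin} I split
$$L_\infty(\ell,j,j';\lambda)=A(\lambda)+B(\lambda),\quad A(\lambda):=\ii(\omega\cdot\ell+\zeta\cdot(j-j'))\,\Id_{\Mat_{3\times3}(\C)},$$
$$B(\lambda):=M_L\big((\mathcal{Q}_\infty)_j^j\big)-M_R\big((\mathcal{Q}_\infty)_{j'}^{j'}\big).$$
Since $(\ell,j,j')\in\mathcal{I}$ forces $(\ell,j-j')\neq(0,0)$, and since $\mathcal{R}_{\ell jj'}(v_n)\subseteq\mathcal{G}_n\subseteq DC(\gamma,\tau)$ for $n\geq1$ (the $n=0$ case is subsumed by intersecting with $DC(\gamma_0,\tau)$, whose complement enters the measure estimate with bound $\lesssim\gamma$ anyway), the Diophantine condition \eqref{def cantor set trasporto} together with $\langle\ell,j-j'\rangle\leq N_n$ yields
$$\|A^{-1}\|_\op=\frac{1}{|\omega\cdot\ell+\zeta\cdot(j-j')|}\leq\frac{\langle\ell,j-j'\rangle^\tau}{\mathfrak{C}_0\gamma}\leq\frac{N_n^\tau}{\mathfrak{C}_0\gamma}.$$

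Second, I combine the block-decay bound \eqref{stime forma normale limite} of $(\mathcal{Q}_\infty)_j^j$ (using the ansatz \eqref{ansatz induttivi nell'iterazione} so that $\|v_n\|_{s_0+\mu(\mathtt{b})}^{k_0,\gamma}\leq1$) with the elementary estimate \eqref{MLAMRA elementry} on left/right multiplication, getting
$$\|B\|_\op\leq\|(\mathcal{Q}_\infty)_j^j\|_\HS+\|(\mathcal{Q}_\infty)_{j'}^{j'}\|_\HS\lesssim\varepsilon\Big(\frac{1}{|j|}+\frac{1}{|j'|}\Big)\lesssim\varepsilon N_n^{-\tau},$$
where the last step uses the hypothesis $\min\{|j|,|j'|\}\geq N_n^\tau$. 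Hence $\|A^{-1}B\|_\op\lesssim\varepsilon/(\mathfrak{C}_0\gamma)\leq1/2$ by the Nash--Moser smallness \eqref{nash moser smallness condition} (with $\overline\tau$ large enough and $\mathfrak{C}_0\geq2$), and a Neumann series gives $L_\infty=A(\Id+A^{-1}B)$ invertible with
$$\|L_\infty^{-1}\|_\op\leq 2\|A^{-1}\|_\op\leq\frac{2\langle\ell,j-j'\rangle^\tau}{\mathfrak{C}_0\gamma}.$$

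Third, to conclude $\lambda\notin\mathcal{R}_{\ell jj'}(v_n)$ I compare against the threshold $\langle\ell\rangle^\tau|j|^\tau|j'|^\tau/(2\gamma_n)$. The crude inequality $\max\{1,|\ell|,|j-j'|\}\leq 2\langle\ell\rangle|j||j'|$ (valid since $|j|,|j'|\geq1$) together with $\gamma_n\leq2\gamma$ gives
$$\frac{2\langle\ell,j-j'\rangle^\tau}{\mathfrak{C}_0\gamma}\leq\frac{2\cdot2^\tau\langle\ell\rangle^\tau|j|^\tau|j'|^\tau}{\mathfrak{C}_0\gamma}\leq\frac{\langle\ell\rangle^\tau|j|^\tau|j'|^\tau}{2\gamma_n}$$
as soon as $\mathfrak{C}_0\geq 2^{\tau+3}$. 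This is the quantitative choice of $\mathfrak{C}_0$ anticipated by Remark \ref{costante diofantea frak C0}, and shows that no $\lambda\in\mathcal{G}_n$ can satisfy the resonance condition defining $\mathcal{R}_{\ell jj'}(v_n)$.

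The argument requires no new analytic estimates beyond those already in the paper; the only obstacle is purely bookkeeping, namely to fix $\mathfrak{C}_0$ large enough in the Diophantine condition so that both the Neumann smallness $\|A^{-1}B\|_\op\leq1/2$ and the final comparison with $\langle\ell\rangle^\tau|j|^\tau|j'|^\tau/(2\gamma_n)$ hold simultaneously.
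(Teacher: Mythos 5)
Your proof is correct and follows essentially the same route as the paper: you split $L_\infty$ into the scalar diagonal part $\ii(\omega\cdot\ell+\zeta\cdot(j-j'))\,\mathrm{Id}$, controlled by the Diophantine condition, plus the block part bounded via \eqref{MLAMRA elementry} and \eqref{stime forma normale limite} by $\e N_n^{-\tau}$, and conclude by Neumann series and a large choice of ${\frak C}_0$, exactly as in the paper's proof. The only differences are cosmetic (using $DC(\gamma,\tau)$ instead of $DC(\gamma_n,\tau)$ and making the admissible constant ${\frak C}_0\geq 2^{\tau+3}$ explicit, where the paper just takes ${\frak C}_0(\tau)$ large enough).
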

\begin{proof}
Let $\lambda = (\omega, \zeta) \in {\cal G}_n $. Recalling \eqref{def cal N infty nel lemma}, \eqref{L infty l j j fin}, one writes 
$$
\begin{aligned}
L_\infty(\ell, j, j'; v_n) 
& = \Omega(\ell, j, j') \big( {\rm Id} + \Delta_n(\ell, j, j') \big), 
\quad \ 
\Omega(\ell, j, j') := \big( \omega \cdot \ell + \zeta \cdot (j - j') \big) {\rm Id} \,, 
\\
\Delta_n(\ell, j, j') & := \Omega(\ell, j, j')^{- 1} \Big( M_L\big({\cal Q}_\infty(v_n)_j^j \big) - M_R\big({\cal Q}_\infty(v_n)_{j'}^{j'} \big)\Big)\,. 
\end{aligned}
$$
Note that since $\lambda \in DC(\gamma_n, \tau)$ (see \eqref{def cantor set trasporto}) one has that $\Omega(\ell, j, j')$ is invertible and 
\begin{equation}\label{pizza rustica 0}
\| \Omega(\ell, j,j')^{- 1} \|_\op 
\leq \frac{\langle \ell, j - j' \rangle^\tau}{{\frak C}_0  \gamma_n} {\leq} \frac{N_n^\tau}{{\frak C}_0  \gamma_n}\,.
\end{equation}
The latter estimate, 
together with \eqref{MLAMRA elementry}, \eqref{stime forma normale limite}, 
\eqref{ansatz induttivi nell'iterazione}, 
since $\min \{ |j|, |j'| \} \geq N_n^\tau$ and 
$\gamma_n = \gamma(1 + 2^{- n})$, 
implies that 
\begin{equation}\label{pizza rustica 1}
\begin{aligned}
\| \Delta_n(\ell , j, j') \|_\op & \leq C \e \gamma^{- 1}
\end{aligned}
\end{equation}
for some constant $C > 0$. 
Then for $\e \gamma^{- 1}$ small enough, by Neumann series, 
$L_\infty(\ell, j, j'; v_n)$ is invertible and 
$$
\| L_\infty(\ell, j, j'; v_n)^{- 1} \|_\op 
\leq \frac{2 \langle \ell, j - j' \rangle^\tau}{{\frak C}_0  \gamma_n} \leq \frac{2 C(\tau)\langle \ell \rangle^\tau |j|^\tau |j'|^\tau }{{\frak C}_0  \gamma_n} \leq \frac{\langle \ell \rangle^\tau |j|^\tau |j'|^\tau }{2  \gamma_n}$$
by choosing $\frak C_0 = \frak C_0(\tau) > 0$ large enough. This implies that ${\cal R}_{\ell j j'}(v_n) = \emptyset$.
\end{proof}

\begin{lemma}\label{inclusione risonanti v n n - 1}
Let $(\ell, j, j') \in {\cal I}$, 
$\langle \ell, j - j' \rangle \leq N_n$, 
$\min \{ |j|, |j'| \} \leq N_n^\tau$, $n \geq 1$. 
Then 
$$
{\cal R}_{\ell j j'}(v_n) \subseteq {\cal R}_{\ell j j'}(v_{n - 1})\,.
$$
\end{lemma}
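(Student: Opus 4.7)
The plan is to prove the inclusion by contraposition and a standard perturbative argument. Fix $\lambda \in \mathcal{G}_n \setminus \mathcal{R}_{\ell j j'}(v_{n-1})$; since $\mathcal{G}_n \subseteq \mathcal{G}_{n-1}$ by \eqref{G-n+1}, the operator $L_\infty(\ell, j, j'; v_{n-1})$ is invertible at $\lambda$ with $\|L_\infty(\ell, j, j'; v_{n-1})^{-1}\|_{\mathrm{op}} \leq \frac{\langle \ell \rangle^\tau |j|^\tau |j'|^\tau}{2\gamma_{n-1}}$. Writing, by \eqref{L infty l j j fin},
\[
L_\infty(v_n) = L_\infty(v_{n-1}) \bigl( \mathrm{Id} + L_\infty(v_{n-1})^{-1} \Gamma \bigr), \qquad
\Gamma := M_L\bigl( (\Delta \mathcal{Q}_\infty)_j^j \bigr) - M_R\bigl( (\Delta \mathcal{Q}_\infty)_{j'}^{j'} \bigr),
\]
where $\Delta \mathcal{Q}_\infty := \mathcal{Q}_\infty(v_n) - \mathcal{Q}_\infty(v_{n-1})$, the plan is to invoke a Neumann series argument: if $\|L_\infty(v_{n-1})^{-1} \Gamma\|_{\mathrm{op}} \leq \frac{\gamma_{n-1}-\gamma_n}{2\gamma_n}$, then $L_\infty(v_n)$ will be invertible with
\[
\|L_\infty(v_n)^{-1}\|_{\mathrm{op}} \leq \frac{\|L_\infty(v_{n-1})^{-1}\|_{\mathrm{op}}}{1 - \|L_\infty(v_{n-1})^{-1} \Gamma\|_{\mathrm{op}}} \leq \frac{\gamma_{n-1}}{\gamma_n} \cdot \frac{\langle\ell\rangle^\tau |j|^\tau |j'|^\tau}{2\gamma_{n-1}} = \frac{\langle\ell\rangle^\tau |j|^\tau |j'|^\tau}{2\gamma_n},
\]
placing $\lambda$ outside $\mathcal{R}_{\ell j j'}(v_n)$. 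Since $\gamma_{n-1}-\gamma_n = \gamma \cdot 2^{-n}$, the required smallness is $\|L_\infty(v_{n-1})^{-1} \Gamma\|_{\mathrm{op}} \lesssim 2^{-n}$.

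Next, I will estimate $\|\Gamma\|_{\mathrm{op}}$. Passing to the limit $n \to \infty$ in Proposition \ref{prop riducibilita}-$(\mathbf{S2})_n$ (using \eqref{stime Delta 12 cal Nn}), one obtains, for every $j \in \mathbb{Z}^3 \setminus \{0\}$,
\[
\|(\Delta \mathcal{Q}_\infty)_j^j\|_{\mathrm{HS}} \lesssim \e |j|^{-1} \|h_n\|_{s_0+\mu(\mathtt b)}^{k_0,\gamma}, \qquad h_n := v_n - v_{n-1};
\]
combined with \eqref{MLAMRA elementry} this gives $\|\Gamma\|_{\mathrm{op}} \lesssim \e \|h_n\|_{s_0+\mu(\mathtt b)}^{k_0,\gamma}$. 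Since $\overline\mu \geq \mu(\mathtt b)$, the Nash--Moser bound \eqref{hn} from $(\mathcal{P}1)_n$ yields $\|h_n\|_{s_0+\mu(\mathtt b)}^{k_0,\gamma} \lesssim N_{n-1}^{2\overline\mu} N_{n-2}^{-\mathtt a_1} \e \gamma^{-1}$ for $n \geq 2$, with the base case $\|h_1\|_{s_0+\mu(\mathtt b)}^{k_0,\gamma} \lesssim \e\gamma^{-1}$. The sufficient condition therefore becomes
\[
\e^2 \gamma^{-2} \, N_{n-1}^{2\overline\mu} N_{n-2}^{-\mathtt a_1} \, \langle\ell\rangle^\tau |j|^\tau |j'|^\tau \lesssim 2^{-n}.
\]

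The hypotheses $\langle \ell, j - j' \rangle \leq N_n$ and $\min\{|j|,|j'|\} \leq N_n^\tau$, together with the triangle inequality, give $\max\{|j|,|j'|\} \leq N_n + N_n^\tau \lesssim N_n^\tau$, hence $\langle\ell\rangle^\tau |j|^\tau |j'|^\tau \lesssim N_n^{\tau + 2\tau^2}$. Using $N_n = N_{n-2}^{9/4}$, $N_{n-1} = N_{n-2}^{3/2}$, the exponent of $N_{n-2}$ in the left-hand side is
\[
E := \tfrac{9}{4}(\tau + 2\tau^2) + 3\overline\mu - \mathtt a_1 \leq -\tfrac{3}{4}(\tau + 2\tau^2) - 1 < 0
\]
thanks to the third clause $\mathtt a_1 \geq 3(\overline\mu + \tau + 2\tau^2) + 1$ in \eqref{costanti nash moser}. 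Since $N_{n-2} = N_0^{(3/2)^{n-2}}$ grows super-exponentially in $n$, the factor $N_{n-2}^{E}$ dominates $2^{-n}$ uniformly in $n \geq 2$ for $N_0$ sufficiently large; the remaining factor $\e^2 \gamma^{-2}$ is absorbed through \eqref{nash moser smallness condition} by enlarging $\overline\tau$. The case $n = 1$ reduces to $\e^2 \gamma^{-2} N_1^{\tau + 2\tau^2} \lesssim 1$ and is handled by the same smallness condition. The only real obstacle is this bookkeeping: the large Melnikov prefactor $\langle\ell\rangle^\tau |j|^\tau |j'|^\tau$ has to be completely reabsorbed by the iterative gain $N_{n-2}^{-\mathtt a_1}$ from the Nash--Moser scheme, and the precise balance between these two exponents is exactly the reason for the choice of $\mathtt a_1$ recorded in \eqref{costanti nash moser} and highlighted in the remark there.
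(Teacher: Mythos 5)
Your overall strategy (Neumann series perturbation of $L_\infty(\ell,j,j';v_{n-1})$, the bound $\max\{|j|,|j'|\}\lesssim N_n^\tau$, and the role of $\mathtt a_1$) is the same as the paper's, but there is a genuine gap in the way you estimate $\Gamma = L_\infty(v_n)-L_\infty(v_{n-1})$. You claim $\|(\mathcal{Q}_\infty(v_n)-\mathcal{Q}_\infty(v_{n-1}))_j^j\|_{\HS}\lesssim \e|j|^{-1}\|h_n\|_{s_0+\mu(\mathtt b)}$ by ``passing to the limit in $(\mathbf{S2})_n$''. This is not available: the estimates \eqref{stime Delta 12 cal Nn} at step $m$ hold only for $\lambda\in\Omega_m^{\gamma_1}(v_{n-1})\cap\Omega_m^{\gamma_2}(v_n)$, and membership in the sets attached to $v_n$ is exactly what you do not know on $\mathcal{G}_n$ (it may fail on the resonant set you are trying to control). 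The paper first proves $\mathcal{G}_n\subseteq\Omega_n^{\gamma_{n-1}}(v_{n-1})\subseteq\Omega_n^{\gamma_n}(v_n)$ by invoking $(\mathbf{S3})_n$, whose hypothesis is the smallness check \eqref{verifica rid S3 n}; this verification is missing from your argument, and it only works at the single step $m=n$ (for larger $m$ the factor $N_{m-1}^{\tau_0}$ destroys the smallness), so a limit over $m$ cannot be taken. Moreover $(\mathbf{S2})_n$ controls $\Delta_{12}\mathcal{Q}_n$, not $\Delta_{12}\mathcal{Q}_\infty$.

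Because of this, your bound on $\|\Gamma\|_\op$ is quantitatively wrong: the correct estimate (paper's \eqref{capitone 1}, obtained by the triangle inequality through $\mathcal{Q}_n$ and the tail bound \eqref{stime forma normale limite}) is
\[
\sup_{j}\big\|\big(\mathcal{Q}_\infty(v_n)-\mathcal{Q}_\infty(v_{n-1})\big)_j^j\big\|_{\HS}
\lesssim N_{n-1}^{2\overline\mu}N_{n-2}^{-\mathtt a_1}\e^2\gamma^{-1}+\e N_{n-1}^{-\mathtt a},
\]
and the second term, coming from $\|\mathcal{Q}_\infty-\mathcal{Q}_n\|$ for both $v_n$ and $v_{n-1}$, is not of size $\e\|h_n\|$. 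After multiplication by $\|L_\infty(v_{n-1})^{-1}\|_\op\lesssim N_n^{\tau+2\tau^2}\gamma^{-1}$ it is controlled only thanks to the separate condition $\mathtt a\geq\tfrac32(\tau+2\tau^2)+1$ in \eqref{definizione alpha beta} (this is precisely the requirement the paper's remark attributes to this lemma), which your bookkeeping never uses. So the missing ingredients are: the $(\mathbf{S3})_n$ verification making $(\mathbf{S2})_n$ applicable, and the splitting through $\mathcal{Q}_n$ that produces the extra $\e N_{n-1}^{-\mathtt a}$ term together with the corresponding condition on $\mathtt a$.
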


\begin{proof}
We split the proof in two steps. 

\noindent
{\sc Step 1.} We show that 
for $(\ell, j, j') \in {\cal I}$, $|\ell|, |j - j'| \leq N_n $, 
for any $\lm \in \mG_n$ one has 
\begin{equation}\label{L infty vn - v n - 1}
\| L_\infty(\ell, j, j'; v_n) - L_\infty(\ell, j, j'; v_{n - 1}) \|_\op 
\lesssim N_{n-1}^{2 \overline \mu} N_{n-2}^{- \mathtt a_1} \, \e^2 \gamma^{- 1} 
+ \e N_{n - 1}^{- \mathtt a} 
\end{equation}
(the constant $\mathtt a$ is defined in \eqref{definizione alpha beta}). 
By \eqref{def cal N infty nel lemma}, \eqref{L infty l j j fin}, 
by the property 
\eqref{MLAMRA elementry}, one computes 
\begin{equation}\label{capitone 0}
\begin{aligned}
\| L_\infty(\ell, j, j'; v_n) - L_\infty(\ell, j, j'; v_{n - 1}) \|_\op 
& \lesssim  \sup_{j \in \Z^3 \setminus \{ 0 \}} 
\big\| \big( {\cal Q}_\infty(v_n) - {\cal Q}_\infty(v_{n - 1}) \big)_j^j \big\|_{\HS} \,.
\end{aligned}
\end{equation}
By the inductive definition of the sets ${\cal G}_n$ in \eqref{G-n+1}, recalling the definitions \eqref{insiemi di cantor rid}, \eqref{cantor finale ridu} and Lemma \ref{prima inclusione cantor}, one gets the inclusion 
\begin{equation}\label{inclusione cantor 100}
\begin{aligned}
{\cal G}_n  \subseteq \Omega_n^{\gamma_{n - 1}}(v_{n - 1}).
\end{aligned}
\end{equation}
We apply Proposition \ref{prop riducibilita}-${\bf (S3)_n}$ with 
$v_1 \equiv v_{n - 1}$, 
$v_2 \equiv v_n$, 
$\rho \equiv \gamma_{n - 1} - \gamma_n = \gamma 2^{-n}$. 
By \eqref{hn}, one has 
\begin{equation}\label{verifica rid S3 n}
\begin{aligned}
 & N_{n-1}^{\tau_0} \e \| v_n - v_{n - 1} \|_{s_0 + \overline \mu} 
\leq C N_{n-1}^{\tau_0 + 2 \overline \mu} N_{n-2}^{- \mathtt a_1} \, \e^2 \gamma^{-1} 
\leq \gamma_{n - 1} - \gamma_n
 \end{aligned}
\end{equation}
for some constant $C > 0$, since
$$
C 2^n N_{n-1}^{\tau_0 + 2 \overline \mu} N_{n-2}^{- \mathtt a_1} \, \e^2 \gamma^{-2} 
\leq 1, \quad n \geq 1
$$
by recalling that $\overline \mu > \mu(\mathtt b)$, 
$\mathtt a_1 > 3 \overline \mu + \frac32 (\tau_0 + 1)$ 
(see \eqref{definizione alpha beta}, Proposition \ref{inversione linearized}, and \eqref{costanti nash moser}) and by the smallness condition \eqref{nash moser smallness condition}, by choosing $\overline \tau$ large enough. The estimate \eqref{verifica rid S3 n} implies that Proposition \ref{prop riducibilita}-${\bf (S3)_n}$ applies, implying that ${\cal G}_n  \subseteq \Omega_n^{\gamma_{n - 1}}(v_{n - 1})  \subseteq \Omega_n^{\gamma_{n}}(v_{n })$ (recall also \eqref{inclusione cantor 100}). Then, we can apply the estimate \eqref{stime Delta 12 cal Nn} of Proposition \ref{prop riducibilita}-${\bf (S2)_n}$ (with $v_1 \equiv v_{n - 1}$, $v_2 \equiv v_n$, $\gamma_1 \equiv \gamma_n$, $\gamma_2 \equiv \gamma_{n - 1}$), implying that 
\begin{equation}\label{capitone 100}
\sup_{j \in \Z^3 \setminus \{ 0 \}} \big\| \big({\cal Q}_n(v_n) - {\cal Q}_n(v_{n - 1})\big)_j^j \big\|_{\HS} 
\lesssim \e \| v_n - v_{n - 1} \|_{s_0 + \overline \mu} 
\stackrel{\eqref{hn}}{\lesssim} 
N_{n-1}^{2 \overline \mu} N_{n-2}^{-\mathtt a_1} \e^2 \gamma^{- 1}, \quad 
\lambda \in {\cal G}_n. 
\end{equation}
Hence, by triangular inequality
and by \eqref{stime forma normale limite}, \eqref{capitone 100}, 
\eqref{ansatz induttivi nell'iterazione}, one has 
\begin{equation}\label{capitone 1}
\begin{aligned}
\sup_{j \in \Z^3 \setminus \{ 0 \}} 
\big\| \big({\cal Q}_\infty(v_n) - {\cal Q}_\infty(v_{n - 1})\big)_j^j \big\|_{\HS} 
& \leq \sup_{j \in \Z^3 \setminus \{ 0 \}} 
\big\| \big({\cal Q}_\infty(v_n) - {\cal Q}_n(v_{n })\big)_j^j \big\|_{\HS} \\
& \quad \ \ 
+ \sup_{j \in \Z^3 \setminus \{ 0 \}} 
\big\| \big({\cal Q}_n(v_n) - {\cal Q}_n(v_{n - 1})\big)_j^j \big\|_{\HS}  \\
& \quad \ \ 
+ \sup_{j \in \Z^3 \setminus \{ 0 \}} 
\big\| \big({\cal Q}_n(v_{n - 1}) - {\cal Q}_\infty(v_{n - 1})\big)_j^j \big\|_{\HS} \\
& 
\lesssim N_{n-1}^{2 \overline \mu} N_{n-2}^{-\mathtt a_1} \e^2 \gamma^{-1} 
+ \e N_{n - 1}^{- \mathtt a}\,.
\end{aligned}
\end{equation}
The estimate \eqref{L infty vn - v n - 1} then follows 
by \eqref{capitone 0}, \eqref{capitone 1}.  
 
\smallskip
\noindent
{\sc Step 2.} 
Let $\lambda \in {\cal G}_n$ 
and $\langle \ell,  j - j' \rangle\leq N_n$, ${\rm min}\{ |j|, |j'| \} \leq N_n^\tau$,
$n \geq 1$. We write 
\begin{equation}\label{pizza rustica 9}
\begin{aligned}
L_\infty(\ell, j, j'; v_n) 
& =  L_\infty(\ell, j, j'; v_{n - 1}) \big( {\rm Id} + \Delta_n(\ell, j, j') \big), \\
\Delta_n(\ell, j, j') & := L_\infty(\ell, j, j'; v_{n - 1})^{- 1} 
\Big( L_\infty(\ell, j, j'; v_n) - L_\infty(\ell, j, j'; v_{n - 1}) \Big).
\end{aligned}
\end{equation}
Since $|j - j'| \leq N_n$ and $\min \{ |j|, |j'| \} \leq N_n^\tau$, 
by triangular inequality one has $\max\{ |j|, |j'| \} \lesssim N_n^\tau$. 
Therefore, using that $\lambda \in {\cal G}_n$ 
and the estimate \eqref{L infty vn - v n - 1}, one gets 
\begin{equation}\label{pizza rustica 10}
\begin{aligned}
\| \Delta_n(\ell, j, j') \|_\op 
\lesssim 
N_n^{\tau + 2 \tau^2} N_{n-1}^{2 \overline \mu} N_{n-2}^{-\mathtt a_1} \, \e^2 \gamma^{-2} 
+ N_n^{\tau + 2 \tau^2} N_{n-1}^{- \mathtt a} \, \e \gamma^{- 1}.
\end{aligned}
\end{equation}
Hence, by the smallness condition \eqref{nash moser smallness condition}, 
using that $\mathtt a_1 > 3 \overline \mu + \frac94 (\tau + 2 \tau^2)$ 
(see \eqref{costanti nash moser}), 
$\mathtt a > \frac32(\tau + 2 \tau^2)$ 
(see \eqref{definizione alpha beta}), 
by \eqref{pizza rustica 9}, \eqref{pizza rustica 10}, 
one gets that 
$\| \Delta_n(\ell, j, j') \|_\op \leq 2^{-n}$, 
so that $ L_\infty(\ell, j, j'; v_n) $ is invertible and 
$\|  L_\infty(\ell, j, j'; v_n)^{- 1} \|_\op 
\leq \langle \ell \rangle^\tau |j|^\tau |j'|^\tau \frac{1}{2 \gamma_{n}}$,
implying the claimed inclusion.  
\end{proof}

\begin{lemma}\label{inclusione per sommare serie}
For any $n \geq 1$, the following inclusion holds:
\begin{equation}\label{quinta inclusione stime misura}
{\cal G}_n \setminus \Omega_\infty^{\gamma_n}(v_n) \subseteq \bigcup_{\begin{subarray}{c}(\ell, j, j') \in {\cal I} \\
\langle \ell, j - j' \rangle \geq N_n 
\end{subarray}}{\cal R}_{\ell j j'}(v_n)\,.
\end{equation}
\end{lemma}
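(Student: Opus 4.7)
The plan is to refine the already-established inclusion \eqref{quarta inclusione stime misura}, which writes $\mathcal{G}_n\setminus\Omega_\infty^{\gamma_n}(v_n)$ as a union of resonant sets $\mathcal{R}_{\ell j j'}(v_n)$ indexed by all of $\mathcal{I}$, by showing that the contribution of those triples with $\langle \ell, j-j'\rangle \leq N_n$ (equivalently $\langle\ell,j-j'\rangle<N_n+1$, which certainly covers $\langle\ell,j-j'\rangle<N_n$) is empty. This is a purely set-theoretic argument combining Lemmas \ref{lemma triviale modi alti} and \ref{inclusione risonanti v n n - 1} with the iterative definition \eqref{G-n+1} of $\mathcal{G}_n$.

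Fix $(\ell,j,j')\in\mathcal{I}$ with $\langle\ell,j-j'\rangle\leq N_n$. I would split into two cases according to $\min\{|j|,|j'|\}$. If $\min\{|j|,|j'|\}\geq N_n^{\tau}$, then Lemma \ref{lemma triviale modi alti} directly gives $\mathcal{R}_{\ell j j'}(v_n)=\emptyset$. If instead $\min\{|j|,|j'|\}\leq N_n^{\tau}$, then (since $n\geq 1$) Lemma \ref{inclusione risonanti v n n - 1} yields $\mathcal{R}_{\ell j j'}(v_n)\subseteq \mathcal{R}_{\ell j j'}(v_{n-1})$.

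It remains to show that this latter set is empty. By definition \eqref{def cal R vn l j j'}, any element of $\mathcal{R}_{\ell j j'}(v_n)$ lies in $\mathcal{G}_n$; by the iterative definition \eqref{G-n+1} one has $\mathcal{G}_n\subseteq \Omega_\infty^{\gamma_{n-1}}(v_{n-1})$. But by definition \eqref{cantor finale ridu} of $\Omega_\infty^{\gamma_{n-1}}(v_{n-1})$, every $\lambda$ in that set satisfies
\[
\| L_\infty(\ell,j,j';\lambda,v_{n-1}(\lambda))^{-1}\|_\op \leq \frac{\langle\ell\rangle^\tau|j|^\tau|j'|^\tau}{2\gamma_{n-1}},
\]
which is precisely the negation of membership in $\mathcal{R}_{\ell j j'}(v_{n-1})$ (whose defining inequality in \eqref{def cal R vn l j j'} uses the threshold $\gamma_{n-1}$). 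Thus $\mathcal{R}_{\ell j j'}(v_{n-1})\cap \Omega_\infty^{\gamma_{n-1}}(v_{n-1})=\emptyset$, and composing the two inclusions gives $\mathcal{R}_{\ell j j'}(v_n)=\emptyset$ in this case as well. Combining the two cases with \eqref{quarta inclusione stime misura} yields \eqref{quinta inclusione stime misura}.

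The argument is purely bookkeeping and there is no real obstacle; the only point that requires attention is verifying the correct alignment of the Diophantine levels $\gamma_n, \gamma_{n-1}$ when concluding that $\mathcal{R}_{\ell j j'}(v_{n-1})$ and $\Omega_\infty^{\gamma_{n-1}}(v_{n-1})$ are complementary subsets of $\mathcal{G}_{n-1}$. All the substantive analytic content (the Neumann perturbation giving Lemma \ref{lemma triviale modi alti}, and the perturbative comparison of $L_\infty(v_n)$ with $L_\infty(v_{n-1})$ giving Lemma \ref{inclusione risonanti v n n - 1}) has already been carried out in the preceding lemmas.
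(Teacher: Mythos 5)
Your proposal is correct and takes essentially the same route as the paper's proof: the same case split on $\min\{|j|,|j'|\}$ (Lemma \ref{lemma triviale modi alti} when $\min\{|j|,|j'|\}\geq N_n^\tau$, Lemma \ref{inclusione risonanti v n n - 1} when $\min\{|j|,|j'|\}\leq N_n^\tau$), followed by the observation that ${\cal G}_n\cap{\cal R}_{\ell j j'}(v_{n-1})=\emptyset$ because ${\cal G}_n\subseteq\Omega_\infty^{\gamma_{n-1}}(v_{n-1})$ by \eqref{G-n+1} and the defining bound in \eqref{cantor finale ridu} negates the condition in \eqref{def cal R vn l j j'} at the same level $\gamma_{n-1}$. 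You merely spell out this last disjointness step slightly more explicitly than the paper does.
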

\begin{proof}
Let $n \geq 1$. By the definition \eqref{def cal R vn l j j'}, ${\cal R}_{\ell j j'}(v_n) \subseteq {\cal G}_n $. By Lemmata \ref{lemma triviale modi alti}, \ref{inclusione risonanti v n n - 1} for any $(\ell, j, j') \in {\cal I}$, $\langle \ell, j - j' \rangle \leq N_n$, if ${\rm min}\{ |j|, |j'| \} \geq N_n^\tau$ then ${\cal R}_{\ell, j j'}(v_n) = \emptyset$ and if ${\rm min}\{ |j|, |j'| \} \leq N_n^\tau$ then ${\cal R}_{\ell j j'}(v_n) \subseteq {\cal R}_{\ell j j'}(v_{n - 1})$. On the other hand ${\cal G}_n \cap {\cal R}_{\ell j j'}(v_{n - 1}) = \emptyset$ (see \eqref{G-n+1}), therefore \eqref{quinta inclusione stime misura} holds.
\end{proof} 
By the inclusion \eqref{quarta inclusione stime misura} and by Lemma \ref{stima misura risonanti sec melnikov}, one gets that 
\begin{equation}\label{G0 meno Omega (v0)}
\begin{aligned}
\Big| {\cal G}_0  \setminus \Omega_\infty^{\gamma_0}(v_0)\Big| \lesssim \sum_{(\ell, j, j') \in {\cal I}} |{\cal R}_{\ell j j'}(v_0)| \lesssim \gamma \sum_{(\ell, j, j' ) \in \Z^\nu \times (\Z^3 \setminus \{ 0 \}) \times (\Z^3 \setminus \{ 0 \}) }  \frac{1}{ \langle \ell \rangle^{\frac{\tau}{9}} |j|^{\frac{\tau}{9}} |j'|^{\frac{\tau}{9}}}  \stackrel{\eqref{definizione finale tau}}{\lesssim} \gamma.
\end{aligned}
\end{equation}
Now let $n \geq 1$. By the inclusion \eqref{quinta inclusione stime misura} and by Lemma \ref{stima misura risonanti sec melnikov}, one has 
\begin{equation}\label{Gn meno Omega (vn)}
\begin{aligned}
\Big| {\cal G}_n \setminus \Omega_\infty^{\gamma_n}(v_n)\Big|& \lesssim \sum_{\begin{subarray}{c}(\ell, j, j') \in {\cal I} \\
\langle \ell, j - j' \rangle \geq N_n 
\end{subarray}} |{\cal R}_{\ell j j'}(v_n)| 
  \lesssim \gamma \sum_{\begin{subarray}{c}(\ell, j, j') \in {\cal I} \\
\langle \ell, j - j' \rangle \geq N_n 
\end{subarray} }  \frac{1}{ \langle \ell, j - j' \rangle^{\frac{1}{9}} \langle \ell \rangle^{\frac{\tau}{9}} |j|^{\frac{\tau}{9}} |j'|^{\frac{\tau}{9}}}   
\stackrel{\eqref{definizione finale tau}}{\lesssim} \gamma N_n^{- \frac{1}{9}}\,.
\end{aligned}
\end{equation}

\medskip

\begin{proof}
[\textsc{Proof of Propositions \ref{prop measure estimate finale}, \ref{Gn Omegan Lambdan}}]
Proposition \ref{Gn Omegan Lambdan}-$(i)$ follows by recalling 
formula \eqref{quarta inclusione stime misura} 
and by applying the estimates \eqref{G0 meno Omega (v0)}, \eqref{Gn meno Omega (vn)}. 
Item $(ii)$ can be proved by similar arguments. 
The estimates on ${\cal G}_n \setminus {\cal G}_{n + 1}$ follows by recalling 
formula \eqref{seconda inclusione stime misura} and by applying items $(i),(ii)$.

Proposition \ref{prop measure estimate finale} follows by applying 
Proposition \ref{Gn Omegan Lambdan}, using the inclusion \eqref{prima inclusione stime misura} and the fact that the series $\sum_{n \geq 1} N_n^{-1/9}$ 
converges. 
\end{proof}

\section{Proof of Theorems \ref{main theorem 1}, \ref{main theorem 2}}
\label{sezione teoremi principali}

{\sc Proof of Theorem \ref{main theorem 1}.}
Fix $\gamma := \e^c$ with $0 < c < \frac{1}{\overline \tau}$ where $\overline \tau$ is the constant appearing in the smallness condition \eqref{nash moser smallness condition}. Note that, since $k_0$ and $\tau$ have been fixed in \eqref{definizione finale tau}, the constant $\overline \tau = \overline \tau(\nu)$ only depends on the number of frequencies $\nu$. Then $N_0^{\overline \tau}\e = \gamma^{- \overline \tau}\e = \e^{1 - c \overline \tau}$, therefore the smallness condition \eqref{nash moser smallness condition} is fullfilled for $\e$ small enough. 
By Proposition \ref{iterazione-non-lineare}, $({\cal P}1)_n$, using a telescoping argument, 
the sequence $(v_n)_{n \geq 0}$ converges to $v_\infty \in H^{s_0 + \overline \mu} \cap Y$ 
with respect to the norm $\| \  \|_{s_0 + \overline \mu}^{k_0, \gamma}$, and 
\begin{equation}\label{conv vn v infty}
\| v_\infty \|_{s_0 + \overline \mu}^{k_0, \gamma} \lesssim \e \gamma^{- 1}, \quad \| v_\infty - v_n \|_{s_0 + \overline \mu}^{k_0, \gamma} \lesssim \e \gamma^{-1} N_{n-1}^{-\mathtt a_1}, \quad \forall n \geq 1\,. 
\end{equation}
 By recalling \eqref{def cal G infty} and Proposition \ref{iterazione-non-lineare}-$({\cal P}2)_n$, for any $\lambda \in {\cal G}_\infty$, ${\cal F}(v_n) \to 0$ as $n \to \infty$, therefore, the estimate \eqref{conv vn v infty} implies that ${\cal F}(v_\infty) = 0$ for any $\lambda \in {\cal G}_\infty$. By setting $\Omega_\e := {\cal G}_\infty$, by applying Proposition \ref{prop measure estimate finale} and using that $\gamma = \e^c$, one gets that $\lim_{\e \to 0} |\Omega_\e| = |\Omega|$ and hence the proof is concluded. 
 
 \bigskip
 
 \noindent
 {\sc Proof of Theorem \ref{main theorem 2}.} 
Let $v \in H^s_0$ be the solution of the equation $\mF(v) = 0$ 
provided by Theorem \ref{main theorem 1}, 
where $\mF$ is defined in \eqref{equazione cal F vorticita}. 
Let $u := \mU(v) = \curl (\Lm^{-1} v)$. 
One has 
$\div u = 0$, $\div F = \div (\curl f) = 0$
(the divergence of any curl is zero).
Since $\div u = 0$, one directly calculates
$\div( u \cdot \grad v - v \cdot \grad u) 
= u \cdot \grad (\div v)$.
Moreover $\div \Pi_0^\bot = \div$ because $\div \Pi_0 = 0$. 
Hence, taking the divergence of the identity $\mF(v) = 0$, 
we get
\begin{equation}\label{eq div v}
(\omega \cdot \partial_\vphi +\zeta \cdot \nabla +  \e u \cdot \nabla)(\div v) 
= 0. 
\end{equation}
Since $u = \curl (\Lm^{-1} v) \in H^{s_0 + \overline \mu} \cap X$ 
and $\| u \|_{s_0 +\overline \mu} \lesssim 1$, for $\e$ small enough 
we can apply Proposition \ref{proposizione trasporto} to the transport operator 
$$
{\cal T} = \omega \cdot \partial_\vphi + \zeta \cdot \nabla + \e u(\vphi, x) \cdot \nabla
$$
and obtain that there exists a reversibility preserving, invertible map ${\cal A}$ of the form \eqref{def mappa trasporto} such that 
$$
{\cal A}^{- 1} (\omega \cdot \partial_\vphi + \zeta \cdot \nabla + \e u \cdot \nabla) {\cal A} 
= \omega \cdot \partial_\vphi + \zeta \cdot \nabla , \quad \ 
\forall (\omega, \zeta) \in \Omega_\e \subseteq DC(\gamma, \tau). 
$$
Hence, by \eqref{eq div v}, $ (\omega \cdot \partial_\vphi + \zeta \cdot \nabla) {\cal A}^{-1} (\div v) = 0$.
Since $(\om,\zeta)$ is Diophantine, 
the kernel of $\ompaph + \zeta \cdot \grad$ is given by the constants, 
so that ${\cal A}^{-1} (\div v)$ is a constant,  
say ${\cal A}^{-1} (\div v) = c$. This implies that $\div v = c$, because ${\cal A}^{-1}$ is a change of variable. 
By periodicity, the space average of any divergence is zero, 
and therefore $\div v = c = 0$.

Since $\div u = 0$, $\div v = 0$, integrating by parts one deduces that 
$u \cdot \grad v$ and $v \cdot \grad u$ have both zero average
in the space variable $x \in \T^3$. 
Also $F$ has zero average in $x$ (because $F$ is a curl). 
This implies that $\Pi_0^\bot( u \cdot \grad v - v \cdot \grad u - F) 
= u \cdot \grad v - v \cdot \grad u - F$, 
and then from the equation $\mF(v) = 0$ we deduce that
\[
\ompaph v + \zeta \cdot \grad v + \e (u \cdot \grad v - v \cdot \grad u - F) = 0.
\]
Moreover, by Theorem \ref{main theorem 1}, 
$v \in H^s_0$ has zero space average,
therefore $\Lm^{-1} v = (- \Delta)^{-1} v$ (see \eqref{def pi0 Lm Lm inv})
and $u = \curl (-\Delta)^{-1} v$. 
Restore the tilde that have been removed when passing 
from \eqref{2803.1} to \eqref{equazione cal F vorticita}, 
and consider $v = \tilde\e \tilde v$. 
Then $v$ solves \eqref{equazione vorticita} 
and $\| v \|_s = \tilde \e \| \tilde v \|_s \leq C \tilde\e^{1+a} = C \e^b$, 
$b := (1+a)/2$.

Now we show that there exists a pressure $p(\vphi, x)$ such that $(u, p)$ 
solves \eqref{Eulero3}. 
Recall the general formula $\curl(\curl w) = - \Delta w + \grad (\div w)$.
Then $\curl u = \curl (\curl (- \Delta)^{- 1} v) = v$
because $\div v = 0$ and $\Pi_0 v = 0$. 
Then a direct computation shows that 
$$
{\rm curl}(u \cdot \nabla u) = u \cdot \nabla v - v \cdot \nabla u.
$$
Since $\curl (\omega \cdot \partial_\vphi u) = \omega \cdot \partial_\vphi v$, 
$\curl(\zeta \cdot \nabla u) = \zeta \cdot \nabla v$ and $F = \curl f$, 
the equation \eqref{equazione vorticita} can be rewritten as 
$$
\curl (\Gamma) = 0 \quad \text{ where } \ \ 
\Gamma:=  \omega \cdot \partial_\vphi u + \zeta \cdot \nabla u 
+ \e u \cdot \nabla u - \e f. 
$$
Then $\Gamma$ is a smooth irrotational vector field. 
We observe that $\Gamma$ has zero average. 
Indeed $\Pi_0 u = 0$ (because $u$ is a curl), 
$\Pi_0 f = 0$ (by assumption), 
and, integrating by parts,  
$\Pi_0 (u \cdot \nabla u ) = - \Pi_0 ((\div u)\, u) = 0$
(because $\div u = 0$), 
whence we deduce that $\Pi_0 \Gamma = 0$. Then $p :=  (- \Delta)^{- 1} \div \Gamma$ satisfies $\nabla p = \Gamma$, namely
$$
\omega \cdot \partial_\vphi u + \zeta \cdot \nabla u 
+ \e u \cdot \nabla u - \e f = - \nabla p. 
$$
Hence $u, p$ solve the equation \eqref{Eulero2} and $\| u \|_s, \| p \|_s \lesssim_s \e^b$.

\appendix

\section{Appendix}
In this appendix we prove a lemma which allows to provide the measure estimate of the resonant sets defined in \eqref{def cal R vn l j j'}.
Let ${\cal H}$ be a Hilbert space of dimension $d$ with scalar product $\langle \cdot, \cdot \rangle_{{\cal H}}$ 
and let $\{ \vphi_1, \ldots, \vphi_d \}$ be an orthonormal basis of ${\cal H}$. 
We consider the set $\mB(\mH)$ of linear maps $B : \mH \to \mH$ 
with the operator norm $\| \ \|_\op$. 
Given $B \in \mB(\mH)$, we identify it with its matrix representation 
$(B_j^i)_{j,i = 1, \ldots, d}$ 
where $B_{j}^i  := \langle B \vphi_i \,,\, \vphi_j \rangle_{{\cal H}}$.
Let  
\begin{equation}\label{forma A lambda}
A(\lambda) = \lambda  \cdot k \,{\rm Id}_{\cal H} + Q(\lambda) 
\quad \text{with } \ k \in \Z^{p} \setminus \{ 0 \},
\end{equation} 
where the map  
$\R^p \to {\cal B}({\cal H})$, $\lambda \mapsto Q(\lambda)$ 
is $k_0$ times differentiable.
Let $\Omega \subseteq \R^p$ be a bounded, open set and $\eta > 0$. In the lemma below, we provide a measure estimate of the set 
\begin{equation}\label{risonante matrice A}
{\cal R}_A := \big\{ \lambda \in \Omega : 
A(\lambda) \text{ is not invertible or it is invertible and } 
\| A(\lambda)^{- 1} \|_\op 
> \eta \, \big\}\,. 
\end{equation}
A similar version of the following Lemma can be found in \cite{P1}. For sake of completeness we insert our own proof. 
\begin{lemma}\label{lemma astratto misura risonante}
Let $k_0, d \in \N$ with $k_0 \geq d + 2$, $\gamma \in (0, 1)$. 
Then there exists $\delta_0 \in (0, 1)$ such that if 
\begin{equation}\label{Montparnasse0}
\| Q \|^{k_0, \gamma}_\op 
\leq \delta, \qquad 
\delta \gamma^{ - k_0} \leq \d_0,
\end{equation} 
then the Lebesgue measure of the set ${\cal R}_A$ satisfies 
$|{\cal R}_A| \lesssim (|k| \eta)^{-\frac{1}{d}}$.
\end{lemma}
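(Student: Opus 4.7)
The plan is to reduce the measure of ${\cal R}_A$ to a quantitative sublevel set estimate for $\det A(\lambda)$ along the direction of $k$. First, I would observe that for $\lambda \in {\cal R}_A$, either $A(\lambda)$ is singular or its smallest singular value is $< 1/\eta$; in either case, since $|\det A|$ is the product of singular values and $\|A(\lambda)\|_\op \leq |k \cdot \lambda| + \|Q\|_\op \lesssim |k|$ by boundedness of $\Omega$ and the hypothesis $\delta \leq 1$, one has
$$|\det A(\lambda)| \leq \|A(\lambda)\|_\op^{d-1}/\eta \leq C \, |k|^{d-1}/\eta \qquad \forall\, \lambda \in {\cal R}_A.$$

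Next, setting $v := k/|k|$ and parametrizing $\lambda = s v + w$ with $w \in v^\perp$, I would study $g(s) := \det A(s v + w) = \det(|k| s \, \mathrm{Id} + Q(s v + w))$. Using the expansion $\det(cI + M) = \sum_{j=0}^d c^{d-j}\, e_j(M)$ with $c = |k| s$, the $d$-th derivative reads $\partial_s^d g(s) = d!\, |k|^d + E(s)$, where $E(s)$ collects all contributions in which at least one derivative falls on $Q$. The weighted norm hypothesis $\|Q\|_\op^{k_0,\gamma} \leq \delta$ yields $\|\partial_s^\alpha Q\|_\op \leq \delta/\gamma^\alpha$ for $\alpha \leq k_0$, so by the multilinearity of $e_j$ each derivative of $e_j(Q)$ of order $d-m$ is bounded by $C\, \delta^j/\gamma^{d-m}$. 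Since $|s|$ is bounded on $\Omega$ and $|k| \geq 1$, each summand in $E(s)$ is dominated by $|k|^m\, \delta^j/\gamma^{d-m}$ with $1 \leq j \leq d$ and $0 \leq m \leq d-j$, whose ratio to $|k|^d$ is at most $\delta^j/\gamma^{d-m} \leq \delta_0\, \gamma^{k_0 j - (d-m)}$ (from $\delta \leq \delta_0\gamma^{k_0}$). Since $k_0 \geq d+2$ and $d - m \leq d$, the exponent is $\geq k_0 - d \geq 2$, so this is $\leq \delta_0 \gamma^2$. Choosing $\delta_0$ sufficiently small, I obtain
$$|\partial_s^d g(s)| \geq \tfrac{d!}{2}\, |k|^d \qquad \text{on } I_w := \{s \in \R : s v + w \in \Omega\}.$$

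Finally, the classical R\"ussmann/Pyartli sublevel set lemma applied to $g$ on $I_w$ yields $|\{s \in I_w : |g(s)| \leq \varepsilon\}| \leq C_d \, (\varepsilon/|k|^d)^{1/d}$, and substituting $\varepsilon = C |k|^{d-1}/\eta$ from the first step gives $|\{s \in I_w : s v + w \in {\cal R}_A\}| \leq C_d' \, (|k| \eta)^{-1/d}$. Integrating this bound over $w \in v^\perp$, whose relevant range has bounded $(p-1)$-dimensional Lebesgue measure because $\Omega$ is bounded, Fubini's theorem gives $|{\cal R}_A| \lesssim (|k|\eta)^{-1/d}$.

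The main technical obstacle is the derivative lower bound in the middle step: it is the delicate balance between the loss of $\gamma^{-\alpha}$ per derivative in the weighted norm $\|\cdot\|^{k_0,\gamma}_\op$ and the gain of $|k|^d$ from the principal term that motivates the assumption $k_0 \geq d+2$ together with $\delta\gamma^{-k_0} \leq \delta_0$; once that bound is established, the sublevel estimate and the Fubini integration are routine.
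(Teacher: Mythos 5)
Your proposal is correct and follows essentially the same route as the paper: reduce $\mathcal{R}_A$ to a sublevel set of $\det A$ (your singular-value argument is equivalent to the paper's bound $\| A^{-1}\|_\op \lesssim \|A\|_\op^{d-1}|\det A|^{-1}$), restrict to the line in the direction $k/|k|$, obtain the lower bound $|\partial_s^d \det A| \geq \tfrac{d!}{2}|k|^d$ from the smallness $\delta\gamma^{-k_0}\leq\delta_0$, and conclude by R\"ussmann's sublevel estimate plus Fubini. The only differences are cosmetic (your expansion via elementary symmetric functions $e_j(Q)$ versus the paper's coefficients $r_n(\lambda)$, and the harmless slip $|k|^m$ in place of $|k|^{d-j}$, which does not affect the ratio bound).
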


\begin{proof}
First, if $A$ is invertible, one has the elementary inequality 
$\| A^{- 1} \|_{{\cal B}({\cal H})} 
\leq C_0(d) \| A\|_{{\cal B}({\cal H})}^{d - 1} | \det A |^{-1}$, 
for some $C_0(d) > 0$. 
By \eqref{forma A lambda}, \eqref{Montparnasse0}, 
since $\d \leq 1 \leq |k|$, 
one has $\| A \|_{{\cal B}({\cal H})}^{d - 1} \leq C_1 |k|^{d - 1}$
for some $C_1$ depending on $d,\Om$. 
Then $\| A^{- 1} \|_{{\cal B}({\cal H})} 
\leq C_2 |k|^{d - 1} |\det A|^{-1} $ 
for some $C_2 > 0$. 
Hence 
\begin{equation}\label{def widetilde cal R A}
{\cal R}_{A} \subseteq \widetilde{\cal R}_{A} 
\quad \text{ where } \ 
\widetilde{\cal R}_{A} := \Big\{ \lambda \in \Omega : 
|\det A(\lambda)| < \frac{C_2 |k|^{d - 1}}{\eta}\Big\}\,. 
\end{equation}
By using \eqref{forma A lambda}, \eqref{Montparnasse0}, a direct calculation shows that 
\begin{equation}\label{determinante polinomio}
\det A(\lambda) 
= (\lambda \cdot k)^d + r_{d - 1}(\lambda) (\lambda \cdot k)^{d - 1} + \ldots + r_1(\lambda) ( \lambda \cdot k) + r_0( \lambda)
\end{equation}
where for any $n = 0, \ldots, d - 1$ the maps $\lambda  \mapsto r_n(\lambda )$ 
are $k_0$ times differentiable and 
\begin{equation}\label{stima rn stima misura}
|r_n|^{k_0, \gamma }  \lesssim_{k_0, d}  \delta\,, \quad \forall n = 0, \ldots, d - 1\,. 
\end{equation}
Now let 
\begin{equation}\label{insalata di rinforzo 0}
\begin{aligned}
& \lambda  = \frac{k}{|k|} s + v, \quad v \cdot k = 0\,, \quad  \Gamma (s) := {\rm det}\Big( A \Big( \frac{k}{|k|} s + v \Big) \Big)\,, \\
& q_n(s) := r_n\Big(\frac{k}{|k|}s + v \Big), \quad n = 0, \ldots, d - 1\,. 
\end{aligned}
\end{equation}
By \eqref{determinante polinomio}, \eqref{insalata di rinforzo 0}, using that $\lambda  \cdot k = |k| s$, one has  
$$
\Gamma (s) = |k|^d  \big( s^d + q_\Gamma(s) \big), \quad q_\Gamma(s) := \frac{q_{d - 1}(s)s^{d - 1}}{|k|} + \ldots + \frac{q_1(s) s}{|k|^{d - 1}} + \frac{q_0(s)}{|k|^d} \,.
$$
By \eqref{stima rn stima misura}, \eqref{insalata di rinforzo 0}, 
one gets $|q_\Gamma|^{k_0, \gamma} \lesssim_{k_0, d} \delta$, implying that 
$$
|\partial_s^{d} \Gamma(s)| 
\geq \Big( d! - |\partial_s^{d} q_\Gamma(s)| \Big)|k|^d  
\geq \Big( d! - C \delta \gamma^{- k_0} \Big)|k|^d 
\geq \frac{d!}{2} |k|^d
$$
by taking $\delta \gamma^{- k_0}$ small enough. The latter estimate together with Theorem 17.1 in \cite{Russman} implies that 
$$
\Big| \Big\{ s :  \lambda  = \frac{k}{|k|}s + v \in \Omega, \quad |\Gamma(s)| < \frac{C_2(d) |k|^{d - 1}}{\eta}  \Big\} \Big| \lesssim_d \Big(\frac{1}{|k| \eta} \Big)^{\frac{1}{d}}\,.
$$
By a Fubini argument one gets that 
$|\widetilde{\cal R}_{A}| \lesssim ( |k| \eta )^{-\frac{1}{d}}$ 
and the claimed statement follows by recalling \eqref{def widetilde cal R A}. 
\end{proof}

\bigskip

\begin{flushright}
\textbf{Pietro Baldi}

\smallskip

Dipartimento di Matematica e Applicazioni ``R. Caccioppoli''

Universit\`a di Napoli Federico II

Via Cintia, Monte S.\ Angelo

80126 Napoli, Italy

\smallskip

\texttt{pietro.baldi@unina.it}

\bigskip

\textbf{Riccardo Montalto}

\smallskip

Dipartimento di Matematica ``Federigo Enriques''

Universit\`a degli Studi di Milano

Via Cesare Saldini 50

20133 Milano, Italy

\smallskip

\texttt{riccardo.montalto@unimi.it}
\end{flushright}

\end{document}